\documentclass[pdflatex,sn-mathphys-num]{sn-jnl}

\usepackage{graphicx}
\usepackage{multirow}
\usepackage{amsmath,amssymb,amsfonts}
\usepackage{amsthm}
\usepackage{mathrsfs}
\usepackage[title]{appendix}
\usepackage{xcolor}
\usepackage{textcomp}
\usepackage{manyfoot}
\usepackage{booktabs}
\usepackage{algorithm}
\usepackage{algorithmicx}
\usepackage{algpseudocode}
\usepackage{listings}

\usepackage{stmaryrd}
\usepackage{bbm} 
\usepackage{esint}
\usepackage{mathtools} 

\theoremstyle{thmstyleone}
\newtheorem{theorem}{Theorem}[section] 
\newtheorem{proposition}[theorem]{Proposition}
\newtheorem{lemma}[theorem]{Lemma} 
\newtheorem{corollary}[theorem]{Corollary} 

\theoremstyle{thmstyletwo}

\newtheorem{remark}{Remark}

\theoremstyle{thmstylethree}
\newtheorem{definition}{Definition} 

\begin{document}

\title[Gradient estimates for $p(\cdot)$-harmonic forms]{Gradient estimates for $p\left(\cdot\right)$-harmonic differential forms}

\author*[1]{\fnm{Anna} \sur{Balci}}\email{anngremlin@gmail.com}
\author[2]{\fnm{Swarnendu} \sur{Sil}}\email{swarnendusil@iisc.ac.in}
\author[3]{\fnm{Mikhail} \sur{Surnachev}}\email{peitsche@yandex.ru}

\affil[1]{\orgname{University Bielefeld}, \orgaddress{\city{Bielefeld}, \postcode{33615}, \country{Germany}}}

\affil[2]{\orgdiv{Department of Mathematics}, \orgname{Indian Institute of Science}, \orgaddress{\city{Bangalore}, \postcode{560012}, \country{India}}}

\affil[3]{\orgname{Keldysh Institute of Applied Mathematics}, \orgaddress{\city{Moscow}, \country{Russia}}}

\abstract{In this paper, we establish gradient bounds for $p(\cdot)$-harmonic differential forms subject to a Coulomb-type gauge condition. For variable exponents satisfying the log-H\"older continuity assumption, we derive higher integrability estimates of Meyers type, ensuring improved regularity beyond the natural energy space. Furthermore, under the stronger assumption of H\"older continuity of the exponent function, we prove that the gradient of solutions exhibits H\"older continuity. These results extend classical regularity theory for constant-exponent $p$-harmonic systems to the variable-exponent setting, which is essential for modeling nonhomogeneous and anisotropic media.}

\keywords{Variable exponent, Differential forms, Regularity, Gradient estimates}


\maketitle

\section{Introduction}

Let $M$ be an  $n$-dimensional Riemannian manifold $M$ (with boundary). In this paper we are concerned with the interior and boundary regularity of solutions to the following system of non-linear partial differential equations:
\begin{equation}\label{eq1}
d^* \bigl(a(x) \bigl(\mu^2+|\omega|^2)^\frac{p(x)-2}{2} \omega\bigr) = d^* F, \quad d\omega=0,
\end{equation}
where $\mu\geq 0$, on $M$. Here $d$ is the exterior derivative, $d^*$ is the Hodge codifferential, $\omega$ is a vector-valued differential form, the weight $a$ is a measurable bounded nonnegative function separated from $0$, and the variable exponent $p(\cdot)$ is separated from $1$ and $\infty$ and satisfies the log-H\"older condition:
\begin{gather*}
1<p^{-}\leq p(x)\leq p^{+}<\infty \quad \text{for all}\quad x\in M,\\
|p(x)-p(y)|\leq \frac{C}{\log (e + (\mathrm{dist}\,(x,y))^{-1})}.
\end{gather*}
Since we are interested in local properties of solutions, the manifold is assumed to be orientable.

Our results extend the classical work of Zhikov~\cite{Zhikov_higherintegrability,Zhikov_higherintegrability1}, Alkhutov~\cite{Alkhutov_Harnack1997} and Acerbi and Mingione~\cite{Acerbi_Mingione_ARMA_2001}, \cite{Acerbi_Mingione_Crelle_2005}, on the one hand, and Uhlenbeck~\cite{uhlenbecknonlinearelliptic} and Hamburger~\cite{hamburgerregularity}, on the other hand, to the setting of equations for differential forms with variable exponents of nonlinearity. Further $\Lambda^{k}=\Lambda^k(TM,\mathbb{R}^N)$, $k=0,1,\ldots,n$ denotes the bundle of $\mathbb{R}^N$ valued differential forms on $M$, see the notation in Section~\ref{EAS}. As usual, we denote the conjugate exponent to $p$ by $p'$, that is $p'(x)=p(x)/(p(x)-1)$, and $\mathring{M}$ stands for the interior of $M$. The first group of results deals with \textit{local} properties of solutions.

\begin{enumerate}
    \item The solution $\omega$ to \eqref{eq1} enjoys higher integrability estimates of Meyers type: if $F \in L^{q p'(\cdot)}_{\mathrm{loc}}(\mathring{M};\Lambda^{k+1})$, $q>1$, then $|\omega|^{p(x)(1+\sigma)}$ is locally integrable for some $\sigma > 0$.  
    \item If, in addition, the weight function $a(\cdot)$ is continuous, the variable exponent $p(\cdot)$ is vanishing log-H\"older,  and $F$ has the bounded mean oscillation property, then $\omega$ is in the Morrey space 
    $L^{p^{-},\lambda}_{\mathrm{loc}}(\mathring{M},\Lambda^{k+1})$ for any $\lambda < n$.
    \item If, furthermore, $a(\cdot)$, $p(\cdot)$, and $F$ are H\"older continuous in $\mathring{M}$, then so is $\omega$.
\end{enumerate}

The precise statements of the results are given in Theorem~\ref{higher integrability} (higher integrability), Theorem~\ref{MorreyBound} (Morrey bounds), and Theorem~\ref{main theorem} (H\"older continuity of $\omega$). Our proofs follow the approach of~\cite{Acerbi_Mingione_ARMA_2001}, with necessary modifications for the gauge-fixing procedure.

Since locally any closed form is exact, we can write $\omega =du$, so the first-order system \eqref{eq1} can be rewritten as the second-order system 
\begin{equation}\label{eq2}
d^* \bigl(a(x) (\mu^2+|du|^2)^\frac{p(x)-2}{2} du\bigr) = d^* F,
\end{equation}
which is the standard $p$-Laplacian if $u$ is a scalar function, $\mu=0$ and $a, p$ are constant. This explains ``$p(\cdot)$-harmonic'' in the title. If we consider the equation in form \eqref{eq2}, then for $u$ we can infer no regularity results without an additional gauge condition (if $u$  is a solution then the sum of $u$ and any closed form is also a solution to the same system). We impose the Coulomb gauge $d^* u = 0$ and then for $u$ we get the following.
\begin{enumerate}
\item If $F \in L^{q p'(\cdot)}_{\mathrm{loc}}(\mathring{M};\Lambda^{k+1})$, for some $q>1$, then for some $\sigma>0$ we have $u\in W_{\mathrm{loc}}^{1,(1+\sigma)p(\cdot)}(\mathring{M},\Lambda^k)$. 

\item If, in addition, the weight function $a(\cdot)$ is continuous, the variable exponent $p(\cdot)$ is vanishing log-H\"older, and  $F$ has the bounded mean oscillation property, then $\nabla u$ is (locally) in the Morrey space $L^{p^{-},\lambda}$ for any $\lambda < n$, and as a corollary $u$  is H\"older continuous with any exponent less than $1$.

\item  If, further, $a(\cdot)$, $p(\cdot)$, and $F$ are H\"older continuous, then so is $\nabla u$.
\end{enumerate}

The precise statements are given in Theorems~\ref{T1u}, \ref{T2u}, and \ref{T3u}.

We also study solutions satisfying the homogeneous Dirichlet or Neumann condition, namely $t\omega =0$ and $n \bigl(a(x) \bigl(\mu^2+|\omega|^2)^\frac{p(x)-2}{2} \omega\bigr)=0$. Under these conditions our results are valid up to the boundary:
\begin{enumerate}
    \item If $F \in L^{q p'(\cdot)}(M;\Lambda^{k+1})$, $q>1$, then $|\omega|^{p(x)(1+\sigma)}\in L^1(M)$ for some $\sigma > 0$. 
    
    \item If, in addition, the weight function $a(\cdot)$ is continuous, the variable exponent $p(\cdot)$ is vanishing log-H\"older,  and $F$ has the bounded mean oscillation property on $M$ (up to the boundary), then $\omega$ is in the Morrey space 
    $L^{p^{-},\lambda}(M;\Lambda^{k+1})$ for any $\lambda < n$.
    
    \item If, furthermore, $a(\cdot)$, $p(\cdot)$, and $F$ are H\"older continuous on $M$, then so is $\omega$.
\end{enumerate}
The precise statements are given in Theorems~\ref{th:HI_B}, \ref{th:MorreyBoundsDN}, \ref{main theorem global}. The corresponding results for the second-order system \eqref{eq2} are stated in Theorems~\ref{T1uB}, \ref{T2uB}, \ref{T3u_global}.

Classical $p$-energies and corresponding Lebesgue spaces $L^p$ can be generalized by allowing the exponent $p$ to vary with the spatial variable, that is, $p = p(x)$. Interest in these \emph{variable exponent Lebesgue spaces} has grown significantly, driven in part by their role in the modeling of composites \cite{Zhikov1994} and electro-rheological fluids~\cite{Ruzicka_ERF_2000}. These are ``smart'' materials whose viscosity changes in response to an external electric field, a phenomenon that can be described mathematically by making the viscosity depend on a position-dependent exponent. Such materials are used, for example, in the engineering of clutches and shock absorbers. For the introduction to the theory of variable exponent Lebesgue and Sobolev spaces see the excellent treatises \cite{Diening_et_al_variable_exponent}  and \cite{CruFio13}. 

After the classical paper \cite{uhlenbecknonlinearelliptic} and the work \cite{hamburgerregularity}, where the results were transferred to the whole range $1<p=\mathrm{const}<\infty$, for the model system, there was certain gap in systematic results in this direction, for more than two decades. In recent years a growing interest to the analysis of integral functionals, variational relations and systems of partial differential equations with differential forms has reappeared. The authors of \cite{IwaScoStr99}  developed nonlinear Hodge theory on manifolds with boundary, with particular emphasis on the analysis of appropriate Sobolev spaces.  In the last decade one of the authors of the present paper has rekindled the systematic study of regularity for systems with differential forms, see papers of Sil \cite{silthesis,Sil_semicontinuity,Sil_nonlinearStein,Sil_Sengupta_MorreyLorentz,Sil_linearregularity}.  Gaudin~\cite{Gaudin2024Hodge} established Hodge decompositions and maximal $L^{p}$-regularity results for Hodge–Laplacians on the half-space in homogeneous function spaces, and more recently Breit and Gaudin developed optimal regularity theory for the Stokes–Dirichlet problem in mixed Sobolev–Besov settings~\cite{BreitGaudin2025Stokes}.

  We also refer the reader to \cite{Balci_Surnachev_Lavrentiev} for discussions on the Lavrentiev phenomenon in the context of differential forms and to \cite{Balci_Sil_Surnachev_arxiv2025} for existence and regularity results on classical linear boundary problems with differential forms in variable exponent spaces.  

For applications in 3D, of interest (see  \cite{Yin2001,Laforest2018,WanLaforest2020, ChoiLaforest2018, LawLaforest2019} for an interpretation for high-temperature semiconductors) are properties of minimizers of ``partial'' energies of the type 
\begin{equation}\label{funct_3D}
\int\limits_\Omega \biggl[\frac{a(x)}{p(x)} \bigl(\mu^2 + |\mathrm{curl}\, \mathbf{u}|^2\bigr)^\frac{p(x)}{2} - \mathbf{F}\cdot \mathrm{curl}\, \mathbf{u}\biggr]\, dV,
\end{equation}
where $\Omega$ is a domain in $\mathbb{R}^n$ and $dV=dx$, which yields the Euler-Lagrange equation of the form \eqref{eq2}, the so-called $p$-curl (or $p$-curl\,curl) system
\begin{equation}\label{p_curl}
\mathrm{curl}\, \bigl(a(x)(\mu^2+|\mathrm{curl}\,\mathbf{u}|^2)^\frac{p(x)-2}{2} \mathrm{curl}\, \mathbf{u}\bigr) =  \mathrm{curl}\,\mathbf{F}.
\end{equation}
This equation can be interpreted as a stationary equation for the eddy-current problem arising from the Maxwell system under the assumption of the electrical resistivity $\rho = a(x)|\mathrm{curl}\,\mathbf{u}|^{p(x)-2}$ given by the power-law with variable exponent, here $\mathbf{u}$ will be the magnetic field, $\mathbf{J}=\mathrm{curl}\, \mathbf{u}$ will be the current density and $\mathbf{F}$ will represent the effects of the applied external field/current. For equation~\eqref{p_curl}, our main result says that if $a(\cdot)$, $p(\cdot)$, and $F(\cdot)$ are H\"older continuous, then for critical points of the functional \eqref{funct_3D} (for solutions of \eqref{p_curl}), the field $\mathbf{J}=\mathrm{curl}\, \mathbf{u}$ is H\"older continuous and modulo a potential field (or under the divergence-free condition), the vector field $\mathbf{u}$ itself has H\"older continuous derivatives. If no boundary value is imposed this regularity is local, and under the Dirichlet condition $\mathbf{n}\times\mathbf{u} =0$ or the Neumann condition $\mathbf{n}\cdot\mathbf{u}=0$ on $\partial \Omega$ these results are valid up to the boundary. 


The paper is organized as follows. Section~\ref{Preliminaries} introduces algebraic notation and spaces of differential forms.  In Section~\ref{p_Laplacian_for_forms} we define solutions and recall classical Uhlenbeck estimates. Section~\ref{Regularity} contains the proofs of our main results. In Section~\ref{sec:2nd} we give interpretations of these results for the second-order system~\eqref{eq2}. Appendix (Section~\ref{sec:appendix}) contains several facts we use in the proofs.

	\section{Preliminaries}\label{Preliminaries}
    
    \subsection{Notations}
	We record the notations we shall use for the exterior algebra and differential forms. For further details we refer to \cite{CsatoDacKneuss} and \cite{silthesis}.	Let $n \geq 2$, $N \geq 1$ and $0 \leq k \leq n$ be integers.  The integers $n \geq 2$ --- the dimension of the underlying (or base) space, and $N \geq 1$ --- the dimension of the ``target'' space, remain fixed but arbitrary for the rest. Let $M$ be an $n$-dimensional Riemannian manifold (with boundary). Further without loss we assume it to be orientable since we are interested only in local properties of solutions.

We shall work with vector-valued forms. For any integer $0 \leq k \leq n$ --- we denote the bundle of $\mathbb{R}^N$-valued differential forms of degree $k$ by 
		\begin{align*}
			\varLambda^{k}:= \Lambda^{k}(TM,\mathbb{R}^{N}).
		\end{align*} 
		Clearly, $\operatorname{rank}\left(\varLambda^{k} \right)  ={\binom{{n}}{{k}}}\times N$. We use the standard notation (see for instance \cite{CsatoDacKneuss,silthesis})  for the exterior algebra and exterior bundle, for the reader's convenience it is summarized in Section~\ref{EAS}. An $\mathbb{R}^{N}$-valued differential $k$-form $u$ on $M$ is a section of $\Lambda^k$. We assume that its coefficients in any coordinate system are measurable functions. In the framework of this paper a vector-valued differential form can be understood as an $N$-tuple of scalar-valued differential forms. We use the notation $\Lambda^k TM$ to denote the standard bundle of (real-valued) differential forms.  


    We use the following notations for sets. 
	\begin{itemize}
	\item For any Lebesgue measurable subset $A \subset \mathbb{R}^{n}, $ we denote its $n$-dimensional Lebesgue measure  by $\left\lvert A \right\rvert$. 
	\item For any $z \in \mathbb{R}^{n}$ and any $r>0$, the open ball with center $z$ and radius $r$ is denoted by $B_{r}(z) := \left\lbrace x \in \mathbb{R}^{n}: \left\lvert x - z \right\rvert < r\right\rbrace$.  
    \item For $x_0\in \mathbb{R}^n$ and $r>0$ by $Q_r(x_0)$ we denote the open cube in $\mathbb{R}^n$ with side length $2r$ and center $x_0$, with edges parallel to coordinate axes.
    \item When we need not specify the center of a ball or cube we write just $B_r$ or $Q_r$.
    \item By $B^g_r(p)$ we denote the geodesic ball on $M$ of radius $r$ and center $p$.

	\end{itemize}

For the rest, $M$ will always be a $C^{1,1}$ Riemannian manifold with boundary $bM$, so that in particular its metric tensor $g_{ij}$ is Lipschitz. More regularity will be explicitly specified. 

By $dV$ we denote the standard volume form, in coordinates 
$$
dV =\sqrt{g} dx^1\ldots dx^n, \quad g=\mathrm{det}\{g_{ij}\}.
$$
By $d\sigma$ we shall denote the $(n-1)$-dimensional ``surface'' measure on the boundary $b M$.  By $\nu=(\nu_1,\ldots,\nu_n)$ we denote the ``outward unit normal'' 1-form (in the Euclidean case this is identified with the usual outward unit normal). In a boundary patch, if $bM$ is locally given by $x^n=0$ and the image of $M$ corresponds to $x^n>0$, then $\nu = -(g^{nn})^{-1/2} dx^n$. 

By $\mathrm{Lip}(M;\Lambda^k)$ we define the set of Lipschitz-continuous $k$-forms on $M$. The notation $\mathrm{Lip}_c(M;\ldots)$ will be used to denote Lipschitz (functions or forms or vector-valued forms) with support separated from $bM$ (that is, with compact support in the topology of $\mathring{M}$). By $\mathrm{Lip}_T(M;\Lambda^k)$ (resp. $\mathrm{Lip}_N(M;\Lambda^k)$) we denote the set of $k$-forms on $M$ with Lipschitz-continuous coefficients, with vanishing tangential (resp. normal) part on $bM$. That is,  $\omega\in \mathrm{Lip}_T(M;\Lambda^k)$ satisfies $\nu \wedge \omega=0$ on $bM$ and $\omega \in \mathrm{Lip}_N(M;\Lambda^k)$ satisfies $\nu \lrcorner \omega =0$ on $bM$.

\paragraph{Admissible coordinate systems}
(\textit{Admissible coordinate systems}) In an admissible coordinate system the metric tensor on $x^n=0$ (where the image of the boundary is located) satisfies $g_{nj}=0$, $j<n$, and $g_{nn}=1$, that is 
$$
ds^2 =\sum_{1<i,j<n}g_{ij}dx^i dx^j + (dx^n)^2 \quad \text{on}\quad x^n=0.
$$
By \cite[Chapter 7]{Morrey1966}, on any $C^{s,1}$ manifold ($s \ge 1$) with boundary there exist $C^{s,1}$ admissible coordinate systems. In an admissible coordinate system, on the boundary ($x^n=0$) we have
$$
\nu = -dx^n, \quad d\sigma = \sqrt{g}dx^1\ldots dx^{n-1}.
$$

(\textit{Gau\ss' formula})For a Lipschitz vector field $X^j$ the Gau\ss{} theorem holds
$$
\int\limits_M \nabla_j X^j \, dV = \int\limits_{bM} \nu(X)d\sigma,
$$
where $\nabla_j$ are the covariant (Levi-Civita) derivatives. 

(\textit{Sobolev spaces}) For constant $p\in [1,\infty)$ Sobolev spaces $W^{1,p}(M;\Lambda^k)$ are defined in the standard way, by requiring that in any coordinate chart $(U,\varphi)$ all the coordinates of a form belong to $W^{1,p}(\varphi(U))$, with the norm obtained by summing the corresponding norm over all charts of the atlas.

(\textit{Tangential and normal parts}) The forms $\nu \wedge \omega$ and $\nu \lrcorner \omega$ represent the tangential and normal part of $\omega$ on the boundary: $\omega$ splits on the boundary as
    $$
    \omega = t\omega+n\omega, \quad t\omega= \nu \lrcorner (\nu \wedge \omega), \quad n\omega= \nu \wedge (\nu \lrcorner \omega).
    $$
In an admissible local coordinate system, on $x^n=0$ the tangential part $t\omega$ corresponds to the terms without $dx^n$, and the normal part contains the terms with $dx^n$. The tangential part $t\omega=0$ iff $\nu\wedge \omega=0$, and the normal part $n\omega=0$ iff $\nu\lrcorner \omega=0$. 

(\textit{Sobolev spaces with vanishing tangential/normal part}) The spaces $W_{T}^{1,p}\left(M;\varLambda^{k}\right)  $ and $W_{N}^{1,p}\left(  M;\varLambda^{k}\right)  $ are defined as (see for instance \cite{CsatoDacKneuss})
	\begin{align*}
		W_{T}^{1,p}\left(  M;\varLambda^{k}\right)  &=\left\{  \omega\in
		W^{1,p}\left(  \Omega;\varLambda^{k}\right)  :\nu\wedge\omega=0\text{ on
		}\partial\Omega\right\}, \\
		W_{N}^{1,p}\left(  M;\varLambda^{k}\right)  &=\left\{  \omega\in
		W^{1,p}\left(  M;\varLambda^{k}\right)  :\nu\lrcorner\omega=0\text{ on
		}\partial\Omega\right\}.
	\end{align*}
The subspaces $W_{d^{\ast}, T}^{1,p}(M; \varLambda^{k})$ and $W_{d, N}^{1,p}(M; \varLambda^{k})$  are defined as  
	\begin{align*}
		W_{d^{\ast}, T}^{1,p}(M; \varLambda^{k}) &= \left\lbrace \omega \in W_{T}^{1,p}(M; \varLambda^{k}) : d^{\ast}\omega = 0 \text{ in }
		M \right\rbrace, \\
		W_{d, N}^{1,p}(M; \varLambda^{k}) &= \left\lbrace \omega \in W_{N}^{1,p}(M; \varLambda^{k}) : d\omega = 0 \text{ in }
		M\right\rbrace.		
	\end{align*}


(\textit{Weak exterior derivative}) We say that $\varphi\in L^{1}_{\mathrm{loc}}(\mathring{M};\varLambda^{k+1})$ is the weak exterior derivative of $u\in L^{1}_{\mathrm{loc}}\left(\mathring{M};\varLambda^{k}\right)$, denoted by $du$,  if
		$$
		\int\limits_{M} \eta\wedge\varphi=(-1)^{n-k}\int\limits_{M} d\eta\wedge u,
		$$
		for all $\eta\in \mathrm{Lip}_c(\mathring{M},\Lambda^{n-k-1}TM)$.  
        
        (\textit{Weak codifferential}) The Hodge codifferential of an $\mathbb{R}^N$-valued $k$-form is the $\mathbb{R}^N$-valued $(k-1)$-form defines as 
		$$
		d^{\ast}u:=(-1)^{n(k-1)+1} \ast d \ast \omega.
		$$ 
        We say that $u\in L^{1}_{\mathrm{loc}}\left(\mathring{M};\varLambda^{k}\right)$ has weak codifferential $d^* u = \varphi\in L^{1}_{\mathrm{loc}}(\mathring{M};\varLambda^{k-1})$ if and only if 
		$$
		\int\limits_{M} \eta\wedge\varphi=(-1)^{n-k+1}\int\limits_{M} d^*\eta\wedge u,
		$$
        for all $\eta\in \mathrm{Lip}_c(\mathring{M},\Lambda^{n-k+1}TM)$.
        
        (\textit{$d^*$ as adjoint to $d$}) For $u\in L^{1}_{\mathrm{loc}}\left(\mathring{M};\varLambda^{k}\right)$ there holds $du = \varphi$ in the weak sense if and only if 
        \begin{equation}\label{def:du}
        \int\limits_M \langle \varphi,\eta\rangle \, dV =  \int\limits_M \langle u,d^*\eta\rangle \, dV
        \end{equation}
        for all $\eta\in \mathrm{Lip}_c(M;\Lambda^{k+1}TM)$, and $d^* u =\varphi$ in the weak sense if and only if 
        \begin{equation}\label{def:delta u}
        \int\limits_M \langle \varphi,\eta\rangle \, dV = \int\limits_M \langle u,d\eta\rangle \, dV
        \end{equation}
        for all $\eta\in \mathrm{Lip}_c(M;\Lambda^{k-1}TM)$. See \cite{csatothesis, CsatoDacKneuss,silthesis} for the properties and the integration by parts formula regarding these operators. Note that the sign of $d^*$ is different from the one used in \cite{CsatoDacKneuss} for the codifferential and chosen so that $d$ and $d^*$ are formally adjoint.

 (\textit{Boundary data in the weak sense}) If $\varphi \in L^1(M;\Lambda^{k+1})$ is the weak exterior derivative of $u\in L^1(M;\Lambda^k)$ we say that $u$ has zero tangential part on $\partial \Omega$, or $\nu\wedge u=0$, if \eqref{def:du} holds for any $\eta \in \mathrm{Lip}(M; \Lambda^{k+1})$. If  $\varphi \in L^1(M;\Lambda^{k-1})$ is the (weak) codifferential of $u\in L^1(M;\Lambda^k)$ we say that $u$ has zero normal part on $bM$, or $\nu\lrcorner u=0$, if \eqref{def:delta u} holds for any $\eta \in \mathrm{Lip}(M; \Lambda^{k-1})$.



 (\textit{Morrey spaces}) We say that $\omega \in \mathrm{L}^{p,\lambda} (M;\Lambda^k)$ iff 
 $$ 
 \lVert \omega \rVert_{\mathrm{L}^{p,\lambda}\left(M;\varLambda^{k}\right)}^{p} := \sup_{\substack{ x_{0} \in M,\\ \rho >0 }} 
		\rho^{-\lambda} \int\limits_{B^g_{\rho}(x_{0})} \lvert \omega \rvert^{p}\, dV < \infty. 
$$
This is equivalent to saying that in any coordinate chart $(U, \varphi)$ each component of $\omega$ belongs to the Morrey space $\mathrm{L}^{p,\lambda} (\varphi(U))$. We say that  $\omega \in \mathrm{L}^{p,\lambda}_{\mathrm{loc}} (\mathring{M},\Lambda^k)$ if for any indicator function $\chi_K$ of a compact subset $K$ of $\mathring{M}$, $\omega\chi_K \in \mathrm{L}^{p,\lambda} (M;\Lambda^k)$. We recall the definitions and basic properties of Morrey and Campanato spaces on Euclidean domains in Section~\ref{ssec:MC}.

(\textit{Harmonic fields}) The space of tangential and normal harmonic $k$-fields are defined as 
	\begin{align*}
		\mathcal{H}_{T}\left(  M;\varLambda^{k}\right)  &=\left\{  \omega\in
		W_{T}^{1,2}\left(  M;\varLambda^{k}\right)  :d\omega=0\text{ and }%
		d^{\ast}\omega=0\text{ in }M\right\},\\
		\mathcal{H}_{N}\left(  M;\varLambda^{k}\right)  &=\left\{  \omega\in
		W_{N}^{1,2}\left(  M;\varLambda^{k}\right)  :d\omega=0\text{ and }%
		d^{\ast}\omega=0\text{ in }M\right\}.
	\end{align*}

    In this paper we work with variable exponent Lebesgue and Sobolev spaces. 
    
	\subsection{Variable exponent Lebesgue and Sobolev spaces}\label{variable exponent spaces}
	

	 We recall the following definitions (cf. \cite{Diening_et_al_variable_exponent}) concerning our exponent functions. Let  $\mathcal{P}(M)$ denote the set of all Lebesgue measurable functions $p:M \rightarrow [1, \infty)$. For any $p \in \mathcal{P}(M)$, we set 
	\begin{align*}
		p^{-}_{M}:= \operatorname*{ess\, inf}\limits_{y \in M}\  p\left(y\right) \qquad \text{ and } \qquad 
		p^{+}_{M}:= \operatorname*{ess\, sup}\limits_{y \in M}\ p\left(y\right) . 
	\end{align*}
	\begin{definition} \label{def:1}
		A function $p \in \mathcal{P}(M)$ is log-H\"{o}lder continuous in $M$ if there exists a constant $c_{1}>0$, further denoted by $c_{\mathrm{log}}(p)$, such that 
			\begin{align*}
				\left\lvert p(x) - p(y) \right\rvert \leq \frac{c_{1}}{\log \left( e + \frac{1}{\mathrm{dist}\,(x,y)}\right)} \qquad \text{ for all } x, y \in M. 
			\end{align*}
            	\end{definition}

	Now we set 
	\begin{align*}
		\mathcal{P}^{\log} (M) := \left\lbrace p \in \mathcal{P}(M): \frac{1}{p} \text{ is log-H\"{o}lder continuous in }M \right\rbrace.  
	\end{align*}
	If $p^{+}_{M} < \infty$, then $p \in \mathcal{P}^{\log} (M) $ if and only if $p \in \mathcal{P}(M)$ is log-H\"{o}lder continuous in $M$. We shall always assume 
	\begin{align}\label{definition log holder}
		p \in \mathcal{P}^{\log} (M) \text{ and } 1 < p^{-}_M \leq p^{+}_M < \infty. 
	\end{align}
Clearly, \eqref{definition log holder} holds if and only if in any coordinate chart $(U,\varphi)$ the function $p_\varphi:=p\circ \varphi^{-1}$ belongs to $\mathcal{P}^{\log}(\varphi(U))$ and satisfies $1<\inf_{\varphi(U)}p_\varphi \leq \sup_{\varphi(U)}p_\varphi<\infty$. As usual, we denote $p'(x) = \frac{p(x)}{p(x)-1}$.

By $L^{p(\cdot)}(M;\Lambda^k)$ we denote the space of all $\omega\in \Lambda^k$ with measurable components in any coordinate system and the finite Luxemburg norm
$$
\|u\|_{L^{p(\cdot)}(M;\Lambda^k)}= \inf\{\lambda>0\,:\, \rho(u\lambda^{-1})\leq 1\}, \quad \rho(u) = \int\limits_M |u|^{p(x)}\, dV.
$$
If $k=0$ and $N=1$, that is if we work with scalar functions, $\Lambda^k$ is dropped from the notation and we write just $L^{p(\cdot)}(M)$.

By $W^{1,p(\cdot)}(M;\Lambda^k)$ we denote the subspace of $u\in L^{p(\cdot)}(M;\Lambda^k)$ which have in any coordinate chart $(U,\varphi)$ weak (generalized in the sense of S.L.~Sobolev) derivatives $\partial_j u \in L^{p(\cdot)}(\varphi(U); \Lambda^k(\mathbb{R}^n,\mathbb{R}^N))$, endowed with the norm
$$
\|u\|_{W^{1,p(\cdot)}(M;\Lambda^k)} = \|u\|_{L^{p(\cdot)}(M;\Lambda^k)} + \|\nabla u \|_{L^{p(\cdot)}(M;T^*M\otimes\Lambda^k)}.
$$
By the log-H\"older condition on the exponent (see \cite{Zhikov1995} and \cite{Balci_Sil_Surnachev_arxiv2025}) and since the boundary is good enough, $W^{1,p(\cdot)}(M;\Lambda^k)$ coincides with the closure of $\mathrm{Lip}(M;\Lambda^k)$ in this space .

The local versions $L^{p(\cdot)}_{\mathrm{loc}}(\mathring{M};\Lambda^k)$ and $W^{1,p(\cdot)}_{\mathrm{loc}}(\mathring{M};\Lambda^k)$ are defined in the obvious way. 

In $W^{1,p(\cdot)}(M;\Lambda^k)$ we introduce the subspace $W_0^{1,p(\cdot)}(M;\Lambda^k)$  of elements with zero trace on the boundary $W_0^{1,p(\cdot)}(M;\Lambda^k)$ as the closure of $\mathrm{Lip}_c(M;\Lambda^k)$ in $W^{1,p(\cdot)}(M;\Lambda^k)$. Thanks to the log-H\"older condition on the exponent, working in local charts we can easily see that this definition coincides with the one obtained by requiring the vanishing trace on $bM$.


The spaces $W^{1,p(\cdot)}_T(M;\Lambda^k)$, $W^{1,p(\cdot)}_N(M;\Lambda^k)$, $W^{1,p(\cdot)}_{d^*,T}(M;\Lambda^k)$ and $W^{1,p(\cdot)}_{d,N}(M;\Lambda^k)$, are defined similar to the above, by replacing $p$ with $p(\cdot)$ in the definitions.

We introduce the partial Sobolev spaces 
\begin{gather*}
W^{d,p(\cdot)}(M;\Lambda^k)= \{u\in L^{p(\cdot)}(M;\Lambda^k)\,:\, du \in L^{p(\cdot)}(M;\Lambda^k)\},\\
W^{d^*,p(\cdot)}(M;\Lambda^k)= \{u\in L^{p(\cdot)}(M;\Lambda^k)\,:\, d^* u \in L^{p(\cdot)}(M;\Lambda^k)\},
\end{gather*}
and the partial Sobolev spaces with zero tangential/normal part on the boundary:
\begin{gather*}
 W^{d,p(\cdot)}_T(M;\Lambda^k)= \{u\in W^{d,p(\cdot)}(M;\Lambda^k)\,:\  \nu \wedge u=0\},\\
W^{d^*,p(\cdot)}_N(M;\Lambda^k)= \{u\in W^{d,p(\cdot)}(M;\Lambda^k)\,:\ \nu \lrcorner u=0\}.
\end{gather*}
By the log-H\"older property of the variable exponent $p(\cdot)$ and since $d$ is locally a differential operators with constant coefficients, $\mathrm{Lip}(M;\Lambda^k)$ is dense in $W^{d,p(\cdot)}(M;\Lambda^k)$ and $\mathrm{Lip}_c (M;\Lambda_k)$ is dense in $W^{d,p(\cdot)}_T(M;\Lambda^k)$. By the Hodge duality, the same density results are valid for the spaces $W^{d^*,p(\cdot)}(M;\Lambda^k)$ and $W^{d^*,p(\cdot)}_N(M;\Lambda^k)$. See \cite{Balci_Sil_Surnachev_arxiv2025} for details.


 Local versions of the partial Sobolev spaces defined above, namely $W^{d,p(\cdot)}_{\mathrm{loc}}(M;\Lambda^k)$ and $W^{d^*,p(\cdot)}_{\mathrm{loc}}(M;\Lambda^k)$ are defined in the obvious way.

\section{\texorpdfstring{$p$}{p}-Laplacian for differential forms} \label{p_Laplacian_for_forms}
%

In this paper we are concerned with properties of solutions to the quasilinear first order system of partial differential equations 
\begin{equation}\label{main system}
	d^{\ast}\bigl( a(x)  (\mu^2+\lvert \omega \rvert^2)^\frac{p(x)-2}{2}\omega\bigr) = d^{\ast}F, \quad d\omega=0 \quad\text{ in } M,
\end{equation}
where 
\begin{equation}\label{acond}
a\in L^\infty(M), \quad a:M \to [a^-_\Omega,a^+_\Omega], \quad 0<a^-_M\leq a^+_M<\infty,
\end{equation}
and the exponent $p$ satisfies \eqref{definition log holder}. We assume that $0\leq \mu \leq \mu^+$.  Let $k\in \{0,\ldots,n-1\}$.


\begin{definition}\label{def:2}
We say that $\omega \in L^{p(\cdot)}_{\mathrm{loc}}\left(\mathring{M}; \varLambda^{k+1}\right)$ is a \textbf{local weak solution} of \eqref{main system} in $M$ if 
	\begin{align}\label{weak formulation in WTdpx}
		\int\limits_{M} \bigl\langle a(x) (\mu^2+\lvert \omega \rvert^2)^\frac{p(x)-2}{2} \omega, d\phi \bigr\rangle \, dV &= \int\limits_{M}\langle F, d\phi \rangle\, dV \\
        \text{ and }\qquad  \int\limits_M \langle \omega, d^{\ast} \psi\rangle\, dV&=0  \label{weak2}
	\end{align}
    for every  $\phi \in \mathrm{Lip}_c(\mathring{M};\Lambda^k)$ and $\psi \in \mathrm{Lip}_c(\mathring{M};\Lambda^{k+2})$. We say that $\omega$ is a \textbf{weak solution} of \eqref{main system} in $\Omega$ if moreover $\omega\in L^{p(\cdot)}\left(M; \varLambda^{k+1}\right)$.
\end{definition}
 



Since ${\mathrm{Lip}}_c(M;\Lambda^k)$ is dense in $W^{d,p(\cdot)}_T(M; \varLambda^{k})$,  for weak solutions the integral identity \eqref{weak formulation in WTdpx} in Definition~\ref{def:2} is valid for all test forms $\phi \in W_{T}^{d,p(\cdot)}( \Omega; \varLambda^{k})$. For the same reason one can take $\psi \in W^{d^*, p'(\cdot)} _N (\Omega; \Lambda^{k+2})$ in \eqref{weak2}.

We shall also treat the two boundary-value problems associated with the system \eqref{main system}. 
\begin{definition}\label{omega:defD}
  We say that   $\omega \in L^{p(\cdot)}\bigl(M; \varLambda^{k+1}\bigr)$ is a weak solution of \eqref{main system} in $M$ with the Dirichlet boundary condition $\nu \wedge \omega =\nu \wedge \eta$ for a form $\eta \in W^{d,p(\cdot)}\left(M; \varLambda^{k+1}\right)$ if \eqref{weak formulation in WTdpx} holds for any $\phi \in \mathrm{Lip}_T(M;\Lambda^k)$, $d\omega=0$ and $t(\omega-\eta)=0$ in the weak sense. 
\end{definition}
\begin{definition}\label{omega:defN}
  We say that $\omega \in L^{p(\cdot)}\bigl(M; \varLambda^{k+1}\bigr)$ is a weak solution of \eqref{main system} in $M$ with the Neumann boundary condition $\nu\lrcorner \bigl( (\mu^2 +|\omega|^2)^\frac{p(x)-2}{2}\omega\bigr)=0$ if \eqref{weak formulation in WTdpx} holds for any $\phi \in \mathrm{Lip}(M;\Lambda^k)$ and $d\omega=0$ in the weak sense. 
\end{definition}

We have the following fundamental estimate due to Uhlenbeck \cite{uhlenbecknonlinearelliptic} (for $p>2$ and $g_{ij}=\delta_{ij}$) which was extended by Hamburger \cite{hamburgerregularity}(for any $1<p<\infty$ and an arbitrary metric $g_{ij}$ with Lipschitz coefficients). We state it in the form used in this paper. 



\begin{theorem}\label{Uhlenbeck estimate}
Let $\Omega \subset \mathbb{R}^{n}$ be open and let the metric tensor $g_{ij}$ have Lipschitz coefficients on $\Omega$. Let $a>0$, $\mu \geq 0$, and $1<p^{-} \leq p\leq p^{+}< \infty$. Let $\omega \in L^{p}_{\mathrm{loc}}\left( \Omega; \varLambda^{k+1}\right)$ be a local weak solution to 
\begin{equation}\label{eq:UH}
	d^{\ast}\bigl( a (\mu^2 + |\omega|^2)^\frac{p-2}{2}\omega\bigr) =0, \quad d\omega=0 \quad \text{in}\quad \Omega.
\end{equation}
Then $\omega$ is locally H\"{o}lder continuous in $\Omega$ and for any ball $B_{R} \subset \Omega$ we have the estimates 
\begin{align}\label{sup_est_hom}
	\sup\limits_{B_{R/2}} \left\lvert \omega \right\rvert \leq c_{1} \biggl(~ \fint\limits_{B_{R}} (\mu^2+\left\lvert \omega \right\rvert^{2})^\frac{p}{2}\, dV\biggr)^{\frac{1}{p}}
\end{align}
and 
\begin{align}\label{hamburger_osc_use}
     \fint\limits_{B_{\rho}} \big\lvert  \omega  - \left( \omega \right)_{B_{\rho}}\big\rvert^{p}\, dV  \leq c_2  \left(\frac{\rho}{R}\right)^{p\beta} \fint\limits_{B_{R}} (\mu^2+\left\lvert \omega \right\rvert^{2})^\frac{p}{2}\, dV. 
\end{align}
for any $0 < \rho < R/2$, for some constant $c_{1}, c_{2}>0$ and some $0 < \beta < 1$. The constants $c_1$, $c_2$, $\beta$ depend only on $n$, $k$, $p^{-}$, $p^{+}$, $N$.
\end{theorem}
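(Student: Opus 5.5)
Since $d\omega=0$ locally, on any ball $B_R\Subset\Omega$ we write $\omega=du$ with $u$ in the Coulomb gauge $d^*u=0$ (and a suitable boundary condition on $\partial B_R$, e.g.\ $\nu\wedge u=0$). We work in these potentials and follow Uhlenbeck--Hamburger. The reduction is as follows.

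With this gauge, $u$ solves the second-order elliptic system
\[
d^*\bigl(a(\mu^2+|du|^2)^{\frac{p-2}{2}}du\bigr)=0,\qquad d^*u=0 .
\]
Adding a penalty term $d\,d^*u$ makes it a genuinely elliptic (Legendre--Hadamard, actually strongly elliptic via Gaffney) system for $\nabla u$. By the Gaffney inequality on balls, $\|\nabla u\|_{L^p}\lesssim\|du\|_{L^p}+\|d^*u\|_{L^p}+\|u\|_{L^p}$, so $\nabla u$ is controlled by $\omega$.

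The key steps, in order, are these.

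\textbf{Step 1: second derivatives.} We apply difference quotients to the equation, testing in coordinates with $\phi=\eta^2\Delta_{-h}\Delta_h u$. This gives that $V(\omega)=(\mu^2+|\omega|^2)^{\frac{p-2}{4}}\omega$ belongs to $W^{1,2}_{\mathrm{loc}}$. The Lipschitz metric produces lower-order error terms, which are absorbed by Young's inequality. The gauge condition is what lets the $d$-part of the derivative control the full gradient, via Gaffney applied to $\Delta_h u$.

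\textbf{Step 2: subsolution.} Next we show that $w=(\mu^2+|\omega|^2)^{p/2}$, or a suitable power of $\mu^2+|\omega|^2$, is a weak subsolution of a linear uniformly elliptic (degenerate-weighted) equation, up to lower-order terms from the metric. This is Uhlenbeck's structural computation: the Bochner--Weitzenböck identity for forms replaces the scalar identity $\nabla\cdot\nabla$, and curvature terms are bounded by $|\nabla g|$ and $|\omega|^2$. Moser iteration (or De Giorgi) then yields \eqref{sup_est_hom}, with the constant depending only on $n,k,N,p^\pm$ after scaling $R=1$. The metric terms scale with $R$ and become harmless for small $R$, with the Lipschitz constant absorbed.

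\textbf{Step 3: oscillation decay.} Finally we run the De Giorgi-type alternative, as in Uhlenbeck and Hamburger. Either $|\omega|$ is small on a large portion of the ball, in which case the sup decays by a fixed factor; or $|\omega|\approx\sup|\omega|$ on most of the ball, where the system is uniformly elliptic with nearly frozen coefficients. In the second case we compare with the linearized system for forms, whose Campanato decay is classical, and obtain excess decay of $\fint|\omega-(\omega)_\rho|^2$. Iterating gives \eqref{hamburger_osc_use} with some $\beta\in(0,1)$. We then convert from $|\omega|^2$-type excess to $p$-excess using the sup bound. The case $\mu>0$ is handled uniformly by treating $\mu$ as part of the excess bound.

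The main obstacle is Step 3 for $1<p<2$ (Hamburger's case). Here the degenerate/singular alternative requires working with $V(\omega)$ rather than $\omega$, together with careful comparison estimates. Since this is precisely the content of \cite{uhlenbecknonlinearelliptic,hamburgerregularity}, we cite those works for the iteration and only check that the constants are uniform in $p\in[p^-,p^+]$ and $a$. Uniformity holds because every constant in the argument depends continuously on $p$ and, after dividing the equation by $a$, is independent of $a$.
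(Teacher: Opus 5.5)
Your proposal ends where the paper's proof does: both defer to \cite{uhlenbecknonlinearelliptic,hamburgerregularity} and assert $p$-uniformity of the constants. The paper's appendix makes this concrete by verifying Hamburger's structure conditions (H1--H3 and (1.29) there) for $\rho(Q)=(\mu^2+Q)^{\frac{p-2}{2}}$ with constants uniform in $p\in[p^-,p^+]$, and by converting his $L^2$-excess decay of $\mathcal{V}(\omega)=(\mu^2+|\omega|^2)^{\frac{p-2}{4}}\omega$ (the decaying quantity for every $p$, not just $p<2$) into \eqref{hamburger_osc_use} via \eqref{mu3}, \eqref{mu4}, H\"older and \eqref{sup_est_hom}; your Steps 1--3 only paraphrase the cited proofs. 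One slip: the gauge condition $\nu\wedge u=0$ on $\partial B_R$ is not attainable in general, because it implies $\nu\wedge du=0$ and would therefore force $\nu\wedge\omega=0$ on $\partial B_R$. The admissible choice is $\nu\lrcorner u=0$ as in \eqref{gauge fixing 1}, although the cited theorem is stated directly for the system in $\omega$, so no potential is needed at all.
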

The more customary way is to state the H\"older property in the form
\begin{align}\label{osc_est_hom}
	\sup\limits_{x, y \in B_{\rho}} \left\lvert \omega(x) - \omega(y) \right\rvert \leq c_{2}\left(\frac{\rho}{R}\right)^{\beta} \biggl(~ \fint\limits_{B_{R}} (\mu^2+\left\lvert \omega \right\rvert^{2})^\frac{p}{2}\, dV\biggr)^{\frac{1}{p}},
\end{align}
but the weaker form \eqref{hamburger_osc_use} (equivalent to \eqref{osc_est_hom} by the Campanato characterization) is sufficient. See further comments in Appendix (Section~\ref{append_uhl}).

\begin{remark}
In the theorems of \cite{uhlenbecknonlinearelliptic} and \cite{hamburgerregularity} the exponent $p$ is constant and the constants depend on $n$, $N$, $k$ and $p$, but going through the proofs one can see that the dependence on $p$ is quantitatively controllable and one can take a uniform constant for all $1<p^{-} \leq p \leq p^{+} < \infty$, which depends only on $n$, $N$, $k$, $p^-$, $p^+$ and on the Riemannian manifold $M$.     
\end{remark}

We shall also discuss properties of solutions of the second-order quasilinear system 
\begin{equation}\label{main2}
d^{\ast}\bigl( a(x)  (\mu^2+\lvert du \rvert^2)^\frac{p(x)-2}{2}du\bigr) = d^{\ast}F\quad\text{in}\quad M,
\end{equation}
which is closely related to \eqref{main system}.

\begin{definition}
We say that $u\in W^{d,p(\cdot)}_{\mathrm{loc}}(\mathring{M}; \Lambda^k)$ is a local (weak) solution to the system \eqref{main2} if $\omega=du$ is a local weak solution to \eqref{main system}.
\end{definition}

\begin{definition}
We say that $u\in W^{d,p(\cdot)}(M; \Lambda^k)$ is a (weak) solution to the system \eqref{main2} with the Dirichlet boundary condition $\nu\wedge u=\nu \wedge u_0$ for $u_0\in W^{d,p(\cdot)}(M; \Lambda^k)$  if $\omega=du$ is a weak solution to \eqref{main system} and $\nu\wedge (u-u_0)=0$ in the weak sense.
\end{definition}

\begin{definition}
We say that $u\in W^{d,p(\cdot)}(M; \Lambda^k)$ is a (weak) solution to the system \eqref{main2} with the Neumann boundary condition $\nu \lrcorner \bigl( (\mu^2 +|du|^2)^\frac{p(x)-2}{2}du\bigr)=0$ if $\omega=du$ is a weak solution to \eqref{main system} with the Neumann boundary condition in the sense of Definition~\ref{omega:defN}.
\end{definition}

We shall also need the result of C.~Hamburger~\cite{hamburgerregularity} for the case of the Dirichlet boundary condition. In the following statement, $B_R^+=B_R \cap \{x^n>0\}$ where $B_R$ is a ball of radius $R$ centered on the hyperplane $\{x^n=0\}$.  
\begin{theorem}\label{Hamburger estimate}
Let $\Omega \subset \mathbb{R}^{n}$ be open, $\Omega\subset \{x^n>0\}$, $\Gamma=\partial \Omega \cap \{x^n=0\}$ be a non-empty open subset of $\mathbb{R}^{n-1}$, and let the metric tensor $g_{ij}$ have Lipschitz coefficients in $\Omega\cup \Gamma$. Let $a>0$, $\mu \geq 0$, and $1<p^{-} \leq p\leq p^{+}< \infty$. Let $\omega \in L^{p}_{\mathrm{loc}}\left( \Omega\cup \Gamma; \varLambda^{k+1}\right)$ be a local weak solution to \eqref{eq:UH} satisfying $t\omega=0$ on $\Gamma$. Then $\omega$ is locally H\"{o}lder continuous in $\Omega\cup \Gamma$ and for any ball $B_{R}^+ \subset \Omega$ we have the estimates 
\begin{align}\label{sup_est_homD}
	\sup\limits_{B_{R/2}^+} \left\lvert \omega \right\rvert \leq c_{1} \biggl(~ \fint\limits_{B_{R}^+} (\mu^2+\left\lvert \omega \right\rvert^{2})^\frac{p}{2}\, dV\biggr)^{\frac{1}{p}}
\end{align}
and 
\begin{align}\label{hamburger_osc_useD}
     \fint\limits_{B_{\rho}^+} \big\lvert  \omega  - \left( \omega \right)_{B_{\rho}^+}\big\rvert^{p}\, dV  \leq c_2  \left(\frac{\rho}{R}\right)^{p\beta} \fint\limits_{B_{R}^+} (\mu^2+\left\lvert \omega \right\rvert^{2})^\frac{p}{2}\, dV. 
\end{align}
for any $0 < \rho < R/2$, for some constant $c_{1}, c_{2}>0$ and some $0 < \beta < 1$. The constants $c_1$, $c_2$, $\beta$ depend only on $n$, $k$, $p^{-}$, $p^{+}$, $N$.
\end{theorem}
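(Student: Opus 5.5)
This statement is the boundary analogue of Theorem~\ref{Uhlenbeck estimate}, due to Hamburger, and in the paper it is quoted rather than reproved. I would therefore reduce it to Hamburger's boundary regularity theorem and supply only the extra bookkeeping needed for the precise form stated here. Two points need that bookkeeping: the uniformity of constants in $p\in[p^-,p^+]$, and the integral form \eqref{hamburger_osc_useD} of the oscillation estimate on half-balls.

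The argument runs in three steps.

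\textbf{Step 1: locally exact potential with a gauge.} Since $d\omega=0$ and $t\omega=0$ on $\Gamma$, on a half-ball I write $\omega=du$ with $t u=0$ on $\Gamma$ and $d^*u=0$. This uses a Poincaré lemma with tangential boundary condition, obtained for instance by odd/even reflection of the coefficients of $\omega$ across $x^n=0$ in an admissible chart. I then apply Hamburger's boundary result to $u$, a weak solution of the second-order system $d^*\bigl(a(\mu^2+|du|^2)^{\frac{p-2}{2}}du\bigr)=0$ with $tu=0$.

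\textbf{Step 2: sup bound and Hölder bound.} Hamburger's theorem gives H\"older continuity of $du=\omega$ up to $\Gamma$, together with two estimates:
\[
\sup_{B_{R/2}^+}|\omega|\le c\Bigl(\fint_{B_R^+}(\mu^2+|\omega|^2)^{p/2}\,dV\Bigr)^{1/p}
\]
and
\[
\operatorname{osc}_{B_\rho^+}\omega\le c(\rho/R)^\beta\Bigl(\fint_{B_R^+}(\mu^2+|\omega|^2)^{p/2}\,dV\Bigr)^{1/p}.
\]
The first is exactly \eqref{sup_est_homD}. Raising the second to the power $p$ and using
\[
\fint_{B_\rho^+}|\omega-(\omega)_{B_\rho^+}|^p\,dV\le(\operatorname{osc}_{B_\rho^+}\omega)^p
\]
gives \eqref{hamburger_osc_useD}.

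\textbf{Step 3: uniformity in $p$.} Hamburger's proof uses a Moser/De Giorgi iteration for the function $(\mu^2+|\omega|^2)^{p/2}$, which is a subsolution of a linear elliptic equation in divergence form. The ellipticity ratio of that equation is bounded by constants depending on $p$ only through $\max(p-1,1/(p-1))$ and $p$. Hence all constants can be taken uniform for $p\in[p^-,p^+]$, as noted in the Remark after Theorem~\ref{Uhlenbeck estimate}. The dependence on the metric enters only through its ellipticity and Lipschitz bounds.

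The main obstacle is the boundary regularity itself: the case $1<p<2$ and the treatment of the normal derivative of the form near $\Gamma$. Here the boundary condition $t\omega=0$ controls only the tangential components, and the normal components have to be recovered from the equations $d\omega=0$ and $d^*(\cdots)=0$. This is exactly the content of Hamburger's work, so I would invoke \cite{hamburgerregularity} directly rather than reproduce it. I would only verify that the cited estimates are stated with constants controlled as above.
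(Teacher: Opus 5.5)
Your plan matches the paper's treatment: the theorem is quoted from Hamburger, and uniformity in $p$ is justified as in the Remark after Theorem~\ref{Uhlenbeck estimate}. The gap is in Step 2, which assumes the one conversion that actually has to be carried out.

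\textbf{The missing step.} As recalled in Section~\ref{append_uhl}, Hamburger's quantitative H\"older statement is not an oscillation bound for $\omega$. Besides the sup bound for $H(\omega)=(\mu^2+|\omega|^2)^{p/2}$, it gives a Campanato decay for the nonlinear quantity $\mathcal{V}(\omega)$:
\[
\fint_{B_\rho}\bigl|\mathcal{V}(\omega)-(\mathcal{V}(\omega))_{\rho}\bigr|^{2}\,dV\le c\Bigl(\frac{\rho}{R}\Bigr)^{2\alpha}\fint_{B_R}\bigl|\mathcal{V}(\omega)-(\mathcal{V}(\omega))_{R}\bigr|^{2}\,dV,\qquad \mathcal{V}(\omega)=(\mu^2+|\omega|^2)^{\frac{p-2}{4}}\omega .
\]
Passing from $\mathcal{V}(\omega)$ back to $\omega$ is what produces the exponent $\beta$ and the right-hand side of \eqref{hamburger_osc_useD}.

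For $p\ge 2$ you use \eqref{mu3}, $|\xi-\eta|^p\le 2^{p^+}|\mathcal{V}(\xi)-\mathcal{V}(\eta)|^2$, which loses a factor $2/p$ in the exponent. For $1<p<2$ you use \eqref{mu4}. There the weight $(\mu^p+|\xi|^p+|\eta|^p)^{(2-p)/2}$ must be absorbed by H\"older's inequality together with the sup bound \eqref{sup_est_homD}, which controls $\fint_{B^+_\rho}$ by $\fint_{B^+_R}$ for $\rho<R/2$. This yields \eqref{hamburger_osc_useD} with $\beta=\alpha\min\{1,2/p\}$. The computation is written out in Section~\ref{append_uhl} for balls and goes through unchanged on half-balls.

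Your inequality $\fint_{B_\rho^+}|\omega-(\omega)_{B_\rho^+}|^p\,dV\le(\operatorname{osc}_{B_\rho^+}\omega)^p$ is correct, but it does not avoid this step. To use it, you would first have to derive $\operatorname{osc}_{B^+_\rho}\omega\le c(\rho/R)^{\beta}\bigl(\fint_{B^+_R}H(\omega)\,dV\bigr)^{1/p}$ from the $\mathcal{V}$-decay, via Campanato's characterization and the same two inequalities.

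\textbf{Two smaller remarks.} Step 1 is unnecessary. Hamburger's hypotheses are imposed on the closed form $\omega$ with $t\omega=0$ on $\Gamma$, and all the estimates concern $\omega$. The paper therefore applies his result to $\omega$ directly, with no gauge-fixed potential.

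For uniformity in $p$, the paper does not re-inspect an iteration. Instead it checks Hamburger's structural hypotheses H1, H2($\rho$), H3($\rho$) and (1.29) for $\rho(Q)=(\mu^2+Q)^{(p-2)/2}$, with explicit constants controlled by $p^-$ and $p^+$. That is a cleaner way to justify your Step 3.
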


We state and prove a simple existence result following \cite{silthesis,Sil_semicontinuity} (also \cite{Balci_Surnachev_Lavrentiev,BandDacSil} for the case $N=1$). Let $0\leq k \leq n-1$, $F\in L^{p'(\cdot)}(M;\Lambda^{k+1})$ and introduce the integral functional 
$$
I[u]:=\int\limits_M \left[ \frac{a( x)}{p(x)} (\mu^2+\left\lvert du \right\rvert^2)^{p(x)/2}- \left\langle F, du \right\rangle\right]\, dV
$$
acting on $k$-forms with $du \in L^{p(\cdot)}(M;\Lambda^k)$.

\begin{proposition}\label{existence of minimizers}
	Let $M$ be a compact Riemannian $C^{1,1}$ manifold with boundary. Let the function $a=a(x)$ satisfy \eqref{acond} and the exponent $p=p(x)$ satisfy \eqref{definition log holder}, $F\in L^{p'(\cdot)}\left(M; \varLambda^{k+1}\right)$ and $u_{0} \in W^{1,p(\cdot)}\left(M; \varLambda^{k}\right)$. Then the minimization problem 
\begin{align}\label{minimization}
	m:= \inf \left\lbrace I[u]\, :\ u \in u_{0} + W^{1,p(\cdot)}_{d^{\ast}, T}\left( M; \varLambda^{k}\right)\right\rbrace 
\end{align}
admits a minimizer $\bar{u} \in W^{1,p(\cdot)}\left( M; \varLambda^{k}\right)$, which is a weak solution to the system 
$$
d^{\ast}\left( a(x)(\mu^2+ |d\bar{u}|^2)^\frac{p(x)-2}{2}d\bar{u}\right) = d^{\ast}F \quad \text{and}\quad d^{\ast}\bar{u} = d^{\ast}u_{0}  \quad\text{in}\quad M, 
$$
satisfying the boundary condition $\nu\wedge \bar{u} =\nu\wedge u_{0}$ on $bM$.
This solution (a minimizer of the original variational problem) is unique modulo a Dirichlet harmonic field. The Euler-Lagrange equation 
\begin{equation}\label{eq:EulerLagrange}
\int\limits_M \bigl\langle  a(x)(\mu^2+ |d\bar{u}|^2)^\frac{p(x)-2}{2}d\bar{u}-F, d\xi\bigr \rangle =0
\end{equation}
holds for any $\xi \in W^{1,p(\cdot)}_{T}(M;\Lambda^k)$.
\end{proposition}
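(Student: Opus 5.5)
We use the direct method of the calculus of variations on the affine class $u_0+W^{1,p(\cdot)}_{d^*,T}(M;\Lambda^k)$, and then derive the Euler--Lagrange equation by a gauge-fixing (Hodge decomposition) argument. First, $I$ is bounded below and coercive in terms of $du$. Young's inequality in the variable exponent setting gives $|\langle F,du\rangle|\le \varepsilon |du|^{p(x)} + C_\varepsilon |F|^{p'(x)}$. Since $a\ge a^-_M>0$, $p^+<\infty$ and $\mu\le\mu^+$, this yields
$$
I[u]\ \ge\ c\int\limits_M |du|^{p(x)}\,dV - C\Bigl(1+\int\limits_M |F|^{p'(x)}\,dV\Bigr).
$$
Hence $m>-\infty$ (note $m<\infty$ by testing $u_0$), and along a minimizing sequence $u_j=u_0+v_j$ the modulars $\int |dv_j|^{p(x)}$ are bounded. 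Therefore $\|dv_j\|_{L^{p(\cdot)}}$ is bounded.

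The key step, and the main obstacle, is to upgrade the bound on $dv_j$ to a bound on $\|v_j\|_{W^{1,p(\cdot)}}$. For this we need a Gaffney-type inequality for $v\in W^{1,p(\cdot)}_{d^*,T}$ that has no harmonic-field component:
$$
\|v\|_{W^{1,p(\cdot)}}\le C\|dv\|_{L^{p(\cdot)}}.
$$
We first project each $v_j$ off the finite-dimensional space $\mathcal{H}_T(M;\Lambda^k)$. This is allowed because adding a Dirichlet harmonic field $h$ changes neither $du$ nor $d^*u$ nor the tangential trace, so the projection preserves the class and $I$. The inequality itself comes from the $L^{p(\cdot)}$ Hodge--Morrey/Gaffney estimate for the elliptic system $(d,d^*)$ with tangential boundary condition. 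With constant $p$ this is the classical estimate of \cite{CsatoDacKneuss,IwaScoStr99}. For log-H\"older $p(\cdot)$ on a compact $C^{1,1}$ manifold it follows from the boundedness of the corresponding Calder\'on--Zygmund-type solution operators on $L^{p(\cdot)}$, as treated in \cite{Balci_Sil_Surnachev_arxiv2025}. A compactness (contradiction) argument then removes the lower-order term once harmonic fields are excluded. Granting this, $v_j$ is bounded in the reflexive space $W^{1,p(\cdot)}$ (reflexive because $1<p^-\le p^+<\infty$). We extract $v_j\rightharpoonup v$ weakly. The constraints $d^*v=0$ and $\nu\wedge v=0$ are linear and closed, so they pass to the limit, and $dv_j\rightharpoonup dv$ in $L^{p(\cdot)}$.

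Next, lower semicontinuity. The integrand $\xi\mapsto \frac{a(x)}{p(x)}(\mu^2+|\xi|^2)^{p(x)/2}-\langle F,\xi\rangle$ is convex in $\xi$ and Carath\'eodory. Hence $I$ is convex and strongly continuous on $L^{p(\cdot)}$ in the variable $du$ (continuity follows from the growth bounds and dominated convergence of modulars), and therefore weakly lower semicontinuous. Consequently $\bar u=u_0+v$ is a minimizer.

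Finally, the Euler--Lagrange equation. Variations $\bar u+t\eta$ with $\eta\in W^{1,p(\cdot)}_{d^*,T}$ give \eqref{eq:EulerLagrange} for such $\eta$; the derivative in $t$ is justified by dominated convergence using the bound $|(\mu^2+|\xi|^2)^{(p-2)/2}\xi|\le C(1+|\xi|)^{p-1}$. Now take a general $\xi\in W^{1,p(\cdot)}_T$. We decompose $\xi=d\alpha+\eta+h$ with $\eta\in W^{1,p(\cdot)}_{d^*,T}$, $h\in\mathcal H_T$ and $\alpha\in W^{2,p(\cdot)}_T(M;\Lambda^{k-1})$, via the same variable-exponent Hodge theory. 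Then $d\xi=d\eta$, so the identity extends to all $\xi\in W^{1,p(\cdot)}_T$. The relations $d^*\bar u=d^*u_0$ and $\nu\wedge\bar u=\nu\wedge u_0$ hold by construction.

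For uniqueness, suppose two minimizers differ by $w\in W^{1,p(\cdot)}_{d^*,T}$. Strict convexity of $\xi\mapsto(\mu^2+|\xi|^2)^{p/2}$ gives $dw=0$; the case $\mu=0$ also works because $|\xi|^{p}$ is strictly convex for $p>1$. Together with $d^*w=0$ and $\nu\wedge w=0$, this means $w\in\mathcal H_T(M;\Lambda^k)$.
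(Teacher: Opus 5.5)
Your proof is correct and follows the paper's argument essentially step by step. The shared steps are coercivity via Young's inequality, projection off $\mathcal{H}_T(M;\Lambda^k)$, and the variable-exponent Gaffney inequality of Theorem~\ref{divcurl system} to bound $v_j$ in $W^{1,p(\cdot)}$, followed by weak compactness, weak lower semicontinuity (the paper cites modular lower semicontinuity from Diening et al.\ where you use convexity plus strong continuity), and extension of the Euler--Lagrange equation to all of $W^{1,p(\cdot)}_T$ by swapping $\xi$ for a co-closed tangential form with the same exterior derivative. The only differences are cosmetic. The paper gets that form directly by solving $d\widetilde\xi=d\xi$, $d^*\widetilde\xi=0$, $\nu\wedge\widetilde\xi=0$ with Theorem~\ref{divcurl system}(i), which is lighter than your Hodge decomposition with a $W^{2,p(\cdot)}$ potential $\alpha$. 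It also obtains uniqueness from monotonicity in the Euler--Lagrange equation rather than from strict convexity, which is equivalent.
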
 
\begin{proof}
The proof is by the direct method in the calculus of variations. Let $u_0+v_j$, $v_j \in  W^{1,p(\cdot)}_{d^{\ast}, T}\left( M; \varLambda^{k}\right)$ be a minimizing sequence for the problem \eqref{minimization}, that is $I[u_0+v_j] \to m$ as $j\to \infty$. Clearly, $m\leq I[u_0]$, and so we can assume without loss that $I[u_0+v_j]\leq I[u_0]$. Then using the Young inequality we easily establish that 
$$
\int\limits_{M} |d v_j|^{p(x)}\, dV \leq C
$$
with $C$ independent of $j$, and thus the sequence $dv_j$ is bounded in $L^{p(\cdot)}(M;\Lambda^{k+1})$. Moreover, by  subtracting the harmonic part (but keeping the same notation $v_j$) we can assume that $(v_j,h_T)=0$ for any $h_T \in \mathcal{H}_T(M)$.

By the Gaffney inequality for variable exponent spaces obtained in \cite{Balci_Sil_Surnachev_arxiv2025} (see Theorem~\ref{divcurl system} in Appendix),
$$
\|v_j\|_{W^{1,p(\cdot)}(M;\Lambda^k)} \leq C \|dv_j\|_{L^{p(\cdot)}(M;\Lambda^k)} \leq C.
$$
Extract from $\{v_j\}$ a weakly convergent subsequence in $W^{1,p(\cdot)}(M;\Lambda^k)$ keeping the same notation $\{v_j\}$. Denote the limit by $v$. Clearly, $d^* v=0$ and $\nu\wedge v=0$, so $v\in W^{1,p(\cdot)}_{d^*,T}(M;\Lambda^k)$. For $x\in \Omega$ and $t\geq 0$ denote
$$
\varphi(x,t) = \frac{a(x)}{p(x)} \left[(\mu^2+t^2)^\frac{p(x)}{2} - \mu^{p(x)}\right].
$$ 
The function $\varphi$ is a generalized uniformly convex $N$-function (see Lemma~\ref{uniform_convexity1} in Appendix). 


On $L^{p(\cdot)}(M;\Lambda^{k+1})$ define the modular
$$
\rho(f) = \int\limits_M \varphi(x,|f(x)|)\,dV
$$
induced by $\varphi$. By \cite[Theorem 2.2.8]{Diening_et_al_variable_exponent} the modular $\rho$ is weakly (sequentially) lower semicontinuous. In this notation,
$$
I[u] = \rho(du) +c_* +  \int\limits_M \langle F,du \rangle\, dV, \quad c_* = \int\limits_M \frac{a(x)}{p(x)}\mu^{p(x)}\, dx,
$$
where $c_*$ is constant. Since $I[u_0+v_j]\to m$, 
$$
I[u_0+v] \leq \liminf_{j\to \infty} I[u_0+v_j] \leq m,
$$ 
and thus $\bar{u} = u_0+v$ is the required minimizer.

By \cite[Theorem 2.4.11]{Diening_et_al_variable_exponent}, the modular $\rho$ is uniformly convex and so by \cite[Lemma 2.4.17]{Diening_et_al_variable_exponent}, we get $\rho((v_j-v))\to 0$. This means that the sequence $v_j$ in fact converges strongly in $W^{1,p(\cdot)}(M;\Lambda^k)$.

 Let us show that indeed $\bar{u}=u_0+v$ is a weak solution of the required system. From the Euler-Lagrange equation we obtain the Euler-Lagrange equation \eqref{eq:EulerLagrange} for any $\xi\in W^{1,p(\cdot)}_{d^*,T}(M;\Lambda^k)$. By Theorem~\ref{divcurl system} in Appendix, for any $\xi \in W^{1,p(\cdot)}_T(M;\Lambda^k)$ there exists $\widetilde{\xi} \in W^{1,p(\cdot)}_{d^*,T}(M;\Lambda^k)$ such that $d\widetilde{\xi}=d\xi$. Therefore, \eqref{eq:EulerLagrange} holds for any $\xi \in W^{1,p(\cdot)}_T(M;\Lambda^k)$.

Now, if $\bar{u}$ and $\bar{v}$ are two solutions, then for the difference $\bar{u}-\bar{v}$ from the Euler-Lagrange equation~\eqref{eq:EulerLagrange} and monotonicity we obtain $d(\bar{u}-\bar{v})=0$. Since also $d^*(\bar{u}-\bar{v})=0$ and $\nu \wedge (\bar{u}-\bar{v})=0$, $\bar{u}-\bar{v}$ is a Dirichlet harmonic field.

\end{proof}

By the standard convexity based arguments, any $\bar{u}\in u_0 + W^{1,p(\cdot)}_{d^*,T}(M;\Lambda^k)$, which satisfies \eqref{eq:EulerLagrange} for any $\xi \in W^{1,p(\cdot)}_T(M;\Lambda^k)$, is a minimizer of the variational problem \eqref{minimization}.

 For the Neumann data a similar statement takes the following form
\begin{proposition}\label{existence of minimizersN}
	Let $M$ be a compact Riemannian $C^{1,1}$ manifold with boundary. Let the function $a=a(x)$ satisfy \eqref{acond} and the exponent $p=p(x)$ satisfy \eqref{definition log holder}, $F\in L^{p'(\cdot)}\left(M; \varLambda^{k+1}\right)$ and $u_{0} \in W^{1,p(\cdot)}\left(M; \varLambda^{k}\right)$. Then the minimization problem 
\begin{align}\label{minimizationN}
	m:= \inf \left\lbrace I[u]\, :\ u \in u_{0} + W^{1,p(\cdot)}_{d^{\ast}, N}\left( M; \varLambda^{k}\right)\right\rbrace 
\end{align}
admits a minimizer $\bar{u} \in W^{1,p(\cdot)}\left( M; \varLambda^{k}\right)$, which is a weak solution to the system 
$$
d^{\ast}\biggl( a(x)(\mu^2+ |d\bar{u}|^2)^\frac{p(x)-2}{2}d\bar{u}\biggr) = d^{\ast}F \quad \text{and}\quad d^{\ast}\bar{u} = d^{\ast}u_{0}  \quad\text{in}\quad M,
$$
satisfying the boundary conditions
$$
\nu\lrcorner \bar{u}=\nu\lrcorner u_0\quad\text{and}\quad \nu\lrcorner \bigl((\mu^2+ |d\bar{u}|^2)^\frac{p(x)-2}{2}d\bar{u} \bigr)=0 \quad\text{on}\quad bM.
$$
This solution (a minimizer to the original variational problem) is unique modulo a Dirichlet harmonic field. The Euler-Lagrange equation \eqref{eq:EulerLagrange} holds for any $\xi \in W^{1,p(\cdot)}(M;\Lambda^k)$. 
\end{proposition}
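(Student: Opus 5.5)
We will mirror the proof of Proposition~\ref{existence of minimizers}, replacing the tangential space by the normal one throughout.

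\textbf{Minimizing sequence and uniform bound.} Take a minimizing sequence $u_0+v_j$ with $v_j\in W^{1,p(\cdot)}_{d^*,N}(M;\Lambda^k)$ and $I[u_0+v_j]\le I[u_0]$. Since $a\ge a^-_M>0$, the Young inequality in variable exponent form bounds $\int_M |dv_j|^{p(x)}\,dV$ uniformly, so $dv_j$ is bounded in $L^{p(\cdot)}$. Subtracting the $L^2$-projection onto the finite dimensional space $\mathcal{H}_N(M;\Lambda^k)$ changes neither $dv_j$, $d^*v_j$ nor $\nu\lrcorner v_j$, so we may assume $v_j\perp\mathcal{H}_N$. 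The Gaffney inequality for variable exponents (Theorem~\ref{divcurl system}, normal version) then gives
$$
\|v_j\|_{W^{1,p(\cdot)}}\le C\bigl(\|dv_j\|_{L^{p(\cdot)}}+\|d^*v_j\|_{L^{p(\cdot)}}\bigr)=C\|dv_j\|_{L^{p(\cdot)}}\le C.
$$

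\textbf{Passage to the limit.} By reflexivity, a subsequence converges weakly in $W^{1,p(\cdot)}$ to some $v$. The conditions $d^*v=0$ and $\nu\lrcorner v=0$ are preserved under weak limits, since they are expressed by integral identities (the weak definition \eqref{def:delta u} with test forms in $\mathrm{Lip}(M)$) and weak convergence of traces. Weak lower semicontinuity of the modular built from the $N$-function $\varphi$ (Lemma~\ref{uniform_convexity1}), together with linearity of $u\mapsto\int\langle F,du\rangle$, shows that $\bar u=u_0+v$ is a minimizer.

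\textbf{Euler--Lagrange equation and boundary conditions.} The first variation gives \eqref{eq:EulerLagrange} for all $\xi\in W^{1,p(\cdot)}_{d^*,N}$. To extend it to arbitrary $\xi\in W^{1,p(\cdot)}(M;\Lambda^k)$, we need some $\tilde\xi\in W^{1,p(\cdot)}_{d^*,N}$ with $d\tilde\xi=d\xi$. To construct it, solve the normal div--curl problem $d\tilde\xi=d\xi$, $d^*\tilde\xi=0$, $\nu\lrcorner\tilde\xi=0$ using Theorem~\ref{divcurl system}. Its compatibility conditions are $d(d\xi)=0$, $\nu\lrcorner$ of the datum being free, and $d\xi\perp\mathcal{H}_N(M;\Lambda^{k+1})$. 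The last holds by integration by parts: for $h\in\mathcal{H}_N$ we have $\int\langle d\xi,h\rangle=\int\langle\xi,d^*h\rangle=0$, with the boundary term vanishing because $\nu\lrcorner h=0$.

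\textbf{Main obstacle.} Verifying this compatibility and the exact hypotheses of the appendix theorem in the normal setting is where we expect the real difficulty. Once it is settled, \eqref{eq:EulerLagrange} holds for all $\xi\in W^{1,p(\cdot)}(M;\Lambda^k)$. Taking $\xi$ compactly supported gives the interior system. Taking $\xi$ arbitrary Lipschitz gives the natural boundary condition $\nu\lrcorner\bigl((\mu^2+|d\bar u|^2)^{\frac{p(x)-2}{2}}d\bar u\bigr)=0$ in the weak sense of Definition~\ref{omega:defN}. Since $v$ satisfies the gauge and normal condition, $d^*\bar u=d^*u_0$ and $\nu\lrcorner\bar u=\nu\lrcorner u_0$.

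\textbf{Uniqueness.} For two minimizers, strict monotonicity of $\xi\mapsto a(\mu^2+|\xi|^2)^{(p-2)/2}\xi$ tested with their difference gives $d(\bar u-\bar v)=0$. Together with $d^*(\bar u-\bar v)=0$ and $\nu\lrcorner(\bar u-\bar v)=0$, this makes the difference a harmonic field in $\mathcal{H}_N(M;\Lambda^k)$, which is the Neumann analogue of the ``Dirichlet harmonic field'' in the statement.
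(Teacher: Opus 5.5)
Your proposal is correct and follows the paper's proof, which simply reruns the direct method from the Dirichlet case: $\mathcal{H}_N$-projection plus the variable-exponent Gaffney inequality, then extension of the Euler--Lagrange equation from $W^{1,p(\cdot)}_{d^*,N}$ to all of $W^{1,p(\cdot)}$ by solving $d\tilde\xi=d\xi$, $d^*\tilde\xi=0$, $\nu\lrcorner\tilde\xi=0$ via Theorem~\ref{divcurl system}(ii). The compatibility you flag as the main obstacle is already settled by your own integration by parts (exact forms are always orthogonal to $\mathcal{H}_N(M;\Lambda^{k+1})$), which the paper leaves implicit, and you are right that uniqueness holds modulo a field in $\mathcal{H}_N(M;\Lambda^k)$, so ``Dirichlet harmonic field'' in the statement should read ``Neumann harmonic field''.
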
 
\begin{proof}
The existence proof repeats the previous case. Now, in the Euler-Lagrange equation for this minimization problem we take the test forms from the class $W^{1,p(\cdot)}_{d^*,N}(M;\Lambda^k)$. But for any  $\xi \in W^{1,p(\cdot)}(M;\Lambda^k)$ by Theorem~\ref{divcurl system} there exists $\widetilde{\xi} \in W^{1,p(\cdot)}_{d^*,N}(M;\Lambda^k)$ such that $d\widetilde{\xi} = d\xi$. This completes the proof.
\end{proof}

By the standard convexity based argument, any $\bar{u}\in u_0 + W^{1,p(\cdot)}_{d^*,N}(M;\Lambda^k)$, which satisfies \eqref{eq:EulerLagrange} for any $\xi \in W^{1,p(\cdot)}(M;\Lambda^k)$, is a minimizer of the variational problem \eqref{minimizationN}.

\section{Regularity for \texorpdfstring{$p\left(x\right)$}{p(x)}-Laplacian for forms}\label{Regularity}

This is the main section of this paper. In this section we study solutions to the first-order system \eqref{main system}. First, we prove the higher integrability of solutions using the Gehring-type lemma by Giaquinta and Modica. Second, under vanishing log-H\"older condition we prove Morrey bounds for $\omega$ with the help of the iteration lemma by Giaquinta and Giusti. Using these bounds, for H\"older continuous variable exponents we obtain the H\"older continuity of solutions and H\"older property of $\omega$. For the reader's convenience, in Appendix we provide statements of the necessary technical results.   

    In the rest of this section, $n \geq 2$, $N \geq 1$ and $0 \leq k \leq n-1$  are integers, $\Omega$ is a bounded Lipschitz domain in $\mathbb{R}^n$, the weight $a(\cdot)$ satisfies \eqref{acond} and the variable exponent $p=p(\cdot)$ satisfies the log-H\"older condition   \eqref{definition log holder}. The log-H\"older continuity of the exponent is sufficient to establish an analogue of the Meyers property (the higher integrability for the solution $\omega$ of \eqref{eq1}).

    Let $\Theta_p(\cdot)$ denote the modulus of continuity of the function $p$. To prove the Morrey bounds, we shall assume the stronger \textit{vanishing log-H\"older} condition 
	\begin{align}\label{log holder vanishing}
		\lim\limits_{R \rightarrow 0} \Theta_p(R)\log \left(\frac{1}{R}\right) = 0 . 
	\end{align}	
	
    For our main result on the H\"older property for solutions to \eqref{eq1}, we shall assume that $p(\cdot)$ is H\"{o}lder continuous with exponent $\alpha_1\in (0,1)$, i.e. 
	\begin{align}\label{holder exponent}
		\Theta_p(R) \leq C_{H} R^{\alpha_1}. 
	\end{align}

With a slight abuse of notation, for $x\in M$ introduce the functions 
\begin{gather*}
\mathcal{E}: [1,\infty)\times \Lambda^{k+1}(T_x M; \mathbb{R}^N)\to [0,\infty)\\
\mathcal{A}: [1,\infty)\times \Lambda^{k+1}(T_x M; \mathbb{R}^N)\to \Lambda^{k+1}(T_x M; \mathbb{R}^N)
\end{gather*}
as
\begin{equation}\label{Aedef}
\mathcal{E}(q,\eta)= \frac{1}{q} (\mu^2 + |\eta|^2)^{q/2}, \quad \mathcal{A}(q,\eta) = (\mu^2 +|\eta|^2)^\frac{q-2}{2}\eta.
\end{equation}
Formally, these functions depend also on $x\in M$, but we drop this dependance from notation, it is mentioned in $T_x M$. These functions could be also globally defined using the language of sections on bundles. For instance, $\mathcal{E}$ takes a function from $\mathcal{P}(M)$ and a section of $\Lambda^{k+1}$ as its argument and produces a nonnegative function on $M$, while $\mathcal{A}$ takes a function from $\mathcal{P}(M)$ and a section of $\Lambda^{k+1}$ as its argument and produces a section of $\Lambda^{k+1}$. In the context of this paper this would only encumber the notation.

Clearly, $\mathcal{A}(q,\eta) = D_\eta \mathcal{E}(q,\eta)$, and in this notation the system \eqref{main system} becomes
$$
d^* \mathcal{A}(p,\omega)=d^* F, \quad d\omega=0,
$$
the integral identity \eqref{weak formulation in WTdpx} in the definition of solution to \eqref{main system} looks as 
$$
\int\limits_\Omega a(x)\langle\mathcal{A}(p(x),\omega) - F, d\varphi \rangle\, \mathrm{d}V=0, 
$$
for all $\varphi\in \mathrm{Lip}_c(M;\Lambda^{k+1})$ (and so for all $\varphi \in W^{d,p(\cdot)}_T(M;\Lambda^{k+1})$ by closure).

The Uhlenbeck estimates \eqref{sup_est_hom} and \eqref{hamburger_osc_use} for the constant exponent homogeneous case $a=\mathrm{const}$, $p=\mathrm{const}$, $g_{ij}=\mathrm{const}$, $F=0$, take the form
\begin{equation}\label{sup_est_hom1}
	\sup\limits_{B_{R/2}} \left\lvert \omega \right\rvert \leq c_{1} \biggl(~ \fint\limits_{B_{R}} \mathcal{E}(p,\omega)\, dV\biggr)^{\frac{1}{p}},
\end{equation}
which for $\rho \leq R/2$ yields 
\begin{equation}\label{sup_est_hom2}
\int\limits_{B_\rho} \mathcal{E}(p,\omega)\, dV \leq c_1' \left(\frac{\rho}{R} \right)^n \int\limits_{B_R} \mathcal{E}(p,\omega)\, dV,
\end{equation}
and (also for $\rho \leq R/2$)
\begin{equation}\label{hamburger_osc_use1}
     \fint\limits_{B_{\rho}} \big\lvert  \omega  - \left( \omega \right)_{B_{\rho}}\big\rvert^{p}\, dV  \leq c_2  \left(\frac{\rho}{R}\right)^{p\beta} \fint\limits_{B_{R}} \mathcal{E}(p,\omega)\, dV. 
\end{equation}

We shall use the following simple estimate: by convexity of the function $\mathcal{E}(p,\eta)$ in $\eta$, for $p\in [p^{-}_\Omega, p^{+}_{\Omega}]$,
\begin{equation}\label{convex}
\mathcal{E}(p,\eta) \leq  \langle D_\eta \mathcal{E}(p,\eta), \eta \rangle + \mathcal{E}(p,0) \leq \langle \mathcal{A}(p,\eta),\eta \rangle + 1 + \mu^{p^+_\Omega}.
\end{equation}

Further all the proofs will be done in local coordinates.

\subsection{Higher integrability estimates}

We now begin with a crucial higher integrability result, which generalizes the result of Zhikov~\cite{Zhikov_higherintegrability}. 

\begin{theorem}[Higher integrability]\label{th:HIM}
 Let $ F \in L^{q p'(\cdot) }_{\mathrm{loc}} \left(\mathring{M}; \varLambda^{k+1}\right)$, $q>1$. Let $\omega \in L^{p(\cdot)} _{\mathrm{loc}}(\mathring{M}; \varLambda^{k+1})$ be a local weak solution to the system \eqref{main system}. Then there exists $\sigma >0$ such that  $\omega\in  L^{(1+\sigma)p(\cdot)}_{\mathrm{loc}}(\mathring{M};\Lambda^{k+1})$.
\end{theorem}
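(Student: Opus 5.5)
We follow Zhikov's scheme and work in a coordinate chart, on a small ball $B_{2R}\Subset\mathring{M}$. The goal is a reverse Hölder inequality with increasing support for $f:=|\omega|^{p(x)/\theta}$ with a fixed $\theta>1$ close to $1$, to which the Gehring-type lemma of Giaquinta--Modica (stated in the Appendix) is then applied.

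\textbf{Step 1: gauge fixing and choice of test form.} Since $d\omega=0$ weakly on a ball $B_{2R}$, we use the Poincaré lemma with Coulomb gauge (the div--curl result Theorem~\ref{divcurl system} from the Appendix, applied on the ball). This gives a $k$-form $u\in W^{1,p(\cdot)}(B_{2R};\Lambda^k)$ with $du=\omega$ and $d^*u=0$. Subtracting a suitable constant-coefficient (polynomial) form, which does not change $du$, we may also normalise $(u)_{B_{2R}}=0$. Gaffney's inequality then controls $\nabla u$. On $B_R$ we work with a fixed-exponent version of this gauge, for which the Gaffney estimate
$$
\|\nabla u\|_{L^{s}(B_{2R})}\le C\|\omega\|_{L^{s}(B_{2R})}
$$
holds for constant $s$ with a constant uniform in $s\in[p^-_\Omega,p^+_\Omega]$. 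We then test \eqref{weak formulation in WTdpx} with $\phi=\eta^{p^+_{B_{2R}}}u$, where $\eta$ is a cutoff between $B_R$ and $B_{2R}$.

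\textbf{Step 2: Caccioppoli inequality.} Using $d\phi=\eta^{p^+}\omega+p^+\eta^{p^+-1}d\eta\wedge u$, the convexity bound \eqref{convex}, and Young's inequality with the variable exponent, we aim for
$$
\fint_{B_R}|\omega|^{p(x)}dV\le C\fint_{B_{2R}}\Bigl|\frac{u}{R}\Bigr|^{p(x)}dV+C\fint_{B_{2R}}\bigl(|F|^{p'(x)}+1\bigr)dV .
$$

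\textbf{Step 3: Sobolev--Poincaré with sub-unit exponent, the main obstacle.} We must bound $\fint_{B_{2R}}|u/R|^{p(x)}$ by $\bigl(\fint_{B_{2R}}|\omega|^{p(x)/\theta}\bigr)^\theta$ plus lower-order terms. Write $p_1=p^-_{B_{2R}}$ and $p_2=p^+_{B_{2R}}$, and use $|u/R|^{p(x)}\le |u/R|^{p_2}+1$. Choose $\theta$ so that $p_2\le (p_1/\theta)^*$, which is possible for small $R$ by continuity of $p$. The Sobolev--Poincaré inequality for $u$ (mean zero) combined with Gaffney's inequality in the gauge $d^*u=0$ gives
$$
\Bigl(\fint|u/R|^{p_2}\Bigr)^{1/p_2}\le C\Bigl(\fint|\omega|^{p_1/\theta}\Bigr)^{\theta/p_1}.
$$
The prefactor $\bigl(\fint|\omega|^{p_1/\theta}\bigr)^{\theta(p_2-p_1)/p_1}$ left over from converting the exponent $p_2$ back to $p_1$ is then handled by the log-Hölder condition, Zhikov's key point. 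It is bounded by $\bigl(R^{-n}\int|\omega|^{p(x)}+1\bigr)^{c\,\Theta_p(4R)}\le C$, because $R^{-n\Theta_p(R)}$ is bounded and the total energy is finite; we restrict to $R\le R_0$ with $|B_{2R}|\int|\omega|^{p}\le 1$. This is where the variable exponent and the gauge interact, and where uniformity of the Gaffney constants in the exponent must be checked.

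\textbf{Step 4: Gehring.} Combining the previous steps gives
$$
\fint_{B_R}f^\theta\le C\Bigl(\fint_{B_{2R}}f\Bigr)^\theta+C\fint_{B_{2R}}g^\theta,\qquad g=\bigl(|F|^{p'(x)}+1\bigr)^{1/\theta},
$$
for all small balls, with $g\in L^{\theta q}_{\mathrm{loc}}$ by the hypothesis $F\in L^{qp'(\cdot)}_{\mathrm{loc}}$. The Giaquinta--Modica lemma then yields $f\in L^{\theta(1+\sigma)}_{\mathrm{loc}}$ for some $\sigma\in(0,q-1)$. This means $|\omega|^{p(x)(1+\sigma)}\in L^1_{\mathrm{loc}}$, as claimed.
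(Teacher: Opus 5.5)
Your proposal is correct and follows essentially the same route as the paper's proof (via Theorem~\ref{higher integrability}). Both arguments fix the gauge on small balls through the div--curl theory and test with $\eta^{p_2}u$. Both then use a Sobolev--Poincar\'e step at a sub-unit exponent $p_1/\theta$, absorb the leftover factor $\bigl(\fint|\omega|^{p_1/\theta}\bigr)^{\theta(p_2-p_1)/p_1}$ by the log-H\"older condition for $R$ small relative to the energy, and conclude with the Giaquinta--Modica lemma. The remaining differences are cosmetic: the paper fixes the gauge with respect to the Euclidean metric ($d_0^*v=0$, $\nu\lrcorner_0 v=0$ on $\partial B_R$, Theorem~\ref{T:dcb}), which gives the scale-invariant bound on $R^{-1}\|v\|$ directly instead of subtracting the mean; also, the uniformity of the Gaffney constant you invoke is needed for $s\in[p^-_\Omega/\theta,p^+_\Omega]$ rather than $[p^-_\Omega,p^+_\Omega]$.
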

Since this result is local in nature, it is sufficient to prove it in one coordinate chart. Therefore, we can assume that the $(k+1)$-form $\omega$ is defined in a bounded domain $\Omega\subset \mathbb{R}^n$, with boundary at least Lipschitz, but probably with a non-constant metric tensor $g_{ij}$, and $\omega$ is a local weak solution of \eqref{main system} in $\Omega$. We set 
	\begin{align}\label{energy bound}
		K_{0}[\Omega]:= \int\limits_{\Omega} \left\lvert \omega \right\rvert^{p\left(x\right)}\ \mathrm{d}V +1.
	\end{align}
Let $0\leq \mu \leq \mu^+$. Introduce the set of parameters
$$
data = \{ n, N, k,  p^{-}_\Omega, p^{+}_\Omega, c_{\mathrm{log}}(p), a^-_\Omega, a^+_\Omega\}.
$$

\begin{theorem}[Higher integrability]\label{higher integrability}
    Let $ F \in L^{q p'(\cdot) } \left(\Omega; \varLambda^{k+1}\right)$, $q>1$. Let $\omega \in L^{p\left(\cdot\right)} (\Omega; \varLambda^{k+1})$ be a weak solution to the system \eqref{main system} in $\Omega$.
     Then there exist constants $c = c (\textit{data},\mu^+)>0$ and $\sigma_{0} = \sigma_{0}\bigl(\textit{data}, K_{0}[\Omega], q-1 \bigr) \in (0,1)$ and a radius $R_{0} = R_{0}\bigl( n,K_0[\Omega], c_{\mathrm{log}}(p) \bigr) >0$ such that $\omega\in L^{(1+\sigma)p(\cdot)}_{\mathrm{loc}}(\Omega;\Lambda^{k+1})$ and for any cube $Q_{R}\subset \Omega$ with $0 < R \leq R_{0}$, any Lipschitz co-closed form $\xi$ and any $\sigma \leq \sigma_{0}$, we have 
	\begin{multline}\label{higher integrability estimate}
		\biggl(~\fint\limits_{Q_{R/2}}\left\lvert \omega\right\rvert^{p(x)(1 + \sigma)}\ \mathrm{d}V\biggr)^{\frac{1}{1+\sigma}}
		 \leq c \fint\limits_{Q_{R}}\left\lvert \omega \right\rvert^{p(x)}\ \mathrm{d}V \  + c \biggl( 1+  \fint\limits_{Q_{R}}\left\lvert F - \xi \right\rvert^{(1+\sigma)p'(x) }\ \mathrm{d}V \biggr)^{\frac{1}{1+\sigma}}. 
	\end{multline} 
\end{theorem}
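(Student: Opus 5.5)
The plan is to prove a reverse Hölder inequality with increment for $|\omega|^{p(x)}$ on small cubes, and then apply the Gehring-type lemma of Giaquinta--Modica. The gauge problem is handled by writing $\omega$ locally as $du$ with $u$ co-closed, so that Gaffney/Sobolev--Poincaré inequalities for $u$ become available.

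\textbf{Step 1 (potential in Coulomb gauge).} Fix a cube $Q_R\subset\Omega$ with $R\le R_0$. Since $d\omega=0$, I would use the Hodge/div-curl result (Theorem~\ref{divcurl system} in the Appendix, in its constant-exponent form on a cube) to find $u\in W^{1,s}(Q_R;\Lambda^k)$ for every admissible constant $s$ such that
\[
du=\omega,\qquad d^*u=0,\qquad \|\nabla u\|_{L^s(Q_R)}\le C\|\omega\|_{L^s(Q_R)}.
\]
The form $u$ is fixed only up to a harmonic field. I subtract a suitable constant form, so that the Sobolev--Poincaré inequality
\[
\Bigl(\fint_{Q_R}\Bigl|\frac{u-(u)_{Q_R}}{R}\Bigr|^{s}\Bigr)^{1/s}\le C\Bigl(\fint_{Q_R}|\nabla u|^{s_*}\Bigr)^{1/s_*},\qquad s_*=\max\Bigl(1,\frac{ns}{n+s}\Bigr),
\]
holds. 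Combined with the Gaffney bound this gives
\[
\Bigl(\fint_{Q_R}\Bigl|\frac{u-(u)_{Q_R}}{R}\Bigr|^{s}\Bigr)^{1/s}\le C\Bigl(\fint_{Q_R}|\omega|^{s_*}\Bigr)^{1/s_*}.
\]

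\textbf{Step 2 (Caccioppoli inequality).} Take a cutoff $\eta$ supported in $Q_R$ with $\eta=1$ on $Q_{R/2}$ and $|\nabla\eta|\le C/R$. Test \eqref{weak formulation in WTdpx} with $\phi=\eta^{p^+}(u-(u)_{Q_R})$. Then
\[
d\phi=\eta^{p^+}\omega+p^+\eta^{p^+-1}\,d\eta\wedge(u-(u)_{Q_R}).
\]
Using \eqref{convex}, Young's inequality with variable exponent, and replacing $F$ by $F-\xi$ (allowed because $\int\langle\xi,d\phi\rangle=0$ for co-closed Lipschitz $\xi$), I expect
\[
\fint_{Q_{R/2}}|\omega|^{p(x)}\le C\fint_{Q_R}\Bigl|\frac{u-(u)_{Q_R}}{R}\Bigr|^{p(x)}+C\fint_{Q_R}\bigl(|F-\xi|^{p'(x)}+1\bigr).
\]

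\textbf{Step 3 (reverse Hölder inequality; main obstacle).} Set $p_1=\min_{Q_R}p$ and $p_2=\max_{Q_R}p$. Writing $|v|^{p(x)}\le|v|^{p_2}+1$, I need
\[
\fint_{Q_R}\Bigl|\frac{u-(u)_{Q_R}}{R}\Bigr|^{p_2}\le C\Bigl(\fint_{Q_R}|\omega|^{p_1\theta}\Bigr)^{p_2/(p_1\theta)}
\]
for some $\theta<1$. This follows from Step 1 with $s=p_2$, provided $p_2\le(p_1\theta)^*$, which holds for small $R$ because $p_2-p_1\le\Theta_p(R)\to0$. The extra factor produced is
\[
\Bigl(\fint_{Q_R}|\omega|^{p_1\theta}\Bigr)^{(p_2-p_1)/(p_1\theta)}\le\bigl(R^{-n}K_0\bigr)^{(p_2-p_1)/p_1}\le C,
\]
and it is bounded by the log-Hölder condition (Zhikov's trick). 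This is what fixes the choice $R_0=R_0(n,K_0[\Omega],c_{\log}(p))$. The result is
\[
\fint_{Q_{R/2}}|\omega|^{p(x)}\le C\Bigl(\fint_{Q_R}|\omega|^{p(x)\theta}\Bigr)^{1/\theta}+C\fint_{Q_R}\bigl(|F-\xi|^{p'(x)}+1\bigr).
\]

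\textbf{Step 4 (Gehring).} The inequality of Step 3 holds on all subcubes, with $g=|\omega|^{p(x)\theta}$ and $f=(|F-\xi|^{p'(x)}+1)^{\theta}\in L^{q/\theta}$. The Giaquinta--Modica lemma then gives $\sigma_0$, depending on data, $K_0[\Omega]$ and $q-1$, and yields \eqref{higher integrability estimate}.
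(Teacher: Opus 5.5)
Your proposal is correct and is essentially the paper's proof. Both arguments use the same four ingredients:
\begin{itemize}
\item a co-closed potential of $\omega$, controlled at a lowered constant exponent via Gaffney and Sobolev inequalities;
\item a Caccioppoli estimate from testing with a power of the cutoff times the potential (the paper uses $\eta^{p_2}v$; your $\eta^{p^+}$ works equally well);
\item Zhikov's log-H\"older trick with $R_0\le 1/K_0$, which bounds the factor $(\fint_{Q_R}|\omega|^{p_1\theta})^{(p_2-p_1)/(p_1\theta)}$;
\item the Giaquinta--Modica lemma with $g=|\omega|^{p(x)\theta}$, where the paper's choice is $\theta=1/s$, $s=(p^-_\Omega+1)/2$.
\end{itemize}
Two small adjustments are needed. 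First, Theorem~\ref{divcurl system} requires a $C^{1,1}$ domain, so build the potential on balls as the paper does via Theorem~\ref{T:dcb}, or use a homotopy operator on the cube. Second, fix $\theta$ uniformly with $p^-_\Omega\theta>1$, so that the $L^{p_1\theta}$ gradient bound is available.
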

\begin{proof}
    We shall abbreviate $K_0 = K_0[\Omega]$. We assume without loss that $p^{-}_{\Omega} \leq 2n/(2n-1)$ and set 
    $$
    s: = \frac{p^{-}_{\Omega}+1}{2}.
    $$ 
    Choose a number $R_{0} \in (0,1/K_{0})$ small enough such that 
	$$
    \Theta_p \left(2R_{0}\right) \leq \frac{s}{2n}.
    $$
    Let $x_0 \in \Omega$ be such that $Q_{2R}(x_0) \subset \Omega$.
	Note that for 
    $$
    p_{2}:= p^{+}_{B_{R_{0}}\left(x_{0}\right) }\quad \text{and}\quad p_{1}:= p^{-}_{B_{R_{0}}\left(x_{0}\right) }
    $$
    this implies 
	\begin{align*}
		\frac{p_{1}}{p_{2}} \geq 1 - \frac{s}{2n},
	\end{align*}
	We set 
    \begin{align*}
        \left( \frac{p_{1}}{s}\right)^{\ast} := \left\lbrace \begin{aligned}
            &\frac{np_{1}}{ns-p_{1}} &&\text{ if } p_{1} < ns, \\
            &p_{2}+1 &&\text{ if } p_{1} \geq ns.  
        \end{aligned}\right. 
    \end{align*} 
    Then $\left( \frac{p_{1}}{s}\right)^{\ast} \geq p_{2}.$ Indeed, in the case $p_{1} \leq ns,$ this follows from 
    \begin{align*}
         \left( \frac{p_{1}}{s}\right)^{\ast} \geq p_{2}\frac{n\left( 1- \frac{p^{-}_\Omega}{2n}\right)}{np^{-}_\Omega-p^{-}_\Omega} \geq p_{2},
    \end{align*}
    where we have used the bounds $p^{-}_\Omega \leq p_1$ and $s\leq p^{-}_{\Omega} \leq 2n/(2n-1)$. 
    
    From now on, all balls (and cubes) will be centered at $x_{0}$ and we omit writing the center of the balls. For any $0 < R \leq  R_{0}$, $\omega \in L^{p\left(\cdot\right)}\left( B_{R}; \varLambda^{k+1}\right)$ and so by the results of \cite{Balci_Sil_Surnachev_arxiv2025} (see Theorem~\ref{T:dcb} in Appendix), we can find $v \in W^{1, p\left(\cdot\right)}\left( B_{R}; \varLambda^{k}\right)$ such that 
	\begin{align}\label{gauge fixing 1}
		\left\lbrace \begin{aligned}
			dv &=\omega &&\text{ in } B_{R}, \\
			d_0^{\ast}v &= 0 &&\text{ in } B_{R}, \\
			\nu \lrcorner_0 v &= 0 &&\text{ on } \partial B_{R}
		\end{aligned}\right.
	\end{align}
and 
\begin{equation}\label{est0}
\begin{aligned}
\|\nabla v\|_{L^{p_1/s}(B_R;\Lambda^k)} + R^{-1} \|v\|_{L^{p_1/s}(B_R;\Lambda^k)} 
&\leq C \|\omega\|_{L^{p_1/s}(B_R;\Lambda^{k+1})},\\
\|\nabla v\|_{L^{p(\cdot)}(B_R;\Lambda^k)} + R^{-1} \|v\|_{L^{p(\cdot)}(B_R;\Lambda^k)} 
&\leq C \|\omega\|_{L^{p(\cdot)}(B_R;\Lambda^{k+1})}.
\end{aligned}
\end{equation}
Here $C$ depends only on $n$, $N$, $k$, $p^{-}_{\Omega}$, $p^{+}_\Omega$, $c_{\mathrm{log}}(p)$, and $d_0^*$ and $\lrcorner_0$ correspond to the standard Euclidean metric.
	
	
    By the Sobolev or Sobolev-Morrey inequality (for $p_1/s\leq n$ and $p_1/s > n$, respectively) and \eqref{est0} we have ($0<c_1\leq g\leq c_2<\infty$, thus the volume is comparable with the standard Euclidean volume)
    \begin{equation}\label{poincaresobolevdutov}
		\biggl(~ 	\fint\limits_{B_{R}} \left\lvert v \right\rvert^{p_{2}}\, dV\biggr)^{\frac{1}{p_{2}}} \leq c R \biggl(~ 	\fint\limits_{B_{R}} \left\lvert \nabla v\right\rvert^{\frac{p_{1}}{s}}\, dV\biggr)^{\frac{s}{p_{1}}} + c\biggl(~ 	\fint\limits_{B_{R}} \left\lvert v\right\rvert^{\frac{p_{1}}{s}}\, dV\biggr)^{\frac{s}{p_{1}}}  
        \leq  cR \biggl(~ 	\fint\limits_{B_{R}} \left\lvert \omega\right\rvert^{\frac{p_{1}}{s}}\, dV\biggr)^{\frac{s}{p_{1}}}. \notag
	\end{equation}
    
	Now we choose a cut-off function $\eta \in C_{c}^{\infty}\left(B_{R}\right)$ with 
	\begin{align*}
		0 \leq \eta \leq 1 \text{ in  } B_{R},\quad \eta \equiv 1 \text{ in } B_{R/2} \text{ and } \left\lvert d\eta \right\rvert \leq \frac{C}{R}.   
	\end{align*}
	Note that $\phi = \eta^{p_{2}} v  \in W_{0}^{1, p\left(\cdot\right)}\left( B_{R}; \varLambda^{k}\right)$ and thus we can plug $\phi$ as the test function in the weak formulation \eqref{weak formulation in WTdpx} of \eqref{main system}. This yields 
	\begin{equation}\label{eq:p1}
		\int\limits_{B_{R}} \bigl\langle  a(x) \mathcal{A}(p(x),\omega), d\left( \eta^{p_{2}} v  \right) \bigr\rangle \, dV = \int\limits_{B_{R}} \left\langle  F, d\left( \eta^{p_{2}} v  \right) \right\rangle  \, dV
        = \int\limits_{B_{R}}\left\langle F - \xi, d\left(\eta^{p_{2}} v \right) \right\rangle \, dV. 
    \end{equation}
	for any Lipschitz co-closed form $\xi$. Here we have used the integration-by-parts formula: for any form $\psi \in W_T^{1,1}(B_R)$ and a Lipschitz form $\xi$ with $d^*\xi =0$ there holds 
    $$
    \int\limits_{B_R} \langle\xi, d\psi\rangle\, dV = 0.
    $$
     
    Since $dv = \omega$ in $B_{R}$, we have   
	\begin{align*}
		d\left( \eta^{p_{2}} v  \right) = p_{2}\eta^{p_{2}-1}d\eta \wedge v + \eta^{p_{2}}dv = p_{2}\eta^{p_{2}-1}d\eta \wedge v + \eta^{p_{2}}\omega.
	\end{align*}
	Plugging this into \eqref{eq:p1}, we obtain 
	\begin{align*}
		a^-_\Omega\int\limits_{B_{R}}\eta^{p_{2}}\langle \mathcal{A}(p(x),\omega),\omega \rangle \, dV &\leq \int\limits_{B_{R}} \eta^{p_{2}}\bigl\langle   a(x) \langle \mathcal{A}(p(x),\omega),\omega \rangle \, dV \\
        &\begin{multlined}
          =\int\limits_{B_{R}} p_{2}\eta^{p_{2}-1}\langle F - \xi, d\eta \wedge v \rangle \, dV + \int\limits_{B_{R}}\eta^{p_{2}}\langle F - \xi, \omega \rangle\, dV \\
        - \int\limits_{B_{R}}p_{2}\eta^{p_{2}-1}\big\langle a(x)  \mathcal{A}(p(x),\omega),  d\eta \wedge v \big\rangle \, dV 
        \end{multlined} \\
		&:= I_{1} + I_{2} +I_{3}.
	\end{align*}
By convexity (see \eqref{convex}), 
$$
\int\limits_{B_R} \eta^{p_2}\mathcal{E}(p(x),\omega)\, dV \leq c (I_1+I_2+I_3) + c |B_R|(1+\mu^{p^{+}_\Omega}) . 
$$
    
	Now, using Young's inequality with $\varepsilon>0$, we have 
	\begin{align*}
		\left\lvert I_{1} \right\rvert &\leq c\int\limits_{B_{R}} \eta^{\frac{p(x)\left(p_{2}-1\right)}{p(x)-1}} \left\lvert F - \xi \right\rvert^{\frac{p(x)}{p(x)-1}} \, dV+ c \int\limits_{B_{R}}\left\lvert d \eta \right\rvert^{p_{2}}\left\lvert v \right\rvert^{p_{2}} \, dV+ c \left\lvert B_{R} \right\rvert \\
		&\leq c\int\limits_{B_{R}} \left\lvert F - \xi \right\rvert^{\frac{p\left(x\right)}{p(x)-1}}\, dV + \frac{c}{R^{p_{2}}} \int\limits_{B_{R}}\left\lvert v \right\rvert^{p_{2}}\,dV + c \left\lvert B_{R} \right\rvert ,\\
		\left\lvert I_{2} \right\rvert &\leq \varepsilon \int\limits_{B_{R}}\eta^{p_{2}}\left\lvert \omega \right\rvert^{p(x)}\, dV + C_{\varepsilon} \int\limits_{B_{R}} \eta^{p_{2}} \left\lvert F - \xi \right\rvert^{\frac{p(x)}{p(x)-1}}\, dV\\
		&\leq \varepsilon \int\limits_{B_{R}}\eta^{p_{2}}\left\lvert \omega \right\rvert^{p(x)}\,dV + C_{\varepsilon} \int\limits_{B_{R}} \left\lvert F - \xi \right\rvert^{\frac{p(x)}{p(x)-1}}\,dV
        \end{align*}
and 
        \begin{align*}
		\left\lvert I_{3} \right\rvert 		 &\leq a^+_\Omega p^{+}_\Omega\int\limits_{B_{R}}\eta^{p_{2}-1} (\mu^2 + |\omega|^2)^\frac{p(x)-1}{2} \left\lvert  d\eta \right\rvert \left\lvert v \right\rvert \,dV	\\
        &\leq \varepsilon \int\limits_{B_{R}}\eta^{\frac{p\left(x\right)\left(p_{2}-1\right)}{p(x)-1}}(\mu^2+\left\lvert \omega \right\rvert^2)^\frac{p(x)}{2} \, dV+ C_{\varepsilon}\int\limits_{B_{R}}\left\lvert d \eta \right\rvert^{p_{2}}\left\lvert v \right\rvert^{p_{2}}\,dV   + c \left\lvert B_{R} \right\rvert \\
		&\leq \varepsilon \int\limits_{B_{R}}\eta^{p_{2}}(\mu^2+\left\lvert \omega \right\rvert^2)^\frac{p(x)}{2}\,dV + \frac{C_{\varepsilon}}{R^{p_{2}}}\int\limits_{B_{R}}\left\lvert v \right\rvert^{p_{2}} \, dV + c \left\lvert B_{R} \right\rvert, 
	\end{align*}
	where in the last line, we have used the fact that $\frac{p(x)(p_{2}-1)}{p(x)-1} > p_{2}$ and $0 \leq \eta \leq 1$, implying 
    $$
    \eta^{\frac{p(x)(p_{2}-1)}{p(x)-1}} \leq \eta^{p_{2}}.
    $$
    Combining the estimates and choosing $\varepsilon>0$ small enough, we get 
	\begin{multline*}
		\int\limits_{B_{R}}\eta^{p_{2}}(\mu^2+\left\lvert \omega \right\rvert^2)^\frac{p(x)}{2} \, dV  \leq c\int\limits_{B_{R}} \left\lvert F - \xi \right\rvert^{\frac{p(x)}{p(x)-1}} \, dV+ \frac{c}{R^{p_{2}}} \int\limits_{B_{R}}\left\lvert v \right\rvert^{p_{2}} \, dV + c(1+\mu^{p^+_\Omega}) \left\lvert B_{R} \right\rvert.
	\end{multline*}
	Dividing by $\left\lvert B_{R}\right\rvert $ and using \eqref{poincaresobolevdutov}, we arrive at 
	\begin{align*}
		\fint\limits_{B_{R}}\eta^{p_{2}}\left\lvert \omega \right\rvert^{p(x)} \, dV &\leq c \fint\limits_{B_{R}} \left\lvert F - \xi \right\rvert^{\frac{p(x)}{p(x)-1}}\, dV + \frac{c}{R^{p_{2}}} \fint_{B_{R}}\left\lvert v \right\rvert^{p_{2}} \, dV+ c \\
		&\leq  \fint\limits_{B_{R}} \left\lvert F - \xi \right\rvert^{\frac{p(x)}{p(x)-1}}\, dV + c \biggl(~ 	\fint\limits_{B_{R}} \left\lvert \omega\right\rvert^{\frac{p_{1}}{s}}\, dV\biggr)^{\frac{sp_{2}}{p_{1}}} + c.
	\end{align*}
	Since $\eta \equiv 1$ on $B_{R/2},$ this implies 
	\begin{align}\label{prelim rev holder}
		\fint\limits_{B_{R/2}}\left\lvert \omega \right\rvert^{p(x)} \, dV\leq c \biggl(~\fint\limits_{B_{R}} \left\lvert \omega\right\rvert^{\frac{p_{1}}{s}}\, dV\biggr)^{\frac{sp_{2}}{p_{1}}} + c \biggl( 1 +  \fint\limits_{B_{R}} \left\lvert F - \xi \right\rvert^{\frac{p(x)}{p\left(x\right)-1}}\,dV \biggr)
	\end{align}
	By the Young inequality and the relation $ p(x) \geq p_{1}$ in $B_R$, we get
	\begin{align}
		\fint\limits_{B_{R}} \left\lvert \omega\right\rvert^{\frac{p_{1}}{s}}\, dV \leq \fint\limits_{B_{R}} \left\lvert \omega\right\rvert^{\frac{p(x)}{s}}\, dV + \fint\limits_{B_{R}} 1 \, dV= \fint\limits_{B_{R}} \left\lvert \omega\right\rvert^{\frac{p(x)}{s}}\, dV +  1. \label{estimate of du by p}
	\end{align}
	Using \eqref{estimate of du by p} and as $sp_{2}/p_{1} > 1,$ we have   
	\begin{align}\label{estimate of p1bys}
		\biggl(~ 	\fint\limits_{B_{R}} \left\lvert \omega\right\rvert^{\frac{p_{1}}{s}} \, dV \biggr)^{\frac{sp_{2}}{p_{1}}} &\leq \biggl(~ \fint\limits_{B_{R}} \left\lvert \omega\right\rvert^{\frac{p(x)}{s}}\,dV +  1\biggr)^{\frac{sp_{2}}{p_{1}}} \leq c \biggl[ \biggl(~ \fint\limits_{B_{R}} \left\lvert \omega\right\rvert^{\frac{p(x)}{s}}\, dV \biggr)^{\frac{sp_{2}}{p_{1}}} + 1\biggr] \notag\\&=c\biggl(~ \fint\limits_{B_{R}} \left\lvert \omega\right\rvert^{\frac{p(x)}{s}}\,dV \biggr)^{s}\biggl(~ \fint\limits_{B_{R}} \left\lvert \omega\right\rvert^{\frac{p(x)}{s}}\,dV \biggr)^{\frac{s\left( p_{2}-p_{1}\right)}{p_{1}}} + c. 
	\end{align}
	Now we estimate the term 
	$$
    I := \biggl(~ 	\fint\limits_{B_{R}} \left\lvert \omega\right\rvert^{\frac{p(x)}{s}}\, dV\biggr)^{\frac{s\left( p_{2}-p_{1}\right)}{p_{1}}}. 
    $$
	Since $s>1$, by H\"{o}lder inequality, we deduce 
	\begin{align*} 
		I &\leq \biggl(~\fint\limits_{B_{R}} \left\lvert \omega\right\rvert^{p(x)} \,dV\biggr)^{\frac{ p_{2}-p_{1}}{p_{1}}} = cR^{-\frac{n\left( p_{2}-p_{1}\right)}{p_{1}}}\biggl(~\int\limits_{B_{R}} \left\lvert \omega\right\rvert^{p(x)}\,dV \biggr)^{\frac{ p_{2}-p_{1}}{p_{1}}} \\&\leq cR^{-\frac{n\left( p_{2}-p_{1}\right)}{p_{1}}}\biggl(~\int\limits_{B_{R}} \left\lvert \omega\right\rvert^{p(x)}\,dV +1 \biggr)^{\frac{ p_{2}-p_{1}}{p_{1}}} \\&\leq cR^{-2n\Theta_p(R)}\biggl(~\int\limits_{B_{R}} \left\lvert \omega\right\rvert^{p(x)}\,dV +1 \biggr)^{\frac{2\Theta_p(R)}{p_{1}}} \leq c \biggl(~\int\limits_{B_{R}} \left\lvert \omega\right\rvert^{p(x)}\,dV +1 \biggr)^{\frac{2\Theta_p(R_{0})}{p^{-}_{\Omega}}}.
        \end{align*}
	We further evaluate this as 
    \begin{align*}
    I \leq c K_0 ^{2\Theta_p(R_0)} \leq c \exp \left(2c_{\mathrm{log}}(p)\frac{\log K_0}{\log (e+\frac{1}{R_0})} \right)
    \leq  c \exp \left(2c_{\mathrm{log}}(p)\frac{\log K_0}{\log (e+K_0)} \right) \leq c
    \end{align*}
using that $R_0 \leq 1/K_0$. Combining this with \eqref{prelim rev holder}, \eqref{estimate of p1bys} and \eqref{energy bound}, we arrive at 
	\begin{align*} 
		\fint\limits_{B_{R/2}}\left\lvert \omega \right\rvert^{p(x)}\,dV \leq c\biggl(~ \fint\limits_{B_{R}} \left\lvert \omega\right\rvert^{\frac{p(x)}{s}}\,dV \biggr)^{s}  + c \biggl( 1 +  \fint\limits_{B_{R}} \left\lvert F - \xi \right\rvert^{p'(x)}\,dV \biggr). 
	\end{align*}
	Now a Gehring type lemma of Giaquinta and Modica (see Lemma~\ref{lemma:GM} in Appendix, with $q=s$, $g=|\omega|^{p(x)/s}$, $f= (|F-\xi|+1)^{p'(x)/s}$) implies the existence of a constant $\sigma_{0}$ such that for any $0 < \sigma \leq \sigma_{0}$, we have 
	\begin{multline*}
		\biggl(~ \fint\limits_{Q_{R/2}}\left\lvert \omega \right\rvert^{p(x)\left(1 + \sigma\right)}\,dV\biggr)^{\frac{1}{1+\sigma}}
		 \leq c \fint\limits_{Q_{R}}\left\lvert \omega \right\rvert^{p(x)}\, dV  + c \biggl( 1+  \fint\limits_{Q_{R}}\left\lvert F - \xi \right\rvert^{(1+\sigma)p'(x)}\, dV \biggr)^{\frac{1}{1+\sigma}}
	\end{multline*} 
    provided that $Q_R \subset \Omega$ and $0<R \leq R_0$. This completes the proof. 
\end{proof}	

\begin{corollary}
 Let $M_1$ be a contractible $n$-dimensional $C^{1,1}$ submanifold of $M$ (with boundary). Under the hypotheses of Theorem~\ref{th:HIM} there exists $u\in W^{1,(1+\sigma)p(\cdot)}(M_1; \Lambda^k)$ such that $\omega = du$ and $d^* u=0$. 
\end{corollary}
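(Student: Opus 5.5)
Our plan is to combine the higher integrability of Theorem~\ref{th:HIM} with the gauge-fixing result for variable exponents of \cite{Balci_Sil_Surnachev_arxiv2025} (Theorem~\ref{T:dcb} in Appendix), applied on $M_1$ with exponent $(1+\sigma)p(\cdot)$.

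\emph{Step 1: integrability of $\omega$ on $M_1$.} We read $M_1$ as a compact submanifold contained in $\mathring{M}$; if it were allowed to reach $bM$, the local estimate of Theorem~\ref{th:HIM} would not suffice. Cover $M_1$ by finitely many coordinate cubes $Q_{R/2}$ with $Q_R\subset\mathring{M}$ and $R\le R_0$. On each chart, estimate \eqref{higher integrability estimate} with $\xi=0$ applies, with $\sigma_0$ depending on $K_0$ of a slightly larger compact neighbourhood. Take $\sigma$ to be the minimum over the finitely many charts, and also $\sigma\le q-1$, so that $\int|F|^{(1+\sigma)p'}$ is finite. Summing the chart estimates gives $\omega\in L^{(1+\sigma)p(\cdot)}(M_1;\Lambda^{k+1})$.

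\emph{Step 2: the exponent and closedness.} The new exponent $\tilde p=(1+\sigma)p$ is again log-H\"older and satisfies $1<\tilde p^-\le\tilde p^+<\infty$, so \eqref{definition log holder} holds for $\tilde p$. From \eqref{weak2}, $d\omega=0$ weakly in $\mathring{M}_1$.

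\emph{Step 3: gauge fixing.} We solve the div--curl system
\[
du=\omega \ \text{in } M_1,\qquad d^*u=0 \ \text{in } M_1,\qquad \nu\lrcorner u=0 \ \text{on } bM_1,
\]
using Theorem~\ref{divcurl system} (equivalently Theorem~\ref{T:dcb}) for $\tilde p$ on the $C^{1,1}$ manifold $M_1$. This theorem requires a compatibility condition: $d\omega=0$, and $\omega$ must be $L^2$-orthogonal to the relevant harmonic fields. Since $M_1$ is contractible, its de Rham cohomology vanishes in all degrees $\ge1$; here $\omega$ has degree $k+1\ge1$. Hence the harmonic-field space $\mathcal{H}_N(M_1;\Lambda^{k+1})$, which is isomorphic to the absolute cohomology, is trivial, and the obstruction disappears. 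The same theorem then gives $u\in W^{1,\tilde p(\cdot)}(M_1;\Lambda^k)$ together with the bound $\|u\|_{W^{1,\tilde p}}\le C\|\omega\|_{L^{\tilde p}}$.

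\emph{Main obstacle.} We expect the hardest step to be checking that the hypotheses of the appendix div--curl theorem hold exactly. Two points need care. First, the theorem must be applicable on $M_1$ itself rather than on $M$. Second, we must pick the boundary condition (normal or tangential) so that the relevant harmonic fields vanish; this uses both the absolute and the relative cohomology of a contractible manifold with boundary. If we chose the tangential condition instead, then for $k+1=n$ we would face the relative group $H^n(M_1,bM_1)\neq0$ and would need $\int\omega=0$. For that reason we will use the normal condition $\nu\lrcorner u=0$, whose obstruction space $\mathcal{H}_N\cong H^{k+1}(M_1)=0$.
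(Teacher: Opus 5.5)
Your proposal is correct and matches the paper's proof: the paper also obtains $u$ by solving the normal-condition div--curl problem $du=\omega$, $d^*u=0$ in $M_1$, $\nu\lrcorner u=0$ on $bM_1$, using the variable-exponent theory (Theorem~\ref{divcurl system}, Theorem~\ref{T:dcb}) with exponent $(1+\sigma)p(\cdot)$. Your extra points are implicit in the paper's one-line argument: $M_1$ must be read as compactly contained in $\mathring{M}$, and contractibility makes $\mathcal{H}_N(M_1;\Lambda^{k+1})$ trivial, so no compatibility condition on $\omega$ is needed.
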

\begin{proof}
We can construct $u$ as a solution to the boundary value problem 
\begin{align*}
    \left\lbrace \begin{aligned}
				du = \omega  \quad &\text{and} \quad  d^{\ast} u = 0 &&\text{ in } M_{1}, \\
				\nu\lrcorner u &= 0 &&\text{  on } bM_{1}.
			\end{aligned} \right. 
\end{align*}
See \cite{Balci_Sil_Surnachev_arxiv2025} and Theorem~\ref{T:dcb} in Appendix.
\end{proof}

\subsection{Higher integrability near the boundary}\label{ssec:hib}

We treat the following two types of boundary value condition: the Dirichlet condition $t\omega=0$ (or $\nu \wedge \omega=0$) and the Neumann condition $n\mathcal{A}(p,\omega)=0$ (or $\nu \lrcorner \mathcal{A}(p,\omega)=0$). We state the following result.

\begin{theorem}[Global higher integrability]\label{th:HI_B}
 Let $ F \in L^{q p'(\cdot) }\left(M; \varLambda^{k+1}\right)$, $q>1$. Let $\omega \in L^{p(\cdot)} (M; \varLambda^{k+1})$ be a  weak solution to the system \eqref{main system} with the Dirichlet condition $t\omega=0$ or the Neumann condition $n ((\mu^2+|\omega|^2)^{(p(x)-2)/2}\omega)=0$. Then there exists $\sigma >0$ such that  $\omega\in  L^{(1+\sigma)p(\cdot)}(\mathrm{M};\Lambda^{k+1})$. The exponent $\sigma$ depends only on $N$, $k$, $p^{-}$, $p^{+}$, $c_{\mathrm{log}}(p)$, $M$, and $q$.
\end{theorem}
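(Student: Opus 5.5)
Interior higher integrability is already given by Theorem~\ref{higher integrability}, so by a partition of unity and compactness of $M$ it suffices to prove a boundary reverse H\"older inequality in a single boundary chart. I will work in an admissible coordinate system flattening $bM$ to $\{x^n=0\}$, with $M$ corresponding to $\{x^n>0\}$. Set $Q_R^+=Q_R(x_0)\cap\{x^n>0\}$ for $x_0$ with $x_0^n\ge 0$; when $Q_R(x_0)$ does not meet the boundary this is just the interior case. The plan is to obtain, for $R\le R_0(n,K_0,c_{\log}(p))$,
\[
\fint\limits_{Q_{R/2}^+}|\omega|^{p(x)}\,dV\le c\biggl(\fint\limits_{Q_R^+}|\omega|^{p(x)/s}\,dV\biggr)^{s}+c\biggl(1+\fint\limits_{Q_R^+}|F|^{p'(x)}\,dV\biggr),
\]
with $s=(p^-+1)/2$. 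Extending $|\omega|$ and $F$ by zero (or by reflection) to $\{x^n<0\}$, this fits the Giaquinta--Modica Lemma~\ref{lemma:GM} in the version with cubes that may intersect the boundary. Here I take $\xi=0$, since co-closed $\xi$ would interact with boundary terms.

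The key step is the boundary gauge fixing, which replaces \eqref{gauge fixing 1}. In the Dirichlet case, $\omega$ is closed with $t\omega=0$ on $Q_R^+\cap\{x^n=0\}$. I take a Lipschitz convex domain $D$ with $Q_{R/2}^+\subset D\subset Q_R^+$, for instance $Q_R^+$ itself. By Theorem~\ref{T:dcb} I solve $dv=\omega$, $d^*v=0$ in $D$ with $\nu\wedge v=0$ on $\partial D$. This is solvable because $D$ is contractible and $\omega$ is closed with vanishing tangential part on the flat face, so no topological obstruction remains for the relative cohomology. It comes with the estimates \eqref{est0} in both $L^{p_1/s}$ and $L^{p(\cdot)}$. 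Then $\phi=\eta^{p_2}v$, with $\eta$ a cutoff supported in $Q_R$ (not vanishing on the flat boundary), satisfies $\nu\wedge\phi=0$ on all of $\partial D$. Hence it is admissible in Definition~\ref{omega:defD}. In the Neumann case I use instead $\nu\lrcorner v=0$ on $\partial D$. Any $\phi\in W^{1,p(\cdot)}$ vanishing on the curved part of $\partial D$ is admissible by Definition~\ref{omega:defN}, so $\eta^{p_2}v$ works there too.

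After this, the computation is identical to the proof of Theorem~\ref{higher integrability}. One expands $d(\eta^{p_2}v)=p_2\eta^{p_2-1}d\eta\wedge v+\eta^{p_2}\omega$ and bounds $I_1,I_2,I_3$ by Young's inequality. The term $R^{-p_2}\fint|v|^{p_2}$ is handled via Sobolev--Poincar\'e on the Lipschitz domain $D$, which holds with constants uniform in $R$ by scaling. The factor $R^{-n(p_2-p_1)/p_1}$ is absorbed using the log-H\"older condition and $R_0\le 1/K_0$ exactly as before. The Lipschitz metric only changes constants, since $g$ is comparable to the identity.

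The main obstacle is the Poincar\'e-type step. In the Dirichlet case, $v$ vanishing tangentially gives Poincar\'e directly. In the Neumann case I must make sure that $v$ controls $\|v\|_{L^{p_1/s}}\le CR\|\omega\|_{L^{p_1/s}}$ without subtracting a mean, because subtracting a constant form would spoil the boundary condition. I expect the $R^{-1}\|v\|$ bound in Theorem~\ref{T:dcb} on half-cubes, after rescaling to unit size, to provide exactly this, since harmonic fields vanish on a contractible domain. I would verify that the constant depends only on the Lipschitz character of $D$, which is scale invariant. Finally, covering $M$ by finitely many interior and boundary charts yields $\sigma$ depending only on the listed quantities together with $K_0$ and $q$.
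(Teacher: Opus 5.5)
Your Dirichlet case has a genuine gap at the gauge-fixing step. The problem $dv=\omega$ in $D$, $\nu\wedge v=0$ on all of $\partial D$ is overdetermined. Pull-back to $\partial D$ commutes with $d$, so any solution satisfies $\nu\wedge\omega=\nu\wedge dv=0$ on the whole of $\partial D$. This is exactly the compatibility condition $\nu\wedge(du_0-f)=0$ (with $u_0=0$, $f=\omega$) in Theorem~\ref{divcurl system}(i).

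You only know $t\omega=0$ on the flat face. The rest of $\partial D$ lies in the interior of $M$, where the weak tangential trace of $\omega$ is arbitrary, so such a $v$ does not exist in general. The obstruction is a boundary condition, not a cohomology class, so contractibility of $D$ does not help. Even for $k=n-1$, where the pointwise condition is vacuous, one needs $\omega\perp\mathcal H_T(D;\Lambda^n)=\{c\,dV\}$, i.e.\ $\int_D\omega=\int_{\partial D}v=0$ by Stokes, and this also fails in general. So your relative-cohomology remark is not right either.

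Nor can you switch to the normal gauge $\nu\lrcorner v=0$ on $\partial D$ as in your Neumann case. Then $\eta^{p_2}v$ need not have vanishing tangential part on the flat face, so it is not admissible in Definition~\ref{omega:defD}.

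What is missing is a gauge with mixed conditions: $tv=0$ on the flat face only, with no tangential constraint on the part of the boundary inside $M$. The paper produces it in Lemma~\ref{L:Gauge_Tan} by reflection:
\begin{enumerate}
\item extend $\omega$ to $\{x^n<0\}$ by $-S^*\omega$, which is closed precisely because $t\omega=0$ on $\{x^n=0\}$;
\item solve $dv=\omega$, $d_0^*v=0$, $\nu\lrcorner_0 v=0$ on a ball $\widetilde B_{2R}$ centred on $\{x^n=0\}$ via Theorem~\ref{T:dcb};
\item note that $-S^*v$ solves the same problem, so uniqueness gives $v=-S^*v$ and hence $tv=0$ on the flat face.
\end{enumerate}
Use this $v$, extend $\omega$ and $F$ by zero, and handle balls not centred on the boundary via $B_R(x)\subset\widetilde B_{2R}(x',0)\subset B_{3R}(x)$. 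Then $\eta^{p_2}v$ is an admissible Dirichlet test form and the rest of your computation goes through unchanged.

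Your Neumann argument, by contrast, is fine, but it takes a different route from the paper's. The paper reflects $\omega$, $F$, $a$, $p$ and the metric evenly across $\{x^n=0\}$ and reduces directly to the interior Theorem~\ref{higher integrability}. You instead gauge-fix in the half-domain with $\nu\lrcorner v=0$ and use that Definition~\ref{omega:defN} imposes no boundary condition on test forms. You should, however, replace the Lipschitz half-cube by a domain actually covered by Theorems~\ref{divcurl system} and~\ref{T:dcb}.
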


The proof is done in an admissible boundary coordinate system. Let $\Omega\subset \mathbb{R}^n\cap\{x^n>0\}$ be a Lipschitz domain, $\Gamma=\mathrm{int}\,(\partial\Omega\cap \{x^n=0\})$ be nonempty, the coordinate system be admissible (that is, $g_{in}=\delta_{in}$ on $\Gamma$), with $g_{ij}$ at least Lipschitz. The notation $K_0[\Omega]$ is inherited from the last section (see \eqref{energy bound}), and we assume that $\mu \in [0,\mu^+]$.

\begin{theorem}[Higher integrability near the boundary]\label{higher integrability boundary}
Let $ F \in L^{q p'(\cdot) } \left(\Omega; \varLambda^{k+1}\right)$, $q>1$. Let $\omega \in L^{p(\cdot)} (\Omega; \varLambda^{k+1})$ be a weak solution to the system \eqref{main system} in $\Omega$ satisfying the Dirichlet or Neumann condition $t\omega=0$ or $n ((\mu^2+|\omega|^2)^{(p(x)-2)/2}\omega)=0$, respectively, on $\Gamma$. Then there exist constants $c = c (\textit{data},\mu^+)>0$ and $\sigma_{0} = \sigma_{0}\bigl(\textit{data}, K_{0}[\Omega], q-1 \bigr) \in (0,1)$ and a radius $R_{0} = R_{0}\bigl( n,K_0[\Omega], c_{\mathrm{log}}(p) \bigr) >0$ such that $\omega\in L^{(1+\sigma)p(\cdot)}_{\mathrm{loc}}(\Omega\cup\Gamma;\Lambda^{k+1})$ and for any cube $Q_{R}$ such that $Q_R \cap \{x^n>0\}\subset \Omega$ with $0 < R \leq R_{0}$, any Lipschitz co-closed form $\xi$ and any $\sigma \leq \sigma_{0}$, we have 
	\begin{multline}\label{eq:hiestB}
		\biggl(~\fint\limits_{Q_{R/4}\cap\Omega}\left\lvert \omega\right\rvert^{p(x)(1 + \sigma)}\ \mathrm{d}V\biggr)^{\frac{1}{1+\sigma}}
		\\ \leq c \fint\limits_{Q_{R}\cap\Omega}\left\lvert \omega \right\rvert^{p(x)}\ \mathrm{d}V \  + c \biggl( 1+  \fint\limits_{Q_{R}\cap\Omega}\left\lvert F - \xi \right\rvert^{(1+\sigma)p'(x) }\ \mathrm{d}V \biggr)^{\frac{1}{1+\sigma}}. 
	\end{multline} 
\end{theorem}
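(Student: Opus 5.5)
We follow the proof of Theorem~\ref{higher integrability} step by step. The only real change is the gauge-fixing: the test form must now be admissible for the boundary condition. We fix a cube $Q_R$ centered at a point $x_0$ with $Q_R\cap\{x^n>0\}\subset\Omega$ and $R\le R_0$, where $R_0$ is chosen as before so that $\Theta_p(2R_0)\le s/(2n)$ and $R_0\le 1/K_0$. We distinguish two cases. If $Q_{R/2}(x_0)\cap\{x^n=0\}=\emptyset$, the interior estimate \eqref{higher integrability estimate} already applies on a smaller cube contained in $\Omega$. Otherwise we recenter at a point of $\Gamma$ and work on half-cubes $Q^+_r=Q_r\cap\{x^n>0\}$ centered on $\Gamma$. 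The statement has $Q_{R/4}$ instead of $Q_{R/2}$; this loss accommodates the recentering and a covering argument that combines interior and boundary cubes.

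\textbf{Gauge-fixing on half-cubes.} On the Lipschitz domain $Q_R^+$ we solve a div--curl problem with $dv=\omega$ and $d_0^*v=0$, using Theorem~\ref{T:dcb} from the Appendix and its variable exponent estimates \eqref{est0} in both $L^{p_1/s}$ and $L^{p(\cdot)}$. The boundary condition is chosen according to the case.

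In the Dirichlet case ($t\omega=0$ on $\Gamma$), we need $\phi=\eta^{p_2}v$ to have vanishing tangential part on $\Gamma$, so that it is an admissible test form in Definition~\ref{omega:defD}. We therefore impose $\nu\wedge v=0$ on all of $\partial Q_R^+$. This is compatible because $t\omega=0$ on $\Gamma$, and $\omega$ can be extended by zero in the sense of $d$, so $\omega$ has zero tangential trace on the flat part. The other boundary pieces should be handled by first cutting off, or by solving on a domain whose lateral part lies where $\eta=0$. A solvability condition modulo Dirichlet harmonic fields appears, but on a contractible half-cube these fields vanish.

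In the Neumann case, every Lipschitz $\phi$ is admissible by Definition~\ref{omega:defN}. It then suffices to keep the interior choice $\nu\lrcorner v=0$ on $\partial Q^+_R$; only the cutoff $\eta$ must vanish on the curved/lateral part, not on $\Gamma$.

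In both cases, Sobolev/Poincaré on the half-cube gives \eqref{poincaresobolevdutov} with $B_R$ replaced by $Q_R^+$. This uses the extension property of the Lipschitz domain $Q_R^+$ and the fact that $p_1/s$ is close to $1$, so that the Sobolev exponent exceeds $p_2$.

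\textbf{Caccioppoli and reverse Hölder.} We test with $\phi=\eta^{p_2}v$, where $\eta$ is a cutoff supported in $Q_R$, not necessarily vanishing on $\Gamma$. We replace $F$ by $F-\xi$ with $\xi$ Lipschitz and co-closed. The integration by parts $\int\langle\xi,d\phi\rangle=0$ still holds: in the Dirichlet case because $\phi\in W_T^{1,1}$, and in the Neumann case we take $\xi$ with $\nu\lrcorner\xi=0$, or simply $\xi=0$ near $\Gamma$. The estimates of $I_1$, $I_2$, $I_3$ are then verbatim the same and give
$$\fint_{Q^+_{R/2}}|\omega|^{p(x)}\,dV\le c\Bigl(\fint_{Q^+_R}|\omega|^{p(x)/s}\,dV\Bigr)^s+c\Bigl(1+\fint_{Q^+_R}|F-\xi|^{p'(x)}\,dV\Bigr).$$
The factor $R^{-n(p_2-p_1)/p_1}$ is absorbed exactly as before, using $R_0\le 1/K_0$ and $|Q_R^+|\sim R^n$.

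\textbf{Main obstacle.} We extend $g=|\omega|^{p(x)/s}$ and $f$ by zero, or combine the interior and boundary reverse Hölder inequalities into one inequality valid for all cubes in $Q_R$. Applying the Giaquinta--Modica Lemma~\ref{lemma:GM} then yields \eqref{eq:hiestB} on $Q_{R/4}\cap\Omega$. The delicate step is the Dirichlet gauge-fixing. One must make sure that the solution $v$ of the div--curl system with tangential boundary condition exists on the half-cube with the variable-exponent bounds \eqref{est0}, and that the nonflat boundary pieces cause no conflict. If Theorem~\ref{T:dcb} in the tangential version is not directly available on $Q^+_R$, the first thing to try is Hodge duality: the Hodge star exchanges $t$ and $n$, and it converts the normal-condition solution into the required tangential one.
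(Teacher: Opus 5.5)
Your Dirichlet gauge-fixing does not work as stated. You cannot impose $\nu\wedge v=0$ on all of $\partial Q_R^+$. The tangential trace commutes with $d$, so $tv=0$ on a boundary piece forces $t\omega=t(dv)=0$ on that piece. This is exactly the compatibility condition $\nu\wedge(du_0-f)=0$ in Theorem~\ref{divcurl system}(i), with $u_0=0$ and $f=\omega$. On the lateral or curved part of $\partial Q_R^+$, which lies inside $\Omega$, $t\omega$ is arbitrary, so that problem has no solution; harmonic fields are not the only obstruction.

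What you need is a \emph{mixed} condition: $tv=0$ on the flat part only, and anything on the rest, where $\eta$ vanishes. Neither of your fallbacks gives this. Cutting off $\omega$ destroys $d\omega=0$. Hodge duality only exchanges the tangential and normal versions of the div--curl theorem, turning a prescribed $d$ into a prescribed $d^*$, so it does not touch the compatibility issue. Note also that Theorems~\ref{T:dcb} and~\ref{divcurl system} are stated for balls and $C^{1,1}$ manifolds, not for half-cubes.

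The missing idea is Lemma~\ref{L:Gauge_Tan}, which the paper applies on a half-ball $\widetilde B_{2R}^+$ centered on $\Gamma$. First extend $\omega$ to the lower half-ball by the odd reflection $-S^*\omega$ (tangential components odd, normal ones even). This extension is closed across $\{x^n=0\}$ precisely because $t\omega=0$ there. Next solve $dv=\omega$, $d_0^*v=0$, $\nu\lrcorner_0 v=0$ on the full ball by Theorem~\ref{T:dcb}, which gives both the $L^{p_1/s}$ and the $L^{p(\cdot)}$ bounds. Since $-S^*v$ solves the same problem, uniqueness gives $v=-S^*v$, hence $tv=0$ on the flat part. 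The condition $\nu\lrcorner_0 v=0$ on the sphere is harmless because $\eta$ vanishes there. With this $v$, the zero extension of $\omega$, $F$, $\xi$, and the inclusions $B_R\subset\widetilde B_{2R}\subset B_{3R}$ for balls straddling $\Gamma$, your Caccioppoli, reverse H\"older and Giaquinta--Modica steps go through.

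Your Neumann case is a valid alternative to the paper's route. The paper reflects the whole equation (even extension of $a$ and $p$, pull-back of $\omega$, $F$, $g_{ij}$ by $S$) and reduces to Theorem~\ref{higher integrability}. In your version every Lipschitz $\phi$ is admissible, so $v$ needs no condition at $\Gamma$. You can therefore obtain $v$ by solving the normal problem on the full ball for the even extension $S^*\omega$, which is always closed, and restricting. Your restriction $\nu\lrcorner\xi=0$ on $\Gamma$ is genuinely needed there. The paper's reflection carries the same implicit restriction, since the reflected $\xi$ is co-closed across $\Gamma$ only if $n\xi=0$.
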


\begin{proof}
I. (\textit{Neumann case}) To pass from local estimates to estimates up the boundary for the Neumann boundary condition $n \mathcal{A}(x,\omega) =0$  we use the same reflection principle as was used by \cite{hamburgerregularity}. Let the reflection operator be defined by $S(x',x^n) =(x',-x^n) $. Then in an admissible boundary coordinate system consider the forms $\tilde \omega$ and $\tilde F$ which coincide with $\omega$ and $F$, respectively, for $x^n>0$ and with $S^* \omega$ and $S^* F$, respectively, for $x^n<0$. We use $\tilde a$ and $\tilde p$ to denote the even extension of $a$ and $p$, $\tilde a(x',-x^n)=a(x',x^n)$, $\tilde p(x',-x^n)=p(x',x^n)$. Extend the metric $g_{ij}$ to $x^n<0$ by setting $g_{ij} = (S^* g)_{ij}$ for $x^n<0$, that is $g_{ij}(x',x^n)=g_{ij}(x',-x^n)$ for $i,j\leq  n-1$ or $i=j=n$, and $g_{in}(x',-x^n)=-g_{in}(x',x^n)$ for $i<n$. The volume form is extended correspondingly. The extension $\tilde \omega$ satisfies $d\tilde \omega =0$ in the ball $B_R$. Indeed, for a smooth form $\xi \in C_0^\infty(B_R)$, recalling that the Jacobian of the mapping $S$ is $-1$, and $(S^*)^2=\mathrm{id}$, we have 
\begin{align*}
\int\limits_{B_R} \tilde\omega \wedge d \xi = \int\limits_{B_R^+} \omega \wedge d \xi + \int\limits_{B_R^{-}}S^* \omega \wedge d \xi = \int\limits_{B_R^+} \omega \wedge d(\xi-S^* \xi ) = 0
\end{align*}
since the tangential part of  $\xi- S^*\xi$ on $x^n=0$ equals zero.

The extension $\tilde \omega$ is a local weak solution to the equation 
$$
d^* ( \tilde a (\mu^2 + |\tilde \omega|^2)^{\frac{\tilde p(x)-2}{2}} \tilde \omega) = d^* \tilde F
$$
in $B_R$. Indeed,
\begin{align*}
\int\limits_{B_R} \langle\tilde a (\mu^2 + |\tilde \omega|^2)^{\frac{\tilde p(x)-2}{2}} \tilde \omega - \tilde F, d\xi \rangle\, dV
=\int\limits_{B_R^+} \langle a (\mu^2 + |\omega|^2)^{\frac{p(x)-2}{2}} \omega -  F, d(\xi + S^* \xi)\rangle\, dV=0
\end{align*}
if the Neumann condition on the current is assumed. Thus in this case in the neighborhood of the boundary we can use the local estimates obtained above. It only remains to note that for any cube $Q_{R/4}$ such that $Q_{R/4}\cap \Omega$ is nonempty, the set  $Q_{R/2}$ belongs to the union of $Q_R \cap (\Omega\cap \Gamma)$ and its reflection to half-space $x^n\leq 0$.

Below we shall use the same argument for the Morrey bounds and H\"older estimates.

II. (\textit{Dirichlet case}) We extend $\omega$ and $F$ by zero to the half-space $\{x^n<0\}$. To prove the reverse H\"older inequality, we consider the following three cases. If the ball $B_{R}$ belongs to $\Omega$ there is nothing to change. If the ball $B_{R/2}$ lies in the half space $x^n<0$ the reverse H\"older inequality is obviously valid with $0$ on the left-hand side. Thus we are left with the case when $B_{R}$ intersects $\{x^n<0\}$ but $B_{R/2}$ intersects $\{x^n>0\}$. The ball $B_R=B_R(x',x^n)$ belongs then to the ball $\widetilde B_{2R}=B_{2R}(x',0)$, which belongs in its turn to the ball $B_{3R} = B_{3R}(x',x^n)$. Using Lemma~\ref{L:Gauge_Tan}, we construct in the half-ball  $\widetilde B_{2R}^+=\widetilde{B}_{2R}\cap \{x^n \geq 0\}$ a form $v$ satisfying 
$$
dv=\omega\quad \text{in}\quad \widetilde B_{2R}^+, \quad tv =0\quad \text{on}\quad x^n=0, 
$$
such that 
$$
\|\nabla v\|_{p(\cdot),B_R} + R^{-1 }\|v\|_{p(\cdot),B_R} \leq C \|\omega\|_{p(\cdot),B_{3R}}.
$$
After that, all the estimates work in the same way as above with the only difference that the reverse H\"older inequality is now obtained in the form 
\begin{align*} 
		\fint\limits_{B_{R/2}}\left\lvert \omega \right\rvert^{p(x)}\,dV \leq c\biggl(~ \fint\limits_{B_{3R}} \left\lvert \omega\right\rvert^{\frac{p(x)}{s}}\,dV \biggr)^{s}  + c \biggl( 1 +  \fint\limits_{B_{3R}} \left\lvert F - \xi \right\rvert^{p'(x)}\,dV \biggr), 
	\end{align*}
with $\omega, F, \xi=0$ in the half-space ${x^n<0}$. The same Giaquinta-Modica lemma of Gehring's type completes the proof.
\end{proof}
\begin{corollary}
 Let $M$ be contractible. Under the hypotheses of Theorem~\ref{th:HI_B} there exists $u\in W^{1,(1+\sigma)p(\cdot)}(M; \Lambda^k)$ such that $\omega = du$ and $d^* u=0$, in the Dirichlet case $tu=0$ and in the Neumann case $nu=0$. 
\end{corollary}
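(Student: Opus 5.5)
The approach is to combine the global higher integrability of Theorem~\ref{th:HI_B} with the global gauge-fixing (div-curl) results of \cite{Balci_Sil_Surnachev_arxiv2025} quoted in the Appendix (Theorem~\ref{divcurl system}), applied with the exponent $(1+\sigma)p(\cdot)$ in place of $p(\cdot)$. First, by Theorem~\ref{th:HI_B}, $\omega\in L^{(1+\sigma)p(\cdot)}(M;\Lambda^{k+1})$. The new exponent $(1+\sigma)p(\cdot)$ is still log-H\"older continuous, with $c_{\mathrm{log}}((1+\sigma)p)\le 2c_{\mathrm{log}}(p)$, and is bounded between $p^-$ and $2p^+$. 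Hence all the variable exponent results of the Appendix apply to it.

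\emph{Dirichlet case.} Here $d\omega=0$ and $t\omega=0$ weakly. I would solve the first-order boundary value problem
\begin{equation*}
du=\omega,\quad d^*u=0 \ \text{in } M,\qquad tu=0 \ \text{on } bM,
\end{equation*}
in $W^{1,(1+\sigma)p(\cdot)}_T(M;\Lambda^k)$ by invoking Theorem~\ref{divcurl system}. The compatibility conditions must be checked.
\begin{itemize}
\item The form $\omega$ must be closed with vanishing tangential part. This holds by the definition of a Dirichlet weak solution, i.e.\ \eqref{def:du} holds against all Lipschitz test forms, not only compactly supported ones.
\item $\omega$ must be $L^2$-orthogonal to the tangential harmonic fields $\mathcal{H}_T(M;\Lambda^{k+1})$. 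Since $M$ is contractible, $\mathcal{H}_T$ is isomorphic to the relative cohomology $H^{k+1}(M,bM)$. For a contractible manifold with boundary this vanishes except possibly in top degree $k+1=n$, where it is one-dimensional (constants times the volume form).
\end{itemize}
That top-degree exception is the point needing care. It is precisely the case $k=n-1$ with Dirichlet data, and there I would argue directly. Write $\omega=f\,dV$ with $f\in L^{(1+\sigma)p(\cdot)}(M)$. Set $u=\ast\, d\varphi$ up to sign, where $\varphi$ solves the scalar Neumann problem $\Delta\varphi=f-\bar f$ and $\bar f$ is the mean of $f$. This handles the part of $\omega$ orthogonal to the volume form. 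If the mean of $f$ does not vanish, the orthogonality condition genuinely fails, so the statement must be read modulo the harmonic part.

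\emph{Neumann case.} The construction is analogous: solve $du=\omega$, $d^*u=0$ in $M$, $nu=0$ on $bM$, exactly as in the interior corollary, which used Theorem~\ref{T:dcb}. The compatibility conditions are $d\omega=0$ and orthogonality to $\mathcal{H}_N(M;\Lambda^{k+1})\cong H^{k+1}(M)$. The latter vanishes for contractible $M$, since $k+1\ge 1$.

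In both cases the estimate $\|u\|_{W^{1,(1+\sigma)p(\cdot)}}\le C\|\omega\|_{L^{(1+\sigma)p(\cdot)}}$ comes directly from the Appendix theorem, with $C$ depending on the log-H\"older constant of the new exponent.

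The main obstacle is verifying the cohomological orthogonality conditions, particularly the top-degree Dirichlet case. A secondary obstacle is confirming that the weak boundary condition $t\omega=0$ in the sense of Definition~\ref{omega:defD} is exactly the hypothesis required by the Appendix solvability theorem in the variable exponent class.
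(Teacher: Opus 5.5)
Your proposal follows the route the paper intends. No separate proof is given there, but, as in the interior corollary, one applies Theorem~\ref{divcurl system} with the exponent $(1+\sigma)p(\cdot)$ (still log-H\"older, with controlled constants) and checks the compatibility conditions, which you do correctly. Your top-degree caveat is a real defect of the statement, not of your argument: for $k=n-1$ the condition $t\omega=0$ is vacuous, and $\omega=c\,dV$ with $c\neq 0$, $p\equiv 2$, $a\equiv 1$, $\mu=0$, $F=0$ is a Dirichlet weak solution, yet testing $du=\omega$, $tu=0$ against $\eta=c\,dV$ (where $d^*\eta=0$) forces $|c|^2|M|=0$, so in that case the corollary needs the extra hypothesis $\int_M\omega=0$, i.e.\ $\omega\perp\mathcal{H}_T(M;\Lambda^n)$.
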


\subsection{Morrey bounds for \texorpdfstring{$\omega$}{omega}.}
In this section we prove the following result.

\begin{theorem}\label{th:MorreyBounds}
Let the variable exponent $p(\cdot)$ satisfy \eqref{definition log holder}, \eqref{log holder vanishing}, and the weight function $a$ satisfy \eqref{acond} and be continuous. Let $F$ be in $BMO (M; \Lambda^{k+1})$ and $\omega \in L^{p(\cdot)}_{\mathrm{loc}}(M;\Lambda^{k+1})$ be a local weak solution of \eqref{main system} in $M$. Then $\omega \in \mathrm{L}^{p^{-}_{M},\lambda}_{\mathrm{loc}}(M;\Lambda^{k+1})$ for any $\lambda\in (0,n)$. If $M_1$ is a $C^{1,1}$ contractible submanifold of $M$ with boundary, $\mathrm{dist}\, (M_1,bM)>0$, there exists $\widetilde{u}\in  W^{1,p(\cdot)}(M_1,\Lambda^k)$ such that $d\widetilde u=\omega$ in $M_1$, $|\nabla \widetilde u|\in \mathrm{L}^{p^-_M,\lambda}(M_1)$ for any $\lambda\in (0,n)$, and $\widetilde u\in C^{0,\gamma}(M_1;\Lambda^k)$ for any $\gamma\in (0,1)$.
\end{theorem}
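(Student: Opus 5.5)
We argue in a single coordinate chart $\Omega$ and use the Acerbi--Mingione comparison scheme, with the higher integrability of Theorem~\ref{higher integrability} as the key input. Fix a ball $B_{2R}(x_0)\Subset\Omega$ with $R\le R_0$ small. Freeze coefficients at $x_0$: set $p_0=p(x_0)$ (or $p_2=\sup_{B_R}p$), $a_0=a(x_0)$, and the Euclidean metric $g_{ij}(x_0)$. Let $h$ be the solution in $B_R$ of the frozen homogeneous problem
\[
d^*\bigl(a_0\mathcal{A}(p_2,h)\bigr)=0,\qquad dh=0 \quad\text{in } B_R,\qquad t(h-\omega)=0 \ \text{on } \partial B_R .
\]
To construct $h$, we first gauge-fix $\omega=dv$ on $B_R$ via Theorem~\ref{T:dcb}. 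We then minimize $\int a_0\mathcal{E}(p_2,dw)$ over $w\in v+W^{1,p_2}_{d^*,T}$, in the spirit of Proposition~\ref{existence of minimizers}, and set $h=dw$. This requires $\omega\in L^{p_2}(B_R)$, which holds for $R$ small because $p_2\le p(x)(1+\sigma)$ on $B_R$ once $\Theta_p(2R)\le\sigma p^-$; this is where Theorem~\ref{higher integrability} enters. Theorem~\ref{Uhlenbeck estimate}, in the form \eqref{sup_est_hom2}, then gives for $h$ the decay
\[
\int_{B_\rho}\mathcal{E}(p_2,h)\le c(\rho/R)^n\int_{B_R}\mathcal{E}(p_2,h).
\]

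Next we compare $\omega$ and $h$. The difference $\omega-h=d(v-w)$ has $t(v-w)=0$ on $\partial B_R$, so $\phi=v-w$ is an admissible test form in both equations. Subtracting the two weak formulations, with $F$ replaced by $F-(F)_{B_R}$ (a constant form is co-closed, as in \eqref{eq:p1}), produces four error terms:
\begin{itemize}
\item $\int (a(x)-a_0)\langle\mathcal{A}(p(x),\omega),\omega-h\rangle$, controlled by the modulus of continuity of $a$;
\item $\int a_0\langle\mathcal{A}(p(x),\omega)-\mathcal{A}(p_2,\omega),\omega-h\rangle$, controlled via $|\mathcal{A}(p,\eta)-\mathcal{A}(p_2,\eta)|\le c|p-p_2|(1+|\eta|^{p_2-1+\delta})$ together with higher integrability;
\item the term arising from the metric variation $g(x)-g(x_0)$, which is $O(R)$;
\item $\int\langle F-(F)_{B_R},\omega-h\rangle$, bounded by the BMO seminorm times $|B_R|$ through Young's inequality and John--Nirenberg.
\end{itemize}
Monotonicity of $\mathcal{A}(p_2,\cdot)$ bounds $\int|V(\omega)-V(h)|^2$ from below, and hence bounds $\int\mathcal{E}(p_2,\omega-h)$-type quantities after the usual treatment of the cases $p_2<2$ and $p_2\ge2$. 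The outcome is
\[
\int_{B_R}|\omega-h|^{p_2}\le \varepsilon(R)\int_{B_{2R}}(1+|\omega|^{p(x)})+c\,R^n,
\]
with $\varepsilon(R)\to0$ as $R\to 0$.

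The main obstacle is this last estimate. The exponent mismatch term carries a factor $\Theta_p(R)\log(1/R)$ that must be absorbed, and $R^{-n\Theta_p(R)}$ factors appear when converting the averages $\fint|\omega|^{p(x)(1+\sigma)}$ back into $\int|\omega|^{p(x)}$. The vanishing log-H\"older condition \eqref{log holder vanishing} is exactly what makes these factors tend to $1$ and $\varepsilon(R)\to0$. I would handle it as Acerbi--Mingione do: apply \eqref{higher integrability estimate} with $\xi=(F)_{B_{2R}}$, use $|p(x)-p_2|\log(e+|\omega|)\le c\,\Theta_p(R)\log(1/R)+\sigma$-small terms, and absorb these via the higher integrability.

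Combining the decay of $h$ with the comparison estimate gives, for $\Phi(\rho)=\int_{B_\rho}(1+|\omega|^{p(x)})$,
\[
\Phi(\rho)\le c\bigl[(\rho/R)^n+\varepsilon(R)\bigr]\Phi(2R)+cR^{n}.
\]
The Giaquinta--Giusti iteration lemma then yields $\Phi(\rho)\le c\rho^\lambda$ for every $\lambda<n$, once $\varepsilon(R)$ is small enough relative to $\lambda$. Since $|\omega|^{p^-}\le 1+|\omega|^{p(x)}$, this gives $\omega\in L^{p^-,\lambda}_{\mathrm{loc}}$.

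For the statement about $\widetilde u$, solve $d\widetilde u=\omega$, $d^*\widetilde u=0$, $\nu\lrcorner\widetilde u=0$ on $M_1$ by Theorem~\ref{T:dcb}, which gives $\widetilde u\in W^{1,p(\cdot)}$. Morrey regularity of $\nabla\widetilde u$ follows from that of $\omega$, since the div--curl system is linear elliptic with constant coefficients in each chart and such systems preserve Morrey spaces; equivalently, one can localize with a cutoff and apply Calder\'on--Zygmund estimates in Morrey spaces. Finally, Morrey's Dirichlet growth lemma together with $\lambda>n-p^-$ arbitrarily close to $n$ gives $\widetilde u\in C^{0,\gamma}$ with $\gamma=1-(n-\lambda)/p^-$ arbitrarily close to $1$.
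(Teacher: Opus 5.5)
Your proposal is correct in outline and follows essentially the same route as the paper: the Acerbi--Mingione comparison with a frozen $p_2$-problem obtained by minimizing over a gauge-fixed potential, Uhlenbeck--Hamburger decay, and John--Nirenberg for the $F$-term. It also uses, as the paper does, the $\log(1/R)$-splitting with higher integrability for the exponent-mismatch term (where the vanishing log-H\"older condition makes the perturbation small), the Giaquinta--Giusti iteration, and finally the Neumann div--curl problem with Morrey-space Calder\'on--Zygmund bounds and Morrey's Dirichlet growth theorem for $\widetilde u$. The differences (freezing the metric as well, and iterating the monotone $\int_{B_\rho}(1+|\omega|^{p(x)})$ instead of the paper's quasi-monotone $\int_{B_\rho}(\mathcal{E}(p_2(\rho),\omega)+1)$) are cosmetic, but two details need fixing: in a chart with non-constant $g$ a constant form is not co-closed, so either use $\ast^{-1}(\ast F)_{B_R}$ as the paper does or put the $g$ versus $g(x_0)$ discrepancy of the $F$-pairing into your metric error term; and the div--curl system for $\widetilde u$ on $M_1$ has Lipschitz, not constant, coefficients and needs a boundary Morrey estimate (the paper cites Sil--Sengupta, Theorem~35), which a cutoff argument alone does not give.
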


By $BMO (M;\Lambda^k)$ we mean the set of $k$-forms with bounded mean oscillation, see Section~\ref{ssec:MC}. We are not claiming $F \in BMO$ is sharp for Theorem~\ref{th:MorreyBounds}, but this is a transparent sufficient condition. A more detailed version of this estimate is provided below.


Since our estimates are local in nature, we again work in a coordinate patch $(U,\varphi)$, so we assume that $\omega$ is an $\mathbb{R}^N$-valued form on a domain $\Omega=\varphi(U) \subset \mathbb{R}^n$,  probably with a nonconstant metric with Lipschitz coefficients. Fix $x_{0} \in \Omega$ and choose $\bar{R}>0$ such that $Q_{8\bar{R}}\left(x_{0}\right) \subset \subset \Omega$. Clearly, $\omega$ is a weak solution to \eqref{main system} in $Q_{8\bar{R}}\left(x_{0}\right)$. Replacing $\Omega$ by $Q_{8\bar{R}}\left(x_{0}\right)$ in Theorem \ref{higher integrability}, we determine the constants $R_{0}$, $c$, and $\sigma_{0}$ such that \eqref{higher integrability estimate} holds for $R<R_0$ and any $Q_R \subset Q_{8 \bar{R}}(x_0)$. \smallskip 

        Now we choose $\sigma \leq \min \left\lbrace p^{-}_\Omega -1, \sigma_{0}/2\right\rbrace $ and determine $R_{1}>0$ by the conditions 
		\begin{align*}
			\Theta_p \left(8R_{1}\right) < \sigma /4 \qquad \text{ and } \qquad R_{1} < \min \left\lbrace R_{0}/16, 1/16 \right\rbrace. 
		\end{align*} 
		
        Set 
		\begin{align*}
			p_{\text{max}}:= \max\limits_{x\in \overline{B}_{R_{1}}\left(x_{0}\right) }p\left(x\right).
		\end{align*}
        By our choice of parameters  for $x\in \overline{B}_{R_1}(x_0)$ we have 
        \begin{equation}\label{pest1}
        \begin{aligned}
			p_{\text{max}}\left( 1+ \frac{\sigma}{4} \right) &\leq \left( p(x) + \Theta_p\left(2R_1\right) \right)\left( 1 + \frac{\sigma}{4}\right) \\
            &\leq p(x) \left(1+\frac{\sigma}{4}\right)^2 \leq p(x) (1+\sigma). 
		\end{aligned}
        \end{equation}
Thus, by Theorem \ref{higher integrability}, we have $\omega \in L^{p_{\text{max}}}\left(B_{R_{1}}\left(x_{0}\right) ; \varLambda^{k+1}\right)$ and we can find $\bar{u} \in W^{1, p_{\text{max}}}\left( B_{R_{1}/2}\left(x_{0}\right); \varLambda^{k}\right)$ such that (see Section~\ref{Div_curl_systems})
		\begin{align}\label{gauge fixing 2}
			\left\lbrace \begin{aligned}
				d\bar{u} &=\omega &&\text{ in } B_{R_{1}/2}(x_{0}), \\
				d^{\ast}_0\bar{u} &= 0 &&\text{ in } B_{R_{1}/2}(x_{0}), \\
				\nu \lrcorner_0 \bar{u} &= 0 &&\text{ on } \partial B_{R_{1}/2}(x_{0}).
			\end{aligned}\right.
		\end{align}
Here $d^*_0$ and $\lrcorner_0$ are taken with respect to the standard Eucludian metric.

		Consider a ball $B_{4R}\left(x_{c}\right)\equiv B_{4R} \subset \subset B_{R_{1}/4}\left(x_{0}\right)$ with radius $0 < R < R_{1}/16$ and set 
		\begin{align*}
			p_{1}:= \min_{x\in \overline{B}_{4R}}p\left(x\right) \quad \text{ and } \quad p_{2}:= \max\limits_{x\in \overline{B}_{4R}}p\left(x\right). 
		\end{align*}
		From \eqref{pest1} it follows that 
		\begin{equation}\label{pest2}
			p_{2} \left( 1 + \frac{\sigma}{4}\right)\leq p(x) (1+\sigma) 
		\end{equation}
		for all $x \in \overline{B}_{4R}$.

		
        
        We set 
		\begin{align*}
			K:= 1+\int\limits_{B_{R_{1}/4}\left(x_{0}\right)} \left\lvert \omega \right\rvert^{p_{\text{max}}}\,dV, 
		\end{align*}
        which is finite by Theorem \ref{higher integrability}. We also denote 
		\begin{equation}\label{p2def}
			p_{2}\left(r\right):= \max\limits_{x\in \overline{B}_{4r}}p\left(x\right),
		\end{equation}
		where $B_{4r}\left(x_{c}\right)\equiv B_{4r} \subset \subset B_{R_{1}/32}\left(x_{0}\right)$.

        For a weight function $a(\cdot)$ by $\Theta_a$ we denote its modulus of continuity. Introduce the set of parameters
        $$
        data_1 =\{n, N, k, p^{-}_\Omega, p^{+}_\Omega, a^-_\Omega, a^+_\Omega, c_{\mathrm{log}}(p), \Theta_{a}, K_{0}[\Omega], K\}.
        $$
		In this step we prove 
		
		\begin{theorem}\label{MorreyBound}
        Let the variable exponent $p(\cdot)$ satisfy \eqref{definition log holder}, \eqref{log holder vanishing}, and the weight function $a$ satisfy\eqref{acond} and be continuous in $\Omega$. Let $F$ be in $BMO (B_{R_1}(x_0), \Lambda^{k+1})$ and $\omega \in L^{p(\cdot)}(\Omega;\Lambda^{k+1})$ be a weak solution of \eqref{main system} in $\Omega$. For any $0 < \tau <n$ there exists a choice of a positive radius $\widetilde{R}\in (0, R_1/16)$, depending only on $data_1$, and a positive constant $c$, depending on $\tau$, $data_1$, $\mu^+$, and the norm of $F$ in $BMO (B_{R_1}(x_0), \Lambda^{k+1})$, such that 
			\begin{align*}
				\int\limits_{B_{r}\left(x_{c}\right)} \left\lvert \omega \right\rvert^{p_{2}\left(r\right)}\,dV \leq c r^{n-\tau},
			\end{align*}
			whenever $0 < r <  \widetilde{R}/\left(64\right)^{2}$ and  $x_{c} \in \overline{B}_{\widetilde{R}}\left(x_{0}\right)$ .
		\end{theorem}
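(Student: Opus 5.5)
The argument follows Acerbi--Mingione: freeze the coefficients, compare $\omega$ with the solution of a frozen homogeneous problem, and close with the iteration lemma of Giaquinta and Giusti. Throughout, fix $x_c\in\overline B_{\widetilde R}(x_0)$ and $0<r\le R$ with $R<\widetilde R$.

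\emph{Frozen comparison problem.} On $B_{4R}=B_{4R}(x_c)$ freeze the data:
\begin{itemize}
\item $p_2=p_2(R)$, $a_0=a(x_c)$;
\item the metric $g_{ij}(x_c)$ (constant coefficients);
\item replace $F$ by $F-(F)_{B_{4R}}$, since constants are co-closed.
\end{itemize}
Using the gauge $\bar u$ from \eqref{gauge fixing 2}, let $h$ minimize $\int_{B_{4R}}a_0\,\mathcal E(p_2,dh)$ with frozen metric among $h\in\bar u+W^{1,p_2}_{d^*,T}(B_{4R})$. Such $h$ exists by the argument of Proposition~\ref{existence of minimizers} with constant exponent. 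Then $\omega_0:=dh$ is a weak solution of \eqref{eq:UH} with $t(\omega_0-\omega)=0$ on $\partial B_{4R}$. By Theorem~\ref{Uhlenbeck estimate}, in the form \eqref{sup_est_hom2}, for $\rho\le 2R$
\[
\int_{B_\rho}\mathcal E(p_2,\omega_0)\le c(\rho/R)^n\int_{B_{4R}}\mathcal E(p_2,\omega_0)\le c(\rho/R)^n\int_{B_{4R}}\mathcal E(p_2,\omega),
\]
where the last inequality is minimality of $h$, with $\bar u$ as competitor.

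\emph{Comparison estimate.} Test the equation for $\omega$ and the frozen equation with $\phi=\bar u-h\in W^{1,p_2}_T$. This is admissible because $\omega\in L^{p_2}$ by \eqref{pest2} and Theorem~\ref{higher integrability}. Subtracting, and using monotonicity of $\mathcal A(p_2,\cdot)$, $\int|\omega-\omega_0|^{p_2}$ (for $p_2<2$, the $V$-function analogue) is bounded by four error terms:
\begin{enumerate}
\item $\int|a(x)-a_0||\mathcal A(p(x),\omega)||\omega-\omega_0|$, controlled by $\Theta_a(4R)$;
\item $\int a_0|\mathcal A(p(x),\omega)-\mathcal A(p_2,\omega)||\omega-\omega_0|$, controlled via the pointwise bound $|\mathcal A(p(x),z)-\mathcal A(p_2,z)|\le c\,\Theta_p(4R)\bigl(1+|z|^{p_2-1+\delta}\bigr)\log(e+|z|)$;
\item the Lipschitz variation of the metric, of size $cR$;
\item $\int|F-(F)_{B_{4R}}||\omega-\omega_0|$.
\end{enumerate}
Term 2 is handled by the higher integrability estimate \eqref{higher integrability estimate}, with exponent $p_2(1+\sigma/4)\le p(x)(1+\sigma)$. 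To convert $\fint|\omega|^{p(x)(1+\sigma)}$ back to $\fint|\omega|^{p_2}$, write $R^{-n(p_2-p_1)/p_1}\le c$ using $K$ and log-H\"older, exactly as the term $I$ was treated in the proof of Theorem~\ref{higher integrability}. The logarithmic factor produces $\Theta_p(R)\log(1/R)$, which the vanishing log-H\"older condition \eqref{log holder vanishing} makes small. Term 4 is bounded by Young's inequality and John--Nirenberg, giving $\|F\|_{BMO}^{p_2'}R^n$. The result is
\[
\int_{B_{2R}}\mathcal E(p_2,\omega-\omega_0)\le \varepsilon(R)\int_{B_{4R}}\bigl(1+|\omega|^{p_2}\bigr)+cR^n,
\]
with $\varepsilon(R)\to0$ as $R\to0$.

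\emph{Iteration.} Combining the two estimates gives, for $\rho\le R$,
\[
\Phi(\rho)\le c\bigl[(\rho/R)^n+\varepsilon(R)\bigr]\Phi(4R)+cR^n,\qquad \Phi(\rho)=\int_{B_\rho}\bigl(\mu^2+|\omega|^2\bigr)^{p_2/2}.
\]
Choose $\widetilde R$ so that $\varepsilon(\widetilde R)$ is below the threshold of the Giaquinta--Giusti lemma for the exponent $n-\tau$. The lemma then yields $\Phi(r)\le c\,r^{n-\tau}$.

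The exponent $p_2(r)$ changes with the radius, so the iteration must be run with the fixed exponent $p_2(R)$. Afterwards pass to $|\omega|^{p_2(r)}\le 1+|\omega|^{p_2(R)}$, which is valid since $p_2(r)\le p_2(R)$. This explains the extra factor $64^2$ in the range of $r$.

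The main obstacle is the second step, specifically the $p(x)$ versus $p_2$ error in term 2. It must simultaneously use the higher integrability with only a fixed margin $\sigma$ and the smallness of $\Theta_p(R)\log(1/R)$. The normalizing constant $K$ has to absorb the $R^{-n(p_2-p_1)}$ factors, so that $\varepsilon(R)$ depends only on $data_1$.
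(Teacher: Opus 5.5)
Your comparison step is essentially sound, but the iteration step has a genuine gap. You run the Giaquinta--Giusti iteration with one fixed exponent $p_2(R)$ down to all radii $r<R$, and only at the end pass to $|\omega|^{p_2(r)}\le 1+|\omega|^{p_2(R)}$. The lemma, however, needs the decay inequality for the \emph{same} function at \emph{every} intermediate scale $R'<R$, with one fixed $\varepsilon<\varepsilon_0$.

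Suppose at scale $R'$ you freeze the exponent at $p_2(R)$. Then the exponent error on $B_{4R'}$ is bounded only by $\Theta_p(4R)$, not by $\Theta_p(4R')$. Meanwhile the logarithmic factor at that scale is of size $\log(1/R')$, coming from the normalisation $\fint_{B_{4R'}}|\omega|^{p_2}\lesssim K R'^{-n}$. So your $\varepsilon$ becomes $\Theta_p(4R)\log(1/R')$, which is unbounded as $R'\to 0$.

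Suppose instead you freeze at $p_2(R')$. Then the decay you obtain is for $\int\mathcal{E}(p_2(R'),\omega)$, and this does not control $\int_{B_\rho}|\omega|^{p_2(R)}$, because $p_2(R)\ge p_2(R')$. Either way the hypothesis of the iteration lemma is not verified, so the bound for $\int_{B_r}|\omega|^{p_2(R)}$ on which your last step rests is not established.

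The fix, which is the Acerbi--Mingione device the paper uses, is to let the exponent follow the radius at which the quantity is evaluated: $\Phi(\rho):=\int_{B_\rho}(\mathcal{E}(p_2(\rho),\omega)+1)\,dV$. Since $\rho\mapsto p_2(\rho)$ is nondecreasing, up to constants depending on $p^{\pm}$ you have $\Phi(\rho)\le\int_{B_\rho}(\mathcal{E}(p_2(R),\omega)+2)\,dV$ for $\rho\le R$, and $\int_{B_{4R}}\mathcal{E}(p_2(R),\omega)\,dV\le\Phi(4R)$. Hence your scale-$R$ comparison with exponent $p_2(R)$ gives $\Phi(\rho)\le A[(\rho/R)^n+\varepsilon(R)]\Phi(4R)+BR^n$ for a single function $\Phi$. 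This $\Phi$ is not monotone, but it satisfies $\Phi(s)\le M\Phi(t)$ for $s\le t$, which is exactly what Lemma~\ref{GG} requires. The lemma then yields $\Phi(r)\le c\,r^{n-\tau}$, i.e.\ the claim with exponent $p_2(r)$ directly, with no change of exponent afterwards.

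Two smaller points:
\begin{itemize}
\item In the original equation the metric is $g$, and a constant-coefficient form is not $g$-co-closed, so subtracting $(F)_{B_{4R}}$ is not free. Use $\ast^{-1}(\ast F)_{B_{4R}}$ as the paper does. Freezing the metric is also unnecessary, since Theorem~\ref{Uhlenbeck estimate} already allows Lipschitz $g_{ij}$.
\item In your term 2, the conversion to $\fint|\omega|^{p_2}$ costs only the factor $R^{-n(p_2-p_1)/p_1}$ if you first use Jensen to pass to $\bigl(\fint|\omega|^{p_1(1+\sigma)}\bigr)^{p_2/(p_1(1+\sigma))}$ before applying the reverse H\"older inequality. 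Done naively, it leaves a factor of order $R^{-cn\sigma}$.
\end{itemize}
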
 
\begin{proof}

       By our choice of $0<\widetilde R < R_1/16$, for any $0<R< \widetilde{R}/32 $ and any $x_{c} \in \overline{B}_{\widetilde{R}}\left(x_{0}\right)$ we have $B_{4R}\left(x_{c}\right)\equiv B_{4R} \subset \subset B_{R_{1}/4}\left(x_{0}\right)$. 

Recall the notation \eqref{Aedef}. By \eqref{weak formulation in WTdpx}, we have 
		\begin{align}\label{weak formulation bar u in WT1p in BR}
			\int\limits_{\Omega} \bigl\langle a(x)\mathcal{A}(p,\omega), d\phi \bigr\rangle\,dV = \int\limits_{\Omega}\left\langle F, d\phi \right\rangle \,dV 
				\end{align}
 for every  $\phi \in W_{T}^{d,p\left(\cdot\right)}\left( \Omega; \varLambda^{k}\right)$ and so for every $\phi \in W_{T}^{d,p\left(\cdot\right)}\left( B_{R}; \varLambda^{k}\right)$. 

		Let $x_{\text{max}} \in \overline{B}_{4R}$ be such that $ p\left(x_{\text{max}}\right) = p_{2}:=p_2(R)$. Since $\bar{u} \in W^{1,p_{\text{max}}} \left( B_{R}; \varLambda^{k}\right)\subset  W^{1,p_{2}}\left( B_{R}; \varLambda^{k}\right)$, by Proposition \ref{existence of minimizers}, we find $v \in W^{1, p_{2}}\left( B_{R}; \varLambda^{k}\right)$ such that
		\begin{align}\label{frozen system for comparison}
			\left\lbrace \begin{aligned}
				d^{\ast}\bigl( a(x_{\text{max}}) \mathcal{A}(p_2, dv) \bigr)  &=0 &&\text{ in }  B_{R},\\ 
				d^{\ast}v &= 0 &&\text{ in }  B_{R}, \\
				\nu\wedge v &= \nu \wedge \bar{u} &&\text{ on } \partial B_{R}. 
			\end{aligned}\right. 
		\end{align}
		It is quite possible that $x_{\text{max}} \notin B_{R}$, but this would not be a problem. Note that the form $\bar{u}-v$ is in $ W_{d^{\ast}, T}^{1,p_{2}}\left( B_{R}; \varLambda^{k}\right)$ and consequently, also in $W_{d^{\ast}, T}^{1,p\left(\cdot\right)}\left( B_{R}; \varLambda^{k}\right)$. Since $v$ is a unique minimizer of the functional 
		\begin{align*}
			v \mapsto \int\limits_{B_{R}} a( x_{\text{max}} ) \mathcal{E}(p_2,dv) \, dV \quad \text{ in } \bar{u} + W_{d^{\ast}, T}^{1,p_{2}}\left( B_{R}; \varLambda^{k}\right),  
		\end{align*}
		by minimality we have 
        \begin{equation}\label{minimality estimate}
		\begin{aligned}
			\int\limits_{B_{R}} \mathcal{E}(p_2,dv) \,dV \leq \int\limits_{B_{R}} \mathcal{E}(p_2,\omega)\,dV. 
            \end{aligned}
            \end{equation}
In particular, 
\begin{equation}\label{minimality1}
\int\limits_{B_R} |\omega - dv|^{p_2}\, dV \leq c(p^{+}_\Omega)  \int\limits_{B_R} \mathcal{E}(p_2,\omega)\, dV.
\end{equation}
            
		By virtue of \eqref{minimality estimate} and inequality~\eqref{sup_est_hom} in Theorem \ref{Uhlenbeck estimate} (see \eqref{sup_est_hom2} in the present notation) applied to the frozen system \eqref{frozen system for comparison}, for $\rho\leq R/2$ we have the following estimate
		\begin{equation*}
			\int\limits_{B_{\rho}} \mathcal{E}(p_2,dv)\, dV \leq c \left(\frac{\rho}{R} \right)^{n} \fint\limits_{B_{R}}  \mathcal{E}(p_2,dv)\, dV \leq c \left(\frac{\rho}{R} \right)^{n} \fint\limits_{B_{R}}  \mathcal{E}(p_2,\omega)\, dV.
       \end{equation*}
By Lemma~\ref{lemma_algebra} we have 
$$
\mathcal{E}(p_2,\omega) \leq c(p^{-}_\Omega, p^{+}_\Omega) \bigl( \mathcal{E}(p_2,dv) + \langle \mathcal{A}(p_2,\omega) - \mathcal{A}(p_2,dv), \omega-dv \rangle\bigr).
$$
Using this we deduce 
		\begin{align}
			\int\limits_{B_{\rho}} \mathcal{E}(p_2,\omega) \, dV 
                &\leq  c \int\limits_{B_{\rho}} \mathcal{E}(p_2,dv) \, dV 
             + c\int\limits_{B_{\rho}}  \langle \mathcal{A}(p_2,\omega) - \mathcal{A}(p_2,dv), \omega-dv \rangle\, dV \notag\\
                &\leq c \left(\frac{\rho}{R}\right)^{n}\int\limits_{B_{R}} \mathcal{E}(p_2,\omega)\, dV 
            + c\int\limits_{B_R}\langle \mathcal{A}(p_2,\omega) - \mathcal{A}(p_2,dv), \omega-dv \rangle\, dV. \label{decay for Morrey bound}
		\end{align}
        
		Now, we need to estimate the last term on the right of \eqref{decay for Morrey bound}. 
        Plugging $\bar{u} - v$ as a test function in \eqref{weak formulation bar u in WT1p in BR} and integrating by parts, we obtain 
		\begin{align}
			\int\limits_{B_{R}} \bigl\langle a(x) \mathcal{A}(p(x),\omega), \omega - dv \bigr\rangle\, dV &= \int\limits_{B_{R}}\bigl\langle F, \omega - dv\bigr\rangle\, dV \notag\\&= \int\limits_{B_{R}}\bigl\langle F - {\ast^{-1}(\ast F)_{R}}, \omega - dv\bigr\rangle\, dV. \label{testing u -w for F}
		\end{align}
Here  we use that $\omega -dv = d (\bar{u}-v)$, $\nu \wedge (\bar{u}-v)=0$ on $\partial B_R$, and the Hodge dual of a closed form is co-closed, thus
$$
\int\limits_{B_R} \bigl\langle \ast^{-1}(\ast F)_{R}, \omega - dv\bigr\rangle\, dV =0.
$$

		On the other hand, by plugging $v -\bar{u} $ as a test function in the weak formulation for the ``frozen'' system \eqref{frozen system for comparison}, we deduce 
		\begin{align*}
			0 &= \int\limits_{B_{R}}\bigl\langle a(x_{\text{max}}) \mathcal{A}(p_2,dv), dv - \omega \bigr\rangle\, dV \\
			&\begin{multlined}[t]
				= \int\limits_{B_{R}}\left\langle   a(x_{\text{max}})[ \mathcal{A}(p_2,\omega)- \mathcal{A}(p_2,dv) ], \omega - dv \right\rangle\, dV \\ - \int\limits_{B_{R}}\left\langle   a(x_{\text{max}})\mathcal{A}(p_2,\omega), \omega - dv \right\rangle\, dV.
			\end{multlined}
		\end{align*}
		This implies
        \begin{align}\label{estimate 1}
			a^-_\Omega \int\limits_{B_{R}}  \bigl\langle\mathcal{A}(p_2,\omega) &-\mathcal{A}(p_2,dv), \omega-dv \bigr\rangle\, dV \\ 
            &\leq \int\limits_{B_{R}}\bigl\langle   a(x_{\text{max}})\bigl[  \mathcal{A}(p_2,\omega) -\mathcal{A}(p_2,dv) \bigr], \omega - dv \bigr\rangle\, dV \notag 
			\\&= \int\limits_{B_{R}}\bigl\langle   a(x_{\text{max}})\mathcal{A}(p_2,\omega), \omega - dv \bigr\rangle\, dV =I_{1} + I_{2},\notag
		\end{align}
		where 
		\begin{align*}
			I_{1}&:= -\int\limits_{B_{R}}\bigl\langle   a(x)\mathcal{A}(p(x),\omega) - a(x_{\text{max}}) \mathcal{A}(p_2,\omega), \omega - dv \bigr\rangle \, dV\intertext{ and }
			I_{2} &:= \int\limits_{B_{R}}\bigl\langle   a(x) \mathcal{A}(p(x),\omega), \omega - dv \bigr\rangle\, dV. 
		\end{align*}
        
		Using \eqref{testing u -w for F}, we arrive at 
		\begin{align}\label{I2eq}
			I_{2} =  \int\limits_{B_{R}}\bigl\langle   a(x) \mathcal{A}(p(x),\omega), \omega- dv \bigr\rangle\, dV = -\int\limits_{B_{R}}\bigl\langle F - \ast^{-1}(\ast F)_{R}, \omega - dv\bigr\rangle\, dV.
		\end{align}
		For $I_{1}$, we write $I_{1}= I_{11} + I_{12},$ where 
		$$
			I_{11}:= -\int\limits_{B_{R}}\bigl\langle   a(x)\bigl[ \mathcal{A}(p(x),\omega) - \mathcal{A}(p_2,\omega)\bigr], \omega - dv \bigr\rangle\, dV
            $$
            and
            $$
			I_{12} := -\int\limits_{B_{R}}\bigl\langle   \left[ a(x) - a(x_{\text{max}})\right] \mathcal{A}(p_2,\omega), \omega - dv \bigr\rangle\, dV. 
		$$
		Now we estimate each of the terms. Using, \eqref{I2eq}, the H\"older inequality and \eqref{minimality1} we have  
		\begin{align}\label{estimate of F term}
			\left\lvert I_{2}\right\rvert &\leq \int\limits_{B_{R}}\left\lvert \ast F - \left(\ast F\right)_{R}\right\rvert\cdot \left\lvert \omega - dv\right\rvert\, dV \notag\\
			&\leq \biggl(~ \int\limits_{B_{R}} \left\lvert \ast F - \left(\ast F\right)_{R} \right\rvert^{\frac{p_{2}}{p_{2}-1}}\, dV\biggr)^{\frac{p_{2}-1}{p_{2}}}\biggl(~ \int\limits_{B_{R}} \left\lvert \omega - dv \right\rvert^{p_{2}}\, dV\biggr)^{\frac{1}{p_{2}}} \notag\\
			&\leq c \biggl(~ \int\limits_{B_{R}} \left\lvert \ast F - \left(\ast F\right)_{R} \right\rvert^{\frac{p_{2}}{p_{2}-1}}\, dV\biggr)^{\frac{p_{2}-1}{p_{2}}}\biggl(~ \int\limits_{B_{R}}  \mathcal{E}(p_2,\omega) \, dV\biggr)^{\frac{1}{p_{2}}} \notag\\
			&\leq  \varepsilon \int\limits_{B_{R}}  \mathcal{E}(p_2,\omega) \, dV+ C_{\varepsilon} \int\limits_{B_{R}} \left\lvert \ast F - \left(\ast F\right)_{R} \right\rvert^{\frac{p_{2}}{p_{2}-1}}\, dV,
		\end{align}
		for every $\varepsilon>0$, by the Young's inequality. Now we estimate  
		\begin{align}\label{estimate of a term}
			\left\lvert I_{12}\right\rvert &\leq \int\limits_{B_{R}}   \left\lvert  a(x) - a(x_{\text{max}})\right\rvert\cdot (\mu^2+\left\lvert \omega \right\rvert^2)^\frac{p_{2}-1}{2}\cdot\left\lvert \omega - dv \right\rvert\, dV \notag\\
			&\leq \Theta_{a}\left(4R\right)\int\limits_{B_{R}}(\mu^2+\left\lvert \omega \right\rvert^2 )^\frac{p_{2}-1}{2}\cdot\left\lvert \omega - dv\right\rvert\, dV \notag\\
			&\leq \Theta_{a}\left(4R\right) \biggl(~\int\limits_{B_{R}}(\mu^2+\left\lvert \omega \right\rvert^2)^\frac{p_{2}}{2}\, dV\biggr)^{\frac{p_{2}-1}{p_{2}}}\biggl(~\int\limits_{B_{R}} \left\lvert \omega -dv \right\rvert^{p_{2}}\, dV \biggr)^{\frac{1}{p_{2}}} \notag\\
			&\leq c\Theta_{a}(4R)  \int\limits_{B_R}\mathcal{E}(p_2,\omega)\, dV,
		\end{align}
        where we again use \eqref{minimality1}.

        
		Finally, we estimate the most difficult term $I_{11}$. By the Newton-Leibniz formula, 
        \begin{align*}
        \mathcal{A}(q_1,\eta)- \mathcal{A}(q_2,\eta) &= \int\limits_{q_1}^{q_2} (\mu^2 + |\eta|^2)^\frac{s-2}{2}\eta  \log (\mu^2 +|\eta|^2)^{1/2} \, ds \\
        &\leq |q_1-q_2|\cdot \max ((\mu^2 +|\eta|^2)^\frac{q_1-1}{2},(\mu^2 +|\eta|^2)^\frac{q_2-1}{2}) \cdot \log (\mu^2 +|\eta|^2)^{1/2}.
        \end{align*}
         Denote
            $$
            g(x,t) = \max \bigl((\mu^2+t^2)^\frac{p_{2}-1}{2}, (\mu^2+t^2)^\frac{p(x)-1}{2} \bigr) \cdot\left\lvert \log   (\mu^2+t^2)^{1/2} \right\rvert.
            $$
        We have 
		\begin{align*}
			\left\lvert I_{11}\right\rvert &\leq a^+_\Omega\int\limits_{B_{R}}\left\lvert  \mathcal{A}(p(x),\omega) -  \mathcal{A}(p_2,\omega) \right\rvert \cdot \left\lvert \omega - dv\right\rvert\, dV \notag \\
			&\leq a^+_\Omega\Theta_p(4R)\int\limits_{B_{R}} g(x,|\omega|)  \left\lvert \omega - dv\right\rvert\, dV \notag \\
            &\begin{aligned}
                = a^+_\Omega\Theta_p(4R)  \int\limits_{B_R \cap \{ |\omega|\geq 1+\mu\}} & g(x,|\omega|)  \left\lvert \omega - dv\right\rvert \, dV\\
            &+ a^+_\Omega\Theta_p(4R)\int\limits_{B_R \cap \{ |\omega|\leq 1+\mu\}}g(x,|\omega|)  \left\lvert \omega - dv\right\rvert\, dV.
            \end{aligned}
            \end{align*}
Since for $t\in (0,2(1+\mu))$ there holds
$$
\max (t^{p(x)-1}, t^{p_2-1}) |\log t| \leq \max (t^{p^{-}_\Omega-1},t^{p^{+}_\Omega-1}) |\log t| \leq C (p^{-}_\Omega,p^{+}_\Omega,\mu), 
$$
we have
\begin{align*}
a^+_\Omega\Theta_p(4R) &\int\limits_{B_R \cap \{ |\omega|\leq 1+\mu\}} g(x,|\omega|)  \left\lvert \omega - dv\right\rvert\, dV\\ 
&\leq c a^+_\Omega\Theta_p(4R)\int\limits_{B_R} \left\lvert \omega - dv\right\rvert\, dV  \leq ca^+_\Omega\Theta_p(4R) \biggl(~\int\limits_{B_R} \left\lvert \omega - dv\right\rvert^{p_2}\,dV + R^n \biggr)\\
&\leq ca^+_\Omega\Theta_p(4R) \biggl(~\int\limits_{B_R} \mathcal{E}(p_2,\omega)\,dV + R^n \biggr)
\end{align*}
by \eqref{minimality1}.


For the integral over the set $B_R \cap \{|\omega|\geq 1+\mu\}$, by the Young inequality we further have 
            \begin{align}
            \mathfrak{w}&:=a^+_\Omega\Theta_p(4R) \int\limits_{B_R \cap \{ |\omega|\geq 1+\mu\}} g(x,|\omega|)  \left\lvert \omega - dv\right\rvert\, dV \notag\\ 
			&\begin{multlined}[t]
			   \leq a^+_\Omega \Theta_p(4R)\log\left(\frac{1}{R}\right)\int\limits_{B_{R}} |\omega - dv|^{p_2}\, dV\\
            + a^+_\Omega \Theta_p(4R)\log\left(\frac{1}{R}\right)\int\limits_{B_{R}\cap \{ |\omega|\geq 1+\mu\}}(\mu^2+|\omega|^2)^\frac{p_2}{2} \left(\frac{\log (\mu^2+|\omega|^2)^{1/2}}{\log (1/R)}\right)^\frac{p_2}{p_2-1}\,dV . 
			\end{multlined} 
			   \label{frak_w}
            \end{align}
For $1+\mu \leq |\omega| \leq R^{-n/p_1}$ there holds 
\begin{equation}\label{te0}
\left(\frac{\log (\mu^2+|\omega|^2)^{1/2}}{\log (1/R)}\right)^\frac{p_2}{p_2-1}\leq c(n,p^{-}_\Omega,p^{+}_\Omega),
\end{equation}
For $t\geq R^{-n/p_1}$ the function 
$$
f(t) = t^{p_2 - (1+\sigma)p(x)} \biggl(\frac{\log t}{\log(1/R)} \biggr)^\frac{p_2}{p_2-1}
$$
is decreasing if 
$$
\frac{n}{p_1 (p_2-1)} \leq  \frac{p(x)(1+\sigma)-p_2}{p_2} \log\frac{1}{R}.
$$
Since the left-hand side of this relation does not exceed $n/[p^{-}_\Omega(p^{-}_\Omega-1)]$, while the right-hand side is greater than $(\sigma/4)\log (1/R)$ due to \eqref{pest2}, we get the estimate
\begin{equation}\label{tech_est}
t^{p_2} \biggl(\frac{\log t}{\log(1/R)} \biggr)^\frac{p_2}{p_2-1} \leq C(n,p^{-}_\Omega,\sigma) R^{n(p_1(1+\sigma)-p_2)/p_1}t^{(1+\sigma)p(x)}
\end{equation}
for $t\geq R^{-n/p_1}$ and $x\in B_R$ if $R \leq R (n,p^{-}_\Omega,\sigma)$. At this point we further bound $\widetilde{R}$ in the statement of the Theorem so that \eqref{tech_est} holds. Now, 
$$
n\frac{p_1(1+\sigma)-p_2}{p_1}\geq n \frac{p_1(1+\sigma)-p_1 - \Theta_p(4R)}{p_1} \geq n\sigma -n\Theta_p(4R),
$$
and so
$$
R^{n(p_1(1+\sigma)-p_2)/p_1} \leq R^{n\sigma - n \Theta_p(4R)} \leq C(n, c_{\mathrm{log}}(p)) R^{n\sigma}
$$
by the log-H\"older condition. Thus from \eqref{tech_est} we get 
\begin{equation}\label{tech_est1}
t^{p_2} \biggl(\frac{\log t}{\log(1/R)} \biggr)^\frac{p_2}{p_2-1} \leq  C(n,p^{-}_\Omega,\sigma, c_{\mathrm{log}}(p)) R^{n\sigma}t^{(1+\sigma)p(x)}
\end{equation}
for $t\geq R^{-n/p_1}$ and $x\in B_R$.


Using \eqref{te0} (where $1+\mu\leq |\omega|< R^{-n/p_1}$) and \eqref{tech_est1} (where $|\omega|\geq R^{-n/p_1}$) for the second term on the right-hand side of \eqref{frak_w} and again \eqref{minimality1} for the first term on the right-hand side of \eqref{frak_w} we then continue the chain of estimates above as:
            \begin{align*}
            \mathfrak{w}
            & \leq  c\left(\Theta_p(4R)\log\frac{1}{R}\right) \biggl(~\int\limits_{B_{R}} \mathcal{E}(p_2,\omega)\, dV+ R^{n\sigma  }\int\limits_{B_{R}} |\omega|^{(1+\sigma)p(x)} \, dV \biggr).
		\end{align*}

By the higher integrability result of Theorem~\ref{higher integrability}, the relation \eqref{pest2} and the initial choice of $\sigma$, there holds
\begin{align*}
R^{n\sigma}&\int\limits_{B_R} |\omega|^{p(x)(1+\sigma)}\, dV \\
&\leq cR^{n(1+\sigma)} \biggl(~ \biggl(~\fint\limits_{Q_{2R}} |\omega|^{p(x)}\,dV \biggr)^{1+\sigma} + 1 + \fint\limits_{Q_{2R}} |{\ast}F-{(\ast F)_{2R}}|^{(1+\sigma)p'(x)}\,dV \biggr)\\
&\leq c \biggl(~ \biggl(~\int\limits_{Q_{2R}} |\omega|^{p(x)}\, dV \biggr)^{1+\sigma} + R^{n(1+\sigma)} + R^{n\sigma}\int\limits_{Q_{2R}} |{\ast}F-{(\ast F)_{2R}}|^{(1+\sigma)p'(x)}\, dV \biggr).
\end{align*}
Combining the above estimates we get 
\begin{equation}\label{I11f}
|I_{11}|
\begin{aligned}[t]
 \leq &c\biggl( \Theta_p(4R) \log \frac{1}{R} \biggr)\\
&\qquad\times \biggl(~K^\sigma\int\limits_{B_{R}} \mathcal{E}(p_2,\omega)\, dV + R^{n} +R^{n\sigma}\int\limits_{Q_{2R}} |{\ast}F-{(\ast F)_{2R}}|^{(1+\sigma)p'(x)}\, dV \biggr).
\end{aligned}
\end{equation}
Finally gathering the estimates \eqref{decay for Morrey bound} -- \eqref{I11f}, and using the BMO property of $F$, which implies the BMO property for $\ast F$, we arrive at the decay relation
\begin{align}\label{fest0}
\int\limits_{B_\rho}&\mathcal{E}(p_2(R),\omega)\, dV \\
&\leq A \left[ \left(\frac{\rho}{R} \right)^n + \varepsilon + \Theta_a(4R) + \Theta_p(4R) \log \frac{1}{4R} \right] \int\limits_{B_{4R}} \mathcal{E}(p_2(R),\omega)\, dV +B R^n. \notag
\end{align}

		Now we set 
		\begin{align*}
			\Phi (\rho):= \int\limits_{B_{\rho}}(\mathcal{E}(p_2(\rho),\omega)+1)\, dV. 
		\end{align*}
By the Young inequality,
$$
\mathcal{E}(p_2(\rho),\omega)\leq \mathcal{E}(p_2(R),\omega)+1 \quad \text{ and } \quad  
\Phi(\rho) \leq \int\limits_{B_\rho} (\mathcal{E}(p_2(R),\omega)+2)\, dV \leq 2\Phi(R)
$$
for $\rho \leq R$. Using this notation in \eqref{fest0} we arrive at
		
		\begin{align*}
			\Phi \left( \rho\right)\leq A \left[ \left(\frac{\rho}{R}\right)^{n} + \varepsilon + \Theta_{a}(4R)+ \Theta_p(4R)\log \left(\frac{1}{4R}\right) \right] 	\Phi \left( R\right) + B_1 R^{n}.
		\end{align*} %
		Choosing $\varepsilon >0$ and $R_{1}$ sufficiently small, the Campanato--Giaquinta--Giusti iteration lemma (see Lemma~\ref{GG} in Appendix, with $\alpha=n$ and $0<\beta<n$) implies $\Phi(\rho) \leq c (\rho/R)^\beta$ for all $\beta \in (0,n)$, whence the claim easily follows.
        \end{proof}

Now we return to the proof of the result claimed in the beginning of this section.

\begin{proof}[Proof of Theorem~\ref{th:MorreyBounds}]
    (i) We again can argue in a local coordinate system. Theorem~\ref{MorreyBound} implies that we have the estimate 
	\begin{align*}
			\int\limits_{B_{r}\left(x_{c}\right)} \left\lvert \omega\right\rvert^{p_{2}(r)}\, dV \leq c r^{n-\tau},
	\end{align*}
	where $x_{c}$ and $r$ are as before and $p_2(r)$ is defined by \eqref{p2def}. Consequently, we have 
	\begin{align*}
		\int\limits_{B_{r}\left(x_{c}\right)} \left\lvert \omega \right\rvert^{p^{-}_\Omega} \, dV \leq cr^{n}\fint\limits_{B_{r}\left(x_{c}\right)} \left\lvert \omega \right\rvert^{p^{-}_\Omega}\, dV &\leq cr^{n}\biggl(~ \fint\limits_{B_{r}\left(x_{c}\right)} \left\lvert \omega \right\rvert^{p_{2}(r)}\, dV \biggr)^{\frac{p^{-}_\Omega}{p_{2}(r)}} \\
		&\leq cr^{n}\left( cr^{-\tau} \right)^{\frac{p^{-}_\Omega}{p_{2}\left(r\right)}} \leq c r^{ n - \tau}.
	\end{align*}
	This proves that for any $0 < \tau < n, $ $\omega$ is locally in the Morrey space $\mathrm{L}^{p^{-}_\Omega, n-\tau}$.
    
    (ii) Now, for any contractible $C^{1,1}$ submanifold $M_{1}$ separated from $bM$, by Theorem~\ref{divcurl system} in Appendix and Theorem 35 in  \cite{Sil_Sengupta_MorreyLorentz}, we find $\widetilde{u} \in W^{1, p(\cdot)}( M_{1}; \Lambda^{k})$ with $\nabla \widetilde{u} \in \mathrm{L}^{p^{-}_M, n-\tau}(M_{1}; \mathbb{R}^{n}\otimes\Lambda^{k} )$ for any $\tau\in (0,n)$, such that  
	\begin{align*}
		\left\lbrace \begin{aligned}
			d\widetilde{u} = \omega \quad &\text{and} \quad  d^{\ast} \widetilde{u} = 0 &&\text{ in } M_{1}, \\
			\nu\lrcorner \widetilde{u} &= 0 &&\text{  on } bM_{1}.
		\end{aligned}\right. 
	\end{align*}
By Morrey's ``Dirichlet growth'' theorem \cite[Theorem 3.5.2]{Morrey1966}, we deduce that $\widetilde{u} \in C^{0, \gamma}\left( M_{1};  \varLambda^{k}\right)$ for any $0 < \gamma < 1$. This completes the proof.  
\end{proof} 

\begin{remark} 
 In the proof of Theorem 35 in \cite{Sil_Sengupta_MorreyLorentz}, the metric is Euclidean and the domain is assumed to be $C^{2,1}$ but as flattening the boundary changes the metric, it is easy to adapt the proof to our current setting. The crux of the matter is that the derivative of the Neumann potential is a CZ operator and is bounded in all Morrey spaces $L^{q, \lambda}$ as long as $1 < q < \infty$ and $0 \le \lambda < n$.
\end{remark}

\subsection{Global Morrey estimates}

In this section we discuss Morrey bounds up to the boundary for the Dirichlet or Neumann boundary conditions.

\begin{theorem}\label{th:MorreyBoundsDN}
Let the variable exponent $p(\cdot)$ satisfy \eqref{definition log holder}, \eqref{log holder vanishing}, and the weight function $a$ satisfy\eqref{acond} and be continuous. Let $F$ be in $BMO (M; \Lambda^{k+1})$ and $\omega \in L^{p(\cdot)}(M;\Lambda^{k+1})$ be a local weak solution of \eqref{main system} in $M$ satisfying the Dirichlet condition $t\omega=0$ or the Neumann condition $n ((\mu^2+|\omega|^2)^{(p(x)-2)/2}\omega)=0$. Then $\omega \in \mathrm{L}^{p^{-}_{M},\lambda}(M;\Lambda^{k+1})$ for any $\lambda\in (0,n)$. If $M$ is contractible, there exists $\widetilde{u}\in  W^{1,p(\cdot)}(M;\Lambda^k)$ such that $d\widetilde u=\omega$ in $M$, $|\nabla \widetilde u|\in \mathrm{L}^{p^-_M,\lambda}(M)$ for any $\lambda\in (0,n)$, and $\widetilde u\in C^{0,\gamma}(M;\Lambda^k)$ for any $\gamma\in (0,1)$, satisfying $tu=0$ in the Dirichlet case and $nu=0$ in the Neumann case, respectively.
\end{theorem}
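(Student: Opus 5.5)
The plan is to reduce to local estimates near the boundary in admissible boundary coordinates and then argue as in Theorem~\ref{higher integrability boundary}. The Neumann and Dirichlet cases are handled separately. Interior points are already covered by Theorem~\ref{th:MorreyBounds}. Since $M$ is compact, a finite covering by interior charts and admissible boundary charts reduces the global Morrey bound to a uniform estimate
\[
\int_{B_r(x_c)\cap\Omega}|\omega|^{p_2(r)}\,dV\le c\,r^{n-\tau}
\]
for centres $x_c$ near $\Gamma$ and small $r$. From this estimate the bound in $\mathrm{L}^{p^-_M,n-\tau}$ follows exactly as in part (i) of the proof of Theorem~\ref{th:MorreyBounds}.

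\emph{Neumann case.} I would use the reflection of Theorem~\ref{higher integrability boundary}, part I. Extend $\omega,F$ by $S^*$ and $a,p,g_{ij}$ evenly, respectively via $S^*g$. The extension $\tilde\omega$ is a local weak solution of the reflected system in a full ball around boundary points. The reflected $\tilde a$ is continuous and $\tilde p$ is still vanishing log-H\"older, because $|Sx-y|\ge |x-y|$ when $x,y$ lie on the same side. The reflected $\tilde F$ is BMO, with norm comparable to that of $F$, since the even reflection preserves BMO. The reflected metric is only Lipschitz: $g_{in}$ changes sign across $x^n=0$, but it vanishes there by admissibility. Because Theorem~\ref{Uhlenbeck estimate} and Theorem~\ref{MorreyBound} only need Lipschitz $g_{ij}$, Theorem~\ref{MorreyBound} applies verbatim to $\tilde\omega$, and restricting back to $x^n>0$ gives the estimate.

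\emph{Dirichlet case.} I would rerun the proof of Theorem~\ref{MorreyBound} on half-balls $B_R^+$ centred on $\Gamma$, and treat balls that meet $\Gamma$ but are not centred on it by enlarging them as in Theorem~\ref{higher integrability boundary}. Three ingredients are needed. The gauge form $\bar u$ is taken from Lemma~\ref{L:Gauge_Tan}, so that $d\bar u=\omega$ and $t\bar u=0$ on $\Gamma$. The comparison form $v$ solves the frozen system on $B_R^+$ with $\nu\wedge v=\nu\wedge\bar u$ on $\partial B_R^+$; then $\bar u-v$ is tangentially zero and is admissible in the weak formulation of Definition~\ref{omega:defD}. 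The decay of $\mathcal{E}(p_2,dv)$ comes from the boundary Hamburger estimate, Theorem~\ref{Hamburger estimate}, in place of Theorem~\ref{Uhlenbeck estimate}. The estimates of $I_{11}$, $I_{12}$ and $I_2$ are unchanged. They use only H\"older and Young inequalities, the minimality \eqref{minimality1}, and the boundary higher integrability \eqref{eq:hiestB}. For $I_2$ one again subtracts the constant $\ast^{-1}(\ast F)_R$, which is co-closed, so it integrates to zero against $d(\bar u-v)$ with $t(\bar u-v)=0$. The Giaquinta--Giusti lemma then gives the decay.

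For the potential on contractible $M$, I would solve $d\tilde u=\omega$, $d^*\tilde u=0$ with $t\tilde u=0$ (Dirichlet) or $n\tilde u=0$ (Neumann) by Theorem~\ref{divcurl system} and Theorem~\ref{T:dcb}. In the Dirichlet case $t\omega=0$ is the compatibility condition for $t\tilde u=0$; in the Neumann case there is no boundary constraint on $\omega$. The Morrey regularity $\nabla\tilde u\in \mathrm{L}^{p^-_M,n-\tau}$ up to the boundary follows from the Calder\'on--Zygmund boundedness on Morrey spaces of the derivative of the corresponding Dirichlet/Neumann potentials, as in \cite[Theorem 35]{Sil_Sengupta_MorreyLorentz} adapted to the flattened metric. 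Morrey's Dirichlet growth theorem then gives $\tilde u\in C^{0,\gamma}$. I expect the main obstacle to be the Dirichlet-case comparison step, namely checking that the frozen boundary problem on half-balls is well posed in the gauge class $W^{1,p_2}_{d^*,T}$ on a Lipschitz half-ball. If this causes trouble, I would instead extend $\omega$ by zero and work on the enlarged balls $\widetilde B_{2R}$ as in Theorem~\ref{higher integrability boundary}.
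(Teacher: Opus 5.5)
Your outline is the paper's own. The Neumann case goes by the $S^*$-reflection of Theorem~\ref{higher integrability boundary} and a reduction to Theorem~\ref{MorreyBound}. The Dirichlet case reruns that proof on half-balls, with $\bar u$ from Lemma~\ref{L:Gauge_Tan}, a frozen comparison $v$ with $\nu\wedge v=\nu\wedge\bar u$ on $\partial B_R^+$, and Theorem~\ref{Hamburger estimate} in place of Theorem~\ref{Uhlenbeck estimate}. However, the step you flag as the main obstacle is exactly the one that needs an idea, and you leave it open.

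The existence result you would invoke, Proposition~\ref{existence of minimizers}, rests on the Gaffney inequality of Theorem~\ref{divcurl system} and needs a $C^{1,1}$ manifold. But $B_R^+$ has a corner along $\partial B_R\cap\{x^n=0\}$. So you do not yet have $v\in W^{1,p_2}(B_R^+;\Lambda^k)$ with $t(\bar u-v)=0$ on all of $\partial B_R^+$. In particular you lack $tv=0$ on the flat part, which is what lets Theorem~\ref{Hamburger estimate} act on $dv$.

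Your fallback does not help. The zero extension of $\omega$ is closed, but it is not a weak solution across $\{x^n=0\}$, because Definition~\ref{omega:defD} only allows test forms with vanishing tangential trace there. So for a comparison form on the enlarged ball $\widetilde B_{2R}$, the difference $\bar u-v$ is not admissible and the identity \eqref{testing u -w for F} is lost. In Theorem~\ref{higher integrability boundary} the zero extension serves only the Caccioppoli step, where the test form $\eta^{p_2}v$ already has $tv=0$ on $\{x^n=0\}$.

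The paper closes the gap by reflection:
\begin{itemize}
\item Extend $\bar u$ by $-S^*\bar u$: tangential components odd, normal ones even. This stays in $W^{1,p_2}$ because $t\bar u=0$. Extend the metric as in the Neumann case.
\item Solve \eqref{frozen system for comparison} on the full ball $B_R$, where Proposition~\ref{existence of minimizers} applies.
\item The form $-S^*v$ has the same boundary data and the same energy, and is still co-closed. Uniqueness gives $v=-S^*v$, hence $tv=0$ on $\{x^n=0\}$, and the restriction of $v$ to $B_R^+$ is the comparison form you need.
\end{itemize}
Alternatively, since only $dv$ enters the comparison, you can drop the gauge. Minimize the frozen energy over $\omega+\overline{\{d\phi:\ \phi\in\mathrm{Lip}(B_R^+),\ t\phi=0\text{ on }\partial B_R^+\}}$, with closure in $L^{p_2}$; this needs no Gaffney inequality at all. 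With $v$ in hand, the rest of your Dirichlet argument is fine: the unchanged bounds for $I_2,I_{11},I_{12}$ via \eqref{eq:hiestB}, and the subtracted co-closed constant.

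Two smaller corrections.

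First, in the Neumann case $S^*dx^n=-dx^n$, so the components $F_I$ with $n\in I$ are reflected \emph{oddly}, and odd reflection does not preserve BMO. For example, $\log(1/x^n)$ becomes $\mathrm{sgn}(x^n)\log(1/|x^n|)$, whose mean oscillation on $B_r(0)$ is of order $\log(1/r)$. So Theorem~\ref{MorreyBound} does not apply verbatim; the paper's one-line reduction passes over the same point. The repair is that averages of a BMO form over half-balls grow at most like $\log(1/r)$. This only replaces $BR^n$ by $BR^n\log^{c}(1/R)$ in the decay inequality, which is harmless for every $\beta<n$ in Lemma~\ref{GG}.

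Second, in the Dirichlet potential step with $k=n-1$, the condition $t\omega=0$ is vacuous. Theorem~\ref{divcurl system}(i) additionally requires $\int_M\omega=0$, i.e. orthogonality to $\mathcal{H}_T(M;\Lambda^n)$, and the hypotheses do not give this: take $\omega=c\,dV$, $F=0$, and constant $a,p$. This is a defect of the statement itself rather than of your argument.
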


 Since local estimates are proved above in Theorem~\ref{MorreyBound}, it is sufficient to consider an admissible boundary coordinate system. Let $\Omega\subset \mathbb{R}^n\cap\{x^n>0\}$ be a Lipschitz domain, $\Gamma=\mathrm{int}\,(\partial\Omega\cap \{x^n=0\})$ be nonempty, the coordinate system be admissible (that is, $g_{in}=\delta_{in}$ on $\Gamma$), with $g_{ij}$ at least Lipschitz.

		\begin{theorem}\label{MorreyBoundDN}
        Let the variable exponent $p(\cdot)$ satisfy \eqref{definition log holder}, \eqref{log holder vanishing}, and the weight function $a$ satisfy\eqref{acond} and be continuous in $\Omega$. Let $F$ be in $BMO (B_{R_1}(x_0), \Lambda^{k+1})$ and $\omega \in L^{p(\cdot)}(\Omega;\Lambda^{k+1})$ be a weak solution of \eqref{main system} in $\Omega$ satisfying the Dirichlet or Neumann condition $t\omega =0$ or $n( (\mu^2 + |\omega|^2)^{(p(x)-2)/2}\omega)=0$ on $\Gamma$. For any $0 < \tau <n$ there exists a choice of a positive radius $\widetilde{R}\in (0, R_1/16)$, depending only on $data_1$, and a positive constant $c$, depending on $\tau$, $data_1$, $\mu^+$, and the norm of $F$ in $BMO (B_{R_1}(x_0), \Lambda^{k+1})$, such that 
			\begin{align*}
				\int\limits_{B_{r}(x_{c})\cap \Omega} \left\lvert \omega \right\rvert^{p_{2}\left(r\right)}\,dV \leq c r^{n-\tau},
			\end{align*}
			whenever $0 < r <  \widetilde{R}/\left(64\right)^{2}$ and  $x_{c} \in \overline{B}_{\widetilde{R}}\left(x_{0}\right)$ .
		\end{theorem}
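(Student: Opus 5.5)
The plan is to split into the two boundary conditions, exactly as in the proof of Theorem~\ref{higher integrability boundary}, and in each case reduce to the interior argument of Theorem~\ref{MorreyBound}.

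\emph{Neumann case.} On $\Gamma$ the Neumann condition $n\mathcal{A}(p,\omega)=0$ holds. I will use the reflection from part I of the proof of Theorem~\ref{higher integrability boundary}: extend $\omega, F$ by $S^*$, extend $a, p$ evenly, and reflect the metric. This produces a local weak solution $\tilde\omega$ of the same system in a full neighbourhood of $\Gamma$. The even extensions preserve the relevant structure: $\tilde a$ is continuous with the same modulus $\Theta_a$, and $\tilde p$ is vanishing log-H\"older with the same modulus. The extension $\tilde F$ is BMO with a comparable norm, since for balls meeting $\Gamma$ the oscillation is controlled by that on the half-ball and its mirror image. The reflected metric is only Lipschitz, and Theorem~\ref{Uhlenbeck estimate} allows exactly Lipschitz coefficients. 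Hence Theorem~\ref{MorreyBound} applies verbatim to $\tilde\omega$. Restricting to $B_r(x_c)\cap\Omega$ gives the claim, with the energy $K$ of $\tilde\omega$ at most twice that of $\omega$.

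\emph{Dirichlet case.} Here $t\omega=0$ on $\Gamma$, and I will redo the comparison argument on half-balls. Fix $x_c$ and $R$. If $B_{4R}(x_c)\subset\Omega$, the interior proof applies. If $B_R(x_c)$ reaches the boundary, I pass to the concentric half-ball $B^+_{2R}(x_c',0)$, at the cost of enlarging radii by a fixed factor; this is absorbed by the iteration lemma. The steps are then as follows.
\begin{enumerate}
\item Use Lemma~\ref{L:Gauge_Tan} to build a potential $\bar u$ with $d\bar u=\omega$ on a half-ball and $t\bar u=0$ on $x^n=0$. Its integrability in $L^{p_{\max}}$ is guaranteed by the boundary higher integrability, Theorem~\ref{higher integrability boundary}.
\item Minimise the frozen energy $a(x_{\max})\mathcal{E}(p_2,dv)$ over $v\in\bar u+W^{1,p_2}_{d^*,T}(B_R^+)$, using Proposition~\ref{existence of minimizers} on the half-ball. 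Then $dv$ solves the frozen homogeneous system with $t\,dv=0$ on $\Gamma$, because $t v=t\bar u=0$ there.
\item Replace the Uhlenbeck decay \eqref{sup_est_hom2} by the sup bound \eqref{sup_est_homD} of Theorem~\ref{Hamburger estimate}. This gives $\int_{B^+_\rho}\mathcal{E}(p_2,dv)\le c(\rho/R)^n\int_{B_R^+}\mathcal{E}(p_2,\omega)$.
\item Test with $\bar u-v$, which has zero tangential part on all of $\partial B^+_R$. This test form is admissible both for the original problem (Definition~\ref{omega:defD}) and for the frozen one.
\end{enumerate}
The subtraction of the constant $\ast^{-1}(\ast F)_R$ still works, because $\int\langle\xi,d\psi\rangle=0$ for co-closed $\xi$ and $\psi\in W_T$.

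The estimates of $I_2$, $I_{12}$ and $I_{11}$ then go through unchanged. The $I_{11}$ term needs the higher integrability with the $R^{n\sigma}$ gain, which now comes from \eqref{eq:hiestB} instead of \eqref{higher integrability estimate}. Both cases yield the decay inequality \eqref{fest0} for $\Phi(\rho)=\int_{B_\rho\cap\Omega}(\mathcal{E}(p_2(\rho),\omega)+1)$, and the Giaquinta--Giusti Lemma~\ref{GG} concludes as before.

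The main obstacle is bookkeeping in the Dirichlet case. The balls centred near $\Gamma$ but not on it must be handled uniformly: the comparison is made on a boundary-centred half-ball $B^+_{2R}(x_c',0)$, and interior and boundary balls must be chained into one monotone $\Phi$, using $B_r(x_c)\cap\Omega\subset B^+_{2r}(x_c',0)$ whenever $\mathrm{dist}(x_c,\Gamma)<r$. A second point to check is that the constants in Theorem~\ref{Hamburger estimate} stay uniform for $p_2\in[p^-,p^+]$ in a Lipschitz admissible metric.
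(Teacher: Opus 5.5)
Your plan matches the paper's. For the Neumann condition you use the reflection of Section~\ref{ssec:hib} and quote Theorem~\ref{MorreyBound}. For the Dirichlet condition you redo the comparison on boundary-centred half-balls, with $\bar u$ from Lemma~\ref{L:Gauge_Tan} and Theorem~\ref{Hamburger estimate} in place of the Uhlenbeck bound, and then chain boundary and interior balls via $B_r(x_c)\cap\Omega\subset B^+_{2r}(x_c',0)$. However, there are two steps that do not hold as you state them.

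\emph{The reflected $\tilde F$ is not BMO.} Your claim that $\tilde F$ is BMO with comparable norm is false. $S^*$ reflects the components $F_{In}$ (those containing $dx^n$) \emph{oddly}, and odd reflection does not preserve BMO. For $F=\log|x|\,dx^n$ one gets $\tilde F_n=\operatorname{sign}(x^n)\log|x|$. Pairing $x$ with $Sx$ then shows that for every constant $c$, $\fint_{B_r(0)}|\tilde F_n-c|\,dV\ge\fint_{B_r^+(0)}\bigl|\log|x|\bigr|\,dV\approx\log(1/r)$. So Theorem~\ref{MorreyBound} does not apply verbatim; the paper's one-line reduction passes over the same point. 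The repair is cheap. For $F\in BMO$ the averages grow at most logarithmically, $|(F)_{B_r^+(y)}|\le C(1+\log(1/r))$. Hence $\fint_{B_r}|\ast\tilde F-(\ast\tilde F)_r|^q\,dV\le C(1+\log(1/r))^q$ for every $q<\infty$. In the proof of Theorem~\ref{MorreyBound} this only turns the term $BR^n$ in \eqref{fest0} into $BR^n\log^q(1/R)\le B'R^{n-\tau/2}$, and similarly for the $F$-term in \eqref{I11f}. Lemma~\ref{GG} absorbs this, since only decay $r^{n-\tau}$ is claimed.

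\emph{Step 2: the comparison form on $B_R^+$.} You cannot invoke Proposition~\ref{existence of minimizers} on $B_R^+$. Through the Gaffney inequality of Theorem~\ref{divcurl system} it is stated for $C^{1,1}$ manifolds, and the half-ball has an edge along $\partial B_R\cap\{x^n=0\}$. This is the step where the paper does something extra:
\begin{itemize}
\item It extends $\bar u$ to $B_R$ by $-S^*\bar u$, which lies in $W^{1,p_2}$ because $t\bar u=0$.
\item It extends the metric by reflection, which keeps it Lipschitz.
\item It solves \eqref{frozen system for comparison} on the full ball, where the proposition applies.
\item It notes that $-S^*v$ is again a solution, so uniqueness gives $v=-S^*v$, hence $tv=0$ on $\{x^n=0\}$.
\end{itemize}
By symmetry, the restriction to $B_R^+$ has exactly the properties your steps 2--4 use. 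This includes the minimality $\int_{B_R^+}\mathcal{E}(p_2,dv)\,dV\le\int_{B_R^+}\mathcal{E}(p_2,\omega)\,dV$ and the Euler--Lagrange identity for test forms with zero tangential part on $\partial B_R^+$.

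Alternatively, you can drop the gauge $d^*v=0$, which plays no role in the comparison. Minimise $\int_{B_R^+}\mathcal{E}(p_2,\eta)\,dV$ over the closed affine set $\omega+\overline{\{d\phi:\ \phi \text{ Lipschitz},\ \nu\wedge\phi=0 \text{ on } \partial B_R^+\}}$ in $L^{p_2}$; this needs only convexity and coercivity.

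With these two repairs the rest of your argument goes through, including the bookkeeping for balls centred near $\Gamma$.
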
 
\begin{proof}

I. (\textit{Neumann boundary condition}). For the Neumann boundary condition the boundary neighbourhood is treated by the same extension device as was used in Section~\ref{ssec:hib} in the proof of Theorem~\ref{higher integrability boundary}. After extending the metric, $\omega$ and $F$ to $\{x^n<0\}$, the boundary estimates are reduced to the local estimates obtained in Theorem~\ref{MorreyBound}.

II. (\textit{Dirichlet boundary condition}). As above, we work in an admissible boundary coordinate system. We work either in balls separated from the boundary, where estimates were obtained in Theorem~\ref{MorreyBound}, or in half-balls centered on the boundary. In the latter case case, for half-balls $B_{R_1}$ centered on the boundary, using Lemma~\ref{L:Gauge_Tan} we first find $\bar{u}$ in $B_{R_1/2}^+$ such that $d\bar{u} = \omega$ and $t\bar{u}=0$ on $B_{R_1/2}\cap\{x^n=0\}$. Then  we find $v \in W^{1, p_{2}}\left( B_{R}^+; \varLambda^{k}\right)$ such that
		\begin{align}\label{frozen system for comparison boundary}
			\left\lbrace \begin{aligned}
				d^{\ast}\bigl( a(x_{\text{max}}) \mathcal{A}(p_2, dv) \bigr)  &=0 &&\text{ in }  B_{R}^+,\\ 
				d^{\ast}v &= 0 &&\text{ in }  B_{R}^+, \\
				\nu\wedge v &= \nu \wedge \bar{u} &&\text{ on } \partial B_{R}^+. 
			\end{aligned}\right. 
		\end{align}
To this end, let us extend $\bar{u}$ to $B_R^{-}$ by $-S^* \bar{u}$, that is the components $\bar{u}_I$ with $n\notin I$ will be
odd and the components $\bar{u}_{In}$ will be even functions of $x^n$. Let us extend the metric tensor as above (that is, $g_{ij}$ is even in $x^n$ if $i,j<n$ or $i,j=n$ and odd otherwise). There exists a unique solution of \eqref{frozen system for comparison} in $B_R$. Let us show that the form $\hat{v}=-S^* v$ obtained by the even reflection of $v_{In}$ and odd reflection of $v_I$, $n\notin I$, will be a solution to the same boundary value problem. First, by definition one can easily check that $\hat{v}$ satisfies the same boundary condition. Second, it delivers the same energy of the corresponding functional. Third, 
$$
\int\limits_{B_R} \langle \hat{v}, d\xi \rangle \, dV = -\int\limits_{B_R} \langle v, d S^* \xi \rangle\, dV =0
$$
for any smooth compactly supported test form $\xi$. By uniqueness, $v = \hat{v}$, thus $tv=0$ on $B_R\cap\{x^n=0\}$.

Using then the results of Hamburger for half-balls (see Theorem~\ref{Hamburger estimate}) we see that the rest of the arguments goes through for balls replaced with half-balls. Combining the internal and boundary estimate we see that for the ball $B_r(x',t)$ and $\omega$ obtained by the even reflection of $\omega_{In}$ and odd reflection of $\omega_I$, $n\notin I$, we have the required estimate for all balls $B_r$.
\end{proof}


\subsection{H\"{o}lder continuity of \texorpdfstring{$\omega$}{omega}}

In this section we prove the main result of this paper.

\begin{theorem}\label{main theorem}
	 Let the variable exponent $p(\cdot)$ satisfy \eqref{definition log holder} and \eqref{holder exponent} with some $0<\alpha_1<1$. 
    Let the weight function $a \in C^{0, \alpha_{2}}_{\mathrm{loc}}(M)$ for some $0 < \alpha_{2} < 1$ and satisfy \eqref{acond}. Let  $F \in C^{0, \alpha_{3}}_{\mathrm{loc}}(M \varLambda^{k+1})$ for some $0 < \alpha_{3} < 1$. Let $\omega \in L_{\mathrm{loc}}^{p(\cdot)} (M; \varLambda^{k+1})$ be a local weak solution to the system \eqref{main system} in $M$. Then the form $\omega$ is locally H\"older continuous in $M$ with the H\"older exponent depending only on $n$, $N$, $p^{-}_M$, $p^{+}_M$, $\alpha_1$, $\alpha_2$, $\alpha_3$, $M$. 
\end{theorem}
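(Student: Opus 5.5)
The plan is to follow the Acerbi--Mingione comparison scheme already used in the proof of Theorem~\ref{MorreyBound}, this time for the oscillation of $\omega$ rather than its energy. We work in a coordinate patch around $x_0$, with $\bar u$ the gauge-fixed potential from \eqref{gauge fixing 2}, and freeze coefficients on each small ball $B_R=B_R(x_c)$. Let $p_2=p_2(R)$ and let $v$ solve the frozen problem \eqref{frozen system for comparison}. In addition to the Euclidean metric being frozen in $d^*_0$, we may also freeze $g_{ij}$ at $x_c$; the Lipschitz error this creates is absorbed like the $a$-term. The aim is a Campanato decay estimate
$$
\int\limits_{B_\rho}|\omega-(\omega)_{B_\rho}|^{p_2}\,dV\le c\Bigl(\frac{\rho}{R}\Bigr)^{n+p_2\beta}\int\limits_{B_R}\mathcal{E}(p_2,\omega)\,dV + c R^{n+\gamma_0}
$$
for some $\gamma_0>0$. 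By the Morrey bound of Theorem~\ref{MorreyBound} we have $\int_{B_R}\mathcal{E}(p_2,\omega)\,dV\le cR^{n-\tau}$ with $\tau$ arbitrarily small. The first term is then at most $c\rho^{n+\gamma}$ once we choose $\rho=R^{1+\theta}$ or run the iteration Lemma~\ref{GG}. Campanato's characterization (Section~\ref{ssec:MC}) then gives H\"older continuity of $\omega$.

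\textbf{Step 1 (comparison).} The oscillation of $dv$ decays by \eqref{hamburger_osc_use1}. It therefore remains to bound $\int_{B_R}|\omega-dv|^{p_2}\,dV$. By Lemma~\ref{lemma_algebra} this is controlled by the monotonicity quantity $\int_{B_R}\langle\mathcal{A}(p_2,\omega)-\mathcal{A}(p_2,dv),\omega-dv\rangle\,dV$. For $p_2\ge2$ the control is direct; for $p_2<2$ we first apply H\"older's inequality with weight $(\mu^2+|\omega|^2+|dv|^2)^{(2-p_2)p_2/4}$, which costs a square root. As in \eqref{estimate 1}, the monotonicity quantity is bounded by $|I_2|+|I_{11}|+|I_{12}|$, and each term now has to be shown to carry a positive power of $R$.

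\textbf{Step 2 (the three error terms).} For $I_2$, since $F\in C^{0,\alpha_3}$ we have $|{\ast}F-({\ast}F)_R|\le cR^{\alpha_3}$. Young's inequality then gives $|I_2|\le \varepsilon\int_{B_R}|\omega-dv|^{p_2}\,dV+cR^{n+\alpha_3p_2'}$, and the first piece is absorbed. For $I_{12}$, replacing the factor $\Theta_a(4R)$ by $cR^{\alpha_2}$ in \eqref{estimate of a term} gives $|I_{12}|\le cR^{\alpha_2}\int_{B_R}\mathcal{E}(p_2,\omega)\,dV\le cR^{n+\alpha_2-\tau}$. For $I_{11}$, we repeat the estimate leading to \eqref{I11f} with $\Theta_p(4R)\log(1/R)\le cR^{\alpha_1}\log(1/R)$. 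This yields
$$
|I_{11}|\le cR^{\alpha_1}\log(1/R)\Bigl(\int_{B_R}\mathcal{E}(p_2,\omega)\,dV+R^n+R^{n\sigma}\!\!\int_{Q_{2R}}\!|F-(F)_{2R}|^{(1+\sigma)p'}\,dV\Bigr)\le cR^{n+\alpha_1/2-\tau}.
$$
Here we used the Morrey bound once more and the higher integrability estimate \eqref{higher integrability estimate}. Choosing $\tau<\min(\alpha_1,\alpha_2,\alpha_3)/4$, all errors are bounded by $cR^{n+\gamma_0}$.

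\textbf{Step 3 (iteration and conclusion).} We combine the bounds as follows:
$$
\int_{B_\rho}|\omega-(\omega)_\rho|^{p_2}\le c\int_{B_\rho}|dv-(dv)_\rho|^{p_2}+c\int_{B_R}|\omega-dv|^{p_2}.
$$
The first term is estimated by \eqref{hamburger_osc_use1} together with \eqref{minimality estimate}. Taking $\rho=R^{1+\theta}$ with $\theta$ small then gives $\int_{B_\rho}|\omega-(\omega)_\rho|^{p_2}\le c\rho^{n+\gamma}$. The exponent $p_2$ varies with $R$, but $p_2\ge p^-$ and H\"older's inequality reduce everything to the fixed exponent $p^-$; the factor $R^{-n(p_2-p^-)/p_2}$ that appears is bounded because $p$ is H\"older, hence log-H\"older. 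Campanato's theorem then yields $\omega\in C^{0,\gamma/p^-}$ near $x_0$.

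\textbf{Expected obstacle.} I expect the main obstacle to be the case $p_2<2$ in Step 1. There the monotonicity quantity controls $|\omega-dv|^{p_2}$ only after the weighted H\"older argument, which produces a factor $\bigl(\int\mathcal{E}(p_2,\omega)\bigr)^{1-p_2/2}$. This factor should be harmless because of the Morrey bound with small $\tau$, but the resulting exponents have to be tracked carefully, together with the log-correction in $I_{11}$.
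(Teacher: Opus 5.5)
Your proposal is correct and follows the paper's proof essentially step by step: the frozen comparison form $v$ on $B_r$, the bounds on $I_2$, $I_{11}$, $I_{12}$ from the H\"older moduli together with the Morrey bound of Theorem~\ref{MorreyBound} for small $\tau$, the Uhlenbeck--Hamburger decay \eqref{hamburger_osc_use1} for $dv$, the coupling $r=(2\rho)^{1/(1+\theta)}$ with $\tau$ small relative to $\beta\theta$, and Campanato's characterization. One local slip needs fixing: when $p_2<2$, the piece $\varepsilon\int_{B_R}|\omega-dv|^{p_2}\,dV$ that Young's inequality produces in $I_2$ cannot be absorbed into the monotonicity quantity, so bound $I_2$ directly as the paper does, $|I_2|\le cR^{\alpha_3}\int_{B_R}|\omega-dv|\,dV\le cR^{n+\alpha_3-\tau}$, using H\"older's inequality, \eqref{minimality1} and the Morrey bound (also note that $|\omega-dv|^{p_2}$ is controlled by the monotonicity quantity through \eqref{mu1}--\eqref{mu2}, not through Lemma~\ref{lemma_algebra}).
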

	\begin{proof}
We again argue in a local coordinate system $(U,\varphi)$, with $\Omega = \varphi(U)$ a Lipschitz domain in $\mathbb{R}^n$. We show that $\omega$ is H\"{o}lder continuous in a ball of small enough radius, but otherwise arbitrary. We set 
        \begin{align*}
			\alpha := \min \left\lbrace \alpha_{1}, \alpha_{2}, \alpha_{3}\right\rbrace .
		\end{align*}
		
		Now we prove the H\"{o}lder continuity of the $\omega$ in $B_{\widetilde{R}}\left(x_{0}\right)$, where $\widetilde{R}>0$ is the radius given by Theorem~\ref{MorreyBound}. We choose $0 < \rho_{0} < 1$ such that $\left(2\rho_{0}\right)^{\frac{1}{1+\theta}} < \widetilde{R}/\left(64\right)^{2}.$ 
		Set 
		$$
        r= r ( \rho ) := \left(2\rho\right)^{\frac{1}{1+\theta}}. 
        $$
        Clearly, for any $0 < \rho < \rho_{0}$, we have $r( \rho) < \widetilde{R}/\left( 64\right)^{2}$. Thus, by Theorem~\ref{MorreyBound}, for any $0<\tau < n$ we have 
		\begin{align}\label{Morrey decay}
			\int\limits_{B_{r ( \rho )}\left(x_{c}\right)}\left\lvert \omega \right\rvert^{p_{2}\left(r( \rho )\right)} \, dV \leq c \left(r ( \rho )\right)^{ n - \tau }.
		\end{align}
		As before, define $\bar{u}$ as in \eqref{gauge fixing 2} and find $v \in W^{1, p_{2}\left(r \left( \rho \right)\right)}\left( B_{r( \rho )}; \varLambda^{k}\right)$ such that 
		\begin{align}\label{frozen system for comparison Holder}
			\left\lbrace \begin{aligned}
				d^{\ast}( a(\bar{x}_{\text{max}}) \mathcal{A}(p(\bar{x}_{\text{max}}),dv)) ) &=0 &&\text{ in }  B_{r(\rho)}, \\
				d^{\ast}v &= 0 &&\text{ in }  B_{r (\rho)}, \\
				\nu\wedge v &= \nu \wedge \bar{u} &&\text{ on } \partial B_{r (\rho)}, 
			\end{aligned}\right. 
		\end{align} 
		where $\bar{x}_{\text{max}} \in \overline{B}_{4r ( \rho )}$ is a point such that $p\left(\bar{x}_{\text{max}}\right)= p_{2}(r ( \rho ))$. 

        Hence, as before in \eqref{estimate 1}, we have 
		\begin{align}\label{estimate 1 for holder}
			a^-_\Omega \int\limits_{B_{r}}   \bigl\langle \mathcal{A}(p_2,\omega) - \mathcal{A}(p_2,dv), \omega-dv \bigr\rangle\, dV \leq I_1 + I_2
		\end{align}
		where 
		\begin{align*}
			I_{1}&:= \int\limits_{B_{r}}\left\langle   a(x)\mathcal{A}(p(x),\omega)- a(x_{\text{max}})\mathcal{A}(p_2(r),\omega), \omega - dv \right\rangle\, dV, \\
			I_{2} &:= - \int\limits_{B_{r}}\left\langle   a(x)\mathcal{A}(p(x),\omega), \omega - dv \right\rangle\, dV. 
		\end{align*}
		As in \eqref{estimate of F term}, but using now the Morrey bound~\eqref{Morrey decay}, we arrive at 
		\begin{align}\label{estimate of F term holder}
			\left\lvert I_{2}\right\rvert &\leq \int\limits_{B_{r}}\left\lvert \ast F - (\ast F)_{r}\right\rvert\cdot \left\lvert \omega - dv\right\rvert \, dV \notag \\
            &\leq c(n)\left[\ast F\right]_{C^{0, \alpha}} r^{n+\alpha} \fint\limits_{B_{r}} \left\lvert \omega - dv\right\rvert \, dV \notag \\
            & \leq c \left[{\ast} F\right]_{C^{0, \alpha}} r^{n+\alpha} \biggl(~\fint\limits_{B_{r}} \mathcal{E}(p_2(r),\omega)\, dV \biggr)^\frac{1}{p_2(r)} \leq c \left[ {\ast}F\right]_{C^{0, \alpha}} r^{n+\alpha-\tau}.
		\end{align}
        Here we used that $F\in C^{0,\alpha}$ implies $\ast F\in C^{0,\alpha}$. Writing $I_{1}=I_{11}+ I_{12}$ as before, we have, as in \eqref{estimate of a term},  
\begin{align}\label{estimate of I12 holder}
	\left\lvert I_{12}\right\rvert &\leq c\Theta_{a}(4r)  \int\limits_{B_{r}} \mathcal{E}(p_2(r),\omega)\, dV \stackrel{\eqref{Morrey decay}}{\leq} cr^{\alpha + n -\tau }. 
\end{align}
On the other hand, from \eqref{I11f} we obtain 
\begin{align}\label{estimate of I11 holder}
		|I_{11}| \leq c r^{\alpha + n - \tau} \log \frac{1}{r}.
\end{align}	
Thus, combining \eqref{estimate of F term holder}, \eqref{estimate of I12 holder} and \eqref{estimate of I11 holder}, we obtain 
\begin{equation}
\label{estimate of I holder}
\int\limits_{B_{r}}  \bigl\langle \mathcal{A}(p_2,\omega) -\mathcal{A}(p_2,dv), \omega-dv \bigr\rangle\, dV
\leq cr^{\alpha + n -\tau }\log\left(\frac{1}{4r}\right)
\leq cr^{\frac{\alpha}{2} + n -\tau }. 
\end{equation}
Now if $p_{2}(r) \geq 2$, using \eqref{mu1} we have 
\begin{align*}
	\int\limits_{B_{r}} \left\lvert \omega - dv\right\rvert^{p_{2}(r)}\, dV &\leq c(p^{+}_\Omega) \int\limits_{B_{r}} \bigl\langle \mathcal{A}(p_2,\omega) -\mathcal{A}(p_2,dv), \omega-dv \bigr\rangle\, dV
    \leq cr^{\frac{\alpha}{4} + n -\tau }. 
\end{align*}
On the other hand, if $1 < p_{2}\left(r\right)< 2$, from \eqref{mu2} by the H\"older inequality we have 
\begin{align*}
 		\int\limits_{B_{r}} \left\lvert \omega - dv\right\rvert^{p_{2}(r)}\, dV 
		&\begin{multlined}[t]
		    \leq \biggl(~\int\limits_{B_{r}}\bigl\langle \mathcal{A}(p_2,\omega) -\mathcal{A}(p_2,dv), \omega-dv \bigr\rangle\, dV\biggr)^{\frac{p_{2}}{2}}\\ \times\biggl(~\int\limits_{B_{r}} \left(2\mu^2+ \lvert \omega \rvert^{2} + \lvert dv \rvert^{2}\right)^{\frac{p_{2}}{2}} \, dV\biggr)^{\frac{2-p_{2}}{2}}
		\end{multlined} \\
		&\leq cr^{\frac{p_{2}}{2}\left(\frac{\alpha}{2} + n -\tau \right)}\biggl(~\int\limits_{B_{r}} \mathcal{E}(p_2,\omega)\, dV\biggr)^{\frac{2-p_{2}}{2}} \\
		&\leq c r^{\frac{p_{2}}{2}\left(\frac{\alpha}{2} + n -\tau \right)}\cdot r^{\left(n-\tau\right)\frac{2-p_{2}}{2}} = c r^{n-\tau + \frac{\alpha p_{2}}{4} } \leq c r^{\left(\frac{\alpha}{4} + n -\tau \right)}. 
\end{align*}
Thus, in both cases, we arrive at 
\begin{align}\label{comparison estimate holder}
	\int\limits_{B_{r}} \left\lvert \omega - dv\right\rvert^{p_{2}(r)}\, dV \leq c r^{\frac{\alpha}{4} + n -\tau }. 
\end{align}
Further, using the Uhlenbeck--Hamburger estimate \eqref{hamburger_osc_use}, we have 
\begin{align*}
	\int\limits_{B_{\rho}} \bigl\lvert \omega - \left(\omega\right)_{B_{\rho}} \bigr\rvert^{p_{2}(r)}\, dV &\leq 	c\int\limits_{B_{\rho}} \bigl\lvert \omega - \left(dv\right)_{B_{\rho}} \bigr\rvert^{p_{2}(r)}\, dV \\
	&\leq c\int\limits_{B_{\rho}} \bigl\lvert dv - \left(dv\right)_{B_{\rho}} \bigr\rvert^{p_{2}(r)}\, dV + c\int\limits_{B_{\rho}} \bigl\lvert \omega - dv \bigr\rvert^{p_{2}(r)}\, dV \\
	&\stackrel{\eqref{hamburger_osc_use}}{\leq} c\left( \frac{\rho}{r}\right)^{\beta p_{2}(r)}\rho^{n} \biggl(~ \fint\limits_{B_{r}} \mathcal{E}(p_2,\omega)\, dV\biggr) + c\int\limits_{B_{\rho}} \left\lvert \omega- dv \right\rvert^{p_{2}(r)}\, dV \\
	&\stackrel{\eqref{comparison estimate holder}}{\leq}c\left( \frac{\rho}{r}\right)^{\beta p_{2}\left(r\right)}\rho^{n} \biggl(~ \fint\limits_{B_{r}} \mathcal{E}(p_2,\omega)\, dV\biggr) + c r^{\frac{\alpha}{4} + n -\tau } \\
	&\stackrel{\eqref{Morrey decay}}{\leq}c\left( \frac{\rho}{r}\right)^{\beta p_{2}\left(r\right)}\rho^{n} r^{-\tau}+ c r^{\frac{\alpha}{4} + n -\tau } \leq c\left( \frac{\rho}{r}\right)^{\beta}\rho^{n} r^{-\tau}+ c r^{\frac{\alpha}{4} + n -\tau }.  
\end{align*}
Now we choose $\tau = \alpha\beta /8\left(n+\beta\right)$ and choose $\theta = \alpha/4\left(n+\beta\right).$ Then the powers of $\rho$ are the same in both terms and is equal to $n + \kappa$ where 
\begin{align*}
	\kappa := \frac{\alpha\beta}{2\left[ \alpha + 4\left( n +\beta\right)\right]} >0.
\end{align*} 
This implies 
\begin{align*}
	\int\limits_{B_{\rho}} \bigl\lvert \omega - \left(\omega\right)_{B_{\rho}} \bigr\rvert^{p^{-}_\Omega}\, dV 
    &\leq c\rho^{n}\biggl(~ \fint\limits_{B_{\rho}} \bigl\lvert \omega - \left(dv\right)_{B_{\rho}} \bigr\rvert^{p_{2}(r)}\, dV \biggr)^{\frac{p^{-}_\Omega}{p_{2}(r)}} \\
	&\leq c\rho^{n}\left( c\rho^{\kappa} \right)^{\frac{p^{-}_\Omega}{p_{2}\left(r\right)}} \leq c\rho^{n+p^{-}_\Omega \tilde \alpha},\quad \tilde \alpha = \frac{\kappa}{p^{+}_\Omega}
\end{align*}
as $p_{2}\left(r\right) < p^{+}_\Omega$ and $\rho_{0} < 1$. Since this holds for any $x_{c} \in \overline{B}_{\widetilde{R}}\left(x_{0}\right)$ and any $0 < \rho < \rho_{0}$, by Campanato's characterization (see the original paper \cite{Campanato} or \cite[Chapter III, \S 1, Theorem 1.2]{Giaquinta}), this implies the H\"{o}lder continuity with exponent $\tilde \alpha$ of $\omega$. This completes the proof.  
\end{proof}

\begin{corollary} Under conditions of Theorem ~\ref{main theorem}, for any contractible $C^{2,\alpha_4}$ submanifold $M_1$ of $M$ there exists $\widetilde{u} \in C^{1,\tilde{\alpha}}\left( \Omega_{1}; \varLambda^{k}\right)$, depending on $\omega$ and $\Omega_{1},$ where $0 < \tilde{\alpha} <1$ depends only on $n$, $N$, $k$, $p^{-}_M$, $p^{+}_M$, $a^-_M$, $a^+_M$, $\alpha_{1}$, $\alpha_{2}$, $\alpha_{3}$, $\alpha_4$, such that $d\widetilde{u} = \omega$ and $d^* \widetilde{u}=0$ in $M_1$. 
\end{corollary}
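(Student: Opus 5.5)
The plan is to build $\widetilde u$ as the solution of a first-order div–curl boundary value problem on $M_1$, exactly as in the corollaries following Theorems~\ref{th:HIM} and \ref{th:MorreyBounds}, and then transfer the H\"older regularity of $\omega$ to $\nabla\widetilde u$ through Schauder-type estimates for this linear system. Since $M_1$ is contractible and $\omega$ is closed ($d\omega=0$ in the weak sense by \eqref{weak2}), there are no harmonic fields on $M_1$ to obstruct solvability. Theorem~\ref{T:dcb} from the Appendix (see also \cite{Balci_Sil_Surnachev_arxiv2025}) then gives $\widetilde u\in W^{1,p(\cdot)}(M_1;\Lambda^k)$ with
\begin{align*}
d\widetilde u=\omega \quad\text{and}\quad d^*\widetilde u=0 \quad\text{in } M_1, \qquad \nu\lrcorner\widetilde u=0 \quad\text{on } bM_1 .
\end{align*}
Here I use that $M_1$ is compact in $\mathring M$ (after shrinking, if necessary). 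Theorem~\ref{main theorem} then yields $\omega\in C^{0,\tilde\alpha_0}(M_1;\Lambda^{k+1})$, where $\tilde\alpha_0$ depends only on the listed data.

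The key step is the Schauder estimate for the div–curl system with normal boundary condition on a $C^{2,\alpha_4}$ manifold. Formally $\widetilde u$ solves the Hodge Laplace equation $\Delta\widetilde u=d^*\omega$, with $\nu\lrcorner\widetilde u=0$ and $\nu\lrcorner d\widetilde u=\nu\lrcorner\omega$ on $bM_1$. This is an elliptic boundary problem satisfying the complementing condition. Its divergence-form structure makes the right-hand side $d^*\omega$ acceptable in the $C^{0,\alpha}$ sense: writing the weak formulation $\int\langle d\widetilde u,d\phi\rangle+\langle d^*\widetilde u,d^*\phi\rangle=\int\langle\omega,d\phi\rangle$, the data enter only through $\omega\in C^{0,\tilde\alpha}$. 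I would invoke the $C^{1,\alpha}$ Schauder theory for such systems, the analogue of Theorem 35 of \cite{Sil_Sengupta_MorreyLorentz} in H\"older rather than Morrey spaces, or the classical estimates of Morrey \cite{Morrey1966} for $d,d^*$ systems. The conclusion I aim for is
\begin{align*}
\|\widetilde u\|_{C^{1,\alpha}(M_1)}\le C\bigl(\|\omega\|_{C^{0,\alpha}(M_1)}+\|\widetilde u\|_{L^1(M_1)}\bigr),
\end{align*}
with $\alpha=\min\{\tilde\alpha_0,\alpha_4\}$, which is the required $\tilde\alpha$.

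I expect the main obstacle to be verifying this boundary Schauder estimate under the stated regularity. Flattening a $C^{2,\alpha_4}$ boundary produces a metric of class $C^{1,\alpha_4}$, so the coefficients of $d^*$ are only $C^{0,\alpha_4}$ in the lower-order terms. This is enough for $C^{1,\alpha}$ estimates in divergence form, but I must check that no derivative of the metric beyond the first appears in the principal part. This regularity requirement explains the hypothesis $C^{2,\alpha_4}$ and the dependence of $\tilde\alpha$ on $\alpha_4$. In the interior, I would simply freeze coefficients and use Campanato-type comparison, as in the proof of Theorem~\ref{main theorem}. Near $bM_1$, I would work in admissible coordinates and reduce to half-space estimates for the normal problem by the reflection used in Theorem~\ref{higher integrability boundary}.
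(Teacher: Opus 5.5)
Your proposal is correct and follows essentially the paper's route. The paper also takes $\widetilde u$ solving $d\widetilde u=\omega$, $d^*\widetilde u=0$ in $M_1$, $\nu\lrcorner\widetilde u=0$ on $bM_1$. This is solvable because $M_1$ is contractible; note that the relevant existence result is Theorem~\ref{divcurl system}(ii), not the ball version Theorem~\ref{T:dcb}. The paper then obtains $C^{1,\min\{\alpha_4,\cdot\}}$ regularity from Morrey's classical H\"older theory, \cite[Theorem 7.7.8]{Morrey1966}. The only difference is that the paper realizes $\widetilde u$ as $d^*$ of the Neumann potential of $\omega$, so it needs only the standard Schauder bound $\|\widetilde u\|_{C^{1,\tilde\alpha}}\le c\|\omega\|_{C^{0,\tilde\alpha}}$ for that potential, rather than the divergence-form boundary estimate you planned to verify by hand.
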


\begin{proof}

Using  \cite[Theorem 7.7.8]{Morrey1966} we can find $\widetilde{u} \in C^{1, \tilde{\alpha}}\left(M_1; \varLambda^{k}\right)$, where the exponent is given by $\tilde{\alpha}=\min\{\alpha_4, \text{ the H\"older exponent for }\omega\}$, such that  
\begin{align*}
	\left\lbrace \begin{aligned}
			d\widetilde{u} = \omega  \quad &\text{and} \quad  d^{\ast} \widetilde{u} = 0 &&\text{ in } M_{1}, \\
		\nu\lrcorner \widetilde{u} &= 0 &&\text{  on } bM_{1}.
	\end{aligned}\right. 
\end{align*}
In fact, $\widetilde u$ is constructed as the codifferential of the Neumann potential of $\omega$. Moreover, $\|\widetilde{u}\|_{C^{1,\tilde{\alpha}} (M_1;\Lambda^k)} \leq c \|\omega\|_{C^{\tilde{\alpha}} (M_1;\Lambda^{k+1})}$.
\end{proof}


\subsection{Global H\"older continuity}

\begin{theorem}\label{main theorem global}
	 Let the variable exponent $p(\cdot)$ satisfy \eqref{definition log holder} and \eqref{holder exponent} with some $0<\alpha_1<1$.  
    Let the weight function $a \in C^{0, \alpha_{2}}(M)$ for some $0 < \alpha_{2} < 1$ and satisfy \eqref{acond}. Let  $F \in C^{0, \alpha_{3}}(M \varLambda^{k+1})$ for some $0 < \alpha_{3} < 1$. Let $\omega \in L^{p(\cdot)} (M; \varLambda^{k+1})$ be a weak solution to the system \eqref{main system} in $M$ satisfying the Dirichlet or the Neumann condition $t\omega=0$ or $n ( (\mu^2+|\omega|^2)^{(p(x)-2)/2})=0$, respectively. Then $\omega$ is H\"older continuous in $M$ with the H\"older exponent depending only on $n$, $N$, $p^{-}_M$, $p^{+}_M$, $\alpha_1$, $\alpha_2$, $\alpha_3$, $M$. 
\end{theorem}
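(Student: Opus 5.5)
The plan is to combine the interior result of Theorem~\ref{main theorem} with a boundary version of the same comparison argument, carried out in admissible boundary coordinate systems. By compactness of $M$ we cover it with finitely many interior charts, where Theorem~\ref{main theorem} applies, and finitely many admissible boundary charts. Everything then reduces to showing a Campanato-type decay
\[
\int_{B_\rho(x_c)\cap\Omega} \bigl|\omega-(\omega)_{B_\rho(x_c)\cap\Omega}\bigr|^{p^-}\,dV\leq c\rho^{n+p^-\tilde\alpha}
\]
for all $x_c$ near $\Gamma$ and all small $\rho$. Campanato's characterization holds on the Lipschitz sets $B_\rho\cap\Omega$, so this decay gives H\"older continuity up to $\Gamma$, and the exponent $\tilde\alpha$ is uniform over the finitely many charts.

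\emph{Neumann case.} I would use the reflection device from the proof of Theorem~\ref{higher integrability boundary}. We extend $\omega$ and $F$ by $S^*$, extend $a$ and $p$ evenly, and extend the metric as there. The extended $\tilde\omega$ is then a local weak solution in a full neighbourhood of the boundary point. The even extensions of $a$ and $p$ keep their H\"older continuity with the same exponents. $S^*F$ is H\"older on each side, and $\tilde F$ is again $C^{0,\alpha_3}$ on the whole neighbourhood. The extended metric is only Lipschitz, but it is even/odd in the right components, and Lipschitz coefficients are all that Theorem~\ref{Uhlenbeck estimate} requires. I would double-check the matching of $\tilde F$ across $x^n=0$; if its normal and tangential pieces fail to match, I would instead subtract a suitable constant form (the Hodge dual of a constant is co-closed) and note that only the oscillation $\ast F-(\ast F)_r$ enters the estimate of $I_2$. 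With this in place, Theorem~\ref{main theorem} applied to $\tilde\omega$ gives the result directly.

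\emph{Dirichlet case.} Here I would repeat the proof of Theorem~\ref{main theorem} on half-balls $B_r^+$ centred on $\Gamma$; balls away from $\Gamma$ are already covered by the interior result.
\begin{enumerate}
\item Use Lemma~\ref{L:Gauge_Tan} to produce $\bar u$ with $d\bar u=\omega$ and $t\bar u=0$ on the flat part.
\item Take the comparison solution $v$ of \eqref{frozen system for comparison boundary} with frozen coefficients $a(\bar x_{\max})$ and $p_2(r)$. As in the proof of Theorem~\ref{MorreyBoundDN}, the reflection-plus-uniqueness argument shows $tv=0$, so $t(dv)=0$ on the flat part.
\item Use $\bar u-v$ as the test form. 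It lies in $W^{1,p_2}_T$ of the half-ball, since its tangential part vanishes both on the curved boundary and on the flat boundary. The identities \eqref{testing u -w for F} and \eqref{estimate 1} therefore hold verbatim, with $F-\ast^{-1}(\ast F)_{r}$, and the integration by parts against the co-closed constant form is still valid.
\item Bound $I_{11}$, $I_{12}$ and $I_2$ exactly as in \eqref{estimate of F term holder}--\eqref{estimate of I11 holder}. For this we use the boundary Morrey bound of Theorem~\ref{MorreyBoundDN} in place of \eqref{Morrey decay}, and the boundary higher integrability of Theorem~\ref{higher integrability boundary} inside the $I_{11}$ estimate.
\item Obtain the comparison bound \eqref{comparison estimate holder} on half-balls from \eqref{mu1} and \eqref{mu2}.
\item Apply the Hamburger boundary oscillation estimate \eqref{hamburger_osc_useD} of Theorem~\ref{Hamburger estimate} to $dv$ in place of \eqref{hamburger_osc_use}.
\item Make the same choices of $\tau$, $\theta$ and $r(\rho)=(2\rho)^{1/(1+\theta)}$ as before. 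This gives the decay $\rho^{n+\kappa}$ on half-balls centred on $\Gamma$.
\end{enumerate}

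It remains to cover balls $B_\rho(x_c)$ with $x_c$ close to, but not on, $\Gamma$. This is the usual combination argument. If $\rho<\mathrm{dist}(x_c,\Gamma)/2$, we use the interior estimate on the ball of radius $\mathrm{dist}(x_c,\Gamma)$, with constants controlled through the boundary Morrey bound. Otherwise $B_\rho(x_c)\cap\Omega$ lies in a half-ball of comparable radius centred on $\Gamma$. Patching the two regimes yields the Campanato bound uniformly in $x_c$.

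The main obstacle I expect is the Dirichlet $I_{11}$ term near the boundary. It needs higher integrability on the half-ball $Q_{2R}\cap\Omega$, but Theorem~\ref{higher integrability boundary} only controls $Q_{R/4}$ by $Q_R$. I would handle this by enlarging the radii by a fixed factor, which changes only constants. The second delicate point is checking that the Morrey bound of Theorem~\ref{MorreyBoundDN} holds uniformly for centres near $\Gamma$, since the whole exponent bookkeeping rests on it.
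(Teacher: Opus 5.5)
Your route is the same as the paper's. For the Neumann condition you reflect across $\Gamma$ as in the proof of Theorem~\ref{higher integrability boundary} and invoke Theorem~\ref{main theorem}. For the Dirichlet condition you rerun the comparison argument on half-balls, using $\bar u$ from Lemma~\ref{L:Gauge_Tan}, the symmetric solution of \eqref{frozen system for comparison boundary}, and Theorem~\ref{Hamburger estimate} in place of \eqref{hamburger_osc_use}. The Dirichlet part is sound: $\bar u-v$ has vanishing tangential part on all of $\partial B_r^+$, so subtracting $\ast^{-1}(\ast F)_r$ is legitimate. Your patching of centres near $\Gamma$ is a detail the paper leaves implicit.

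The Neumann step, however, rests on an unjustified claim, and your fallback would not work.

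Since $S^*dx^n=-dx^n$, the components of $F$ containing $dx^n$ are reflected oddly. So $\tilde F$ is continuous across $\{x^n=0\}$, and then $C^{0,\alpha_3}$, if and only if these components vanish on $\Gamma$, i.e.\ $nF=0$ there.

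Subtracting a constant or any co-closed form cannot repair a mismatch. A form that is co-closed across the interface has continuous normal part, so it cannot absorb a jump located exactly in the normal components. After such a subtraction, $\tilde F$ still oscillates by an amount of order $|nF|$ on balls crossing $\Gamma$, not $r^{\alpha}$.

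Worse, the weak formulation for all $\phi\in\mathrm{Lip}(M;\Lambda^k)$ in Definition~\ref{omega:defN} encodes $n\bigl(a\mathcal{A}(p,\omega)\bigr)=nF$. So if $nF\neq0$, the reflected $\tilde\omega$ itself jumps across $\Gamma$. For example, in flat coordinates with $p\equiv 2$, $a\equiv 1$, $\mu=0$, $k=0$, the form $\omega=F=dx^n$ is such a solution, and $\tilde\omega=-dx^n$ below $\Gamma$. This jump is harmless for higher integrability and Morrey bounds, where $\tilde F\in L^{qp'(\cdot)}$ or $BMO$ suffices, but it is fatal for H\"older continuity.

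The correct way to close the step is to take the hypothesis literally. The condition $n\mathcal{A}(p,\omega)=0$ together with $n\bigl(a\mathcal{A}(p,\omega)\bigr)=nF$ forces $nF=0$ on $bM$. Then $\tilde F\in C^{0,\alpha_3}$ and the reduction to Theorem~\ref{main theorem} is valid. Allowing $nF\neq0$ would require a genuine half-ball comparison with an inhomogeneous conormal condition, which Theorem~\ref{Hamburger estimate} does not provide.

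The paper's own proof makes the reflection without comment, so your suspicion was justified. The fix is this compatibility condition, not a subtraction.
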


\begin{proof} The interior estimates were obtained in Theorem~\ref{main theorem}. So we only have to study the regions close to the boundary. As before, we argue in an admissible boundary coordinate system.

I. (\textit{Neumann case}). Again, by the extension device described in Section~\ref{ssec:hib} in the proof of Theorem~\ref{higher integrability boundary}, in this case the proof is reduced to the interior estimates covered by Theorem~\ref{main theorem}.

II. (\textit{Dirichlet case}). In this case we use the same choice of the potential $\bar{u}$ and the comparison function $v$ as in the proof of Theorem~\ref{MorreyBoundDN}. That is, $\bar u$ is provided by Lemma~\ref{L:Gauge_Tan} instead of \eqref{gauge fixing 2} and $v$ is provided by \eqref{frozen system for comparison boundary} instead of \eqref{frozen system for comparison}). Thus the arguments in the proof of Theorem~\ref{main theorem} are repeated in half-balls centered at the boundary.
\end{proof}

\begin{corollary} Under conditions of Theorem ~\ref{main theorem global}, if $M$ is contractible and $C^{2,\alpha_4}$, there exists $\widetilde{u} \in C^{1,\tilde{\alpha}}\left(M; \varLambda^{k}\right)$ where $0 < \tilde{\alpha} <1$ depends only on $n$, $N$, $k$, $p^{-}_M$, $p^{+}_M$, $a^-_M$, $a^+_M$, $\alpha_{1}$, $\alpha_{2}$, $\alpha_{3}$, $\alpha_4$, such that $d\widetilde{u} = \omega$ and $d^* \widetilde{u}=0$ in $M$, and $tu=0$ (respectively $nu=0$) for the Dirichlet/Neumann boundary condition. 
\end{corollary}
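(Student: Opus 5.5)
The plan is to mimic the proof of the interior corollary to Theorem~\ref{main theorem}, replacing the interior Neumann-potential construction with a global boundary value problem for the div–curl system on the whole of $M$. By Theorem~\ref{main theorem global}, $\omega\in C^{0,\gamma}(M;\Lambda^{k+1})$ for some $\gamma\in(0,1)$ depending only on the listed data. We set $\tilde\alpha=\min\{\alpha_4,\gamma\}$. Since $d\omega=0$ weakly on $M$, and $M$ is contractible so that its de Rham cohomology vanishes in positive degrees, $\omega$ is orthogonal to the relevant harmonic fields. We then solve the first-order system
\begin{align*}
d\widetilde u=\omega,\quad d^*\widetilde u=0 \ \text{ in } M,\qquad \nu\wedge\widetilde u=0\ \ (\text{Dirichlet}) \ \text{ or }\ \nu\lrcorner\widetilde u=0\ \ (\text{Neumann}) \ \text{ on } bM.
\end{align*}

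In the Neumann case ($nu=0$) we take $\widetilde u=d^*\Phi$, where $\Phi$ is the Neumann potential of $\omega$, i.e. $\Delta\Phi=\omega$ with $\nu\lrcorner\Phi=0$ and $\nu\lrcorner d\Phi=0$. Schauder theory for the Hodge Laplacian on $C^{2,\alpha_4}$ manifolds, \cite[Theorem 7.7.8]{Morrey1966}, gives $\Phi\in C^{2,\tilde\alpha}$ and hence $\widetilde u\in C^{1,\tilde\alpha}$ with $\|\widetilde u\|_{C^{1,\tilde\alpha}}\le c\|\omega\|_{C^{\tilde\alpha}}$. We then check $d\widetilde u=\omega$: since $d\omega=0$ and $\omega$ has no harmonic component, $\omega-dd^*\Phi=d^*d\Phi$ is simultaneously exact-type and coexact, so it vanishes by the Hodge decomposition.

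In the Dirichlet case ($tu=0$) we use the Dirichlet potential instead. We solve $\Delta\Phi=\omega$ with $\nu\wedge\Phi=0$ and $\nu\wedge d^*\Phi=0$, and set $\widetilde u=d^*\Phi$. This requires $t\omega=0$, which holds by the boundary condition, together with vanishing of the tangential harmonic fields $\mathcal H_T(M;\Lambda^{k+1})$. The latter is guaranteed by contractibility, since these fields are isomorphic to relative cohomology, which is dual to absolute cohomology in degree $n-k-1$ and hence vanishes unless $k+1=n$. In the top-degree case we instead appeal to Theorem~\ref{divcurl system} with the compatibility condition, which is automatic here.

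The expected main obstacle is this Dirichlet case. The compatibility conditions must be verified carefully, and the Schauder estimates for the Dirichlet problem on a $C^{2,\alpha_4}$ manifold, again from \cite{Morrey1966}, must be confirmed to give $C^{1,\tilde\alpha}$ regularity up to the boundary with the stated dependence of the exponent.
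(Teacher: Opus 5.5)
Your construction is the one the paper intends. The paper gives no separate proof of this corollary, but its interior analogue builds $\widetilde u$ as the codifferential of the Neumann potential via \cite[Theorem 7.7.8]{Morrey1966`}, and your Neumann/Dirichlet-potential version with $\tilde\alpha=\min\{\alpha_4,\gamma\}$ is the natural global form of that. Two parts are correct:
\begin{itemize}
\item The Neumann case.
\item The Dirichlet case for $k\le n-2$. There $\mathcal{H}_T(M;\Lambda^{k+1})=\{0\}$, and $d^*d\Phi=0$ follows by integrating $|d^*d\Phi|^2$ by parts, using $d(d^*d\Phi)=d\omega=0$ and $t(d^*d\Phi)=t\omega=0$.
\end{itemize}

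The gap is the top-degree Dirichlet case $k=n-1$, where you claim the compatibility condition is ``automatic''. It is not.
\begin{itemize}
\item Since $\mathcal{H}_T(M;\Lambda^n)=\{c\,dV\}$, the condition in Theorem~\ref{divcurl system}(i) with $u_0=0$ reads $\int_M\omega=0$.
\item This condition is also necessary: $d\widetilde u=\omega$ and $t\widetilde u=0$ give $\int_M\omega=\int_{bM}\widetilde u=0$ by Stokes.
\item For an $n$-form the condition $t\omega=0$ is vacuous, so nothing forces $\int_M\omega=0$.
\end{itemize}
A counterexample: take $M=\overline{B}_1\subset\mathbb{R}^n$, $N=1$, $a\equiv1$, $p\equiv2$, $\mu=0$, $F=0$ and $\omega=dx^1\wedge\cdots\wedge dx^n$. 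For every $\phi\in\mathrm{Lip}_T(M;\Lambda^{n-1})$ we have $\int_M\langle\omega,d\phi\rangle\,dV=\int_M d\phi=\int_{bM}\phi=0$. So $\omega$ is a smooth weak solution with the Dirichlet condition and meets every hypothesis of Theorem~\ref{main theorem global}. Yet $\int_M\omega=|B_1|\neq0$, so no $\widetilde u$ with $d\widetilde u=\omega$ and $t\widetilde u=0$ exists.

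This step therefore cannot be repaired. The corollary itself is false for $k=n-1$ in the Dirichlet case. It must either be restricted to $k\le n-2$ there, or carry the extra hypothesis $\int_M\omega=0$. That hypothesis holds automatically when $\omega=du$ with $tu=0$, as in Theorem~\ref{T3u_global}.

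Under that hypothesis $\omega\perp\mathcal{H}_T(M;\Lambda^n)$, and your Dirichlet-potential argument works verbatim (here $d\Phi=0$ trivially, so $dd^*\Phi=\Delta\Phi=\omega$). Note that appealing to Theorem~\ref{divcurl system} alone would give only $W^{1,p(\cdot)}$, not $C^{1,\tilde\alpha}$. You would still need the Schauder step for the potential in that case.
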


\section{Results for the second-order system}\label{sec:2nd}

In this section we collect results for the second-order system \eqref{main2}. As above, $n \geq 2$, $N \geq 1$ and $0 \leq k \leq n-1$ are integers and $M$ is a compact orientable $C^{1,1}$ manifold with boundary, $\mathrm{dim}\, M =n$, the variable exponent $p(\cdot)$ satisfies \eqref{definition log holder} and the weight $a(\cdot)$ satisfies \eqref{acond}.

\subsection{Higher integrability of the gradient}


\begin{theorem}\label{T1u}
Let $u\in L^{p(\cdot)}_{\mathrm{loc}}(M; \Lambda^k)$ be a solution to \eqref{eq2} with $F\in L^{qp(\cdot)}_{\mathrm{loc}}(M; \Lambda^{k+1})$, $q>1$. Then there exists a constant 
$$
\sigma_{0} = \sigma_{0}\bigl(\textit{data}, K_{0}[\Omega], q-1 \bigr) \in (0,1)
$$
such that for any $\sigma <\sigma_0$ and any $M_1 \Subset \mathring{M}$ there exists a closed form $\eta$ such that $u-\eta\in W^{1,(1+\sigma)p(\cdot)} (M_1; \Lambda^k)$, and moreover $t(u-\eta)=0$ for the Dirichlet case. If $d^* u \in L_{\mathrm{loc}}^{(1+\sigma)p(\cdot)}(M;\Lambda^{k-1})$ then $u\in W^{1,(1+\sigma)p(\cdot)}_{\mathrm{loc}}(M;\Lambda^k)$.  
\end{theorem}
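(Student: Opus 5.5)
The plan is to reduce the statement to the higher integrability of $\omega=du$ and then recover the full gradient of $u$ by gauge fixing. By definition, $u$ solves \eqref{main2} exactly when $\omega=du$ is a local weak solution of \eqref{main system} (and $d\omega=0$ holds automatically). Theorem~\ref{th:HIM} therefore applies; here the hypothesis on $F$ is read as $F\in L^{qp'(\cdot)}_{\mathrm{loc}}$, as in that theorem. It gives $\sigma_0$, depending on \textit{data}, $K_0$ and $q-1$, such that $\omega\in L^{(1+\sigma)p(\cdot)}_{\mathrm{loc}}(\mathring{M};\Lambda^{k+1})$ for every $\sigma<\sigma_0$. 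The estimate \eqref{higher integrability estimate} holds on small cubes, and the local bound is global on compact subsets of $\mathring M$ by a finite covering.

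Fix $M_1\Subset\mathring M$. Enlarging if necessary, choose a $C^{1,1}$ submanifold $M_2$ with boundary, $M_1\subset M_2\Subset\mathring M$. If $M_2$ is contractible, proceed as in the Corollary after Theorem~\ref{th:HIM}: by Theorem~\ref{T:dcb} (Theorem~\ref{divcurl system}) with exponent $(1+\sigma)p(\cdot)$, which is still log-H\"older with bounds in $(1,\infty)$, solve
$$
dw=\omega,\quad d^*w=0 \ \text{ in } M_2,\qquad \nu\lrcorner w=0 \ \text{ on } bM_2 ,
$$
with $w\in W^{1,(1+\sigma)p(\cdot)}(M_2;\Lambda^k)$. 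Then set $\eta:=u-w$. Since $d\eta=\omega-\omega=0$, $\eta$ is closed and $u-\eta=w\in W^{1,(1+\sigma)p(\cdot)}(M_1)$.

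In general $M_2$ need not be contractible. This is handled because the compatibility condition for solvability of $dw=\omega$ is exactly that $\omega$ be exact, and it is: $\omega=du$ with $u\in L^{p(\cdot)}$. In the Hodge--Morrey decomposition of $\omega$ on $M_2$, the harmonic component is therefore orthogonal to $\mathcal H_N$, so the solvability condition of Theorem~\ref{divcurl system} is met. Alternatively one can cover $M_1$ by finitely many small balls, build local potentials on each, and take $\eta$ local, but a single global $\eta$ is cleaner.

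For the Dirichlet case, where $t u=0$ and $t\omega=0$, one uses Theorem~\ref{th:HI_B} on all of $M$ together with the tangential gauge of Lemma~\ref{L:Gauge_Tan} / Theorem~\ref{divcurl system}. This produces $w$ with $dw=\omega$, $d^*w=0$ and $tw=0$. Then $\eta=u-w$ is closed with $t\eta=0$, so $t(u-\eta)=tw=0$.

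For the last claim, suppose $d^*u\in L^{(1+\sigma)p(\cdot)}_{\mathrm{loc}}$. Then locally both $du$ and $d^*u$ lie in $L^{(1+\sigma)p(\cdot)}$. Take a ball $B\Subset\mathring M$ and a cutoff $\zeta$. The form $\zeta u$ has compact support, and
$$
d(\zeta u)=\zeta\,du+d\zeta\wedge u,\qquad d^*(\zeta u)=\zeta\,d^*u\pm d\zeta\lrcorner u .
$$
The lower-order terms $d\zeta\wedge u$ and $d\zeta\lrcorner u$ are only in $L^{p(\cdot)}$, so we bootstrap. The Gaffney inequality (Theorem~\ref{divcurl system}, with zero boundary trace so there are no harmonic fields) gives $\zeta u\in W^{1,p(\cdot)}$. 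Sobolev embedding then places $u$ in $L^{(1+\sigma)p(\cdot)}_{\mathrm{loc}}$ once $\sigma$ is small enough, using the log-H\"older continuity of $p$ locally. Applying Gaffney again with exponent $(1+\sigma)p(\cdot)$ gives $\zeta u\in W^{1,(1+\sigma)p(\cdot)}$.

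I expect the main obstacle to be making the Gaffney/div-curl step rigorous at the variable exponent $(1+\sigma)p(\cdot)$: one has to check that Theorem~\ref{divcurl system} applies with this exponent and that the embedding step closes. A possible shrinking of $\sigma$ is harmless because the statement only asks for some $\sigma<\sigma_0$. The topological compatibility in the non-contractible case is the other point to watch, and it is settled by the exactness of $\omega$.
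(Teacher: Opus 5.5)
Your proposal is correct and follows the paper's proof. You apply Theorem~\ref{th:HIM} to $\omega=du$, take the Neumann-gauge potential $w$ from Theorem~\ref{divcurl system} with $dw=\omega$, $d^*w=0$, $\nu\lrcorner w=0$, set $\eta=u-w$, and for the second claim run the cutoff--Gaffney--Sobolev bootstrap from $W^{1,p(\cdot)}$ to $W^{1,(1+\sigma)p(\cdot)}$, shrinking $\sigma_0$ if necessary. Your use of the exactness of $\omega$ to supply the orthogonality to $\mathcal{H}_N$ on a non-contractible $M_1$ is the same point the paper makes in the proof of Theorem~\ref{T2u}, and your reading of the hypothesis as $F\in L^{qp'(\cdot)}_{\mathrm{loc}}$ is what Theorem~\ref{th:HIM} actually requires.
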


\begin{proof}

The first statement follows from Theorem~\ref{th:HIM} for $\omega=du$ and the results of  \cite{Balci_Sil_Surnachev_arxiv2025} for div-curl systems in variable exponent spaces (Theorem~\ref{divcurl system} in Appendix). In any $C^{1,1}$ submanifold $M_1 \subset \mathring{M}$  there exists $\widetilde u \in W^{1,(1+\sigma)p(\cdot)}(M_1; \Lambda^k)$ such that $d\widetilde u = du$ and  $d^* \widetilde u = 0$ in $M_1$, $\nu \lrcorner \widetilde u =0$ on $b M_1$. Then $\eta = u-\widetilde u$ is a closed form.

Now let $d^*u \in L_{\mathrm{loc}}^{(1+\sigma)p(\cdot)}(M;\Lambda^{k-1})$. Let $\xi$ be a Lipschitz function with compact support in $\mathring{M}$. Then 
\begin{align*}
    d(\xi u) &= \xi du + d\xi \wedge u\in L^{p(\cdot)}(M;\Lambda^{k+1}), \\ 
d^* (\xi u) &= \xi d^* u - d\xi \lrcorner u \in  L^{p(\cdot)}(M;\Lambda^{k+1}).
\end{align*}
 
Thus $\xi u \in W^{1,p(\cdot)}(M; \Lambda^k)$ by the Gaffney inequality of \cite{Balci_Sil_Surnachev_arxiv2025}. Since $\xi$ is arbitrary by the Sobolev inequality  this implies $u\in L^{np(\cdot)/(n-1)}_{\mathrm{loc}}(M;\Lambda^k)$, and thus (we can assume without loss that $\sigma<n/(n-1)$) 
\begin{align*}
    d(\xi u)\in L^{(1+\sigma)p(\cdot)}(M;\Lambda^{k+1}) \quad \text{ and } \quad d^* (\xi u) \in  L^{(1+\sigma)p(\cdot)}(M;\Lambda^{k+1}).
\end{align*}

Again by the Gaffney inequality in variable exponent spaces, $\xi u\in W^{1, (1+\sigma)p(\cdot)}(M; \Lambda^k)$. It remains to recall again that $\xi$ is arbitrary.
\end{proof}

The global version is given by
\begin{theorem}\label{T1uB}
Let $u\in L^{p(\cdot)}(M; \Lambda^k)$ be a solution to \eqref{eq2} with $F\in L^{qp(\cdot)}(M; \Lambda^{k+1})$, $q>1$, satisfying the Dirichlet boundary condition $tu=0$ or the Neumann boundary condition $n (\mathcal{A}(p,du))=0$. Then there exists a constant $\sigma_{0} = \sigma_{0}\bigl(\textit{data}, K_{0}[\Omega], q-1 \bigr) \in (0,1)$ such that for any $\sigma <\sigma_0$ there exists a closed form $\eta$ such that $u-\eta\in W^{1,(1+\sigma)p(\cdot)} (M; \Lambda^k)$, and moreover $t(u-\eta)=0$  for the Dirichlet case. If $d^* u \in L^{(1+\sigma)p(\cdot)}(M;\Lambda^{k-1})$ then $u\in W^{1,(1+\sigma)p(\cdot)}(M;\Lambda^k)$.  
\end{theorem}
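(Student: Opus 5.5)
The plan mirrors the proof of Theorem~\ref{T1u}, with local tools replaced by their global counterparts.

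\emph{Step 1: higher integrability of $\omega=du$.} Set $\omega=du$. By definition, $\omega$ is a weak solution of \eqref{main system} with $d\omega=0$. In the Dirichlet case $tu=0$ gives $t\omega=t\,du=0$ in the weak sense, since $d$ commutes with pullback to the boundary; in the weak formulation this is the statement that \eqref{def:du} for $du$ holds with arbitrary Lipschitz test forms. In the Neumann case $n\mathcal{A}(p,\omega)=0$ is exactly Definition~\ref{omega:defN}. Theorem~\ref{th:HI_B} then gives $\omega\in L^{(1+\sigma)p(\cdot)}(M;\Lambda^{k+1})$ for all $\sigma<\sigma_0$, with $\sigma_0$ as in Theorem~\ref{higher integrability boundary} after a finite covering of the compact $M$ by interior and admissible boundary charts.

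\emph{Step 2: gauge-fixed potential.} I would apply the variable exponent div--curl/Gaffney theory (Theorem~\ref{divcurl system} in Appendix) with exponent $(1+\sigma)p(\cdot)$, which is still log-H\"older and bounded away from $1$ and $\infty$.
\begin{itemize}
\item In the Dirichlet case, solve $d\widetilde u=\omega$, $d^*\widetilde u=0$, $t\widetilde u=0$ on $M$.
\item In the Neumann case, solve $d\widetilde u=\omega$, $d^*\widetilde u=0$, $n\widetilde u=0$.
\end{itemize}
Solvability requires the compatibility condition that $\omega$ be orthogonal to the relevant harmonic fields. Dirichlet case: $\omega=du$ with $tu=0$, so $\omega$ is exact with vanishing tangential part and orthogonal to $\mathcal{H}_N$ by integration by parts. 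Neumann case: $\omega=du$ is exact and hence orthogonal to $\mathcal{H}_N$ too, and only $d\widetilde u=\omega$ is needed there. In both cases $\widetilde u\in W^{1,(1+\sigma)p(\cdot)}(M;\Lambda^k)$. Set $\eta=u-\widetilde u$; then $d\eta=0$, and in the Dirichlet case $t(u-\eta)=t\widetilde u=0$.

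\emph{Step 3: the case $d^*u\in L^{(1+\sigma)p(\cdot)}$.} Here $\eta$ satisfies $d\eta=0$, $d^*\eta=d^*u\in L^{(1+\sigma)p(\cdot)}$, and in the Dirichlet case $t\eta=0$. The global Gaffney inequality $\|\eta\|_{W^{1,r(\cdot)}}\le C(\|d\eta\|+\|d^*\eta\|+\|\eta\|_{L^{r(\cdot)}})$ for forms with $t\eta=0$ would give $\eta\in W^{1,(1+\sigma)p(\cdot)}$, provided $\eta\in L^{(1+\sigma)p(\cdot)}$. I would get this by bootstrapping, as in Theorem~\ref{T1u}. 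The form $\eta$ lies in $L^{p(\cdot)}$ because $u$ and $\widetilde u$ do, and the Gaffney inequality at exponent $p(\cdot)$ gives $\eta\in W^{1,p(\cdot)}$. Sobolev embedding then gives $\eta\in L^{np(\cdot)/(n-1)}\subset L^{(1+\sigma)p(\cdot)}$ once $\sigma<1/(n-1)$, and the Gaffney inequality at exponent $(1+\sigma)p(\cdot)$ finishes. Then $u=\widetilde u+\eta\in W^{1,(1+\sigma)p(\cdot)}(M;\Lambda^k)$.

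\emph{Main obstacle.} In the Neumann case $u$ carries no boundary condition on $tu$ or $nu$, so Gaffney's inequality does not apply to $\eta$ without one. I would first try to replace $\eta$ by the Neumann-type decomposition $\eta=h+d\alpha+d^*\beta$ and estimate each piece separately. If that fails, I would fall back to the statement in the Dirichlet case only, or add the boundary condition $nu=0$ coming from the gauge as in Proposition~\ref{existence of minimizersN}, under which Gaffney's inequality again applies.
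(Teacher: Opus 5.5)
Your Steps~1--2 and the Dirichlet half of Step~3 are the argument the paper has in mind. The paper states this theorem without proof, as the global counterpart of Theorem~\ref{T1u}: Theorem~\ref{th:HI_B} for $\omega=du$, Theorem~\ref{divcurl system} for the gauge-fixed potential, then Gaffney with the Sobolev bootstrap. You tacitly read the hypothesis as $F\in L^{qp'(\cdot)}$, which is surely what is meant and what Theorem~\ref{th:HI_B} requires. There is one slip. The tangential problem in Theorem~\ref{divcurl system}(i) needs $\omega\perp\mathcal{H}_T(M;\Lambda^{k+1})$, not $\mathcal{H}_N$. It holds for the same reason: $\nu\wedge u=0$ means precisely $\int_M\langle du,\chi\rangle\,dV=\int_M\langle u,d^*\chi\rangle\,dV$ for all sufficiently regular $\chi$, so $du$ is orthogonal to every co-closed field.

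The Neumann obstacle you flag is genuine, and no cleverer decomposition of $\eta$ will remove it. As stated, the last assertion of the theorem is false in the Neumann case. Here is a counterexample.
\begin{itemize}
\item Data: $M=\overline{D}$ the closed unit disc in $\mathbb{R}^2$, $k=1$, $N=1$, $p\equiv 2$, $a\equiv 1$, $F=0$.
\item Form: $u=d\vartheta$ with $\vartheta=\mathrm{Re}\,(z-1)^{1/2}$, branch cut along $[1,\infty)$. Then $\vartheta$ is harmonic in $D$ and smooth on $\overline{D}\setminus\{1\}$.
\item Hypotheses hold: $du=0$, so $\omega=0$ is a weak solution with the Neumann condition of Definition~\ref{omega:defN} (both sides of the integral identity vanish). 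Also $d^*u=\pm\Delta\vartheta=0$, and $|u|\sim|z-1|^{-1/2}$ gives $u\in L^2$.
\item Conclusion fails: $|\nabla u|\sim|z-1|^{-3/2}$ is not square integrable near $z=1$. So $u\notin W^{1,2}(M;\Lambda^1)$, let alone $W^{1,2(1+\sigma)}$.
\end{itemize}
The equation and the Neumann condition only see $du$. Adding $d\vartheta$ with $\vartheta$ harmonic changes neither $du$ nor $d^*u$ but destroys boundary regularity. In the Dirichlet case it is exactly the condition $t\eta=0$ that rules this out.

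So your first fallback, estimating the pieces of $\eta=h+d\alpha+d^*\beta$, cannot succeed. Your second is the correct repair: assume additionally $\nu\lrcorner u=0$, the kind of gauge built into Proposition~\ref{existence of minimizersN}. Then $\eta=u-\widetilde u$ satisfies $d\eta=0$, $d^*\eta=d^*u$ and $\nu\lrcorner\eta=0$. The normal-boundary Gaffney inequality with your bootstrap then gives $u\in W^{1,(1+\sigma)p(\cdot)}(M;\Lambda^k)$. The first assertion of the theorem (a closed $\eta$ with $u-\eta\in W^{1,(1+\sigma)p(\cdot)}$) is correct in both cases, exactly as you prove it.
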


\subsection{H\"{o}lder continuity of solutions}

\begin{theorem}\label{T2u}
Let the variable exponent $p(\cdot)$ satisfy \eqref{definition log holder} and \eqref{log holder vanishing}. 
	  Let the weight function $a(\cdot)$ be continuous and satisfy \eqref{acond}. Let $F \in BMO(M; \varLambda^{k+1})$. Let $u\in W_{\mathrm{loc}}^{d,p(\cdot)} \left(\mathring{M}; \varLambda^{k}\right)$ be a local weak solution to the system \eqref{main2} in $M$. Then, for any $C^{1,1}$ submanifold $M_{1} \Subset \mathring{M}$, there exists $\widetilde{u} \in W^{1,p(\cdot)}( M_{1}; \Lambda^{k})$, depending on $\omega$ and $M_{1}$, such that $d\widetilde{u} = du$ and $d^*\widetilde{u}=0$ in $M_1$, $|\nabla \widetilde{u}| \in \mathrm{L}^{p^{-}_M, \lambda}(M_1)$ for any $\lambda\in (0,n)$ and $\widetilde{u} \in C^{0,\kappa}\left( M_{1}; \varLambda^{k}\right)$ for any $0<\kappa<1$. If $d^* u=0$ then $u\in C^{0,\kappa}_{\mathrm{loc}}(M;\Lambda^k)$ and $|\nabla u| \in \mathrm{L}_{\mathrm{loc}}^{p^{-}_M, \lambda}(M)$ for any $\lambda\in (0,n)$.
\end{theorem}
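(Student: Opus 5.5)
The plan is to reduce Theorem~\ref{T2u} to the first-order results for $\omega = du$ and then recover $u$ through gauge fixing, exactly as in Theorem~\ref{T1u}.

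\emph{Step 1: the Morrey bound for $\omega$.} By definition, $\omega := du \in L^{p(\cdot)}_{\mathrm{loc}}(\mathring{M};\Lambda^{k+1})$ is a local weak solution of \eqref{main system}. The hypotheses of Theorem~\ref{T2u} on $p(\cdot)$, $a(\cdot)$ and $F$ are exactly those of Theorem~\ref{th:MorreyBounds}. That theorem therefore gives $\omega \in \mathrm{L}^{p^-_M,\lambda}_{\mathrm{loc}}$ for every $\lambda<n$.

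\emph{Step 2: the potential $\widetilde u$ on $M_1$.} For a $C^{1,1}$ submanifold $M_1\Subset\mathring M$, solve the div--curl problem
\[
d\widetilde u=\omega,\qquad d^*\widetilde u=0 \ \text{ in } M_1,\qquad \nu\lrcorner\widetilde u=0 \ \text{ on } bM_1,
\]
using Theorem~\ref{divcurl system}; this produces $\widetilde u\in W^{1,p(\cdot)}(M_1;\Lambda^k)$.

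If $M_1$ is not contractible, a compatibility condition must hold: $\omega$ has to be orthogonal to the normal harmonic fields. This holds because $\omega=du$ is exact on $M_1$. Alternatively, one can cover $M_1$ by contractible pieces.

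The Morrey regularity $|\nabla\widetilde u|\in \mathrm{L}^{p^-_M,\lambda}(M_1)$ then comes from the Morrey-space boundedness of the Calder\'on--Zygmund operator $\nabla d^*N$, where $N$ is the Neumann potential (\cite[Theorem 35]{Sil_Sengupta_MorreyLorentz} together with the Remark following Theorem~\ref{th:MorreyBounds}). H\"older continuity of $\widetilde u$ with any $\kappa<1$ follows from Morrey's Dirichlet growth theorem: taking $\lambda>n-p^-_M$ close to $n$ gives $\int_{B_\rho}|\nabla\widetilde u|\,dV\le c\rho^{\,n-1+\kappa}$. This is the same argument as part (ii) of the proof of Theorem~\ref{th:MorreyBounds}, now without assuming contractibility.

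\emph{Step 3: the case $d^*u=0$.} Set $h=u-\widetilde u$ on $M_1$. Then $dh=0$ and $d^*h=0$, so $h$ is a harmonic field in the interior. Since the metric is only Lipschitz, $h$ satisfies a first-order elliptic system with Lipschitz coefficients, equivalently a divergence-form second-order system for its components in coordinates. Interior regularity for this system (Gaffney plus difference quotients, or just the $W^{1,q}$ theory for all $q<\infty$) gives $h\in W^{1,q}_{\mathrm{loc}}$ for every $q$, and hence $h\in C^{0,\kappa}_{\mathrm{loc}}$ with $|\nabla h|$ locally bounded in every Morrey space $\mathrm{L}^{p^-_M,\lambda}$, $\lambda<n$. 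Therefore $u=\widetilde u+h$ inherits both properties locally.

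The main obstacle I expect is Step 3. It requires interior $L^q$/Morrey regularity for harmonic fields when the metric is only Lipschitz. It also requires $u$ itself to be regular enough to start with. The latter is not a problem: since $du$ and $d^*u=0$ are in $L^{p(\cdot)}_{\mathrm{loc}}$, the localized Gaffney argument from the proof of Theorem~\ref{T1u} gives $u\in W^{1,p(\cdot)}_{\mathrm{loc}}$. One could also avoid $h$ entirely: apply the cutoff argument of Theorem~\ref{T1u} to $\xi u$ with the Morrey-space bounds for $d(\xi u)$ and $d^*(\xi u)$, and bootstrap through Sobolev embedding to reach the Morrey estimate for $\nabla u$.
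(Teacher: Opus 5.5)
Your proposal is correct and follows the paper's proof. You apply Theorem~\ref{th:MorreyBounds} to $\omega=du$, build $\widetilde u$ from the Neumann div--curl problem together with \cite[Theorem 35]{Sil_Sengupta_MorreyLorentz} and Morrey's Dirichlet growth theorem (compatibility is automatic since $\omega$ is exact, so no topological assumption on $M_1$ is needed), and handle $d^*u=0$ by noting that $u-\widetilde u$ is a harmonic field in $W^{1,q}_{\mathrm{loc}}$ for every $q<\infty$; your extra remarks (exactness giving orthogonality to $\mathcal{H}_N$, the a priori $W^{1,p(\cdot)}_{\mathrm{loc}}$ regularity of $u$ via localized Gaffney, the choice of $\lambda$ for a given $\kappa$) only fill in details the paper leaves implicit.
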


\begin{proof}
The proof repeats the proof of Theorem~\ref{th:MorreyBounds}. It only remains to take $\omega =du$ in this proof. Note that $d^* \widetilde u=0$ by construction. Since $\omega = du$ is already exact, the compatibility conditions are automatically satisfied and no assumption on the homology of $M_1$ is required. 

For the second part, if one already knows that $d^* u =0$, then the difference $u-\widetilde{u}$ is a harmonic field ($d(u-\widetilde u)=0$, $d^* (u-\widetilde u)=0$). Since on a $C^{1,1}$ manifold any harmonic field belongs (at least locally) to $W^{1,q}$ for any $q<\infty$, its gradients belongs to the Morrey class $\mathrm{L}^{p^{-}_M,\lambda}_{\mathrm{loc}}(M;\Lambda^k)$ for any $\lambda\in (0,n)$. By the Sobolev embedding they are also H\"older continuous with any exponent less than $1$. 
\end{proof}

\begin{theorem}\label{T2uB}
Let the variable exponent $p(\cdot)$ satisfy \eqref{definition log holder} and \eqref{log holder vanishing}. 
	  Let the weight function $a(\cdot)$ be continuous and satisfy \eqref{acond}. Let $F \in BMO(M; \varLambda^{k+1})$. Let $u\in W^{d,p(\cdot)} \left(M; \varLambda^{k}\right)$ be a weak solution to the system \eqref{main2} in $M$ satisfying the Dirichlet boundary condition $tu=0$ or the Neumann boundary condition $n (\mathcal{A}(p,du))=0$. Then there exists $\widetilde{u} \in W^{1,p(\cdot)}( M; \Lambda^{k})$, such that $d\widetilde{u} = du$ and $d^*\widetilde{u}=0$, $tu=0$ or $nu=0$, respectively, $|\nabla \widetilde{u}| \in \mathrm{L}^{p^{-}_M, \lambda}(M)$ for any $\lambda\in (0,n)$ and $\widetilde{u} \in C^{0,\kappa}\left( M; \varLambda^{k}\right)$ for any $0<\kappa<1$. If $d^* u=0$ then $u\in C^{0,\kappa}(M;\Lambda^k)$ and $|\nabla u| \in \mathrm{L}^{p^{-}_M, \lambda}(M)$ for any $\lambda\in (0,n)$.
\end{theorem}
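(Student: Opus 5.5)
The plan is to reduce Theorem~\ref{T2uB} to the global Morrey bound for $\omega=du$ and then reconstruct the potential with a global gauge-fixed div-curl problem. Set $\omega:=du\in L^{p(\cdot)}(M;\Lambda^{k+1})$. By definition of a weak solution to \eqref{main2} with the Dirichlet (resp. Neumann) condition, $\omega$ is a weak solution of \eqref{main system} with $d\omega=0$. In the Dirichlet case, $t(du)=0$ follows from $tu=0$ in the weak sense: for any Lipschitz $\eta$ one has $\int\langle du, d^*\eta\rangle=\int\langle u, d^*d^*\eta\rangle=0$, so $t\omega=0$. In the Neumann case, $n\mathcal{A}(p,\omega)=0$ holds by definition. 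Theorem~\ref{th:MorreyBoundsDN} then gives $\omega\in \mathrm{L}^{p^-_M,\lambda}(M;\Lambda^{k+1})$ for every $\lambda\in(0,n)$, with no contractibility needed for this part.

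Next I construct $\widetilde u$ by solving the boundary value problem $d\widetilde u=\omega$, $d^*\widetilde u=0$ in $M$, with $t\widetilde u=0$ in the Dirichlet case or $n\widetilde u=0$ in the Neumann case, using Theorem~\ref{divcurl system} (the variable exponent Gaffney/div-curl results of \cite{Balci_Sil_Surnachev_arxiv2025}). The compatibility conditions are exactly where the statement drops contractibility. Since $\omega=du$ is exact with a potential $u$ that has the correct boundary trace, $\omega$ is $L^2$-orthogonal to the relevant harmonic fields. Concretely, in the Dirichlet case, for $h\in\mathcal{H}_N(M;\Lambda^{k+1})$ we have $\int\langle du,h\rangle=\int\langle u,d^*h\rangle=0$, using $tu=0$. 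So the solution exists, unique modulo harmonic fields, and lies in $W^{1,p(\cdot)}(M;\Lambda^k)$. In the Neumann case I minimize $\int|\widetilde u-u|$-type projections, i.e. take $\widetilde u=u-\eta$ with $\eta$ solving $d\eta=0$, $d^*\eta=d^*u$, $n\eta=nu$, to avoid a homology condition.

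Morrey regularity of $\nabla\widetilde u$ comes from the Calderón--Zygmund/Morrey theory for the Dirichlet or Neumann potential, via Theorem 35 of \cite{Sil_Sengupta_MorreyLorentz} adapted to the $C^{1,1}$ setting as in the Remark after Theorem~\ref{th:MorreyBounds}. This theory gives $\|\nabla\widetilde u\|_{\mathrm{L}^{q,\lambda}}\le C\|\omega\|_{\mathrm{L}^{q,\lambda}}$ for $1<q<\infty$ and $0\le\lambda<n$, applied with $q=p^-_M$. Morrey's Dirichlet growth theorem \cite[Theorem 3.5.2]{Morrey1966} then yields $\widetilde u\in C^{0,\kappa}(M)$ for all $\kappa<1$, since $\lambda$ can be taken arbitrarily close to $n$.

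Finally, suppose $d^*u=0$. Then $h:=u-\widetilde u$ satisfies $dh=0$, $d^*h=0$, and $th=0$ (resp. $nh=0$), so it is a harmonic field in $\mathcal{H}_T$ (resp. $\mathcal{H}_N$). Such fields lie in $W^{1,q}(M)$ for all $q<\infty$ on a $C^{1,1}$ manifold by elliptic regularity for the Hodge system with these boundary conditions. This gives $\nabla h\in \mathrm{L}^{p^-_M,\lambda}$ and $h\in C^{0,\kappa}$, so the conclusions transfer to $u$.

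The main obstacle I expect is the boundary Morrey estimate for the potential, that is, the boundedness of the derivative of the Dirichlet/Neumann potential in Morrey spaces up to a $C^{1,1}$ boundary. It has to be justified by flattening in admissible coordinates and using the reflection already used above, so that it reduces to interior CZ estimates with Lipschitz metric.
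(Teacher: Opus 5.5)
Your overall route is the one the paper intends: the global Morrey bound for $\omega=du$ from Theorem~\ref{th:MorreyBoundsDN}, a gauge-fixed potential from Theorem~\ref{divcurl system}, Calder\'on--Zygmund/Morrey bounds for its gradient, Morrey's Dirichlet growth theorem, and the harmonic-field argument. The paper gives no separate proof of this theorem and relies on the proof of Theorem~\ref{T2u}. The boundary Morrey estimate you flag is used there at the same level of detail.

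Two repairs are needed in the gauge step.

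\emph{Dirichlet case.} The compatibility condition of Theorem~\ref{divcurl system}(i) is orthogonality of $du$ to $\mathcal{H}_T(M;\Lambda^{k+1})$, not $\mathcal{H}_N$. Your identity $\int\langle du,h\rangle\,dV=\int\langle u,d^*h\rangle\,dV=0$ holds for every co-closed $h$ once $tu=0$, so this is only a relabeling.

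\emph{Neumann case.} Setting $\widetilde u=u-\eta$ with $d^*\eta=d^*u$, $n\eta=nu$ is not meaningful. For $u\in W^{d,p(\cdot)}(M;\Lambda^k)$ only $u$ and $du$ lie in $L^{p(\cdot)}$, so $d^*u$ is merely a distribution and $\nu\lrcorner u$ has no trace. Theorem~\ref{divcurl system}(ii), by contrast, requires $w\in L^{p(\cdot)}$ and $u_0\in W^{1,p(\cdot)}$. The detour is also unnecessary, since there is no homology obstruction: apply Theorem~\ref{divcurl system}(ii) directly with $f=du$, $w=0$, $u_0=0$. The only nontrivial condition is $\int\langle du,\psi\rangle\,dV=0$ for $\psi\in\mathcal{H}_N(M;\Lambda^{k+1})$. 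It holds because $d^*\psi=0$ and $\nu\lrcorner\psi=0$ give $\int\langle\psi,d\phi\rangle\,dV=0$ for all Lipschitz $\phi$, hence for $\phi=u$ by density of Lipschitz forms in $W^{d,p(\cdot)}$. This is the paper's remark that exactness of $\omega=du$ makes the compatibility conditions automatic.

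The genuine gap is the last step in the Neumann case. Claiming $nh=0$ for $h=u-\widetilde u$ requires $\nu\lrcorner u=0$, which is not a hypothesis. The Neumann condition constrains only $\mathcal{A}(p,du)$, and $d^*u=0$ is an interior condition in the paper's weak sense. Without a boundary condition, $h$ is just a closed and co-closed $L^{p(\cdot)}$ form, and such forms need not be regular at $bM$.

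For a concrete counterexample, take $p\equiv 2$, $a\equiv 1$, $\mu=0$, $F=0$, $k=1$, and $M$ the closed unit disc. Let $u=d\phi$ with $\phi=\mathrm{Re}\,(1-z)^{1/2}$. Then $u\in L^2$, $\omega=du=0$ is a Neumann solution, and $d^*u=0$. However, $u$ is unbounded near $z=1$ and $|\nabla u|\notin L^2(M)$.

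So the final assertion of the theorem is false in the Neumann case as stated, and this step cannot be closed. With the extra hypothesis $\nu\lrcorner u=0$ one gets $h\in\mathcal{H}_N$, and your argument then works. The paper supplies no separate argument here either: the one in Theorem~\ref{T2u} only uses interior regularity of harmonic fields. In the Dirichlet case, $th=0$ does follow from $tu=0=t\widetilde u$, so $h\in\mathcal{H}_T$ and that part of your proof is fine.
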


\subsection{H\"{o}lder continuity for the gradient}

By the same arguments as above we get the following result.

\begin{theorem}\label{T3u}
 Let the variable exponent $p(\cdot)$ satisfy \eqref{definition log holder} and \eqref{holder exponent} with some $0<\alpha_1<1$. Let the weight function $a \in C^{0, \alpha_{2}}_{\mathrm{loc}}(\mathring{M})$ for some $0 < \alpha_{2} < 1$ and satisfy \eqref{acond}. Let  $F \in C^{0, \alpha_{3}}_{\mathrm{loc}}\left(\mathring{M}; \varLambda^{k+1}\right)$ for some $0 < \alpha_{3} < 1$. Let $u \in W_{\mathrm{loc}}^{d,p(\cdot)} \left(\mathring{M}; \varLambda^{k+1}\right)$ be a local weak solution to the system \eqref{main2} in $M$. There exists a number $\alpha \in (0,1)$, which depends only on $n$, $N$, $p^{-}_M$, $p^+_M$, $\alpha_1$, $\alpha_2$, $\alpha_3$, and $M$, such that for any $C^{2,\alpha_4}$, $\alpha_4\in (0,1)$, submanifold $M_1\Subset \mathring{M}$ there exists $\widetilde u \in C^{1,\min\{\alpha,\alpha_4\}}(M_1;\Lambda_k)$ satisfying $d\widetilde u = du$ and $d^*\widetilde u=0$ in $M_1$. If $M\in C^{2,\alpha_4}$, $\alpha_4\in (0,1)$, $d^*u =0$ in $M$, then $u\in C_{\mathrm{loc}}^{1,\min\{\alpha,\alpha_4\}}(\mathring{M};\Lambda^k)$. 
\end{theorem}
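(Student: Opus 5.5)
The proof proceeds in the same way as for Theorem~\ref{T2u}: we reduce everything to the first-order system for $\omega=du$. By definition, $u$ is a local weak solution of \eqref{main2} exactly when $\omega:=du\in L^{p(\cdot)}_{\mathrm{loc}}(\mathring{M};\Lambda^{k+1})$ is a local weak solution of \eqref{main system}; the condition $d\omega=0$ holds automatically because $d^2=0$ in the weak sense. The hypotheses on $p$, $a$ and $F$ are precisely those of Theorem~\ref{main theorem}. That theorem gives $\omega\in C^{0,\alpha}_{\mathrm{loc}}(\mathring{M};\Lambda^{k+1})$, where $\alpha$ depends only on $n$, $N$, $p^{\pm}_M$, $\alpha_1,\alpha_2,\alpha_3$ and $M$. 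This is the exponent $\alpha$ appearing in the statement.

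Next we construct $\widetilde u$. Fix a $C^{2,\alpha_4}$ submanifold $M_1\Subset\mathring{M}$. We solve the div--curl problem
$$
d\widetilde u=\omega,\quad d^*\widetilde u=0 \ \text{in } M_1,\qquad \nu\lrcorner\widetilde u=0\ \text{on } bM_1,
$$
taking $\widetilde u$ as the codifferential of the Neumann potential of $\omega$, as in the corollary to Theorem~\ref{main theorem}, via \cite[Theorem 7.7.8]{Morrey1966}. Schauder theory on the $C^{2,\alpha_4}$ manifold then gives $\|\widetilde u\|_{C^{1,\min\{\alpha,\alpha_4\}}(M_1)}\le c\|\omega\|_{C^{0,\min\{\alpha,\alpha_4\}}(M_1)}$.

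The one point that needs attention is solvability without a contractibility assumption on $M_1$. For the Neumann-type problem, the compatibility conditions are that $\omega$ be closed and $L^2$-orthogonal to the normal harmonic fields $\mathcal{H}_N(M_1;\Lambda^{k+1})$. Since $\omega=du$ is exact, we have $\int_{M_1}\langle du,h\rangle\,dV=\int_{M_1}\langle u,d^*h\rangle\,dV=0$ for every $h\in\mathcal H_N$, by the integration by parts formula with $\nu\lrcorner h=0$. This step needs some care: $u$ is only in $W^{d,p(\cdot)}$, so we justify the computation by the density of Lipschitz forms in $W^{d,p(\cdot)}$. Hence the compatibility conditions hold and no topological hypothesis is required, exactly as remarked in the proof of Theorem~\ref{T2u}.

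Finally, for the last assertion, assume $M\in C^{2,\alpha_4}$ and $d^*u=0$. Then on $M_1$ the difference $h=u-\widetilde u$ satisfies $dh=0$ and $d^*h=0$ weakly, so $h$ is a harmonic field; in particular it solves the Hodge Laplace system $\Delta h=0$. Here $\Delta$ is an elliptic operator whose coefficients are built from the $C^{1,\alpha_4}$ metric, so interior elliptic regularity (Weyl-type bootstrapping, starting from $h\in L^{p(\cdot)}$ and using the Gaffney inequality of Theorem~\ref{divcurl system} locally) gives $h\in C^{1,\alpha_4}_{\mathrm{loc}}(\mathring{M}_1)$. Therefore $u=\widetilde u+h\in C^{1,\min\{\alpha,\alpha_4\}}_{\mathrm{loc}}$. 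Exhausting $\mathring M$ by such submanifolds $M_1$, for example small smooth balls, completes the proof.
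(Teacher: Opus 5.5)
Your proposal is correct and follows the paper's own route. You apply Theorem~\ref{main theorem} to $\omega=du$ and obtain $\widetilde u$ as the codifferential of the Neumann potential via \cite[Theorem 7.7.8]{Morrey1966}; your explicit check that $du\perp\mathcal H_N(M_1;\Lambda^{k+1})$ is what the paper summarizes as ``$\omega$ is already exact''. When $d^*u=0$, you use that $u-\widetilde u$ is a harmonic field, hence $C^{1,\alpha_4}_{\mathrm{loc}}$.

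One point to tidy: derive that regularity from the weak (divergence-form) formulation, or from the first-order system $dh=0$, $d^*h=0$. With a $C^{1,\alpha_4}$ metric, the non-divergence form of the Hodge Laplacian has zeroth-order coefficients involving second derivatives of $g$.
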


\begin{theorem}\label{T3u_global}
 Let the variable exponent $p(\cdot)$ satisfy \eqref{definition log holder} and \eqref{holder exponent} with some $0<\alpha_1<1$. Let the weight function $a \in C^{0, \alpha_{2}}(M)$ for some $0 < \alpha_{2} < 1$ and satisfy \eqref{acond}. Let  $F \in C^{0, \alpha_{3}}\left(M; \varLambda^{k+1}\right)$ for some $0 < \alpha_{3} < 1$. Let $M$ be additionally of the class $C^{2,\alpha_4}$, $\alpha_4\in (0,1)$. Let $u \in W^{d,p(\cdot)} \left(M; \varLambda^{k+1}\right)$ be a weak solution to the system \eqref{main2} in $M$ satisfying the Dirichlet or Neumann boundary condition $tu=0$ or $n \mathcal{A}(p,du)=0$, respectively. There exists a number $\alpha \in (0,1)$, which depends only on $n$, $N$, $p^{-}_M$, $p^+_M$, $\alpha_1$, $\alpha_2$, $\alpha_3$, $\alpha_4$, and $M$, such that  there exists $\widetilde u \in C^{1,\alpha}(M;\Lambda_k)$ satisfying $d\widetilde u = du$ and $d^*\widetilde u=0$ in $M$. If $d^*u =0$ in $M$, then $u\in C^{1,\alpha}(M;\Lambda^k)$. 
\end{theorem}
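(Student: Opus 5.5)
The plan is to reduce Theorem~\ref{T3u_global} to the first-order global result, Theorem~\ref{main theorem global}, applied to $\omega=du$, and then to recover $u$ through a linear div--curl (Hodge-type) boundary value problem with Schauder estimates. First, $\omega:=du\in L^{p(\cdot)}(M;\Lambda^{k+1})$ is a weak solution of \eqref{main system} in the sense of Definitions~\ref{omega:defD}/\ref{omega:defN}. The Dirichlet condition $tu=0$ gives $t\omega=t\,du=0$ weakly: for Lipschitz $\eta$ one moves $d$ across using \eqref{def:du}, with no boundary term because $\nu\wedge u=0$. The Neumann condition on $\mathcal{A}(p,du)$ is exactly the one in Definition~\ref{omega:defN}. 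Also $d\omega=0$ trivially. Theorem~\ref{main theorem global} then yields $\omega\in C^{0,\alpha'}(M;\Lambda^{k+1})$, with $\alpha'$ depending only on the stated data.

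Second, I will construct $\widetilde u$ by solving the linear problem
$$
d\widetilde u=\omega,\qquad d^*\widetilde u=0\ \text{in } M,\qquad t\widetilde u=0\ \ (\text{resp. } n\widetilde u=0)\ \text{on } bM .
$$
Solvability requires compatibility: $\omega$ must be closed, which holds, and orthogonal to the relevant harmonic fields. This is automatic here because $\omega=du$ with $tu=0$ (resp. $u$ itself available). More concretely, I will take $\widetilde u=u-h$, where $h$ is the Hodge projection of $u$ onto the appropriate kernel, so that $d\widetilde u=du$ with no cohomological obstruction. For the regularity I will use the $C^{2,\alpha_4}$ Schauder theory for the Hodge Laplacian with absolute or relative boundary conditions, \cite[Theorem 7.7.8]{Morrey1966}, as in the corollaries to Theorems~\ref{main theorem} and~\ref{main theorem global}. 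Writing $\widetilde u=d^*\phi$, where $\phi$ is the Dirichlet (resp. Neumann) potential of $\omega$, gives $\|\widetilde u\|_{C^{1,\alpha}}\le c\|\omega\|_{C^{0,\alpha}}$ with $\alpha=\min\{\alpha',\alpha_4\}$.

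Third, suppose $d^*u=0$. Then $u-\widetilde u$ is closed, co-closed, and has vanishing tangential (resp. normal) part, so it is a Dirichlet (resp. Neumann) harmonic field. On a $C^{2,\alpha_4}$ manifold such fields are $C^{1,\alpha_4}$ up to the boundary, again by Morrey's boundary regularity for the elliptic system $(d,d^*)$ with these complementing conditions. Hence $u=\widetilde u+(u-\widetilde u)\in C^{1,\alpha}(M;\Lambda^k)$. In the Neumann case, note that the boundary condition is on $\mathcal{A}(p,du)$, not on $u$. For $u-\widetilde u$ to be a harmonic field I therefore need some boundary condition on $u$ itself. If none is given, I will instead use that $u-\widetilde u$ is $d$- and $d^*$-free and apply interior and boundary Gaffney-type regularity only where the data allow. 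The more natural route is to read the statement with $nu=0$ (matching the gauge in Proposition~\ref{existence of minimizersN}), so that the Neumann harmonic field argument applies.

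I expect the main obstacle to be the boundary Schauder step for the div--curl system in the variable-coefficient $C^{2,\alpha_4}$ setting. One needs both the existence of $\widetilde u$ with the correct boundary condition, which involves handling the finite-dimensional harmonic fields and compatibility, and the $C^{1,\alpha}$ bound up to $bM$. I would rely entirely on Morrey's theory for $C^{2,\alpha_4}$ manifolds rather than reprove it, checking only that the compatibility conditions hold because $\omega$ is exact with the right trace.
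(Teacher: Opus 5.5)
Your route is the one the paper intends; it gives no separate proof of this theorem, only ``by the same arguments''. The route is: apply Theorem~\ref{main theorem global} to $\omega=du$, obtain $\widetilde u$ as the codifferential of the Dirichlet/Neumann potential of $\omega$ via \cite[Theorem 7.7.8]{Morrey1966} as in the corollaries, and handle $d^*u=0$ through harmonic fields. You are right that the compatibility conditions are automatic, and this is what removes the contractibility assumption of those corollaries. In the Dirichlet case they follow from $tu=0$. For the normal gauge no condition on $u$ is needed at all: $\int_M\langle du,\psi\rangle\,dV=\int_M\langle u,d^*\psi\rangle\,dV=0$ for $\psi\in\mathcal H_N(M;\Lambda^{k+1})$, because $\nu\lrcorner\psi=0$.

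Your caveat in the Neumann case is correct, and it is a defect of the statement, not of your argument. Without a condition such as $nu=0$, the last assertion is false. Take $k=1$, $M=\overline{B}\subset\mathbb{R}^n$, and any solution $u_1\in C^{1,\alpha}$ with $d^*u_1=0$. Set $u=u_1+df$, where $f$ is harmonic in $B$ with $\nabla f\in L^{p^+_M}(B)$ but $f\notin C^{2,\alpha}(\overline B)$; for instance, let $f$ have $C^1$ boundary trace that is not $C^{2,\alpha}$. Then $du=du_1$, so $u$ still satisfies the Neumann problem (which only involves $du$), and $d^*u=0$, but $u\notin C^{1,\alpha}(M)$. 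So drop your fallback about applying ``Gaffney-type regularity only where the data allow''; it cannot give regularity up to $bM$. Instead add $nu=0$ as a hypothesis in the Neumann case. Then $u-\widetilde u\in\mathcal H_N$ and your argument goes through.
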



\section{Appendix}\label{sec:appendix}

Here we provide details on notation and proofs of some facts used above.

\subsection{Exterior algebra and exterior bundle notation}\label{EAS} Let $W$ be an $n$-dimensional real linear space.  We write $\Lambda^{k}W$ to denote the vector space of all alternating $k-$linear maps $f:\underbrace{W\times\cdots\times W}_{k-\text{times}
		}\rightarrow\mathbb{R}$. For $k=0$, we set $\Lambda^{0} W  =\mathbb{R}$. For $k\leq n$, $\operatorname{dim}\left(\Lambda^{k}  W  \right)  ={\binom{{n}}{{k}}}$, and $\Lambda^{k} \mathbb{R}%
		^{n}  =\{0\}$ for $k>n$. 

If $\left\{  e_{1},\cdots,e_{n}\right\}  $ is a basis of $W$, then, the dual basis $\left\{  e^{1},\cdots,e^{n}\right\}$ (with $e^i(e_j)=\delta^i_j$) is a basis for $\Lambda^{1}W$. If 
        $$
        \mathcal{T}^{k}=\left\{ \left(  i_{1}\,,\cdots,i_{k}\right)\ :\ 1\leq i_{1}<\cdots<i_{k}\leq n\right\},
        $$
        then
		$\left\lbrace e^{I}:= e^{i_{1}}\wedge\cdots\wedge e^{i_{k}},\ I \in \mathcal{T}^k \right\rbrace$,
		is a basis of $\Lambda^{k}W$. If $v_1,\ldots,v_N$ is a basis of an $N$-dimensional real vector space $E$ (identified with $\mathbb{R}^N$), an element $\xi\in\varLambda^{k}(W,E)$ can be written as%
		\begin{equation}\label{xidef}
			\xi=\sum\limits_{j=1}^{N}\sum_{I\in\mathcal{T}^{k}}\xi
			_{I}^{\hphantom{I}|j}\,e^{I}\otimes v_{j} = \sum_{I\in\mathcal{T}^{k}} \xi_I\, e^I,\quad\text{where}\quad \xi_I \in E.
		\end{equation}
If both spaces $W$ and $E$ are equipped with scalar product, $\langle \cdot,\cdot\rangle_W$ and $\langle \cdot,\cdot\rangle_E$, correspondingly, then the scalar product of $\xi$ and $\eta$ in $\Lambda^k$ is then defined as 
        $$
        \langle \xi,\eta\rangle_{\Lambda^k}=  \sum_{I,J \in \mathcal{T}^k} \langle\xi_I, \eta_J\rangle_{E} \langle e^I, e^J \rangle _{\Lambda^k W}.
        $$
        Here $\langle e^I, e^J \rangle _{\Lambda^k W} = G^{IJ}_W$ where $G^{IJ}_W$ is the minor at the intersection of rows $I$ and columns $J$ of the matrix $g^{ij}_W$ (the inverse matrix of the metric corresponding to the space $W$).  Further we shall denote the scalar product in $\Lambda^k$ by $\left\langle \ ,\ \right\rangle $.

       \
 $\wedge$, $\lrcorner$,  and, respectively, $\ast$ denote the exterior product, the interior product, and, respectively, the Hodge star operator. For a pair of forms, the $\wedge$ and $\lrcorner$ operators are defined by standard formulas if one of the forms  is an element of $\Lambda^k W$. In this paper we shall use only $v \wedge \xi$ and $v\lrcorner \xi$, where $v= \sum_{j=1}^n v_j e^j\in \Lambda^1W$, $\xi \in \Lambda^k$, and in this case as usual
\begin{align*}
 (v \lrcorner \xi)_I &= \sum_{j=1}^{n} v^j \xi_{jI},\ I \in \mathcal{T}_{k-1}, \qquad v^j = g^{jk}_W v_k,\\
(v\wedge \xi)_I &= \sum_{j=1}^{k+1} (-1)^{j-1} v_j \xi_{I'_j},\ I \in \mathcal{T}_{k+1}, \\
\end{align*}
where for $I=\{i_1,\ldots,i_{k+1}\}$, $I'_j = \{i_1,\ldots,i_{j-1},i_{j+1},\ldots i_{k+1}\}$. The Hodge star operator $\ast \xi$ defined as the interior product of $\xi$ and the volume form $\sqrt{g}e^1\wedge\ldots\wedge e^n$, also easily extends to vector valued forms. For $\xi$ in \eqref{xidef}, 
\begin{align*}
(\ast \xi)_{i_1\ldots i_{n-k}} &= \sqrt{g}\sum g^{j_1 l_1} \ldots g^{j_k l_k} \xi_{l_1\ldots l_k} \varepsilon_{j_1\ldots j_k i_1\ldots i_{n-k}},\\
\text{or}\quad (\ast \xi)_I  &= \sqrt{g} G^{JK}\xi_J \mathrm{sign}\,(K,I).
\end{align*}

Now, we define 
$$
\Lambda^k = \Lambda^k(TM,E) = \bigcup_{p\in M} \Lambda^k(T_pM,E),
$$
then a differential $k$-form $\omega$ is a section of $\Lambda^k$, that is a mapping from $M$ to $\Lambda^k$ such that $\omega(p)\in \Lambda^k(T_pM,E)$ for all $p\in M$. In the paper we identify the target space $E$ with $\mathbb{R}^N$. In local coordinates $x^1,\ldots,x^n$, an $\mathbb{R}^n$-valued $k$-form $\omega$ is represented as 
$$
\sum_{I \in \mathcal{T}^k}\omega_I dx^I, \quad \omega_I \in \mathbb{R}^N,
$$
and under the coordinate change $x=x(y)$ these coefficients change as 
$$
\omega_{i_1\ldots i_k}'(y) = \frac{\partial (x^{j_1},\ldots,x^{j_k})}{\partial (y^{i_1},\ldots,y^{i_k})} \omega_{j_1\ldots j_k}(x).
$$

\subsection{Morrey and Campanato spaces on Euclidean domains}\label{ssec:MC}

Let $\Omega$ be a Lipschitz domain in $\mathbb{R}^n$ and $dV = dx^1\ldots dx^n$. For $1\leqslant p <  \infty$ and $\lambda \geq 0,$  $\mathrm{L}^{p,\lambda}\left(\Omega;\varLambda^{k}\right) $ stands for the Morrey space of all $\omega \in L^{p}\left(\Omega;\varLambda^{k}\right)$ such that 
		$$ \lVert \omega \rVert_{\mathrm{L}^{p,\lambda}\left(\Omega;\varLambda^{k}\right)}^{p} := \sup_{\substack{ x_{0} \in \overline{\Omega},\\ \rho >0 }} 
		\rho^{-\lambda} \int\limits_{B_{\rho}(x_{0}) \cap \Omega} \lvert \omega \rvert^{p}\, dV < \infty, 
        $$
        endowed with the norm 
		$ \lVert \omega \rVert_{\mathrm{L}^{p,\lambda}\left(\Omega;\varLambda^{k}\right)}$ and $\mathcal{L}^{p,\lambda}\left(\Omega;\varLambda^{k}\right) $ denotes the Campanato space of all $\omega \in L^{p}\left(\Omega;\varLambda^{k}\right)$ such that 
		$$ [\omega ]_{\mathcal{L}^{p,\lambda}\left(\Omega;\varLambda^{k}\right)}^{p} := \sup_{\substack{ x_{0} \in \overline{\Omega},\\  \rho >0 }} 
		\rho^{-\lambda} \int\limits_{B_{\rho}(x_{0}) \cap \Omega} \lvert \omega  - (\omega)_{ \rho , x_{0}}\rvert^{p} \, dV< \infty, 
        $$ 
        where the element $(\omega)_{ \rho , x_{0}}\in \Lambda^k$ is defined by
		$$
        (\displaystyle (\omega)_{ \rho , x_{0}})_I = \frac{1}{\left\lvert B_{\rho}(x_{0}) \cap \Omega \right\rvert}\int\limits_{B_{\rho}(x_{0}) \cap \Omega} \omega_I\, dV ,
        $$  
		endowed with the norm 
		$$ \lVert \omega \rVert_{\mathcal{L}^{p,\lambda}\left(\Omega;\varLambda^{k}\right)} := \lVert  \omega \rVert_{L^{p}(\Omega, \varLambda^{k})} +  
		[\omega ]_{\mathcal{L}^{p,\lambda}\left(\Omega;\varLambda^{k}\right)}$$. 
        
        By definition, $BMO(\Omega;\Lambda^k) = \mathcal{L}^{1,n}(\Omega;\Lambda^k)$. The following facts are standard (see \cite{Giaquinta,giaquinta-martinazzi-regularity}): 

        \begin{itemize}
            \item $\mathrm{L}^{p,0} = L^p$ and   $\mathrm{L}^{p,\lambda} =\{0\}$ for $\lambda >n$,
            \item $\mathrm{L}^{p,\lambda} \simeq\mathcal{L}^{p,\lambda}$ for $\lambda \in [0,n)$, 
            \item $\mathrm{L}^{p,n} \simeq L^\infty$ and $\mathcal{L}^{p,n}=BMO$, 
            \item $\mathcal{L}^{p,\lambda} \simeq C^{(\lambda-n)/p}$ for $n<\lambda\leq n+p$, and $\mathcal{L}^{p,\lambda} $ is the space of constant functions for $\lambda>n+p$.
        \end{itemize}

We also use the space $BMO(M;\Lambda^k)$, which is the set of forms such that in any coordinate system $(U,\varphi)$ they belong
to $BMO (\varphi(U);\Lambda^k)$.  

\subsection{Div-curl systems}\label{Div_curl_systems}

We use the following technical result for div-curl systems. In \cite{Balci_Sil_Surnachev_arxiv2025} this result is obtained for $N=1$, the generalization for vector-valued forms is straightforward. For the constant exponent case (or in $C^{m,\alpha}$ spaces) these facts are more or less standard, see \cite{Kre70,Kress,bolikphd,Bolik97,Bolik01,Bolik04,
Bolik07}.
    
    \begin{theorem}\label{divcurl system}
		Let $M$ be a compact $n$-dimensional $C^{1,1}$ Riemannian manifold with boundary and let \begin{align*}
			p \in \mathcal{P}^{\log} \left( M \right) \text{ and } 1 < p^{-}_{M} \leq p^{+}_{M} < \infty. 
		\end{align*} 
		Let $u_{0} \in W^{1, p(\cdot)}\left(M; \varLambda^{k}\right)$, 
		$f \in L^{p(\cdot)}\left( M; \varLambda^{k+1}\right)$, and $w \in L^{p(\cdot)}\left( M; \varLambda^{k-1}\right)$.
		Then the following hold true.\smallskip
		
		\noindent \textbf{(i)} Suppose $f$ and $w$  satisfy $df = 0$, $d^{\ast} w = 0$ in $M$ and 
		$ \nu\wedge (d u_{0}-f) =0$ on $bM$, and for every $\chi \in \mathcal{H}_T(M;\varLambda^{k+1})$ and $\psi \in \mathcal{H}_T(M;\varLambda^{k-1})$,
		\begin{equation*}
			\int\limits_{M} \langle f ; \chi \rangle\, dV - \int\limits_{bM} \langle \nu \wedge u_0 ; \chi \rangle\, d\sigma = 0 
			\qquad \text{ and } \qquad \int\limits_{M} \langle w ; \psi \rangle\, dV = 0. 
		\end{equation*}
		Then there exists a unique solution $u \in W^{1, p(\cdot)}\left( M; \varLambda^{k}\right)\cap \left( \mathcal{H}_{T}\left(  M;\varLambda^{k}\right)\right)^{\perp}$ to the boundary value problem 
		\begin{equation} \label{problemddeltalinear}
			\left\lbrace \begin{aligned}
				du = f  \quad &\text{and} \quad  d^{\ast} u = w &&\text{ in } M, \\
				\nu\wedge u &= \nu\wedge u_0 &&\text{  on } bM,
			\end{aligned} 
			\right. \tag{$\mathcal{P}_{T}$}
		\end{equation}
		satisfying the estimates 
		\begin{equation}\label{solest}
			\left\lVert u \right\rVert_{W^{1, p(\cdot)}\left( M; \varLambda^{k}\right)} \leq c \left(  \left\lVert f\right\rVert_{L^{p(\cdot)}(M;\Lambda^{k+1})}  + \left\lVert w\right\rVert_{L^{p(\cdot)}(M;\Lambda^{k-1})} + \left\lVert  u_{0}\right\rVert_{W^{1, p(\cdot)}(M;\Lambda^k)}\right), 
		\end{equation} \smallskip
        where the constant $c$ depends only on $n$, $N$, $p_M^+$, $p_M^{-}$, $M$, and on the constant $c_1$ in the log-H\"older condition in Definition~\ref{def:1}.
        
		\noindent\textbf{(ii)} Suppose $f$ and $w$  satisfy $df = 0$, $d^{\ast} w = 0$ in $M$ and $ \nu\lrcorner (d^\ast u_0-w)  = 0$ on $bM$, and for every $\chi \in \mathcal{H}_N(M;\varLambda^{k-1})$ and $\psi \in \mathcal{H}_N(M;\varLambda^{k+1})$,
		\begin{equation*}
			\int\limits_{M} \langle w ; \chi \rangle\, dV - \int\limits_{bM} \langle \nu \lrcorner u_0 ; \chi \rangle\, d\sigma = 0 
			\qquad \text{ and } \qquad \int\limits_{M} \langle f ; \psi \rangle\, dV = 0. 
		\end{equation*}
		Then there exists a unique solution $u \in W^{1, p(\cdot)}\left(M; \varLambda^{k}\right)\cap \left( 	\mathcal{H}_{N}\left( M;\varLambda^{k}\right)\right)^{\perp}$ to the boundary value problem
		\begin{equation} \label{problemddeltalinearnormal}
			\left\lbrace \begin{aligned}
				du = f  \quad &\text{and} \quad  d^{\ast} u = w &&\text{ in } M, \\
				\nu\lrcorner u &= \nu\lrcorner u_0 &&\text{  on } bM,
			\end{aligned} 
			\right. \tag{$\mathcal{P}_{N}$}
		\end{equation}
		satisfying the estimate \eqref{solest} with $c=c(n,N,p_\Omega^+, p_\Omega^-,M, c_1)$.
	\end{theorem}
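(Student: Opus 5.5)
The plan is to prove part (i) by a variable-exponent variational argument that reduces, after freezing the exponent, to the classical constant-exponent Hodge theory on $C^{1,1}$ manifolds. I will first settle uniqueness. If $u_1,u_2$ both solve \eqref{problemddeltalinear} and lie in $(\mathcal{H}_T)^\perp$, then $z=u_1-u_2\in W^{1,p(\cdot)}\subset W^{1,p^-}$ satisfies $dz=0$, $d^*z=0$ and $\nu\wedge z=0$. The constant-exponent regularity of harmonic fields (Gaffney and elliptic regularity for the Hodge system with tangential condition) then gives $z\in\mathcal{H}_T(M;\Lambda^k)$, and orthogonality forces $z=0$. For existence, I subtract $u_0$ and pass to $\tilde f=f-du_0$ and $\tilde w=w-d^*u_0$. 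The compatibility conditions in the statement are exactly what is needed for $\tilde f$ (which has vanishing tangential part) and $\tilde w$ to be orthogonal to the relevant harmonic fields; the boundary integral $\int_{bM}\langle\nu\wedge u_0,\chi\rangle$ arises from integrating by parts $\int\langle du_0,\chi\rangle$. This reduces the problem to the homogeneous tangential boundary condition $\nu\wedge u=0$.

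With the boundary condition homogeneous, I construct the solution through Hodge--Morrey decomposition. I look for $u=d^*\alpha+d\beta$, where $\alpha$ solves the Hodge Laplacian problem $\Delta\alpha=\tilde f$ with the tangential (Dirichlet-type) conditions and $\beta$ solves the analogous problem with data $\tilde w$. Equivalently, $u$ is given by $u=d^*G_T\tilde f+dG_T\tilde w$, where $G_T$ is the Green operator of the Hodge Laplacian with the absolute/relative boundary conditions. Classical theory (Morrey, Schwarz, Iwaniec--Scott--Stroffolini) provides, for every constant $1<q<\infty$, the bound $\|\nabla^2 G_T h\|_{L^q}\le c\|h\|_{L^q}$ on $C^{1,1}$ manifolds. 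Hence the operators $T_1=\nabla d^*G_T$ and $T_2=\nabla dG_T$ are bounded on every $L^q$.

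The key step, and the one I expect to be the main obstacle, is to transfer the $L^q$ bounds to $L^{p(\cdot)}$. The tool I would use is the extrapolation theorem of Diening et al.\ (Theorem 7.2.1 in the variable exponent monograph): an operator that is bounded on $L^{q_0}(\mathsf{w})$ for all Muckenhoupt weights $\mathsf{w}\in A_{q_0}$ is bounded on $L^{p(\cdot)}$ whenever $p\in\mathcal{P}^{\log}$ with $1<p^-\le p^+<\infty$. To feed this theorem, I need weighted estimates for $T_1$ and $T_2$. In charts, after flattening the boundary and reflecting as in the Neumann/Dirichlet reflection device, these operators are Calderón--Zygmund operators plus compact lower-order remainders, so the weighted $L^q$ bounds follow from standard CZ theory. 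The difficulty is that the metric is only Lipschitz, so the kernels are not smooth in the variable coefficients. I would handle this by a freezing argument: on small balls, compare with the constant-coefficient operator and absorb the perturbation using the smallness of the oscillation of $g_{ij}$. An alternative route would be a Gehring/good-$\lambda$ argument, but I would try the extrapolation route first.

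With the operators bounded, the estimate \eqref{solest} follows from $\|\nabla u\|_{p(\cdot)}\le c(\|\tilde f\|_{p(\cdot)}+\|\tilde w\|_{p(\cdot)})$, combined with a Poincaré-type inequality on $(\mathcal{H}_T)^\perp$ for the zeroth-order term. This last inequality I would prove by contradiction and compactness, using the embedding $W^{1,p(\cdot)}\hookrightarrow L^{p(\cdot)}$. Part (ii) is then obtained from part (i) by Hodge duality: the star operator interchanges $d$ and $d^*$, as well as $\nu\wedge$ and $\nu\lrcorner$, so it maps $\mathcal{H}_N$ to $\mathcal{H}_T$.
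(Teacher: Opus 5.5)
The paper does not prove this statement. It takes the scalar case $N=1$ from \cite{Balci_Sil_Surnachev_arxiv2025} and notes that the $\mathbb{R}^N$-valued case follows componentwise, because the system, the boundary and compatibility conditions, and the norms all decouple over the $N$ components. Your proposal is therefore an independent, self-contained route, and its skeleton is sound:
\begin{itemize}
\item uniqueness via regularity of harmonic fields;
\item the reduction to $\nu\wedge u=0$, where the compatibility conditions say precisely that $f-du_0\perp\mathcal{H}_T(M;\Lambda^{k+1})$ and $w-d^*u_0\perp\mathcal{H}_T(M;\Lambda^{k-1})$;
\item the representation $u=d^*G_T\tilde f+dG_T\tilde w$ with the relative conditions $\nu\wedge\alpha=0$, $\nu\wedge d^*\alpha=0$, which yield $d\alpha=0$ and $d^*\beta=0$;
\item a final harmonic projection to land in $(\mathcal{H}_T)^\perp$;
\item Hodge duality for (ii).
\end{itemize}
Treating $\nabla d^*G_T$ and $\nabla dG_T$ as operators on all of $L^q$ has one real advantage: density arguments need no constrained approximation of $(f,w,u_0)$.

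\textbf{The gap.} The weak point is the step you flagged yourself, and the justification you give does not go through as stated. Extrapolation needs $\|T_jh\|_{L^{q_0}(w)}\le C\|h\|_{L^{q_0}(w)}$ with $C$ depending only on $[w]_{A_{q_0}}$, because the Rubio de Francia algorithm builds the weights from the data. Your statement of the theorem omits exactly this requirement, and two things go wrong in trying to meet it.
\begin{itemize}
\item With a merely Lipschitz metric, the kernels of $T_j$ are not standard Calder\'on--Zygmund kernels. Freezing coefficients therefore controls only the principal part on small balls.
\item The remaining lower-order and cut-off terms, $\|G_Th\|_{L^{q_0}(w)}$ and $\|\nabla G_Th\|_{L^{q_0}(w)}$, cannot be dismissed as \emph{compact remainders}. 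A compactness--uniqueness argument in $L^{q_0}(w)$ gives a constant depending on the particular weight, not on $[w]_{A_{q_0}}$ alone.
\end{itemize}
The same defect affects your Poincar\'e step. That step needs the \emph{compact} embedding, and for fixed $p(\cdot)$ its constant depends on $p(\cdot)$ itself, not only on $p^\pm_M$, $c_1$ and $M$ as \eqref{solest} asserts.

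\textbf{Possible repairs.} To stay on the weighted route, you would need, for example, Green's function bounds $|G_T(x,y)|\lesssim|x-y|^{2-n}$ and $|\nabla_xG_T(x,y)|\lesssim|x-y|^{1-n}$ up to the boundary. These give $|G_Th|+|\nabla G_Th|\lesssim\mathcal{M}h$ on the compact $M$, where $\mathcal{M}$ is the maximal operator.

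A simpler repair is to drop weights and run the freezing argument directly in $L^{p(\cdot)}$.
\begin{itemize}
\item The frozen whole-space and (after reflection) half-space operators are Calder\'on--Zygmund and bounded on $L^{p(\cdot)}$ for log-H\"older $p$ with $1<p^-\le p^+<\infty$ \cite{Diening_et_al_variable_exponent}.
\item The oscillation of $g_{ij}$ is absorbed in the $L^{p(\cdot)}$ norm. Smooth data are dense, which justifies the a priori estimate.
\item The lower-order terms are then removed by a single contradiction argument, where compactness is legitimate.
\item Running that argument along sequences of exponents with fixed $p^\pm_M$ and $c_1$ gives the claimed dependence of the constant. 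Such sequences are compact in $C(M)$ by Arzel\`a--Ascoli, and the limit passes through the uniform Sobolev embedding.
\end{itemize}
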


Here the notation $W^{1, p(\cdot)}\left( M; \varLambda^{k}\right)\cap \left( \mathcal{H}_{T}\left( M;\varLambda^{k}\right)\right)^{\perp}$ denotes the subspace of $W^{1, p(\cdot)}\left( M; \varLambda^{k}\right)$ defined by the condition
$$
\int\limits_M \langle u,\psi \rangle\, dV  =0 \quad \text{for all}\quad \psi \in \mathcal{H}_{T}\left(  M;\varLambda^{k}\right).
$$
This is well defined since elements of $\mathcal{H}_{T}\left( M;\varLambda^{k}\right)$ belong at least to $W^{1,q}(\Omega; \varLambda^{k})$ for all $q<\infty$, and thus are (H\"older) continuous on $M$. The notation $\left( \mathcal{H}_{N}\left( M;\varLambda^{k}\right)\right)^{\perp}$ is understood in the similar fashion. 
    Recall that if $M$ is contractible, the space $\mathcal{H}_T(M;\Lambda^k)=\{0\}$  for all $k=0,\ldots,n-1$ and $\mathcal{H}_T(M;\Lambda^n)=\{c dV, \ c\in\mathbb{R}^N\}$, while the space  $\mathcal{H}_N(M;\Lambda^k)=\{0\}$  for all $k=1,\ldots,n$ and $\mathcal{H}_N(M;\Lambda^0)=\{c, \ c\in\mathbb{R}^N\}$. 
    
    We shall use Theorem~\ref{divcurl system} for balls only (but with a nonconstant metric). In this case by using a simple scaling argument we have  the following result where the scalar product, volume form, $d_0^*$, and $\lrcorner_0$ correspond to the standard Euclidean metric.


\begin{theorem}\label{T:dcb}
Let $p\in \mathcal{P}^{log}(B_R)$, $1<p_{B_R}^{-}<p_{B_R}^+<\infty$, $u_0\in W^{1,p(\cdot)}(B_R;\Lambda^k)$, $f\in L^{p(\cdot)}(B_R;\Lambda^{k+1})$, $w\in L^{p(\cdot)}(B_R;\Lambda^{k-1})$ satisfy $df=0$ and $d_0^* w=0$ in $B_R$ and $\nu \wedge (du_0-f) =0$ (resp. $\nu \lrcorner (d_0^*u_0 -w)=0$) on $\partial B_R$. If $k=n-1$ (resp. $k=1$) let additionally 
$$
\int\limits_{B_R} f = \int\limits_{\partial B_R} u_0 \quad \biggl(\text{resp.}\quad \int\limits_{B_R} w\, dV = \int\limits_{\partial B_R} \langle \nu,u_0\rangle\, d\sigma \biggr).
$$    
Then there exists a unique $u\in W^{1,p(\cdot)}(B_R;\Lambda^k)$ satisfying $du=f$ and $d_0^* u=w$ in $B_R$, $\nu\wedge u =\nu \wedge u_0$ (resp. $\nu \lrcorner_0 u = \nu\lrcorner_0 u_0$) on $\partial B_R$, such that 
$$
\text{for $k=n$ (resp. $k=0$)} \quad \int\limits_{B_R} u=0, \quad \biggl(\text{resp.}\quad \int\limits_{B_R} u\, dV=0 \biggr).
$$  
Moreover there holds 
\begin{align*}
\|\nabla u\|_{L^{p(\cdot)}(B_R;\Lambda^k)} + R^{-1}& \|u\|_{L^{p(\cdot)}(B_R;\Lambda^k)} \\&
\leq C (\|f\|_{L^{p(\cdot)}(B_R;\Lambda^{k+1})} + \|w\|_{L^{p(\cdot)}(B_R;\Lambda^{k-1})})\\&\qquad \qquad 
+ C(\|\nabla u_0\|_{L^{p(\cdot)}(B_R;\Lambda^k)} +R^{-1}\|u_0\|_{L^{p(\cdot)}(B_R;\Lambda^k)}  )
\end{align*}
with a constant $C$ depending only on $\sup\limits_{0<r<1/2} \Theta_p(rR) \log r^{-1}$, where $\Theta_p$ stands for the modulus of continuity of $p$, and on the numbers $n$, $p_{B_R}^{-}$, $p_{B_R}^+$.   
\end{theorem}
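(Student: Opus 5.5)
We obtain Theorem~\ref{T:dcb} from Theorem~\ref{divcurl system} by rescaling to the unit ball. Set $\tilde u(y)=u(x_0+Ry)$, and define $\tilde f(y)=Rf(x_0+Ry)$, $\tilde w(y)=Rw(x_0+Ry)$, $\tilde u_0(y)=u_0(x_0+Ry)$, $\tilde p(y)=p(x_0+Ry)$ on $B_1$. Here forms are pulled back coefficientwise, so that $d$ and $d_0^*$ scale by a factor $R$. The system $du=f$, $d_0^*u=w$ together with the boundary conditions then becomes the same system for the tilde quantities on $B_1$ with the Euclidean metric. Since $B_1$ is contractible, the harmonic fields are the ones listed after Theorem~\ref{divcurl system}. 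The orthogonality conditions of Theorem~\ref{divcurl system} are therefore vacuous, except in two cases. For $k=n-1$ in the tangential problem, testing against $\mathcal{H}_T(B_1;\Lambda^n)=\{c\,dV\}$ gives exactly the compatibility condition $\int f=\int_{\partial B}u_0$. For $k=1$ in the normal problem, $\mathcal{H}_N(B_1;\Lambda^0)$ consists of the constants and gives the stated condition on $w$. The normalization $u\perp\mathcal{H}_T$ (resp.\ $\mathcal{H}_N$) is vacuous except for $k=n$ (resp.\ $k=0$), where it is the zero-mean condition. Theorem~\ref{divcurl system} then gives existence and uniqueness of $\tilde u$ and the estimate \eqref{solest} on $B_1$.

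It remains to track the constants under scaling. The constant in Theorem~\ref{divcurl system} depends on $n$, $\tilde p^\pm$ and the log-H\"older constant of $\tilde p$ on $B_1$. That constant is controlled by $\sup_{0<r<1/2}\Theta_{\tilde p}(r)\log r^{-1}$ and $\tilde p^+-\tilde p^-$, which is where the quantity $\sup_{0<r<1/2}\Theta_p(rR)\log r^{-1}$ in the statement enters. We then undo the scaling: $\nabla\tilde u=R(\nabla u)(x_0+R\cdot)$.

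The main obstacle is that Luxemburg norms of variable exponent spaces are not homogeneous under dilation. A naive change of variables gives $\rho_{\tilde p}(\tilde g)=R^{-n}\rho_p(g)$, and $R^{-n}$ cannot be pulled out of the norm when the exponent varies. To handle this, we would use the unit-ball normalization. First assume, by homogeneity of the linear estimate, that the right-hand side equals $1$, and work with modulars rather than norms. Equivalently, replace $L^{p(\cdot)}(B_R)$ by averaged norms defined through $\fint|g/\lambda|^{p(x)}\le1$. These scale exactly, and they are comparable to the unaveraged ones up to factors $|B_R|^{1/p^\pm}$. The mismatch $|B_R|^{1/p^--1/p^+}\le R^{-n(p^+-p^-)/(p^-)^2}$ over small balls is bounded, by the log-H\"older condition, by a constant depending on $\Theta_p(R)\log R^{-1}$.

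The cleanest route is to write the estimate for averaged norms on both sides, so that the factors $|B_R|$ cancel between the left- and right-hand sides. Both sides then carry the same power of $R$: the factor $R^{-1}$ in front of $u$ and $u_0$ matches the factor $R$ in $\tilde f$ and $\tilde w$. The residual comparison constants are absorbed using the log-H\"older bound. For balls of radius $R\ge1$ within a bounded domain the same comparison holds trivially with a constant depending on $p^\pm$ and the diameter. The uniqueness part needs no scaling considerations and is inherited directly.
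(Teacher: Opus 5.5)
Your route is the paper's. The paper also obtains Theorem~\ref{T:dcb} by conjugating the unit-ball solution operator of Theorem~\ref{divcurl system} with the dilation $\varphi_R(x)=Rx$, and it only adds the remark that $c_{\mathrm{log}}(p(R\cdot))\le c_{\mathrm{log}}(p)$ for $R\le 1$. It never addresses the fact that $L^{p(\cdot)}$ norms do not commute with dilations, so your averaged-norm step is needed and is the right fix.

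With $\|g\|_{(p),B_R}=\inf\{\lambda>0:\fint_{B_R}|g/\lambda|^{p(x)}\,dx\le 1\}$, the estimate transfers exactly from $B_1$. For $|B_R|\le 1$ one has $\|g\|_{L^{p(\cdot)}(B_R)}\le |B_R|^{1/p^+_{B_R}}\|g\|_{(p),B_R}$ and $\|g\|_{(p),B_R}\le |B_R|^{-1/p^-_{B_R}}\|g\|_{L^{p(\cdot)}(B_R)}$. These factors do not cancel between the two sides of the unaveraged estimate. What survives is $|B_R|^{1/p^+_{B_R}-1/p^-_{B_R}}\le c\,R^{-n\Theta_p(2R)/(p^-)^2}$. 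You wrote the exponent of $|B_R|$ with the opposite sign; that quantity is trivially at most $1$ and is not the one you need to control.

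Be aware of what your argument actually proves. Your constant depends not only on $n$, $p^\pm_{B_R}$ and $\sup_{0<r<1/2}\Theta_p(rR)\log r^{-1}$, but also on $\Theta_p(2R)\log(1/R)$, and, for $R\ge 1$, on an upper bound for $R$. The statement does not list these. The extra dependence cannot be removed, because $\sup_{0<r<1/2}\Theta_p(rR)\log r^{-1}$ is scale invariant. Here is a counterexample to the statement as literally written:
\begin{itemize}
\item Take $k=0$ and $u_0=0$ in the tangential problem. The estimate then reduces to the Poincar\'e inequality $R^{-1}\|u\|_{L^{p(\cdot)}(B_R)}\le C\|\nabla u\|_{L^{p(\cdot)}(B_R)}$ on $W^{1,p(\cdot)}_0(B_R)$, with $B_R$ centred at $0$.
\item Fix $q_1>q_2>1$ and a Lipschitz $p_1$ on $B_1$ with $p_1=q_1$ on $B_{1/2}$ and $p_1=q_2$ outside $B_{3/4}$.
\item Let $\phi=1$ on $B_{3/4}$ and $\phi(y)=4(1-|y|)$ elsewhere, and set $p(x)=p_1(x/R)$, $u(x)=\phi(x/R)$.
\item Then $\|u\|_{L^{p(\cdot)}(B_R)}\ge cR^{n/q_1}$ and $\|\nabla u\|_{L^{p(\cdot)}(B_R)}=cR^{n/q_2-1}$. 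So the Poincar\'e ratio is at least $cR^{n/q_1-n/q_2}\to\infty$ as $R\to 0$.
\item Meanwhile $p^\pm$ and $\sup_{0<r<1/2}\Theta_p(rR)\log r^{-1}=\sup_{0<r<1/2}\Theta_{p_1}(r)\log r^{-1}$ do not depend on $R$.
\end{itemize}
So what you prove is the correct form of the theorem. It is also the form the paper actually uses: in \eqref{est0} the constant is allowed to depend on $c_{\mathrm{log}}(p)$, which bounds $\Theta_p(2R)\log(1/R)$ for small $R$.
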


\begin{proof}
Let $T$ be the resolvent operator for the boundary value problem \eqref{problemddeltalinear} (resp. \eqref{problemddeltalinearnormal}) for $R=1$. For $u=T(f,w,u_0)$ we have the estimate 
\begin{align*}
\|\nabla u\|_{L^{p(\cdot)}(B_1;\Lambda^k)} + &\|u\|_{L^{p(\cdot)}(B_1;\Lambda^k)} \\&
\leq C (\|f\|_{L^{p(\cdot)}(B_1;\Lambda^{k+1})} + \|w\|_{L^{p(\cdot)}(B_1;\Lambda^{k-1})})\\&\qquad\qquad  
+ C(\|\nabla u_0\|_{L^{p(\cdot)}(B_1;\Lambda^k)} +\|u_0\|_{L^{p(\cdot)}(B_1;\Lambda^k)}  )
\end{align*}
with the constant which depends only on $n$, $p_{B_1}^+$, $p_{B_1}^{-}$, and $\sup_{0<r<1/2} \Theta_p(r) \log r^{-1}$. 
Then the resolvent operator in the ball $B_R$ can be constructed as $ ((\varphi_R)^*)^{-1} T (\varphi_R)^* $ where $\varphi_R (x) = Rx$.  It remains to note that $c_{\mathrm{log}}(p(R\cdot ))\leq c_{\mathrm{log}} (p)$ for $R\leq 1$.
\end{proof}

We shall use a simple variant of this statement in half-balls.
\begin{lemma}\label{L:Gauge_Tan}
Let $p\in \mathcal{P}^{log}(B_R)$, $1<p_{B_R}^{-}<p_{B_R}^+<\infty$. Let $f\in L^{p(\cdot)}(B_R^+)$ satisfy $df=0$ and $tf=0$ on $x^n=0$. Then there exists $u\in W^{1,p(\cdot)}(B_R^+)$ such that $du=f$ in $B_R^+$ and $tu=0$ on $x^n=0$, and 
$$
\|\nabla u\|_{L^{p(\cdot)}(B_R^+;\Lambda^k)} + R^{-1} \|u\|_{L^{p(\cdot)}(B_R^+;\Lambda^k)} 
\leq \|f\|_{L^{p(\cdot)}(B_R;\Lambda^{k+1})}.
$$
\end{lemma}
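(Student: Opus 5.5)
Our plan is to reduce the half-ball problem to the full-ball gauge fixing of Theorem~\ref{T:dcb} by reflection. First, extend $f$ from $B_R^+$ to the whole ball $B_R$ by $\tilde f = f$ for $x^n>0$ and $\tilde f = S^*f$ for $x^n<0$, where $S(x',x^n)=(x',-x^n)$. We extend $p$ evenly, which preserves the log-H\"older constant up to a factor. The key observation is that, because $tf=0$ on $\{x^n=0\}$, the glued form is closed in the whole ball. The computation is the same one used in the proof of Theorem~\ref{higher integrability boundary} (Neumann case), but with the roles swapped. For a test form $\xi\in C_0^\infty(B_R)$ we get
\[
\int_{B_R}\tilde f\wedge d\xi=\int_{B_R^+} f\wedge d(\xi+\epsilon S^*\xi)
\]
with the appropriate sign $\epsilon$. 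Since $\xi+\epsilon S^*\xi$ need not vanish tangentially, we use instead the weak formulation of $tf=0$, i.e.\ \eqref{def:du} with arbitrary Lipschitz test forms, which kills the boundary term. The choice between the reflection $\pm S^*$ should be made so that the gluing is consistent with $tf=0$: the tangential components $f_I$ ($n\notin I$) are made odd in $x^n$ and the $f_{In}$ even. This is exactly $-S^*$, matching the device in the proof of Theorem~\ref{MorreyBoundDN}. Clearly $\|\tilde f\|_{L^{p(\cdot)}(B_R)}\le 2\|f\|_{L^{p(\cdot)}(B_R^+)}$.

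Second, apply Theorem~\ref{T:dcb} in $B_R$ (Euclidean metric) with data $\tilde f$, $w=0$, $u_0=0$, and the normal boundary condition $\nu\lrcorner_0 u=0$. The compatibility conditions hold trivially: $d\tilde f=0$, $d_0^*w=0$, and for $k=1$ the integral condition reads $0=0$. This gives $\tilde u\in W^{1,p(\cdot)}(B_R)$ with $d\tilde u=\tilde f$ and the scaled estimate
\[
\|\nabla\tilde u\|_{L^{p(\cdot)}}+R^{-1}\|\tilde u\|_{L^{p(\cdot)}}\le C\|\tilde f\|_{L^{p(\cdot)}}.
\]
Third, we symmetrize: set $u=\tfrac12(\tilde u-S^*\tilde u)$. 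Since $d$ commutes with pullback and $-S^*\tilde f=\tilde f$, we get $du=\tilde f$. The equation $d_0^*u=0$ and the boundary condition on $\partial B_R$ are also preserved, because $S$ is a Euclidean isometry preserving $B_R$, though we do not need them. By construction $u$ is $-S^*$-invariant, so its tangential components are odd in $x^n$ and have zero trace on $x^n=0$; that is, $tu=0$. The estimate survives, with at most a factor $2$, because $S$ preserves the even exponent's modular. Restricting $u$ to $B_R^+$ finishes the proof, with constant $C$ depending on $n,N,k,p^\pm$ and the log-H\"older constant. The lemma's statement with constant $1$ should be read as $\le C\|f\|$.

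The main obstacle is the sign bookkeeping in the first step: checking that the $-S^*$ extension of a form with $tf=0$ is weakly closed across $\{x^n=0\}$, and that $-S^*$-invariance of a $W^{1,p(\cdot)}$ form implies vanishing tangential trace. We would verify both componentwise, using odd/even parity of the coefficients. An odd $W^{1,1}$ function has zero trace, and for the closedness we would integrate by parts separately on $B_R^\pm$, where the boundary terms involve only the tangential components of $f$, which vanish from both sides.
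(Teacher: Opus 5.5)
Your proof is correct and follows the paper's route: you reflect $f$ by $-S^*$ (closed across $\{x^n=0\}$ because $tf=0$), take the normal-gauge solution from Theorem~\ref{T:dcb} in the full ball $B_R$, and use the reflection symmetry to obtain $tu=0$. The only difference is that you force $-S^*$-invariance by averaging, $u=\tfrac12(\tilde u - S^*\tilde u)$, whereas the paper invokes uniqueness of that solution to conclude $u=-S^*u$. Your averaging, together with extending $p$ evenly, has a small advantage: it avoids checking that the reflected form lies in the uniqueness class and solves the whole boundary value problem.
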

\begin{proof}
Let $S(x',x^n) =(x',-x^n)$ be the reflection operator across $x^n=0$. Extend $f$ to $B_R^-$ by $-S^*f$. That is, $f_I(x',x^n) = -f_I(x',-x^n)$ if $n\notin I$ and $f_I(x',x^n) = f_I(x',-x^n)$ if $n\in I$. Then $df=0$ in $B_R$. Indeed, for any $\xi\in C_0^\infty(B_R; \Lambda^{n-k-2})$ we have
$$
\int\limits_{B_R} f\wedge d\xi = \int\limits_{B_R^+} f \wedge  d(\xi+S^*\xi) =0.   
$$
Let $u$ be a unique solution of the boundary value problem 
$$
d u = f\quad \text{in}\quad B_R, \quad \nu \lrcorner u =0 \quad \text{on}\quad \partial B_R
$$
provided by the previous theorem. It is easy to see that the form $\widetilde u$ obtained from the relations $\widetilde u_I (x',x^n) = \pm\widetilde u_I (x',-x^n)$ with $+$ if $n\in I$ and $-$ if $n\notin I$, solves the same boundary value problem. Therefore, $u=\widetilde u$. This immediately implies that $tu =0$ on $x^n=0$. The estimate follows then from the previous theorem
\end{proof}

This can be also extended by using homotopy operator of Poincar\'{e} or Bogovskii type \cite{DieRuz2003},  \cite{CostabelMcIntosh}, \cite{MitreaMonniaux2008}. 

\subsection{Algebraic inequalities}

We recall some well-known algebraic inequalities for vectors. Let $\xi,\eta \in \mathbb{R}^m$, $m\in \mathbb{N}$, and let $\langle \xi,\eta \rangle$, be the scalar product of $\xi$ and $\eta$, that is $\langle \xi,\eta \rangle = g_{ij}\xi^i\eta^j$ with a symmetric positive definite metric tensor $g_{ij}$. Our aim in this section is to demonstrate the stability of standard algebraic inequalities with respect to $p$. For $p\geq 2$ there holds
\begin{align}\label{alg1}
\begin{aligned}
&\langle |\xi|^{p-2}\xi - |\eta|^{p-2}\eta,\xi-\eta\rangle\\
&\qquad \quad= \frac{|\xi|^{p-2}+|\eta|^{p-2}}{2}|\xi-\eta|^2 + \frac{|\xi|^{p-2}-|\eta|^{p-2}}{2}(|\xi|^2-|\eta|^2)\\
&\qquad \quad\geq 2^{-1} (|\xi|^{p-2} + |\eta|^{p-2}) |\xi-\eta|^2\\
&\qquad \quad\geq 2^{-p/2}(|\xi|^2 + |\eta|^2)^\frac{p-2}{2} |\xi-\eta|^2 \geq 2^{1-p} |\xi-\eta|^p.
\end{aligned}
\end{align}
For $1<p\leq 2$, using the relation 
$$
\langle D(|\xi|^{p-2}\xi)[\eta] ,\eta\rangle= \langle|\xi|^{p-2}\eta+ (p-2)|\xi|^{p-4}(\xi,\eta)\xi,\eta\rangle \geq (p-1)|\xi|^{p-2}|\eta|^2
$$
and the Newton-Leibnitz formula, we easily deduce
\begin{align}\label{alg2}
\begin{aligned}
\langle|\xi|^{p-2}\xi - |\eta|^{p-2}\eta,\xi-\eta\rangle
&\geq (p-1) |\xi-\eta|^2\int\limits_0^1 |\eta+t(\xi-\eta)|^{p-2} \, dt\\
&\geq (p-1)(|\xi|^2 + |\eta|^2)^\frac{p-2}{2} |\xi-\eta|^2.
\end{aligned}
\end{align}

Now, for $p\geq 2$ there holds
\begin{align}\label{mu1}
\begin{aligned}
&\bigl\langle (\mu^2+|\xi|^2)^\frac{p-2}{2}\xi - (\mu^2+|\eta|^2)^\frac{p-2}{2}\eta,\xi-\eta\bigr\rangle \\
&\qquad \qquad \geq 2^{-p/2} (2\mu^2 + |\xi|^2+|\eta|^2)^\frac{p-2}{2}|\xi-\eta|^2\\
&\qquad \qquad \geq 2^{-p} |\xi-\eta|^p + \frac{1}{4}\mu^{p-2}|\xi-\eta|^2.
\end{aligned}
\end{align}
To see this, consider the vectors $\widetilde\xi=(\xi,\mu)$, $\widetilde\eta = (\eta,\mu)$ in $\mathbb{R}^{m+1}$. Then \eqref{mu1} reduces to \eqref{alg1} for $\widetilde{\xi}$ and $\widetilde{\eta}$. Similarly, if $1<p<2$ then 
\begin{multline}\label{mu2}
\langle (\mu^2+|\xi|^2)^\frac{p-2}{2}\xi - (\mu^2+|\eta|^2)^\frac{p-2}{2}\eta,\xi-\eta\rangle \\
\geq (p-1)(2\mu^2+|\xi|^2 + |\eta|^2)^\frac{p-2}{2} |\xi-\eta|^2.
\end{multline}
As a corollary, for $2\leq p \leq p^{+}$ we have
$$
|\xi-\eta|^\frac{p+2}{2} \leq 2^{p^{+}/2} \bigl\langle (\mu^2+|\xi|^2)^\frac{p-2}{4}\xi - (\mu^2+|\eta|^2)^\frac{p-2}{4}\eta, \xi-\eta \bigr\rangle,
$$
and so 
\begin{equation}\label{mu3}
|\xi -\eta|^p \leq 2^{p^{+}} \bigl|(\mu^2+|\xi|^2)^\frac{p-2}{4}\xi - (\mu^2+|\eta|^2)^\frac{p-2}{4}\eta\bigr|^2.
\end{equation}
And for $1<p^{-} \leq p <2$ using \eqref{mu2} we have 
\begin{align*}
&|\xi-\eta|^2 \leq \frac{2}{p^{-}} \bigl\langle (\mu^2+|\xi|^2)^\frac{p-2}{4}\xi - (\mu^2+|\eta|^2)^\frac{p-2}{4}\eta, \xi-\eta \bigr\rangle (\mu^2 + |\xi|^2 + |\eta|^2 )^\frac{2-p}{4},    
\end{align*}
which implies that 
\begin{align}\label{mu4}
\begin{aligned}
 &|\xi-\eta|^p \leq c(p^{-})\bigl|(\mu^2+|\xi|^2)^\frac{p-2}{4}\xi - (\mu^2+|\eta|^2)^\frac{p-2}{4}\eta\bigr|^p (\mu^p + |\xi|^p + |\eta|^p)^\frac{2-p}{2}.   
\end{aligned}
 \end{align}

We shall use the following two simple algebraic lemmas.

\begin{lemma}\label{algebraic_lemma}
		Let $m \in \mathbb{N}$. For any $1 < p < \infty$, there exists a constant $c=c\left(p\right) >0$ such that for any $\xi, \eta \in \mathbb{R}^{m}$, we have 
		\begin{align}\label{comparison algebraic ineq}
			\lvert \xi \rvert^{p} \leq c \lvert \eta \rvert^{p} + c 	\left( \left\lvert \xi \right\rvert^{2} + 	\left\lvert \eta \right\rvert^{2} \right)^{\frac{p-2}{2}}\left\lvert \xi - \eta \right\rvert^{2}.
		\end{align}
        Moreover, $c(p)\leq 2^{p+2}$.
	\end{lemma}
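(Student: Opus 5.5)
The plan is an elementary case distinction on the relative sizes of $|\xi|$ and $|\eta|$. Only the triangle inequality for the norm $|\cdot|$ induced by $g_{ij}$ is used, so the argument does not depend on the metric. If $\xi=\eta=0$ both sides vanish, with the convention that the second term is $0$ when $\xi-\eta=0$. So we may assume $|\xi|^2+|\eta|^2>0$, and the factor $(|\xi|^2+|\eta|^2)^{\frac{p-2}{2}}$ is then finite and positive.

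\emph{Case 1: $|\xi|\leq 2|\eta|$.} Then $|\xi|^p\leq 2^p|\eta|^p$. Since $2^p\leq 2^{p+2}$ and the second term on the right of \eqref{comparison algebraic ineq} is nonnegative, the inequality holds.

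\emph{Case 2: $|\xi|>2|\eta|$.} Here the difference term must dominate. By the triangle inequality $|\xi-\eta|\geq |\xi|-|\eta|\geq |\xi|/2$, so $|\xi-\eta|^2\geq |\xi|^2/4$. Moreover $|\xi|^2\leq |\xi|^2+|\eta|^2\leq \tfrac54|\xi|^2$. We compare the weight $(|\xi|^2+|\eta|^2)^{\frac{p-2}{2}}$ with $|\xi|^{p-2}$, splitting according to the sign of $p-2$.
\begin{itemize}
\item If $p\geq 2$, the weight is at least $|\xi|^{p-2}$.
\item If $1<p<2$, the weight is at least $(5/4)^{\frac{p-2}{2}}|\xi|^{p-2}\geq (5/4)^{-1/2}|\xi|^{p-2}\geq \tfrac12|\xi|^{p-2}$.
\end{itemize}
In both subcases
$$
(|\xi|^2+|\eta|^2)^{\frac{p-2}{2}}|\xi-\eta|^2\geq \tfrac18|\xi|^{p-2}|\xi|^2=\tfrac18|\xi|^p,
$$
so $|\xi|^p\leq 8(|\xi|^2+|\eta|^2)^{\frac{p-2}{2}}|\xi-\eta|^2$.

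Combining the two cases gives \eqref{comparison algebraic ineq} with $c=\max\{2^p,8\}$. Since $p>1$, we have $8<2^{p+2}$, and of course $2^p<2^{p+2}$; hence $c\leq 2^{p+2}$. There is no real obstacle. The only point needing care is the range $1<p<2$, where the weight has a negative exponent; adding $|\eta|^2$ then decreases it, and the comparability $|\xi|^2+|\eta|^2\leq\tfrac54|\xi|^2$ from Case 2 controls this loss.
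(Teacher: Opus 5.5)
Your proof is correct and follows the paper's argument: the same split into $|\xi|\le 2|\eta|$ and $|\xi|>2|\eta|$, using $|\xi-\eta|\ge |\xi|/2$ in the second case. The only difference is cosmetic. The paper writes the weight as $(|\xi|^2+|\eta|^2)^{p/2}/(|\xi|^2+|\eta|^2)$, which avoids your sign split on $p-2$ and gives the constant $\max(2^p,5)$ for all $p$, instead of your $\max(2^p,8)$.
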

\begin{proof}
Follows easily by considering the two cases: $|\xi|\leq 2 |\eta|$ and $|\xi|>2|\eta|$. In the first case $|\xi|^p \leq 2^p |\eta|^p$, and in the second case 
$$
(|\xi|^p+|\eta|^2)^{(p-2)/2} |\xi-\eta|^2 \geq \frac{|\xi|^2}{4(|\xi|^2+|\eta|^2)} (|\xi|^2+|\eta|^2)^{p/2} \geq \frac{1}{5} |\xi|^p. 
$$
Thus the statement of the lemma is valid for any $c(p)\geq \max (2^p,5)$.
\end{proof}

In particular, for any $p\in [p^{-}_\Omega,p^{+}_\Omega]$ the statement of Lemma~\ref{algebraic_lemma} is valid with $c(p) = c(p^{+}_\Omega)$. 

\begin{lemma}\label{algebraic_lemma_1}
		Let $m \in \mathbb{N}$ and $\mu \in \mathbb{R}$. For any $1 < p < \infty$, there exists a constant $c=c\left(p\right) >0$ such that for any $\xi, \eta \in \mathbb{R}^{m}$, we have 
		\begin{align}\label{comparison algebraic ineq_1}
			\left\lvert \xi \right\rvert^{p} \leq c\mu^p+c \left\lvert \eta \right\rvert^{p} + c 	\left(\mu^2+ \left\lvert \xi \right\rvert^{2} + 	\left\lvert \eta \right\rvert^{2} \right)^{\frac{p-2}{2}}\left\lvert \xi - \eta \right\rvert^{2}.
		\end{align}
        Moreover, $c(p)\leq 2^{2p+2}$.
	\end{lemma}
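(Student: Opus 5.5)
The plan is to reduce Lemma~\ref{algebraic_lemma_1} to Lemma~\ref{algebraic_lemma} by the same lifting device already used to derive \eqref{mu1} and \eqref{mu2}. Assume first $\mu\neq 0$. Consider the vectors $\widetilde\xi=(\xi,\mu)$ and $\widetilde\eta=(\eta,\mu)$ in $\mathbb{R}^{m+1}$, with the metric extended by an orthogonal unit direction. Then
\[
|\widetilde\xi|^2=\mu^2+|\xi|^2,\quad |\widetilde\eta|^2=\mu^2+|\eta|^2,\quad \widetilde\xi-\widetilde\eta=(\xi-\eta,0).
\]
Applying \eqref{comparison algebraic ineq} to $\widetilde\xi,\widetilde\eta$ with constant $c(p)\le\max(2^p,5)$ gives
\[
(\mu^2+|\xi|^2)^{p/2}\le c(\mu^2+|\eta|^2)^{p/2}+c\,(2\mu^2+|\xi|^2+|\eta|^2)^{\frac{p-2}{2}}|\xi-\eta|^2 .
\]

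Next I convert this into the stated form term by term. On the left, $|\xi|^p\le(\mu^2+|\xi|^2)^{p/2}$. For the first term on the right, $(\mu^2+|\eta|^2)^{p/2}\le 2^{p/2}\max(\mu,|\eta|)^p\le 2^{p/2}(\mu^p+|\eta|^p)$. For the weight in the last term, $2\mu^2+|\xi|^2+|\eta|^2$ lies between $\mu^2+|\xi|^2+|\eta|^2$ and twice that quantity. Hence
\[
(2\mu^2+|\xi|^2+|\eta|^2)^{\frac{p-2}{2}}\le \max\bigl(1,2^{\frac{p-2}{2}}\bigr)(\mu^2+|\xi|^2+|\eta|^2)^{\frac{p-2}{2}}.
\]
If $p<2$ the exponent is negative, so enlarging the base only decreases the power and the factor is $1$; if $p\ge 2$ the factor is at most $2^{p/2}$.

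Collecting the constants gives $c(p)\le \max(2^p,5)\cdot 2^{p/2}$, and one checks this is at most $2^{2p+2}$ for all $p>1$. The case $\mu=0$ is exactly Lemma~\ref{algebraic_lemma}, and the sign of $\mu$ is irrelevant because $\mu$ enters only through $\mu^2$ and $|\mu|^p$.

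The only point needing care is the exponent bookkeeping when $1<p<2$. There the factor $(\cdot)^{(p-2)/2}$ is decreasing, so every comparison of bases must go in the correct direction. If the lifting argument proves awkward, the fallback is a direct two-case split mirroring the proof of Lemma~\ref{algebraic_lemma}. If $|\xi|\le 2(|\eta|+|\mu|)$, then $|\xi|^p\le 4^p(|\eta|^p+|\mu|^p)$. Otherwise $|\xi-\eta|\ge|\xi|/2$ and $\mu^2+|\xi|^2+|\eta|^2\le 2|\xi|^2$. The last term then bounds $|\xi|^p$ from below up to a factor $2^{-|p-2|/2}/4$. This also yields a constant within $2^{2p+2}$.
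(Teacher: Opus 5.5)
Your proposal is correct and follows the paper's proof: you append a $\mu$-coordinate to lift to $\mathbb{R}^{m+1}$ and then apply Lemma~\ref{algebraic_lemma}, and your constant bookkeeping $\max(2^p,5)\cdot 2^{p/2}\le 2^{2p+2}$ checks out. The only cosmetic difference is that the paper appends $2^{-1/2}\mu$ rather than $\mu$, which gives $|\widetilde\xi|^2+|\widetilde\eta|^2=\mu^2+|\xi|^2+|\eta|^2$ exactly, so your extra comparison between $2\mu^2+|\xi|^2+|\eta|^2$ and $\mu^2+|\xi|^2+|\eta|^2$ is not needed.
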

\begin{proof}
It suffices to consider the two vectors $(\xi,2^{-1/2}\mu)$ and $(\eta,2^{-1/2}\mu)$ in $\mathbb{R}^{m+1}$ and to apply the previous estimate.
\end{proof}

We shall use Lemma~\ref{algebraic_lemma} for $p\geq 2$ and Lemma~\ref{algebraic_lemma_1} for $p<2$, and in the latter case we can use the universal constant $c(p)=c(2)=20$.

From inequalities \eqref{alg1} and \eqref{comparison algebraic ineq} for $p\geq 2$ and \eqref{alg2} and \eqref{comparison algebraic ineq_1} for $1<p<2$ we infer the following
\begin{lemma}\label{lemma_algebra}
For any $1<p^{-}\leq p^{+}< \infty$ there exists a constant $C=C(p^{-},p^{+})$ such that  for any $p \in [p^{-},p^{+}]$, any $\mu \in \mathbb{R}$ and any  $\xi,\eta \in \mathbb{R}^m$, $m\in \mathbb{N}$, there holds
$$
|\xi|^p \leq C \bigl(\mu^p+ |\eta|^p +  \bigl\langle (\mu^2+|\xi|^2)^\frac{p-2}{2}\xi - (\mu^2+|\eta|^2)^\frac{p-2}{2}\eta,\xi-\eta\bigr\rangle \bigr)
$$
\end{lemma}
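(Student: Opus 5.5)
The plan is to split according to the sign of $p-2$ and combine the monotonicity inequalities \eqref{alg1}, \eqref{alg2} with the comparison inequalities of Lemma~\ref{algebraic_lemma} and Lemma~\ref{algebraic_lemma_1}. Throughout we write
$$
\mathcal{M}:=\bigl\langle (\mu^2+|\xi|^2)^\frac{p-2}{2}\xi - (\mu^2+|\eta|^2)^\frac{p-2}{2}\eta,\xi-\eta\bigr\rangle .
$$
The device used in the proof of \eqref{mu1} is applied first: pass to the lifted vectors $\widetilde\xi=(\xi,\mu)$ and $\widetilde\eta=(\eta,\mu)$ in $\mathbb{R}^{m+1}$. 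With the lifted metric, $\mu^2+|\xi|^2=|\widetilde\xi|^2$ and $\xi-\eta=\widetilde\xi-\widetilde\eta$. Hence $\mathcal{M}$ is exactly $\langle |\widetilde\xi|^{p-2}\widetilde\xi-|\widetilde\eta|^{p-2}\widetilde\eta,\widetilde\xi-\widetilde\eta\rangle$, and the problem reduces to the case $\mu=0$ for the lifted vectors.

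\emph{Case $p\ge 2$.} By \eqref{alg1} applied to the lifted vectors,
$$
\mathcal{M}\ge 2^{-p/2}(|\widetilde\xi|^2+|\widetilde\eta|^2)^\frac{p-2}{2}|\xi-\eta|^2 .
$$
Lemma~\ref{algebraic_lemma} applied to $\widetilde\xi,\widetilde\eta$ gives
$$
|\xi|^p\le|\widetilde\xi|^p\le c|\widetilde\eta|^p+c(|\widetilde\xi|^2+|\widetilde\eta|^2)^\frac{p-2}{2}|\xi-\eta|^2 .
$$
The first term is controlled by $|\widetilde\eta|^p\le 2^{p/2}(\mu^p+|\eta|^p)$, and the second by $c\,2^{p/2}\mathcal{M}$. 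Since $c(p)\le 2^{p+2}$, all constants are bounded by powers of $2^{p^+}$.

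\emph{Case $1<p<2$.} Here \eqref{mu2} gives
$$
\mathcal{M}\ge (p-1)(2\mu^2+|\xi|^2+|\eta|^2)^\frac{p-2}{2}|\xi-\eta|^2 .
$$
Lemma~\ref{algebraic_lemma_1} bounds $|\xi|^p$ by $c\mu^p+c|\eta|^p+c(\mu^2+|\xi|^2+|\eta|^2)^\frac{p-2}{2}|\xi-\eta|^2$. Because $p-2<0$, the weight $(\mu^2+|\xi|^2+|\eta|^2)^\frac{p-2}{2}$ is at most $2^{(2-p)/2}(2\mu^2+|\xi|^2+|\eta|^2)^\frac{p-2}{2}$. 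The last term is therefore at most $c\,2^{1/2}(p-1)^{-1}\mathcal{M}$, with $(p-1)^{-1}\le (p^--1)^{-1}$.

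\emph{Conclusion and main obstacle.} Taking $C$ as the maximum of the two case constants gives a bound that depends only on $p^-$ and $p^+$. The only point needing care is this uniformity in $p\in[p^-,p^+]$. The factor $(p-1)^{-1}$ from \eqref{alg2} is exactly what forces the dependence on $p^-$, while the factors $2^{\pm p}$ give the dependence on $p^+$.
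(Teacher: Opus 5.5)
Your proof is correct and follows the paper's route. The paper only states that the lemma follows from \eqref{alg1} with Lemma~\ref{algebraic_lemma} for $p\ge 2$, and from \eqref{alg2} (in its lifted form \eqref{mu2}) with Lemma~\ref{algebraic_lemma_1} for $1<p<2$; that is exactly your case split, and your constant tracking, uniform in $p\in[p^-,p^+]$, fills in the details correctly.
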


\subsection{Uniform convexity}

We start with the following estimate.
\begin{lemma}\label{uniform_convexity0}
Let $\phi(t)=(\mu^2+t^2)^{p/2}$, $\mu \in \mathbb{R}$, $1<p^{-}<p<p^{+}<\infty$. For any $\varepsilon>0$ there exists $\delta=\delta(\varepsilon,p^{-},p^{+})$ such that for any $u,v >0$ either $|u-v|\leq \varepsilon \max (u,v)$ or $\phi((u+v)/2) \leq (1-\delta) (\phi(u)+\phi(v))/2$. 
\end{lemma}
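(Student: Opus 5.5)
The plan is to reduce the inequality to a one-parameter family on a compact set by homogeneity, and then argue by strict convexity plus compactness.

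\textbf{Normalization.} By symmetry assume $0<v<u$. We may also assume $|u-v|>\varepsilon u$, since otherwise the first alternative holds. Write $v=tu$ with $t\in(0,1-\varepsilon)$ and set $s=|\mu|/u$. Because $\phi(\lambda t;\lambda\mu)=\lambda^p\phi(t;\mu)$, the desired inequality is equivalent to
\[
\Psi(t,s,p):=\frac{2\,(s^2+(\frac{1+t}{2})^2)^{p/2}}{(s^2+1)^{p/2}+(s^2+t^2)^{p/2}}\leq 1-\delta .
\]
So it suffices to show that $\sup\Psi<1$ over the relevant parameter set.

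\textbf{Strict convexity.} The map $\tau\mapsto(s^2+\tau^2)^{p/2}$ is strictly convex on $[0,\infty)$ for every $p>1$. It is the composition of the convex increasing function $r\mapsto r^{p/2}$ (restricted to $r\geq s^2$) with the strictly convex $\tau\mapsto s^2+\tau^2$, and for $p<2$ one checks directly that the second derivative $p(s^2+\tau^2)^{(p-4)/2}\bigl(s^2+(p-1)\tau^2\bigr)$ is positive. Hence $\Psi(t,s,p)<1$ at every point with $t\neq1$.

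\textbf{Compactness.} $\Psi$ is continuous in $(t,s,p)$ on $[0,1-\varepsilon]\times[0,S]\times[p^-,p^+]$, and this set is compact. Therefore, for any fixed bound $S$, we get $\sup\Psi\leq1-\delta(\varepsilon,p^\pm,S)$. The endpoint $t=0$ is harmless: there $\Psi<1$ still holds by strict convexity on $[0,1]$.

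\textbf{Main obstacle: uniformity in $s$.} The hard step is to control large $s=|\mu|/\max(u,v)$ so that $\delta$ does not depend on $S$. I would try a Taylor expansion of $\Psi$ as $s\to\infty$. Expanding each term in powers of $1/s^2$, the leading parts cancel and the numerator deficit becomes
\[
1-\Psi\approx\frac{p}{8}\,(1-t)^2\,s^{-2}.
\]
This tends to $0$ as $s\to\infty$, so the bound cannot hold uniformly in $\mu$ without a restriction on $s$. The natural route is therefore to keep $\delta$ depending on an upper bound for $\mu/\max(u,v)$, which is bounded by $\mu^+$ when $\max(u,v)\geq1$. Alternatively, one can apply the lemma to $\phi-\mu^p$, which is the form actually entering the modular $\varphi$ used in the proof of Proposition~\ref{existence of minimizers}. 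I expect verifying which of these restricted versions suffices for Lemma~\ref{uniform_convexity1} to be the decisive point.
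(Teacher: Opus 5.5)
Your diagnosis is correct, and it is the decisive point. The statement is false as written for every fixed $\mu\neq0$, not merely non-uniformly in $\mu$, because $s=|\mu|/\max(u,v)\to\infty$ as $u,v\to0$. So no proof of the statement exists, and you are right not to claim one. A one-line witness: take $\mu\neq0$, $\varepsilon<\tfrac12$, $u=\eta$, $v=2\eta$. The first alternative fails, while $\phi(3\eta/2)$ and $\tfrac12(\phi(\eta)+\phi(2\eta))$ both tend to $|\mu|^p$ as $\eta\to0$, so no $\delta>0$ works.

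The paper's proof goes wrong at ``similarly $\Phi'_v(u,v)>0$''. In fact
\[
\Phi'_v(u,v)=\tfrac12\Bigl[\phi'\bigl(\tfrac{u+v}{2}\bigr)-(1-\delta)\,\phi'(v)\Bigr],
\]
and $\phi'$ is increasing while $\tfrac{u+v}{2}<v$, so this is negative for small $\delta$. Only $\Phi'_u>0$ is true. The passage to $u=(1-\varepsilon)v$, $v\to\infty$ therefore sees only the regime $v\gg|\mu|$, i.e.\ the pure power $t^p$, which is all the Clarkson computation covers. Indeed $\Phi((1-\varepsilon)v,v)\to\delta|\mu|^p>0$ as $v\to0$.

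Your route (homogeneity, strict convexity, compactness) gives a non-explicit $\delta$ where Clarkson gives an explicit one. But it correctly proves what is actually true: the case $\mu=0$, and $\delta=\delta(\varepsilon,p^-,p^+,S)$ under the restriction $|\mu|\le S\max(u,v)$.

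Of your two proposed repairs, only the second is adequate. Lemma~\ref{uniform_convexity1} needs uniform convexity of a generalized $N$-function in the sense of Diening et al., Section~2.4, and this feeds the strong-convergence step in Proposition~\ref{existence of minimizers}. That notion requires the alternative for \emph{all} $a,b\geq0$ with a single $\delta$. A version valid only for $\max(u,v)\geq1$ discards exactly the bad regime.

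The subtracted function, by contrast, satisfies the alternative with $\delta$ independent of $\mu$. Your compactness argument finishes it once $s=\infty$ is adjoined. With $\tilde\phi_s(\tau)=(s^2+\tau^2)^{p/2}-s^p$ put
\[
\widetilde\Psi(t,s,p)=\frac{2\,\tilde\phi_s\bigl(\frac{1+t}{2}\bigr)}{\tilde\phi_s(1)+\tilde\phi_s(t)}.
\]
For finite $s$ this is continuous and $<1$ by strict convexity (at $t=0$ use $\tilde\phi_s(0)=0$). Since $\tilde\phi_s(\tau)=\tfrac p2\,s^{p-2}\tau^2\bigl(1+O(s^{-2})\bigr)$ uniformly for $\tau\in[0,1]$ and $p\in[p^-,p^+]$, one has $\widetilde\Psi\to\frac{(1+t)^2}{2(1+t^2)}\leq1-\frac{\varepsilon^2}{4}$ uniformly as $s\to\infty$. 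Hence $\widetilde\Psi$ extends continuously to the compact set $[0,1-\varepsilon]\times[0,\infty]\times[p^-,p^+]$ with values $<1$. This gives $\delta=\delta(\varepsilon,p^-,p^+)$ independent of $\mu$.

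The factor $a(x)/p(x)$ is irrelevant. Subtracting the constant $\mu^p$ can only help: if $\phi(m)\le(1-\delta)A$, then $\phi(m)-c\le(1-\delta)(A-c)$ for $c\ge0$. That is why the paper treats Lemma~\ref{uniform_convexity1} as immediate; as things stand, it needs this direct argument rather than Lemma~\ref{uniform_convexity0}.
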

\begin{proof}
Let $0<u<v$, $0<\varepsilon<1$, and $|u-v|> \varepsilon \max (u,v)$, so $0<u< (1-\varepsilon)v$. For $\delta>0$ consider the function 
$$
\Phi(u,v)=\phi((u+v)/2) - \frac{1-\delta}{2} (\phi(u)+\phi(v)).
$$
From the convexity of the function $\phi(t)$ we have
\begin{align*}
\Phi'_u(u,v) = \frac{p}{2} \left[\frac{u+v}{2} (\mu^2 +((u+v)/2)^2)^\frac{p-2}{2}- (1-\delta) u(\mu^2+ u^2)^\frac{p-2}{2} \right]> 0 
\end{align*}
and similarly $\Phi'_v(u,v)>0$. So it is sufficient to set $u=(1-\varepsilon)v$ and find $\delta$ such that 
$$
\lim_{v\to \infty} \Phi((1-\varepsilon)v,v) \leq 0.
$$ 
But this is clearly so if 
$$
(1-\varepsilon/2)^p \leq (1-\delta)\frac{1 + (1-\varepsilon)^p}{2}.
$$
The existence of such $\delta=\delta (\varepsilon,p^{-},p^{+})>0$ is established by direct computation (see for instance \cite[Chapter 2, Section 2.4, Remark 2.4.6]{Diening_et_al_variable_exponent}) or using the Clarkson inequalities. 

Assume without loss that $p^{-}\leq 2$ and $p^{+}\geq 2$. For $p\geq 2$ and $0\leq x \leq 1$ there holds (the first Clarkson inequality)
$$
\left(\frac{1+x}{2} \right)^p + \left(\frac{1-x}{2} \right)^p \leq \frac{1+x^p}{2},
$$
so setting $x=1-\varepsilon$ we obtain the required estimate with 
$$
\delta(\varepsilon) = \left(\frac{\varepsilon}{2}\right)^p \frac{2}{1+(1-\varepsilon)^p} \geq \left(\frac{\varepsilon}{2}\right)^{p^{+}}.
$$
For $1<p\leq 2$ we have (the second Clarkson inequality)
$$
\left[\left(\frac{1+x}{2} \right)^\frac{p}{p-1} + \left(\frac{1-x}{2} \right)^\frac{p}{p-1}\right]^{p-1} \leq \frac{1+x^p}{2}.
$$
Therefore, setting again $x=1-\varepsilon$ we obtain
$$
\left(1-\frac{\varepsilon}{2} \right)^p \leq (1-\delta(\varepsilon)) \frac{1+(1-\varepsilon)^p}{2}
$$
with 
\begin{align*}
\delta(\varepsilon) &= 1- \left[1+ \left(\frac{\varepsilon}{2-\varepsilon} \right)^\frac{p}{p-1} \right]^{1-p} \\&\geq (1-2^{1-p}) \left(\frac{\varepsilon}{2-\varepsilon} \right)^\frac{p}{p-1} 
\geq (1-2^{1-p^{-}}) \left(\frac{\varepsilon}{2} \right)^\frac{p^{-}}{p^{-}-1},
\end{align*}
The proof of Lemma~\ref{uniform_convexity0} is complete.
\end{proof}

As an immediate corollary we obtain the following
\begin{lemma}\label{uniform_convexity1}
Let $p:\Omega \to [p^{-}_\Omega,p^+_\Omega]$, $1<p^{-}_\Omega\leq p^{+}_\Omega<\infty$, be measurable. The function $\varphi:\Omega\times[0,\infty]\to[0,\infty]$ defined by
$$
\varphi(x,t) = (\mu(x)^2 + t^2)^{p(x)/2}-(\mu(x))^{p(x)}.
$$
is a uniformly convex generalized $N$-function. 
\end{lemma}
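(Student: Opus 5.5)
The plan is to check the two parts of the definition (cf. \cite[Chapter~2]{Diening_et_al_variable_exponent}) separately. First, $\varphi$ is a generalized $N$-function. Second, the uniform convexity inequality holds with a $\delta$ independent of $x$. The second part will be reduced to Lemma~\ref{uniform_convexity0} applied pointwise in $x$. This works because the constant in that lemma depends only on $\varepsilon$, $p^{-}_\Omega$, $p^{+}_\Omega$, and not on $\mu$ or $p$ themselves.

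\emph{Step 1: $N$-function properties.} I would assume, as is implicit in the statement, that $\mu(\cdot)$ is measurable and finite. Then $\varphi$ is a Carath\'eodory function: it is measurable in $x$ for fixed $t$ and continuous in $t$, which gives the required measurability. For fixed $x$, write $\phi_x(t)=(\mu(x)^2+t^2)^{p(x)/2}$, so that $\varphi(x,\cdot)=\phi_x-\phi_x(0)$.
\begin{itemize}
\item $\phi_x$ is convex on $[0,\infty)$, as the composition of the convex increasing map $s\mapsto s^{p/2}$ on $s\ge\mu^2$ with $t\mapsto \mu^2+t^2$. More directly, $\phi_x''\ge 0$ for $p>1$, which is the one-dimensional case of the monotonicity computation behind \eqref{alg2}.
\item We have $\varphi(x,0)=0$.
\item Near $0$: if $\mu(x)\neq 0$, a Taylor expansion gives $\varphi(x,t)=\tfrac{p}{2}\mu^{p-2}t^2+o(t^2)$. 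If $\mu(x)=0$, then $\varphi(x,t)=t^{p(x)}$ with $p(x)>1$. In both cases $\varphi(x,t)/t\to0$ as $t\to0$.
\item At infinity: $\varphi(x,t)\ge t^{p(x)}-|\mu|^{p(x)}$, so $\varphi(x,t)/t\to\infty$ as $t\to\infty$.
\end{itemize}
Together with continuity, this shows that $\varphi(x,\cdot)$ is an $N$-function for every $x$.

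\emph{Step 2: uniform convexity.} Fix $\varepsilon>0$. Take $\delta=\delta(\varepsilon,p^{-}_\Omega,p^{+}_\Omega)$ from Lemma~\ref{uniform_convexity0}. Let $u,v\ge0$ with $|u-v|>\varepsilon\max(u,v)$; the degenerate case where one of them is $0$ follows by continuity, or by the same monotonicity argument. Lemma~\ref{uniform_convexity0} gives
$$
\phi_x\bigl(\tfrac{u+v}{2}\bigr)\le (1-\delta)\,\tfrac{\phi_x(u)+\phi_x(v)}{2}.
$$
Subtract $c=\phi_x(0)=|\mu(x)|^{p(x)}\ge0$ from both sides. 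Since $-c\le-(1-\delta)c$, we get
$$
\varphi\bigl(x,\tfrac{u+v}{2}\bigr)\le(1-\delta)\,\tfrac{\varphi(x,u)+\varphi(x,v)}{2}.
$$
This $\delta$ does not depend on $x$, which is exactly uniform convexity of the generalized $N$-function.

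\emph{Main point to watch.} The only delicate issue is uniformity in $x$. Lemma~\ref{uniform_convexity0} allows $p$ anywhere in $(p^{-},p^{+})$, but the Clarkson-based bounds in its proof are monotone in $p$ and cover the endpoints as well. So I would point out that $\delta$ depends only on $p^{\pm}_\Omega$ and is independent of $\mu$. The subtraction of the constant $|\mu|^{p(x)}$ is harmless because it is nonnegative, as shown above.
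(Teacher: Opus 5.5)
Your reduction is the paper's own (it calls this lemma an ``immediate corollary'' of Lemma~\ref{uniform_convexity0}), but Step~2 does not go through. The inequality you invoke for $\phi_x$ is false whenever $\mu(x)\neq0$. Take $u=t$, $v=2t$ and $\varepsilon<1/2$, so that $|u-v|>\varepsilon\max(u,v)$. As $t\to0$ we have $\phi_x(3t/2)\to|\mu|^{p}$, while $(1-\delta)\frac{\phi_x(t)+\phi_x(2t)}{2}\to(1-\delta)|\mu|^{p}$, so no $\delta>0$ works. Hence Lemma~\ref{uniform_convexity0}, as stated, can only hold for $\mu=0$.

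Your remark that subtracting $|\mu|^{p(x)}$ is harmless gets the logic backwards. The inequality for $\phi_x$ is equivalent to the one for $\varphi(x,\cdot)$ with an extra $-\delta|\mu|^{p(x)}$ on the right-hand side, so it is strictly stronger. It is exactly the normalization $\varphi(x,0)=0$ that makes the statement true.

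Step~1 is fine, with one correction. The composition argument for convexity only works for $p\geq2$, because $s\mapsto s^{p/2}$ is concave for $p<2$. Keep your direct computation $\phi_x''\geq0$ instead.

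To repair Step~2, argue directly on $\varphi(x,\cdot)$. Let $q:=\min\{2,p^{-}_\Omega\}>1$ and
\[
\psi_x(s):=\varphi(x,s^{1/q})=\bigl[(\mu^2+s^{2/q})^{q/2}\bigr]^{p(x)/q}-|\mu|^{p(x)}.
\]
The bracket is the $\ell^{2/q}$-norm of $(|\mu|^q,s)$, hence convex and nondecreasing in $s\geq0$. Since $p(x)/q\geq1$, $\psi_x$ is convex and nondecreasing with $\psi_x(0)=0$, so $\psi_x(\lambda s)\leq\lambda\psi_x(s)$ for $\lambda\in[0,1]$.

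Now suppose $|a-b|>\varepsilon\max(a,b)$. Uniform convexity of $t\mapsto t^q$ gives $\bigl(\frac{a+b}{2}\bigr)^q\leq(1-\delta_q)\frac{a^q+b^q}{2}$ with $\delta_q=\delta_q(\varepsilon)$. Therefore
\[
\varphi\bigl(x,\tfrac{a+b}{2}\bigr)\leq\psi_x\Bigl((1-\delta_q)\tfrac{a^q+b^q}{2}\Bigr)\leq(1-\delta_q)\,\psi_x\Bigl(\tfrac{a^q+b^q}{2}\Bigr)\leq(1-\delta_q)\,\frac{\varphi(x,a)+\varphi(x,b)}{2}.
\]
The middle inequality is where $\varphi(x,0)=0$ enters. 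The constant $\delta_q$ depends only on $\varepsilon$ and $p^{-}_\Omega$, so it is uniform in $x$ and in $\mu(x)$, as required.
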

The reader can find the necessary definitions and further properties of uniformly convex $N$-functions and corresponding  semimodulars in \cite[Section 2.4]{Diening_et_al_variable_exponent}.

\subsection{Uhlenbeck estimates}\label{append_uhl}

Here we show how to derive estimates \eqref{sup_est_hom} and \eqref{hamburger_osc_use} from the results of \cite{hamburgerregularity}, where, following \cite{uhlenbecknonlinearelliptic}, a system of the general form
$$
d^* (\rho(|\omega|^2) \omega)=0, \quad d\omega =0,
$$
is studied.

We only have to check the conditions of \cite{uhlenbecknonlinearelliptic} and \cite{hamburgerregularity}. Consider the function $\rho(Q) = (\mu^2 +Q)^\frac{p-2}{2}$, where $Q = |\omega|^2$. Clearly, $\rho(Q) \omega = D f(\omega)$, $f(\omega) = p^{-1} (\mu^2+|\omega|^2)^{p/2}$. This constitutes Hypothesis H1 of \cite{hamburgerregularity} (with $\lambda=\Lambda =1$, $a=0$). Now, the function $\rho(Q)$ satisfies 
$$
\rho(Q) + 2Q \rho'(Q) = \frac{\mu^2 + (p-1)Q}{\mu^2 + Q} (\mu^2 + Q)^\frac{p-2}{2}
$$
and so 
$$
(\mu^2+Q)^\frac{p-2}{2} \leq \rho(Q) + 2Q \rho'(Q) \leq (p-1) (\mu^2+Q)^\frac{p-2}{2}.
$$
This gives Hypothesis H2($\rho$) of \cite{hamburgerregularity} (with $c = \max(p-1,1)$).
We also have 
\begin{align*}
|\rho'(Q_1)Q_1 - \rho'(Q_2)Q_2| &= \frac{|p-2|}{2}\left|Q_1 (\mu^2+Q_1)^\frac{p-4}{2} - Q_2(\mu^2+Q_2)^\frac{p-4}{2} \right|\\
&=\frac{|p-2|}{2} \biggl|\int\limits_{Q_1}^{Q_2} (\mu^2 + Q)^\frac{p-6}{2} (\mu^2 + Q (p-2)/2 )\, dQ\biggr| \\
&\leq \frac{p|p-2|}{4} (\mu^2 + Q_1 + Q_2)^\frac{p-4}{2} |Q_1-Q_2|.
\end{align*}
This is Hypothesis H3($\rho$) of \cite{hamburgerregularity} (with $\alpha=2$, $c=p|p-2|/4$). 
There also holds ((1.29) in \cite{hamburgerregularity} with $c=p-1$).
$$
|Q\rho'(Q)| = \frac{|p-2|}{2}Q (\mu^2 + Q )^\frac{p-4}{2} \leq \frac{|p-2|}{2} (\mu^2+Q)^\frac{p-2}{2}. 
$$
Now  let 
$$
H(\omega) = (\mu^2+|\omega|^2)^\frac{p}{2}, \quad \mathcal{V}(\omega) =(\mu^2+|\omega|^2)^\frac{p-2}{4}\omega.
$$
By \cite[Theorem 4.1]{hamburgerregularity}, if $B_R(x_0) \subset \Omega$ we have 
$$
\sup_{B_{R/2}(x_0)} H(\omega) \leq c(n,N,p^{-},p^{+})\fint\limits_{B_{R}(x_0)} H(\omega)\, dV.
$$
This constitutes \eqref{sup_est_hom}.

The second estimate, stated in the same \cite[Theorem 4.1]{hamburgerregularity} has the following form. Let 
$$
\Phi(x_0,r) = \fint\limits _{B_r(x_0)}  |\mathcal{V}(\omega) - (\mathcal{V}(\omega))_{x_0,r}|^2\, dV
$$
Then 
\begin{equation}\label{hamburger_osc}
\Phi(x_0,\rho) \leq c (\rho/R)^{2\alpha} \Phi(x_0,R).
\end{equation}
where the positive constants $c$ and $\alpha$ depend again only on $n$, $N$, $k$, $p^{-}$, and $p^{+}$. 


Let us show that the estimate \eqref{hamburger_osc_use} follows easily from \eqref{hamburger_osc}. Indeed, if $2\leq p \leq p^{+}$, then from \eqref{mu3} it follows that 
\begin{align*}
     \fint\limits_{B_{\rho}} \big\lvert  \omega  - \left( \omega \right)_{B_{\rho}}\big\rvert^{p}\, dV &\leq  c(p^{+})\fint\limits_{B_{\rho}} \big\lvert \mathcal{V}( \omega ) - \big( \mathcal{V}( \omega )\big)_{\rho}\big\rvert^{2}\, dV \\
     &\leq c  \left(\frac{\rho}{R}\right)^{2\alpha} \fint\limits_{B_{R}} \left\lvert \mathcal{V}( \omega ) - \bigl( \mathcal{V}( \omega )\bigr)_{R}\right\rvert^{2} \,dV\\
     &\leq c  \left(\frac{\rho}{R}\right)^{2\alpha} \fint\limits_{B_{R}} \lvert \mathcal{V}( \omega ) \rvert^{2}\, dV \leq c  \left(\frac{\rho}{R}\right)^{2\alpha}\fint\limits_{B_{R}}   (\mu^p+\lvert\omega\rvert^{p})\, dV. 
    \end{align*}
    For $1<p^{-}\leq  p < 2$, using \eqref{mu4}, the obvious fact that $|\mathcal{V}^{-1}(\xi)|\leq |\xi|^{2/p}$, and the H\"older inequality we have 
    \begin{align*}
       \fint\limits_{B_{\rho}} &\big\lvert  \omega  - \left(  \omega \right)_{\rho}\big\rvert^{p} \, dV \\
       &\leq 2\fint\limits_{B_{\rho}} \big\lvert  \omega  - \mathcal{V}^{-1}\bigl( \bigl(  \mathcal{V}(\omega)\bigr)_{\rho}\bigr)\big\rvert^{p} \, dV \\
       &\leq  c(p^{-}) \fint\limits_{B_{\rho}} \big\lvert \mathcal{V}( \omega) - \bigl( \mathcal{V}(\omega )\bigr)_{\rho}\big\rvert^{p} \bigl(\mu^p+ |\omega|^p + |\mathcal{V}^{-1} ( (\mathcal{V}(\omega))_\rho )|^p \bigr)^\frac{2-p}{2}\, dV \\
       &\leq \biggl(~ \fint\limits_{B_{\rho}} \big\lvert \mathcal{V}(\omega ) - \bigl( \mathcal{V}(\omega)\bigr)_{\rho}\big\rvert^{2}\, dV\biggr)^{\frac{p}{2}} \biggl(~ \fint\limits_{B_{\rho}}( \mu^p + |\omega|^p + | (\mathcal{V}(\omega))_\rho|^2)\, dV\biggr)^{\frac{2-p}{2}} \\
       &\leq c\left(\frac{\rho}{R}\right)^{p\alpha}\biggl(~\fint\limits_{B_{R}}\big\lvert \mathcal{V}(\omega ) - \bigl( \mathcal{V}(\omega)\bigr)_{\rho}\big\rvert^{2}\, dV\biggr)^{\frac{p}{2}} \biggl(~ \fint\limits_{B_{\rho}} (\mu^p + |\omega|^p)\, dV\biggr)^{\frac{2-p}{2}}\\
       & \leq c \left(\frac{\rho}{R}\right)^{p\alpha} \biggl(~\fint\limits_{B_{R}}\big\lvert (\mu^p + |\omega|^p)\, dV\biggr)^{\frac{p}{2}} \biggl(~ \fint\limits_{B_{\rho}} (\mu^p + |\omega|^p)\, dV\biggr)^{\frac{2-p}{2}}\\
       & \leq c \left(\frac{\rho}{R}\right)^{p\alpha} \fint\limits_{B_R} (\mu^p + |\omega|^p)\, dV,
    \end{align*}
where we have used that by the sup estimate \eqref{sup_est_hom} for $\rho < R/2$ we have 
\begin{align*}
   \fint\limits_{B_{\rho}} (\mu^p +|\omega|^p)\, dV \leq c \fint\limits_{B_{R}} (\mu^p +|\omega|^p)\, dV.  
\end{align*}
Thus, \eqref{hamburger_osc_use} is established with setting $\beta = \alpha \min \{1, 2/p\}$.

\subsection{Gehring-type lemma}\label{append:Gehring}

First, we recall the generalized Gehring-type lemma of Giaquinta and Modica in the form, presented in  \cite[Chapter V, Theorem 1.2]{Giaquinta}. Let $f,g$ be two nonnegative functions on $Q_1(0)$ and $g\in L^q(Q_1)(0)$, $q>1$, $f\in L^r(Q_1(0))$, $r>q$. Let $d(x)=\mathrm{dist}(x,\partial Q_1(0))$ and for a nonnegative function $h\in L^1_{\mathrm{loc}}(\mathbb{R}^n)$ denote
$$
M_{R_0} (h) (x ) = \sup_{R<R_0} \fint\limits_{B_R(x)} h\, dV.
$$

\begin{lemma}[Giaquinta, Modica]
Suppose almost everywhere on $Q_1(0)$
$$
M_{\frac{d(x)}{m}} (g^q) (x) \leq b M^q(g)(x) + M(f^q)(x) +\theta M(g^q)(x),
$$
where $m\in \mathbb{N}$, $b>1$ and $0\leq \theta <1$. Then $g\in L^p_{\mathrm{loc}}(Q_1(0))$ for $p\in [q,q+\varepsilon)$ and 
$$
\biggl(~\fint\limits_{Q_{1/2}(0)} g^p\, dV\biggr)^\frac{1}{p} \leq c \biggl\{ \biggl(~\fint\limits_{Q_{1}(0)} g^q\, dV\biggr)^\frac{1}{q} + \biggl(~\fint\limits_{Q_{1}(0)} f^p\, dV\biggr)^\frac{1}{p} \biggr\}
$$
where $\varepsilon = \varepsilon (b,\theta,q,n,r,m)$ and $c=c(b,\theta,q,n,m)$ are positive constants.
\end{lemma}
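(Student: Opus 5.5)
We would follow the classical Gehring scheme via level-set (``good-$\lambda$'') estimates, working with the maximal functions that appear in the hypothesis. After normalising (scaling $g$ and $f$ by $\bigl(\fint_{Q_1}g^q\bigr)^{1/q}+\bigl(\fint_{Q_1}f^r\bigr)^{1/r}$) it suffices to prove a weighted estimate in which the weight $d(x)^{\gamma}$, with $\gamma$ large, handles the fact that the hypothesis only controls the truncated maximal function $M_{d(x)/m}$. Equivalently, we cover $Q_{1/2}(0)$ by a Whitney-type family of subcubes $Q$ with $\operatorname{diam} Q\sim d(x)/m$ for $x\in Q$. On each such cube the truncated and the full maximal functions of the localised functions $g\chi_{\widetilde Q}$, $f\chi_{\widetilde Q}$ are comparable, up to a term bounded by $\fint_{Q_1}g^q$.

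\textbf{Step 1.} Convert the pointwise maximal inequality into a level-set inequality. For $t>t_0$, with $t_0$ comparable to the normalising constant, set $E(t)=\{x: M_{d/m}(g^q)(x)>t^q\}$. At a.e. point of $E(t)$ the hypothesis gives one of three alternatives: $bM^q(g)>t^q/3$, or $M(f^q)>t^q/3$, or $\theta M(g^q)>t^q/3$. We split $g=g\chi_{\{g>\varepsilon t\}}+g\chi_{\{g\le\varepsilon t\}}$ in $M(g)$ and use the weak $(1,1)$ bound for $M$ on the first alternative. Using the strong $L^1$ bound on $\{g^q>\varepsilon t^q\}$ for the third alternative, whose coefficient $\theta<1$, we arrive at
\begin{equation*}
\int_{\{g>t\}} g^q\,dV\le C\,t^{q-1}\int_{\{g>\varepsilon t\}} g\,dV+C\int_{\{f>\varepsilon t\}}f^q\,dV+\theta'\int_{\{g>\varepsilon t\}}g^q\,dV .
\end{equation*}
Here $\theta'<1$ is obtained by choosing $\varepsilon$ suitably small relative to $b$ and $\theta$. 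This uses the Lebesgue differentiation theorem, $g^q\le M_{d/m}(g^q)$ a.e., together with the localisation above.

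\textbf{Step 2.} Multiply the level-set inequality by $t^{p-q-1}$ with $p=q+\delta$, integrate over $t\in(t_0,\infty)$, and apply Fubini. This yields
\begin{equation*}
\frac1\delta\int g^{q}(g^{\delta}-t_0^{\delta})\le \frac{C}{\delta}\varepsilon^{-\delta}\int g^{q+\delta}\cdot\frac{\delta}{q-1+\delta}+\frac{C}{\delta}\int f^{q+\delta}+\frac{\theta'\varepsilon^{-\delta}}{\delta}\int g^{q+\delta},
\end{equation*}
where the right-hand side integrals are over the corresponding level regions. For $\delta$ small, the total coefficient of $\int g^{q+\delta}$ on the right, namely $C\delta\varepsilon^{-\delta}/(q-1)+\theta'\varepsilon^{-\delta}$, is $<1$. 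This step is the main obstacle: the coefficient must be kept strictly below one uniformly as $\delta\to0$, with $\theta'$ and $\varepsilon$ independent of $\delta$, and the absorption requires a priori finiteness of $\int g^{q+\delta}$. We handle the latter by running the argument with the truncations $\min(g,k)$, whose level sets satisfy the same inequality for $t<k$, and then letting $k\to\infty$.

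\textbf{Step 3.} Absorb the $g^{q+\delta}$ term and undo the weight/Whitney localisation. Undoing the normalisation then gives the stated estimate on $Q_{1/2}(0)$ for $p\in[q,q+\varepsilon_0)$ with $\varepsilon_0\le r-q$. The constants depend only on $b,\theta,q,n,m$ through the maximal-function bounds and the covering, and $\varepsilon_0$ additionally depends on $r$, as stated.
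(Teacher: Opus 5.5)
The paper gives no proof of this lemma; it quotes it from Giaquinta's monograph (Chapter~V, Theorem~1.2, stated there with $m=2$). Your overall scheme is the classical Gehring--Giaquinta--Modica route: a distance weight to cope with the truncated maximal function, a level-set inequality from weak-type bounds, Fubini against $t^{\delta-1}$, and truncation for a priori finiteness. However, Step~1 has a genuine gap in the treatment of the $\theta$-term, which is exactly the term that distinguishes this lemma from the plain Gehring lemma.

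From the third alternative you only know $M(g^q)(x)>t^q/(3\theta)$. The maximal operator has no strong $L^1$ bound, so turning this into an integral requires the weak $(1,1)$ inequality, with a dimensional constant. Passing from the measure of a maximal-function superlevel set back to $\int_{\{g>t\}}g^q$ (via Calder\'on--Zygmund or Vitali) costs a further dimensional factor. What your derivation actually produces is therefore $\theta'=3\theta\,c(n)$ with $c(n)\ge 1$, which is $\ge 1$ as soon as $\theta\ge 1/3$; the split into thirds alone already triples $\theta$. Shrinking $\varepsilon$ does not help: it only moves the threshold $t^q\bigl(\frac{1}{3\theta}-\varepsilon^q\bigr)$ towards $t^q/(3\theta)$ and enlarges $\{g>\varepsilon t\}$. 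So the absorption in Step~2 is unjustified on the range $0\le\theta<1$ that the lemma asserts.

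The remedy is to absorb $\theta M(g^q)$ pointwise before taking level sets. Since $g$ vanishes outside $Q_1(0)$ and every ball $B_R(x)$ with $R\ge d(x)/m$ has measure at least $|B_1|(d(x)/m)^n$,
\[
M(g^q)(x)\le M_{d(x)/m}(g^q)(x)+\frac{m^n}{|B_1|}\,d(x)^{-n}\int_{Q_1(0)}g^q\,dV .
\]
Since $M_{d(x)/m}(g^q)(x)<\infty$ a.e., the hypothesis then gives
\[
M_{d(x)/m}(g^q)(x)\le \frac{1}{1-\theta}\Bigl(b\,M^q(g)(x)+M(f^q)(x)\Bigr)+\frac{\theta\,m^n}{(1-\theta)|B_1|}\,d(x)^{-n}\int_{Q_1(0)}g^q\,dV .
\]
Now $\theta$ enters only through the factor $1/(1-\theta)$, and the last term is exactly what the weight $d^n$ and the threshold $t_0$ absorb.

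The same device also repairs your localisation, which as written does not close. Your weak-type bounds produce unweighted integrals over all of $Q_1(0)$, and these cannot be absorbed into a weighted or localised left-hand side. To fix this, proceed as follows.
\begin{enumerate}
\item Replace $M(g)$ and $M(f^q)$ by their truncations at scale $d(x)/m$, at the cost of $C\,d(x)^{-n}\bigl(\|g\|^q_{L^q(Q_1)}+\|f\|^q_{L^q(Q_1)}\bigr)$.
\item Assume $m\ge 2$; this is harmless since $M_{d/(2m)}\le M_{d/m}$. Then $d$ is comparable to $d(x)$ on $B_{d(x)/m}(x)$.
\item Pass to $\tilde g=d^{n/q}g$ and $\tilde f=d^{n/q}f$.
\end{enumerate}
This yields, for $t\ge t_0$,
\[
\int_{\{\tilde g>t\}}\tilde g^q\,dV\le C\,t^{q-1}\int_{\{\tilde g>\varepsilon t\}}\tilde g\,dV+C\int_{\{\tilde f>\varepsilon t\}}\tilde f^q\,dV,
\]
with the same function on both sides and no $g^q$-term on the right. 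After this, your Steps~2--3 go through.

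A minor slip: in Step~2 the first coefficient carries $\varepsilon^{-(q-1+\delta)}$, not $\varepsilon^{-\delta}$. This is harmless, since $\varepsilon$ is fixed.
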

The dependence of $\varepsilon$ on  the parameters has the following form $\varepsilon = \min (\varepsilon_*,p-q)$, where $\varepsilon_*$ varies continuously with respect to the parameters $b$, $\theta$, $q$, $n$. The dependence on $c$ on its parameters is also continuous (see \cite[Theorem 4]{Acerbi_Mingione_Crelle_2005}, \cite[Section 4]{Bojarski_Iwaniec_1983}, \cite[Proposition 6.1]{Iwaniec_1995}). In \cite{Giaquinta} this lemma is stated with $m=2$, the general case is obvious. 

As a corollary, the following statement holds, which we give in the form of \cite[Chapter V, Proposition 1.1]{Giaquinta} with cubes  replaced by balls in the assumption. 
\begin{lemma}[Giaquinta, Modica]\label{lemma:GM}
Let $Q$ be an $n$-cube. Suppose
$$
\fint\limits_{B_R(x_0)} g^q\, dV \leq b \biggl(~\fint\limits_{B_{2R}(x_0)} g\, dV \biggr)^{q} + \fint\limits_{B_{2R}(x_0)} f^q\, dx +\theta \fint\limits_{B_{2R}(x_0)} g^q\, dV
$$
for each $x_0 \in Q$ and each $R< \frac{1}{2} \min (\mathrm{dist}(x_0,\partial Q), R_0)$, where $R_0$, $b$, $q$ are constants with $b>1$, $R_0>0$, $0\leq \theta<1$.Then $g\in L^p_{\mathrm{loc}}(Q)$ for $p\in [q,q+\varepsilon)$ and 
$$
\biggl(~\fint\limits_{Q_{R}(x_0)} g^p\, dV\biggr)^\frac{1}{p} \leq c \biggl\{ \biggl(~\fint\limits_{Q_{2R}(x_0)} g^q\, dV\biggr)^\frac{1}{q} + \biggl(~\fint\limits_{Q_{2R}(x_0)} f^p\, dV\biggr)^\frac{1}{p} \biggr\}
$$
for $Q_{2R}\subset Q$, $R<R_0$, where $c$ and $\varepsilon$ are positive constants depending only on $b$, $\theta$, $q$, $n$ (and $r$).
\end{lemma}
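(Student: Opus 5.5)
The statement to prove is Lemma~\ref{lemma:GM}, the ball version of the Giaquinta--Modica proposition. I would deduce it from the maximal-function lemma stated just before it.

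\emph{Step 1: normalization.} Fix a cube $Q_{2R}(x_0)\subset Q$ with $R<R_0$. By translating and dilating, assume $Q_{2R}(x_0)=Q_1(0)$. Averages over balls and cubes are invariant under this rescaling, so the hypothesis keeps the same form with the same constants $b$, $\theta$, $q$. The only change is that the admissible radii are now bounded by a multiple of $\min(d(x),R_0/R)$. On $Q_1(0)$ we have $d(x)\le 1$, so restricting to radii $<d(x)/m$ for a fixed $m$ stays inside the admissible range whenever $R_0/R\ge 1$. This holds because $R<R_0$.

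\emph{Step 2: pointwise maximal inequality.} Take a.e.\ $x\in Q_1(0)$ and any $r<d(x)/m$, with $m$ chosen (say $m=4$) so that $B_{2r}(x)\subset Q_1(0)$ and $r$ is admissible. The hypothesis gives
\[
\fint_{B_r(x)} g^q \le b\Bigl(\fint_{B_{2r}(x)} g\Bigr)^q+\fint_{B_{2r}(x)} f^q+\theta\fint_{B_{2r}(x)} g^q .
\]
Each average over $B_{2r}(x)$ is bounded by the corresponding Hardy--Littlewood maximal function at $x$; here $g$ and $f$ are extended by zero outside $Q_1(0)$. The first average is bounded by $M(g)(x)$, so its $q$-th power is bounded by $M^q(g)(x)$. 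Taking the supremum over $r<d(x)/m$ yields
\[
M_{d(x)/m}(g^q)(x)\le b\,M^q(g)(x)+M(f^q)(x)+\theta M(g^q)(x),
\]
which is exactly the hypothesis of the preceding lemma.

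\emph{Step 3: conclusion and rescaling back.} The preceding lemma now gives $g\in L^p_{\mathrm{loc}}$ for $p\in[q,q+\varepsilon)$, together with the estimate on $Q_{1/2}(0)$ in terms of averages over $Q_1(0)$. Undoing the scaling gives the stated inequality on $Q_R(x_0)$ versus $Q_{2R}(x_0)$. The constants depend only on $b,\theta,q,n$ (and on $r$ through $\varepsilon$), since $m$ is fixed. Covering any compact subset of $Q$ by such cubes gives $g\in L^p_{\mathrm{loc}}(Q)$.

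The main obstacle is the geometric bookkeeping in Steps 1--2. One has to check that the balls $B_{2r}(x)$ stay inside the cube and that the restriction $R<\tfrac12\min(\mathrm{dist}(x_0,\partial Q),R_0)$ is compatible with the truncated maximal function $M_{d(x)/m}$ after rescaling. Passing from balls in the hypothesis to cubes in the conclusion costs only the factor $\sqrt n$, which is absorbed into the choice of $m$.
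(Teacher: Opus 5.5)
Your proposal is correct and follows the same route as the paper: the paper gives this lemma as a direct corollary of the preceding maximal-function version, just as Giaquinta deduces Proposition~1.1 from Theorem~1.2, via rescaling $Q_{2R}(x_0)$ to $Q_1(0)$ and bounding the ball averages over $B_{2r}(x)$ by the corresponding maximal functions. Your choice $m=4$ is exactly why the paper allows a general $m$ instead of Giaquinta's $m=2$: every radius $r<d(x)/4$ is then admissible under $R<R_0$, using $d(x)\le 1$ and $R_0/(2R)>1/2$.
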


\subsection{Giaquinta--Giusti iteration lemma}\label{append:GG}

We recall the following well-known modification by Acerbi, Mingione \cite{Acerbi_Mingione_ARMA_2001} of the iteration lemma by Giaquinta and Giusti (see \cite[Chapter III, \S 2, Lemma 2.1]{Giaquinta}), where the monotonicity assumption is weakened. 

\begin{lemma} \label{GG} 
Let $\phi(t)$ be a non-negative function satisfying $\phi(s) \leq  M\phi(t)$ for $s\leq t$. Suppose that 
$$ 
\phi(\rho) \leq A \left [ \left(\frac{\rho}{R} \right)^\alpha + \varepsilon \right] \phi(R) + B R^\beta
$$
for all $0<\rho <R\leq R_0$, with $A,\alpha,\beta>0$, $B,\varepsilon\geq 0$ and $\beta <\alpha$. Then there exists a constant $\varepsilon_0 = \varepsilon_0 (A,\alpha,\beta)$ such that if $\varepsilon<\varepsilon_0$ we have 
$$
\phi(\rho) \leq cM \left( \frac{\rho}{R}\right)^\beta [\phi(R) + BR^\beta] 
$$
for all $\rho < R \leq R_0$, where the constant $c$ depends only on $\alpha$, $\beta$, $A$.
\end{lemma}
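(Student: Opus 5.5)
The plan is the classical Campanato--Giaquinta--Giusti argument: compare $\phi$ along a geometric sequence of radii $\tau^k R$, and only at the very end use the weakened monotonicity $\phi(s)\le M\phi(t)$ to interpolate between consecutive radii. The constant $M$ therefore enters exactly once, as the factor $M$ in the conclusion.

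\emph{Step 1 (choice of $\tau$ and $\varepsilon_0$).} Set $\gamma=(\alpha+\beta)/2$, so that $\beta<\gamma<\alpha$. Choose $\tau\in(0,1)$ with $2A\tau^{\alpha}\le\tau^{\gamma}$, i.e.\ $\tau^{\alpha-\gamma}\le 1/(2A)$; this is possible since $\alpha>\gamma$, and $\tau$ depends only on $A,\alpha,\beta$. Put $\varepsilon_0=\tau^{\alpha}$. Then for $\varepsilon<\varepsilon_0$ the hypothesis with $\rho=\tau R$ gives
$$
\phi(\tau R)\le A(\tau^\alpha+\varepsilon)\phi(R)+BR^\beta\le 2A\tau^\alpha\phi(R)+BR^\beta\le \tau^{\gamma}\phi(R)+BR^\beta
$$
for every $R\le R_0$.

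\emph{Step 2 (iteration).} Apply Step 1 with $R$ replaced by $\tau^{j}R$ and induct on $k$ to obtain
$$
\phi(\tau^{k}R)\le \tau^{k\gamma}\phi(R)+BR^\beta\sum_{j=0}^{k-1}\tau^{j\gamma}\tau^{(k-1-j)\beta}.
$$
Factor out $\tau^{(k-1)\beta}$ from the sum. What remains is $\sum_j\tau^{j(\gamma-\beta)}\le (1-\tau^{\gamma-\beta})^{-1}$. Also $\tau^{k\gamma}\le\tau^{k\beta}$. Hence
$$
\phi(\tau^kR)\le c\,\tau^{k\beta}\bigl[\phi(R)+BR^\beta\bigr]
$$
with $c=c(\alpha,\beta,A)$ (absorbing $\tau^{-\beta}$). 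The only real care in the proof is this geometric-sum bookkeeping: one must ensure the $B$-term decays like $\tau^{k\beta}$ rather than $\tau^{k\gamma}$, which is exactly why $\beta<\gamma$ is needed.

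\emph{Step 3 (arbitrary $\rho$).} For $0<\rho<R$ pick $k\ge0$ with $\tau^{k+1}R\le\rho<\tau^{k}R$. By the quasi-monotonicity assumption and Step 2,
$$
\phi(\rho)\le M\phi(\tau^kR)\le cM\tau^{k\beta}\bigl[\phi(R)+BR^\beta\bigr]\le cM\tau^{-\beta}\Bigl(\frac{\rho}{R}\Bigr)^{\beta}\bigl[\phi(R)+BR^\beta\bigr],
$$
where the last inequality uses $\tau^{k+1}\le\rho/R$. Since $\tau$ depends only on $A,\alpha,\beta$, the factor $\tau^{-\beta}$ is absorbed into $c$, which yields the claim. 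This includes the case $k=0$, i.e.\ $\tau R\le\rho<R$.
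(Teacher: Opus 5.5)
Your proof is correct, and the choices $\gamma=(\alpha+\beta)/2$ and $\varepsilon_0=\tau^\alpha$, the induction formula, and the geometric-sum bound all check out. It is the standard Giaquinta--Giusti iteration (Giaquinta, Chapter III, Lemma 2.1) with the Acerbi--Mingione observation that monotonicity is used only once, in the final interpolation between $\tau^{k+1}R$ and $\tau^kR$, which is where the factor $M$ comes from; the paper gives no proof of its own and simply cites these sources.
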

The choice of constants guaranteed in this lemma is stable: for any compact range of admissible parameters one can choose universal constants $c$ and $\varepsilon_0$.

\bmhead{Acknowledgements}
The research of Anna Balci was supported by  Deutsche Forschungsgemeinschaft (DFG, German Research Foundation) - SFB 1283/2 2021 - 317210226 in Bielefeld university and  by Charles University  PRIMUS/24/SCI/020 and Research Centre program No. UNCE/24/SCI/005. Swarnendu Sil's research  was supported by ANRF-SERB MATRICS grant MTR/2023/000885 and ANRF ARG grant ANRF/ARG/2025/000348/MS.  The research of Mikhail Surnachev was supported by the Russian Science Foundation (project No. 25-71-30001). 
\bibliography{ref} 
\end{document}